\setlist[1]{leftmargin=*}
\setlist[enumerate,1]{label=(\alph*)}
\setlist[enumerate,2]{label=\normalfont{(\roman*)}, ref=\normalfont{(\alph{enumi}.\roman*)}}
\renewcommand{\ll}{\mathscr{l}}
\newcommand{\pp}{\mathscr{p}}
\newcommand{\qq}{\mathscr{q}}
\newcommand{\cc}{\mathscr{c}}
\newcommand{\kk}{\mathscr{k}}
\newcommand{\etop}{e_{\mathrm{top}}}
\renewcommand{\to}{\longrightarrow}
\newcommand{\map}{\dashrightarrow}
\def\Spec{\operatorname{Spec}}
\def\Aut{\operatorname{Aut}}
\def\Sing{\operatorname{Sing}}
\def\Pic{\operatorname{Pic}}
\newcommand{\R}{\mathbb{R}}
\newcommand{\F}{\mathbb{F}}
\newcommand{\C}{\mathbb{C}}
\newcommand{\Z}{\mathbb{Z}}
\newcommand{\Q}{\mathbb{Q}}
\renewcommand{\P}{\mathbb{P}}
\renewcommand{\O}{\mathcal{O}}
\renewcommand{\tilde}{\widetilde}
\renewcommand{\hat}{\widehat}
\renewcommand{\epsilon}{\varepsilon}
\renewcommand{\phi}{\varphi}
\renewcommand{\theta}{\vartheta}
\renewcommand{\d}{\partial}
\newcommand{\id}{\mathrm{id}}
\newcommand{\de}{:=}
\newcommand{\toin}[1]{\overset{#1}{\to}}
\newcommand{\invlim}{\varprojlim}
\newcommand{\NS}{\operatorname{NS}}
\newcommand{\Exc}{\operatorname{Exc}}
\newcommand{\Supp}{\operatorname{Supp}}
\newcommand{\Bk}{\operatorname{Bk}}
\newcommand{\redd}{_{\mathrm{red}}}
\renewcommand{\leq}{\leqslant}
\renewcommand{\geq}{\geqslant}
\newcommand{\hor}{_{\mathrm{hor}}}
\renewcommand{\vert}{_{\mathrm{vert}}}
\newcommand{\cS}{\mathcal{S}}
\newcommand{\bS}{\mathbb{S}}
\newcommand{\bD}{\mathbb{D}}
\newcommand{\bT}{\mathbb{T}}
\newcommand{\htp}{\simeq_{\mathrm{htp}}}
\newcommand{\diff}{\simeq_{\mathrm{diffeo}}}
\newcommand{\lk}{\ell k}
\newcommand{\AAut}{\mathrm{AAut}_{\mathrm{hol}}}
\newcommand{\lc}{(\!(}
\newcommand{\rc}{)\!)}
\theoremstyle{plain}
\newtheorem{thm}{Theorem}[section]
\newtheorem*{thm*}{Theorem}
\theoremstyle{definition}
\newtheorem{dfn}[thm]{Definition}
\newtheorem{lem}[thm]{Lemma}
\newtheorem{prop}[thm]{Proposition}
\newtheorem{cor}[thm]{Corollary}
\newtheorem{notation}[thm]{Notation}
\newtheorem{rem}[thm]{Remark}
\newtheorem{constr}[thm]{Construction} 
\newtheorem*{question}{Question} 
\theoremstyle{remark}
\newtheorem*{claim*}{Claim}
\newtheorem*{rem*}{Remark} 
\newcommand{\Bs}{\operatorname{Bs}}
\def\subsection{\@startsection{subsection}{3}
	\z@{.5\linespacing\@plus.7\linespacing}{.5\linespacing}
	{\bfseries\itshape}} \makeatother
\makeatletter \renewenvironment{proof}[1][\proofname]{
	\par\pushQED{\qed}\normalfont
	\topsep6\p@\@plus6\p@\relax
	\trivlist\item[\hskip\labelsep\bfseries#1\@addpunct{.}]
	\ignorespaces}{
	\popQED\endtrivlist\@endpefalse} \makeatother
\def\:{\colon}
\numberwithin{equation}{section}
\renewcommand{\bar}{\overline}
\begin{document}

\title[Factorial affine surfaces with $\kappa=0$ and trivial units]{Classification of smooth factorial affine surfaces \\ of Kodaira dimension zero with trivial units}
%\thanks{The first author would like to thank prof.\ Frank Kutzschebauch for drawing his attention to the article \cite{FKN_k0} and for many insightful discussions on the subject}
\subjclass[2010]{Primary: 14R05; Secondary: 14J26, 57R65, 57M99}
\keywords{affine surface, $\C^{*}$-fibration, log Minimal Model program, knot surgery, Kirby diagram}
\author{Tomasz Pełka}
\address{Basque Center for Applied Mathematics, Alameda de Mazarredo 14, 48009 Bilbao, Spain}
\email{tpelka@bcamath.org}
\author{Paweł Raźny}
\address{Instytut Matematyki, Uniwersytet Jagielloński, ul. Łojasiewicza 6, 30-348 Kraków, Poland}
\email{pawel.razny@uj.edu.pl}
\begin{abstract}
	We give a corrected statement of \cite[Theorem 2]{GM_k<2}, which classifies smooth affine surfaces of Kodaira dimension zero, whose coordinate ring is factorial and has trivial units. Denote the class of such surfaces by $\cS_{0}$. An infinite series of surfaces in $\cS_{0}$, not listed in \cite{GM_k<2}, was recently obtained by Freudenburg, Kojima and Nagamine  \cite{FKN_k0} as affine modifications of the plane. We complete their list to a series containing arbitrarily high-dimensional families of pairwise non-isomorphic surfaces in $\cS_{0}$. Moreover, we classify them up to a diffeomorphism, showing that each occurs as an interior of a $4$-manifold whose boundary is an exceptional surgery on a $2$-bridge knot \cite{BW_surgery}. In particular, we show that $\cS_{0}$ contains countably many pairwise non-homeomorphic surfaces.
\end{abstract}

\maketitle

We work with complex algebraic varieties.
\section{Introduction}

The structure of smooth affine surfaces of non-general type is essentially understood, see \cite[Chapter III]{Miyan-OpenSurf}. It can be summarized in a following way, which parallels the classical Enriques-Kodaira classification of smooth projective surfaces. Let $S$ be a smooth affine surface and let $\kappa\in \{-\infty,0,1,2\}$ be its Kodaira--Iitaka dimension. If $\kappa=-\infty$ then $S$ admits a $\C^{1}$-fibration \cite[III.1.3.2]{Miyan-OpenSurf}. If $\kappa=1$ then $S$ has a (canonical) $\C^{*}$-fibration \cite[III.1.7.1]{Miyan-OpenSurf}. The surfaces with $\kappa=0$ are peculiar, but understandable: they are obtained in a controlled way from some specific minimal models, see \cite[II.6.4]{Miyan-OpenSurf}.  

Let $\cS_{\kappa}$ be the class of smooth affine surfaces $S$ of Kodaira--Iitaka dimension $\kappa$, such that $\C[S]^{*}=\C^{*}$ and $\C[S]$ is factorial, i.e.\ all line bundles on $S$ are trivial. It is known \cite[III.2.2.1]{Miyan-OpenSurf} that $S\in \cS_{-\infty}$ if and only if $S\cong \C^{2}$. The above description of affine surfaces with $\kappa<2$ suggests that it is the class $\cS_{0}$ where one could look for unusual examples.

In \cite[Theorem 2]{GM_k<2}, Gurjar and Miyanishi claimed that, up to an isomorphism, there are at most two surfaces in $\cS_{0}$. This result, however, was recently proved wrong by Freudenburg, Kojima and Nagamine \cite{FKN_k0}, who constructed an infinite series of pairwise non-isomorphic surfaces in $\cS_{0}$. 

The aim of this article is to complete the description of $\cS_{0}$ by proving the following theorem.

\begin{thm}\label{THM}
Let $\cS_{0}$ be the class of smooth affine surfaces $S$ such that the ring $\C[S]$ is factorial, $\C[S]^{*}=\C^{*}$ and the logarithmic Kodaira--Iitaka dimension of $S$ is zero. Then $S\in \cS_{0}$ if and only if
\begin{equation*}
S\cong S_{p_1,p_2}\de \Spec \C[x_1,x_2][(x_2x_1^{-\deg p_1}-p_1(x_1^{-1}))x_1^{-1},(x_1x_2^{-\deg p_2}-p_2(x_2^{-1}))x_2^{-1}]
\end{equation*}
for some monic polynomials $p_1,p_2\in \C[t]$. Moreover, $S_{p_1,p_2}\not \cong S_{p_1',p_2'}$ for $\{p_1,p_2\}\neq \{p_1',p_2'\}$.
\end{thm}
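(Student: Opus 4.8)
The plan is to split Theorem~\ref{THM} into three parts: (a) each $S_{p_1,p_2}$ lies in $\cS_0$, (b) conversely every $S\in\cS_0$ is isomorphic to some $S_{p_1,p_2}$, and (c) the non-isomorphism statement. For all three I would fix a log smooth completion $(\bar S,D)$, with $\bar S$ smooth projective (and rational) and $D=\bar S\setminus S=\sum_i D_i$ an SNC divisor, and exploit the exact sequence
\[
0\to\C[S]^{*}/\C^{*}\to\bigoplus_i\Z\,D_i\to\Pic(\bar S)\to\Pic(S)\to 0,
\]
whose middle arrow sends the generator $D_i$ to its class $[D_i]$. Since $S$ is smooth, factoriality is equivalent to $\Pic(S)=0$, so the two hypotheses $\C[S]^{*}=\C^{*}$ and $\C[S]$ factorial are \emph{together} equivalent to the single statement that the boundary components $D_i$ form a $\Z$-basis of $\Pic(\bar S)$. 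This reformulation is the backbone of the argument: it converts the ring-theoretic hypotheses into a rigid combinatorial constraint on the weighted dual graph of $D$, namely that $D$ has exactly $\rho(\bar S)$ components, independent in $\NS(\bar S)$.

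For part (a) I would write down an explicit SNC completion of $S_{p_1,p_2}$. The coordinate functions $x_1,x_2$ define morphisms to $\C$, and I expect each to extend to a $\C^{*}$-fibration over $\P^1$ on a suitable completion, with the degenerate fibres encoding $p_1$ and $p_2$ respectively. With this model in hand I would verify the basis condition above, which yields $\C[S]^{*}=\C^{*}$ and factoriality simultaneously, and then compute $\kappa=\kappa(\bar S,K_{\bar S}+D)=0$ by exhibiting an effective multiple of $K_{\bar S}+D$ supported on $D$ and lying in no positive-dimensional linear system. Smoothness is visible on the same model.

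Part (b) is the heart of the matter. Starting from $S\in\cS_0$, I would pass to an almost minimal model and invoke the structure theory of log surfaces of Kodaira dimension zero (Miyanishi, II.6.4): running the log MMP presents $(\bar S,D)$ in a controlled way from a short list of minimal models. The basis condition forces the number of boundary components to equal $\rho(\bar S)$ and sharply restricts the admissible dual graphs; combined with $\kappa=0$ this should produce a $\C^{*}$-fibration on $S$ whose degenerate fibres are governed by a pair of monic polynomials, from which $S\cong S_{p_1,p_2}$ is recovered. For part (c) I would extract an isomorphism invariant from the minimal completion—the weighted dual graph of $D$ together with the multiplicities of the degenerate $\C^{*}$-fibres—and show it determines the unordered pair $\{p_1,p_2\}$, the symmetry $p_1\leftrightarrow p_2$ corresponding to interchanging the two $\C^{*}$-fibrations.

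The main obstacle is the completeness of part (b): this is precisely where the original argument of Gurjar--Miyanishi broke down, as they obtained too few surfaces. The delicate point is to carry the MMP/fibration analysis through \emph{without} discarding any admissible configuration of degenerate fibres, and to translate the basis constraint on $\Pic(\bar S)$ into the exact combinatorial freedom recorded by two arbitrary monic polynomials. Tracking which boundary contractions preserve the hypotheses $\C[S]^{*}=\C^{*}$ and factoriality—rather than silently normalizing them away—is what guarantees that the entire family, and not merely a finite subfamily, appears in the classification.
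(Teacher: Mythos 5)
Your skeleton---completion, the Picard-basis reformulation of the two ring hypotheses (this is exactly Lemma \ref{lem:S0}), almost minimal models for the converse, and a boundary invariant for non-isomorphism---matches the paper's architecture, but at both places where the theorem is actually hard your proposal records the difficulty instead of resolving it. In part (b), the failure of Gurjar--Miyanishi was not a diffuse bookkeeping slip: it was the specific false assumption that each step of the minimalization is inverted by a \emph{single} blowup. To repair it one needs a structure theorem for half-point attachments valid under your standing hypotheses, namely the paper's Lemma \ref{lem:producing_Cst}: if $\C[S]$ is a UFD and $\kappa(S)=0$, then the morphism $\phi_{A}$ contracts exactly a chain $[1,(2)_{k}]$ (the $(-1)$-curve together with a maximal $(-2)$-twig of the boundary), and its image does not lie on a rational twig of the new boundary. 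Proving this requires producing a $\C^{*}$-fibration from the UFD hypothesis (via the regular function cutting out $A\cap S$) and analyzing its degenerate fibers; it is this lemma that converts \enquote{reconstruct $(X,D)$ from the minimal model} into \enquote{perform two sequences of blowups of arbitrary length along $(-2)$-chains}, i.e.\ into the freedom of two arbitrary monic polynomials. One then still needs the classification of the terminal pairs (Lemma \ref{lem:O}: four explicit configurations on $\P^{2}$ and $\F_{m}$) and the unimodularity condition \eqref{eq:det}, forced by triviality of units, which dictates on which boundary components the blowup centers may sit and reduces all four cases to $\P^{1}\times\P^{1}$ with boundary $\lc 0,0,0,0 \rc$ as in Construction \ref{constr}. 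Without these ingredients, \enquote{should produce a $\C^{*}$-fibration governed by a pair of monic polynomials} is a restatement of the theorem, not a proof of it. (A smaller point: you propose to invoke the general structure theory of $\kappa=0$ log surfaces; the paper deliberately avoids it and uses only Kawamata's theorem, but that difference is cosmetic.)

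Part (c) as you state it would fail outright. The weighted dual graph of the boundary of the standard (or any minimal) completion, together with all multiplicities in degenerate fibers of the $\C^{*}$-fibrations, depends only on the pair $(\deg p_{1},\deg p_{2})$---see Figure \ref{fig:Skl}, where only the lengths of the $(-2)$-chains vary---so no such invariant can separate $S_{p_1,p_2}$ from $S_{p_1',p_2'}$ when the degrees agree. Yet the theorem claims non-isomorphism whenever $\{p_1,p_2\}\neq\{p_1',p_2'\}$, and for fixed degrees these surfaces form arbitrarily high-dimensional families: the coefficients of $p_{1},p_{2}$ are recorded only in the positions of the infinitely near blowup centers, which are invisible to the dual graph. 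The paper's actual argument (Corollary \ref{cor:st}) therefore needs two further steps: uniqueness of the standard completion up to flows (Lemma \ref{lem:boundaries}, from the Flenner--Kaliman--Zaidenberg theory), together with a check that flows cannot alter these particular graphs; and then the decisive step that any isomorphism $S_{p_1,p_2}\to S_{p_1',p_2'}$ extends to the standard completions, descends along $\phi$ to an automorphism of $\P^{1}\times\P^{1}$ preserving the four boundary lines and the two base points---hence equal to the identity or to $(x_1,x_2)\mapsto(x_2,x_1)$---and so must match the blowup centers, forcing $\{p_1,p_2\}=\{p_1',p_2'\}$. This descent argument, not a graph comparison, is what proves the \enquote{moreover} clause.
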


\begin{rem}\label{rem:literature}
	\begin{enumerate}
		\item\label{item:GM} The two surfaces constructed in \cite[Theorem 2]{GM_k<2} are both isomorphic to $S_{1,1}$, see \cite[Theorem 4.4]{FKN_k0}.
		This surface was extensively studied in \cite[\S 8]{Kal-Ku1_AL-theory}.
		\item\label{item:FKN} For $n\geq 1$, the surface $V_{n}=\Spec \C[x,y][(x-1)/(x^{n}y-1)]$ constructed in \cite{FKN_k0} is isomorphic to $S_{t^{n-1},1}$ via $\C[x_1,x_2][(x_2-1)x_1^{-n},(x_1-1)x_2^{-1}]\ni (x_1,x_2)\mapsto (x,1-x^{n}y)\in \C[x,y][(x-1)/(x^{n}y-1)]$.
	\end{enumerate}
\end{rem}

\begin{figure}[htbp]
	\begin{tabular}{ccc}
		\begin{subfigure}[t]{0.35\textwidth}
			\includegraphics[scale=0.25]{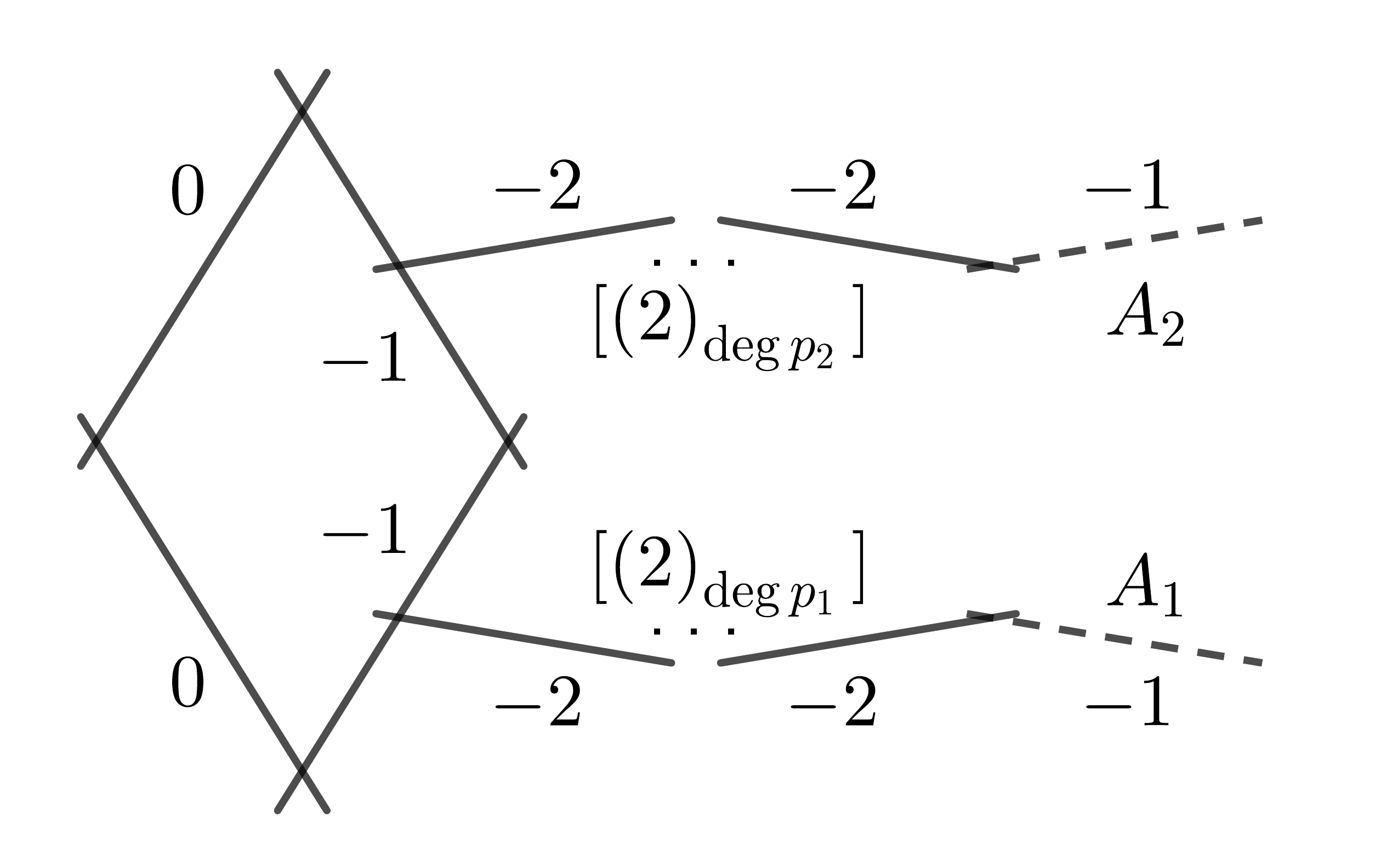}
			\caption{$p_1,p_2\neq 1$}
			\label{fig:Skl}
		\end{subfigure}
		&
		\begin{subfigure}[t]{0.35\textwidth}
			\includegraphics[scale=0.25]{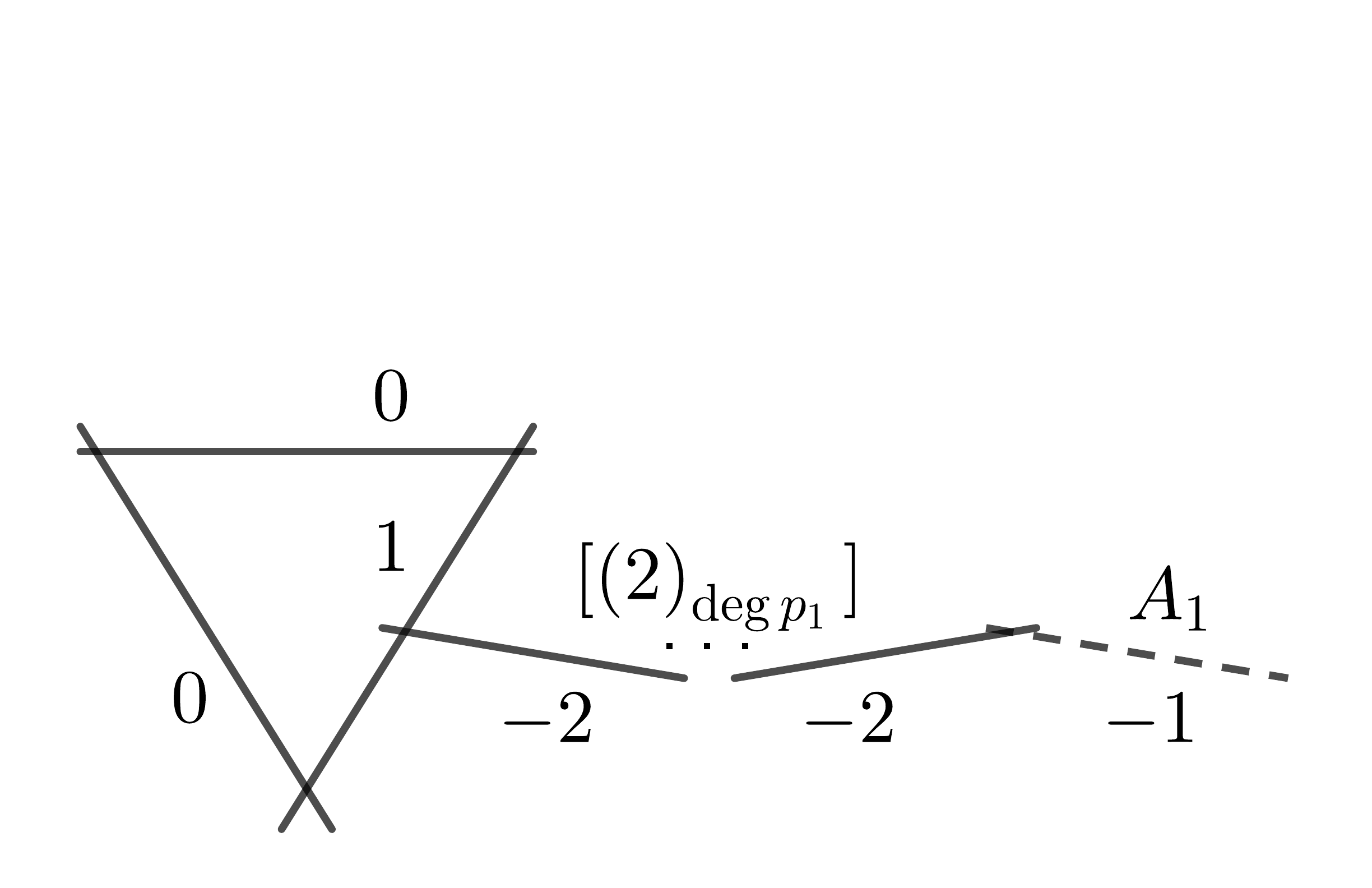}
			\caption{$p_1\neq 1$, $p_2=1$}
			\label{fig:Sk0}
		\end{subfigure}
		&
		\begin{subfigure}[t]{0.15\textwidth}
			\includegraphics[scale=0.25]{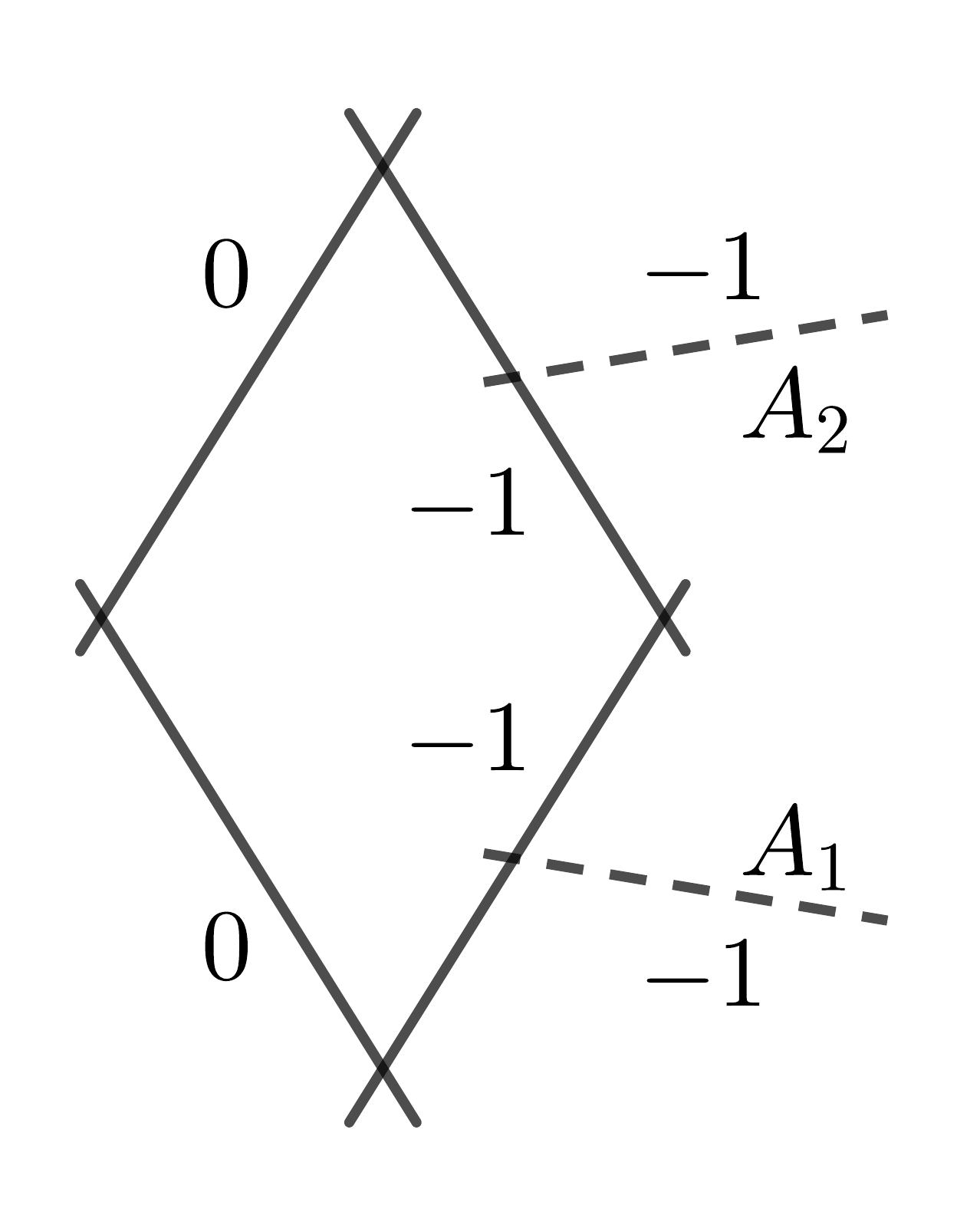}
			\caption{$p_1=p_2=1$}
			\label{fig:S00}
		\end{subfigure}
	\end{tabular}
	\caption{Standard boundaries of the surfaces $S_{p_1,p_2}$ from Theorem \ref{THM}.}
	\label{fig:boundaries}
\end{figure}

\begin{constr}\label{constr}
	In the remaining part of the article we will use the following geometric construction of the surface $S_{p_{1},p_{2}}$, or rather of its log smooth completion. Let $(x_1,x_2)$ be coordinates on $\P^{1}\times \P^{1}$. For $t\in \P^{1}$, $j\in \{1,2\}$ put $\ll_{j,t}=\{x_{j}=t\}$. Write $p_1(t)=t^{d-1}+a_{1}t^{d-2}+\dots+a_{d-1}$. We perform a sequence of $d$ blowups  over $(0,1)$, as follows.  The exceptional curve of the first blowup is parametrized by $x_{2}'=(x_2-1)x_1^{-1}\in \P^{1}$, where $x_{2}'=\infty$ lies on the proper transform of $\ll_{1,0}$. We blow up the point $x_{2}'=a_{1}$, and so on: after $m$-th blowup, the exceptional curve is parameterized by $x_{2}^{(m)}=x_2 x_1^{-m}-x_1^{-m}-a_{1}x_1^{-m+1}-\dots-a_{m-1}$, and we blow up the point with $x_{2}^{(m)}=a_{m}$. Next, we use $p_2$ to analogously define a sequence of $\deg p_2+1$ blowups over the preimage of $(1,0)$. We denote the resulting morphism by $\phi\colon X\to \P^{1}\times \P^{1}$, put $L_{j,t}\de \phi^{-1}_{*}\ll_{j,t}$ and denote by  $A_{1},A_{2}\subseteq X$ the exceptional curves of the last blowup over $(0,1)$ and $(1,0)$, respectively. Eventually, we put $D=\phi^{*}(\ll_{1,\infty}+\ll_{2,\infty}+\ll_{1,0}+\ll_{2,0})\redd-A_{1}-A_{2}$ and $S_{p_1,p_2}=X\setminus D$.
	
	Figure \ref{fig:Skl} shows the graph of $D+A_{1}+A_{2}$, where we use the following notation: each line denotes a curve isomorphic to $\P^{1}$ of specified self-intersection number; the solid ones are the components of $D$, the dashed ones are $A_{1}$ and $A_{2}$; and $[(2)_{m}]$ is short for a chain of $m$ $(-2)$-curves (see Section \ref{sec:log_surfaces}). 
	
	If $p_1,p_2\neq 1$ or $p_1=p_2=1$, then $(X,D)$ is a \emph{standard} completion of $S_{p_1,p_2}$ in the sense of \cite{FKZ-weighted-graphs}, see Section \ref{sec:standard}. To obtain a standard completion in the remaining case, blow up at the preimage of the point $(\infty,\infty)$ and contract $L_{1,\infty}+L_{2,0}$: this gives a graph as in Figure \ref{fig:Sk0}. In Corollary \ref{cor:st}\ref{item:distinct} we use \cite[Corollary 3.36]{FKZ-weighted-graphs} to prove that $S_{p_1,p_2}\not\cong S_{p_1',p_2'}$ for $\{p_1,p_2\}\neq \{p_1',p_2'\}$ just by comparing these graphs.
\end{constr}

In Section \ref{sec:diff} we describe the diffeomorphism types of surfaces in $\cS_{0}$. The result is summarized in Theorem \ref{thm:diff}. We refer to \cite[\S 4,5]{GS_Kirby} for a language of Kirby diagrams (see also Section \ref{sec:Kirby_overview}) and use the notation of \cite{BW_surgery} for $2$-bridge knots.

\begin{thm}\label{thm:diff}
	The surface $S_{p_{1},p_{2}}$ from Theorem \ref{THM} is diffeomorphic to the interior of a $2$-handlebody on a $0$-framed $2$-bridge knot $K_{[2d_1,2d_2]}$, where $d_{j}=\deg p_{j}+1$, $j\in \{1,2\}$. 
	Moreover, $S_{p_{1},p_{2}}$ is not homeomorphic to $S_{p_{1}',p_{2}'}$ for $\{\deg p_{1},\deg p_{2}\}\neq \{\deg p_{1}', \deg p_{2}'\}$.  
\begin{figure}[ht]
	\includegraphics[scale=0.35]{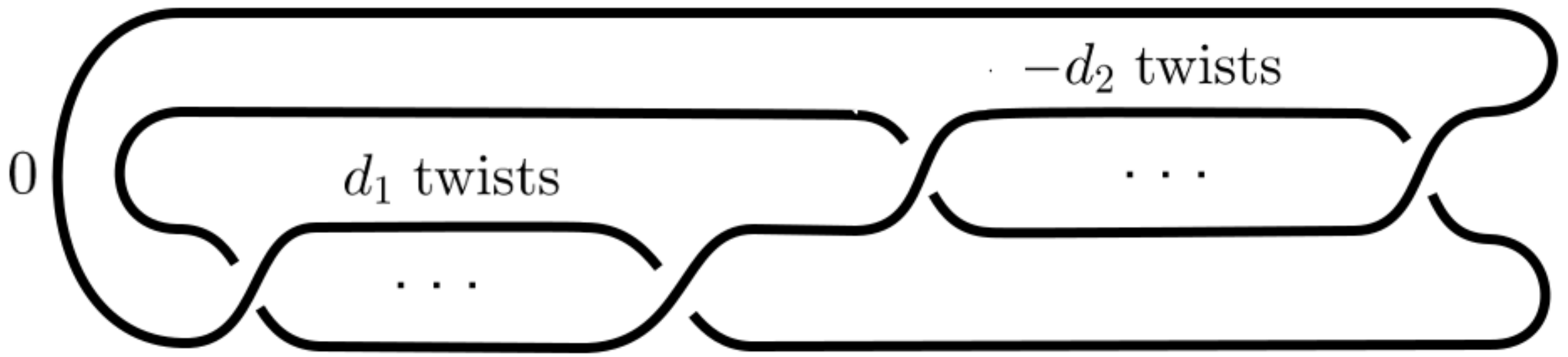}
	\caption{Kirby diagram of a $4$-manifold whose interior is $S_{p_{1},p_{2}}$ with $\deg p_{j}=d_{j}-1$.}
	\label{fig:knot}
\end{figure}	
\end{thm}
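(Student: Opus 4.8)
The plan is to work throughout with the log smooth completion $(X,D)$ of $S_{p_1,p_2}$ furnished by Construction \ref{constr}, and to translate the algebraic geometry into a Kirby diagram. Fix a closed regular neighbourhood $N$ of $D$ in $X$. Since $D$ is a connected normal crossing divisor, $N\setminus D$ is a collar $\partial N\times[0,1)$, so $S_{p_1,p_2}=X\setminus D$ is diffeomorphic to the interior of the compact $4$-manifold $W\de X\setminus\operatorname{int}N$, and $\partial S_{p_1,p_2}\cong\partial N$ is the oriented plumbed $3$-manifold on the weighted dual graph of $D$ recorded in Figure \ref{fig:boundaries}. Thus the first task is to produce a handle decomposition of $W$ and simplify it.

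To build the diagram I would present $X$ by handles, starting from $\P^1\times\P^1=S^2\times S^2$ with the four axes $\ll_{1,0},\ll_{1,\infty},\ll_{2,0},\ll_{2,\infty}$ as a plumbing on a $4$-cycle of $0$-framed unknotted spheres, and then record the two sequences of $d_1=\deg p_1+1$ and $d_2=\deg p_2+1$ infinitely near blowups of Construction \ref{constr} by the standard blow-up rule of Kirby calculus. This yields an explicit framed link displaying every component of $D$, the two exceptional $(-2)$-chains, and the curves $A_1,A_2$, matching the graph of Figure \ref{fig:Skl}. Now $S_{p_1,p_2}=X\setminus D\cong\operatorname{int}W$, and because $S_{p_1,p_2}$ is a smooth affine, hence Stein, surface it retracts onto a $2$-complex carried by the compact curves $A_1,A_2\subseteq S_{p_1,p_2}$; its handle decomposition therefore uses only handles of index $\le 2$, with the two $(-2)$-chains dictating the attaching data of the two $2$-handles along $A_1,A_2$.

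Next I would run Kirby calculus on this diagram. A direct computation from the dual graph shows that $S_{p_1,p_2}$ is simply connected with $\chi(S_{p_1,p_2})=2$, so the explicit picture lets me cancel all $0$- and $1$-handles and leaves a \emph{single} $2$-handle ($A_1$ and $A_2$ become homologous in $H_2(S_{p_1,p_2})\cong\Z$). Collapsing the two $(-2)$-chains by blow-downs and handle slides realises the attaching circle of this $2$-handle as the $2$-bridge knot $K_{[2d_1,2d_2]}$ in the notation of \cite{BW_surgery}. Its framing must be $0$: the four axes persist in the dual graph as a $4$-cycle, so $b_1(\partial N)=1$, whereas a nonzero framing $f$ would force $H_1(\partial S_{p_1,p_2})\cong\Z/f$ to be finite. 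This identifies $S_{p_1,p_2}$ with the interior of the $0$-framed $2$-handlebody on $K_{[2d_1,2d_2]}$, and incidentally $\partial S_{p_1,p_2}$ with $0$-surgery on that knot, in agreement with \cite{BW_surgery}.

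For the non-homeomorphism statement I would use the end of the open manifold $S_{p_1,p_2}$. A homeomorphism of manifolds is automatically proper, hence a proper homotopy equivalence, so the fundamental group at infinity is a homeomorphism invariant; by the collaring of $\partial N$ one has $\pi_1^{\infty}(S_{p_1,p_2})\cong\pi_1(\partial S_{p_1,p_2})$, the fundamental group of the $0$-surgery on $K_{[2d_1,2d_2]}$. It therefore suffices to separate these surgered $3$-manifolds for distinct unordered pairs $\{d_1,d_2\}$; being aspherical they are determined by their fundamental group, and the pair $\{d_1,d_2\}=\{\deg p_1+1,\deg p_2+1\}$ is read back from their JSJ decomposition, the relevant surgeries being exactly those classified in \cite{BW_surgery}. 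Crucially, abelian invariants are too coarse here (the Alexander polynomial of $K_{[2d_1,2d_2]}$ turns out to depend only on the product $d_1d_2$, so it cannot by itself tell $\{1,4\}$ from $\{2,2\}$), which is why the geometric decomposition of the end is needed. I expect the main obstacle to be the bookkeeping of the second and third paragraphs: one must carry the framings and the two marked cores $A_1,A_2$ correctly through the entire blowup sequence and the subsequent handle slides so as to land precisely on the continued fraction $[2d_1,2d_2]$, and not on some other $2$-bridge knot.
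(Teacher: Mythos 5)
Your outline follows the same route as the paper (translate Construction \ref{constr} into a Kirby diagram of the compact model, simplify to a single $0$-framed $2$-handle, then separate homeomorphism types via $\pi_1^\infty$ of the boundary), but it has a genuine gap at its center: the identification of the attaching circle with $K_{[2d_1,2d_2]}$ is asserted, never derived. The only item you actually verify is the framing (your $b_1(\partial N)\geq 1$ argument is correct and is a nice alternative to the paper's bookkeeping), yet the framing is the easy part; nothing in your text pins down the knot type, and that is exactly where the paper spends its effort (Proposition \ref{prop:handles}, Lemma \ref{lem:slides}, and the slides of Figure \ref{fig:proof_diff}). Two steps you treat as routine are not. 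First, passing from a handle picture of the closed surface $X$ to one of the complement $W=X\setminus \operatorname{int}\mathrm{Tub}(D)$ is not a matter of deleting the handles carried by $D$: it requires the surface-complement (dotted-circle) technique of \cite[\S 6.2]{GS_Kirby}, which is what Proposition \ref{prop:handles} implements (working over $\phi^{-1}$ of a bidisk, so that only the affine parts of the components of $D$ must be removed and the complement of the four axes is recognized as $\bT^{2}\times\bD^{2}$). Second, \enquote{simply connected and $\chi=2$, so all $0$- and $1$-handles cancel} is not a valid deduction in dimension $4$, where algebraic cancellation does not yield geometric cancellation; the paper cancels the $1$-handles only after explicit slides make each of them meet a single $2$-handle once geometrically, and it invokes $\pi_1(V)=\{1\}$ only for the standard trick of trading split $0$-framed unknots against $3$-handles. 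Relatedly, your picture of \enquote{$A_1$ and $A_2$ becoming homologous and merging into one $2$-handle} misdescribes the endgame: in the paper the handles coming from $A_1,A_2$ are the ones canceled against the $1$-handles, and the surviving $2$-handle is the commutator handle $h$ of $\bT^{2}\times\bD^{2}$, modified by sliding over $\pm a_1,\pm a_2$ --- it is precisely this double slide that creates the two twist regions $2d_1$, $2d_2$ of $K_{[2d_1,2d_2]}$.

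The non-homeomorphism half of your proposal is sound in outline and close to the paper's: $\pi_1^{\infty}(S)=\pi_1(M)$ with $M$ the $0$-surgery on $K_{[2d_1,2d_2]}$, and $\{d_1,d_2\}$ is to be recovered from a geometric decomposition of $M$ (your remark that the Alexander polynomial of $K_{[2d_1,2d_2]}$ sees only the product $d_1d_2$, so abelian invariants cannot distinguish $\{1,4\}$ from $\{2,2\}$, is correct and a good justification for this). But here too the computation is deferred: one must actually exhibit the JSJ pieces and check that the exceptional cases are handled --- the paper does this in Proposition \ref{prop:JSJ} using \cite{BW_surgery,HT_inc}, and independently in Corollary \ref{cor:not_homeo} by computing Neumann's normal forms of the plumbing graphs \cite{Neumann-plumbing_graphs}, where the degenerate cases $d_j=1$ (for $d_1=d_2=1$ the surgery is on a trefoil and $M$ is Seifert fibered) require separate treatment. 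As it stands, the proposal is a correct plan whose two pillars --- the knot identification and the JSJ/graph computation --- are both missing.
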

Theorem \ref{thm:diff} follows by some elementary handle slides from Proposition \ref{prop:handles}, where we translate Construction \ref{constr} to the Kirby diagram in Figure \ref{fig:handles}. 
	 	 \begin{figure}[ht]
	 	 	\includegraphics[scale=0.4]{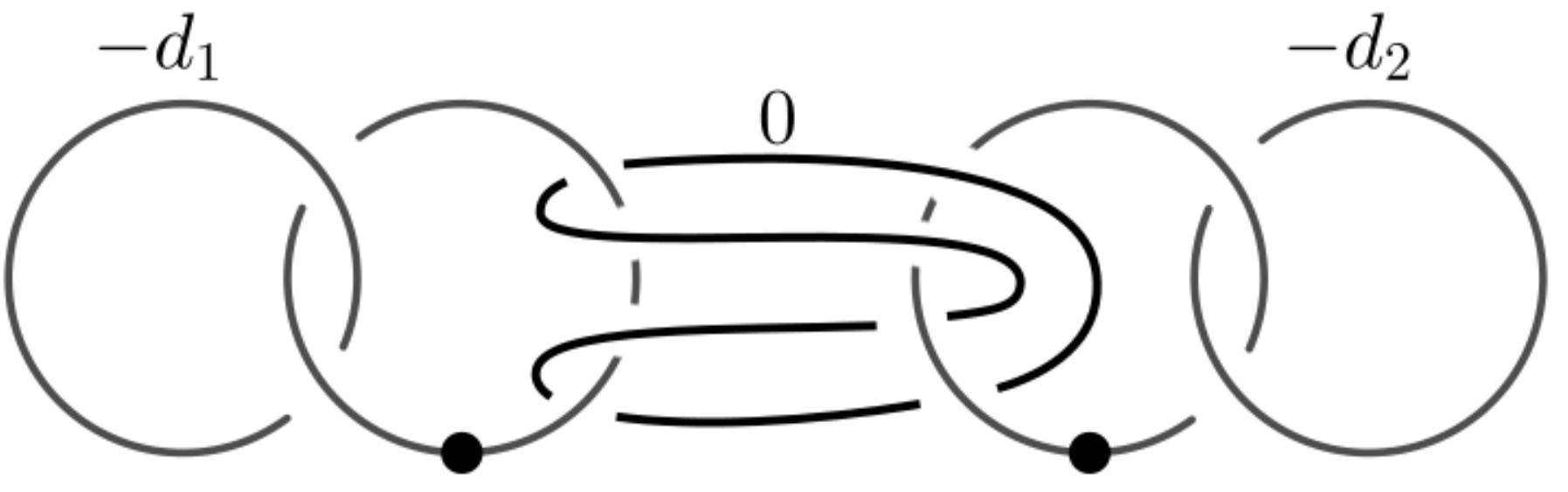}
	 	 	\caption{Kirby diagram equivalent to the one in Figure \ref{fig:knot}, cf.\ \cite[Fig.\ 5.14]{GS_Kirby}.}
	 	 	\label{fig:handles}
	 	 \end{figure}	
	 	 
In Corollary \ref{cor:not_homeo} we prove that $S_{p_{1},p_{2}}$ for different degrees of $p_{j}$ are not homeomorphic, because they have different homotopy types at infinity, see \cite[1.21]{Fujita-noncomplete_surfaces}. Like in the proof of Theorem \ref{THM}, we deduce it from the graph of $D$, this time applying the calculus of graph $3$-manifolds \cite{Neumann-plumbing_graphs} to $M=\d \mathrm{Tub}(D)$, see \cite{Mumford-surface_singularities}. That argument becomes more explicit once it is related to the fact that, by Theorem \ref{thm:diff}, $M$ is a $0$-surgery on $K_{[2d_1,2d_2]}$.  We exploit it in Section \ref{sec:pi_1}, giving a thorough geometric description of $M$. The fundamental group $\pi_{1}(M)=\pi_{1}^{\infty}(S_{p_{1},p_{2}})$ is computed in Proposition \ref{prop:pi_1}.	

\begin{rem}
	We do not know which of the $S_{p_{1},p_{2}}$, for fixed degrees of $p_{1},p_{2}$, are biholomorphic. 
\end{rem}

An interesting homogeneity property of $S_{1,1}$ was shown in \cite[\S 8]{Kal-Ku1_AL-theory}: although $S_{1,1}$ has no algebraic automorphisms except $(x_1,x_2)\mapsto(x_2,x_1)$, it admits a lot of nice holomorphic ones, coming from the flows of algebraic vector fields. Theorem \ref{thm:AAut} shows that the same holds for all $S_{p_{1},p_{2}}$. Denote by $\AAut(S_{p_1,p_2})$ the subgroup of $\Aut_{\mathrm{hol}}(S_{p_1,p_2})$ generated by elements of flows of complete algebraic vector fields on $S_{p_{1},p_{2}}$. 

\begin{thm}\label{thm:AAut}
	We keep notation from Construction \ref{constr}.
	\begin{enumerate}
		\item\label{item:Aut_thm} If $p_1\neq p_2$ then $\Aut(S_{p_1,p_2})$ is trivial. Otherwise, $\Aut(S_{p,p})=\Z_{2}$ is generated by $(x_1,x_2)\mapsto (x_2,x_1)$.
		\item\label{item:AAut_thm} The group $\AAut(S_{p_{1},p_{2}})$ acts $m$-transitively, for any $m$, on its open orbit $S_{p_1,p_2}\setminus Z$, where:
		\begin{enumerate}
			\item $Z=\emptyset$ if $p_{1}=p_{2}=1$,
			\item $Z=((L_{3-j,1}\cap A_{j}) \cup A_{3-j})\cap S_{p_1,p_2} \cong \{\mathrm{pt}\}\sqcup \C^{1}$ if $p_{j}=1, p_{3-j}\neq 1$ for some $j\in \{1,2\}$,
			\item $Z=(A_{1}\cup A_{2})\cap S_{p_1,p_2}\cong \C^{1}\sqcup \C^{1}$ if $p_{1},p_{2}\neq 1$.
		\end{enumerate}
	\end{enumerate}
\end{thm}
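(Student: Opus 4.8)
The plan is to prove the two parts by different means: part \ref{item:Aut_thm} follows from the rigidity of the standard completion of Construction \ref{constr}, while part \ref{item:AAut_thm} is obtained by manufacturing complete algebraic vector fields out of the two $\C^{*}$-fibrations of $S_{p_1,p_2}$ given by the projections $x_1$ and $x_2$, and then running the infinite-transitivity argument of \cite[\S 8]{Kal-Ku1_AL-theory}.

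For part \ref{item:Aut_thm}, I would first observe that, $\C[S_{p_1,p_2}]$ being factorial with $\C[S_{p_1,p_2}]^{*}=\C^{*}$, every $\alpha \in \Aut(S_{p_1,p_2})$ extends to a birational self-map of the standard completion $(X,D)$ and hence, by \cite{FKZ-weighted-graphs}, induces an automorphism of the weighted dual graph of $D$ together with its reconstruction data. Concretely $\alpha$ permutes the $\C^{*}$-fibrations of $S_{p_1,p_2}$, which one reads off the graph to be exactly the two induced by the rulings $x_1,x_2$; thus $\alpha$ either preserves each ruling or interchanges them. In the first case $\alpha$ descends to a product of M\"obius transformations of $\P^{1}\times \P^{1}$ fixing $\{0,\infty\}$ on each factor; since the asymmetry of $D$ forbids exchanging $0$ and $\infty$, this is a scaling $(x_1,x_2)\mapsto(\lambda x_1,\mu x_2)$, and requiring it to fix the whole sequence of blown-up points encoding $p_1,p_2$ forces $\lambda=\mu=1$, so $\alpha=\id$. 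The interchanging case occurs precisely when the two arms of the graph together with their reconstruction parameters --- i.e.\ the coefficients of $p_1$ and $p_2$ --- agree, which by the uniqueness part of Theorem \ref{THM} (that is, \cite[Corollary 3.36]{FKZ-weighted-graphs}) means $p_1=p_2$; it then yields the involution $(x_1,x_2)\mapsto(x_2,x_1)$ and nothing more.

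For part \ref{item:AAut_thm}, the point is that although $\kappa(S_{p_1,p_2})=0$ excludes nontrivial locally nilpotent derivations, each fibration $x_j$ carries the regular vector field $v_j$ generating its fibrewise $\C^{*}$-action, and for every regular function $f$ on the base the ``variable'' field $(f\circ x_j)\,v_j$ is still \emph{complete}: its flow is the fibrewise multiplication $\exp(s\,f\circ x_j)$, a holomorphic (generally non-algebraic) automorphism of $S_{p_1,p_2}$. I would describe $v_1,v_2$ explicitly, determine where they vanish or become mutually tangent, and match this \emph{degeneracy locus} with the set $Z$ of the statement through a fibrewise analysis over the points blown up in Construction \ref{constr}: on the minimal surface $S_{1,1}$ no fibre is degenerate and $Z=\emptyset$, whereas each nonconstant $p_j$ contributes the component $A_j\cap S_{p_1,p_2}\cong \C^{1}$ on which $v_1,v_2$ fail to be independent, the mixed case $p_j=1\neq p_{3-j}$ producing in addition the single point $L_{3-j,1}\cap A_j$. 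Off $Z$ the fields $(f\circ x_1)v_1$ and $(g\circ x_2)v_2$ (with $f,g$ nonvanishing there) span the tangent space, so every $\AAut(S_{p_1,p_2})$-orbit in the connected surface $S_{p_1,p_2}\setminus Z$ is open, whence there is a single open orbit.

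The $m$-transitivity for all $m$ would then follow from the localisation step of \cite[\S 8]{Kal-Ku1_AL-theory}: since the multipliers $f,g$ range over all regular functions on the respective bases, they can be chosen to vanish to prescribed orders at any finite set of fibres, which lets one move $m$ distinct points of $S_{p_1,p_2}\setminus Z$ independently along the two fibrations and thereby makes the induced action of $\AAut(S_{p_1,p_2})$ on ordered $m$-tuples transitive. I expect this last step, together with the exact determination of $Z$, to be the main obstacle: one must check that the ``variable $\C^{*}$'' fields generate a tangent module rich enough to separate arbitrarily many points, and that no degeneracy beyond $Z$ is introduced over the exceptional curves $A_1,A_2$, which requires the careful local fibre computation indicated above.
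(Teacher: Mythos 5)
Your treatment of part \ref{item:Aut_thm} follows the same route as the paper (Corollary \ref{cor:st}\ref{item:Aut}): extend the automorphism to the standard completion via \cite[Corollary 3.36]{FKZ-weighted-graphs}, descend to $\P^{1}\times\P^{1}$, and pin the descent down using the configuration of blown-up points. One intermediate claim is false, though: $S_{p_1,p_2}$ has infinitely many $\C^{*}$-fibrations, not just the two rulings (for instance $x_1^{a_1}x_2^{a_2}$ for coprime $a_1,a_2\geq 1$), so an automorphism cannot be analyzed as "permuting the two fibrations". The paper avoids this by a Picard-group argument: since $\Pic(X)=\Z[D]$, the extended isomorphism must send each $A_{j}$ to an $A_{j'}$, hence descends to $\P^{1}\times\P^{1}$; your conclusion survives with that substitution.

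Part \ref{item:AAut_thm} has a genuine gap: your argument only bounds the orbit from below. Exhibiting some complete fields tangent to $Z$ which span off $Z$ would show at most that $S\setminus Z$ is contained in a single orbit; it does not show that the orbit \emph{equals} $S\setminus Z$, i.e.\ that $Z$ is invariant under $\AAut(S_{p_1,p_2})$. Since $\AAut$ is generated by flows of \emph{all} complete algebraic vector fields, and such flows need not preserve algebraic curves, nothing you wrote excludes a complete field whose flow drags, say, the point $L_{3-j,1}\cap A_{j}$ along the curve $A_{j}\cap S_{p_1,p_2}$ in the case $p_{j}=1\neq p_{3-j}$, which would make the open orbit strictly larger than $S\setminus Z$. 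Ruling this out is the bulk of the paper's Section \ref{sec:AAut}: by \cite[Theorem 1.6]{KKL_avf} every complete algebraic vector field preserves a $\C^{1}$- or $\C^{*}$-fibration (and there are no $\C^{1}$-fibrations, as $\kappa=0$), so the paper classifies \emph{all} $\C^{*}$-fibrations (Lemmas \ref{lem:C1} and \ref{lem:C*_lines}), shows that each restricts to one of finitely many open tori (Lemma \ref{lem:tori}, Proposition \ref{prop:fibrations}\ref{item:fibr}), and only then concludes that every complete field, hence every generator of $\AAut$, preserves $S\setminus Z$ (Proposition \ref{prop:fibrations}\ref{item:vf}). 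Your proposal contains no counterpart of this classification, and it is the key missing idea.

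The lower bound also fails as written, because your building blocks are not algebraic vector fields on $S_{p_1,p_2}$. The rotation field of a ruling, e.g.\ $v_1=x_2\tfrac{\d}{\d x_2}$ for the fibration $x_1$, applied to $y_1=(x_2-\hat{p}_1(x_1))x_1^{-d_1}$ from \eqref{eq:S}, gives $x_2x_1^{-d_1}$, which has a pole along $A_1\cap S_{p_1,p_2}$; hence $(f\circ x_1)v_1$ is regular (and only then complete \emph{algebraic}) only when $f$ vanishes to prescribed orders at the degenerate values of $x_1$. This constraint destroys the claim that one can take "$f,g$ nonvanishing" at an arbitrary point off $Z$. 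Concretely, for $p_1=p_2=1$ both of your fields $(f\circ x_1)v_1$ and $(g\circ x_2)v_2$ vanish identically at the points $A_2\cap L_{1,1}$ and $A_1\cap L_{2,1}$: the rotation $v_1$ vanishes on $\{x_2=0\}\cap S_{1,1}=A_2\cap S_{1,1}$, while any admissible $g$ must vanish at $x_2=0$. So the group generated by their flows fixes these two points, yet the theorem asserts $Z=\emptyset$, i.e.\ they lie in the open orbit. (Your statement that $S_{1,1}$ has no degenerate fibers is also false: $A_1\cap S_{1,1}\cong\C^{1}$ is a degenerate fiber of $x_1$.) The paper circumvents all of this by using the shear fields $\nu_{a_1,a_2}$ with $a_j\geq d_j$, which are shears along the fibrations $v_1^{a_1}v_2^{a_2}$ rather than along the rulings, obtaining $m$-transitivity on each torus from \cite[Proposition 8.9]{Kal-Ku1_AL-theory}, and, in the case $p_1=p_2=1$, it needs the additional tori $S\setminus(C+L_{j,1})$ built from the $(1,1)$-curve $C$ precisely in order to reach those two points.
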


Part \ref{item:Aut_thm} is obtained as  Corollary \ref{cor:st}\ref{item:Aut} from the uniqueness of standard boundary. To prove \ref{item:AAut_thm} it suffices, by \cite[Theorem 1.6]{KKL_avf}, to describe all $\C^{*}$-fibrations of $S_{p_{1},p_{2}}$: this is done in Proposition \ref{prop:fibrations}. 
\bigskip

We now explain what is missing in the proof of \cite[Theorem 2]{GM_k<2}, and how we are going to correct it. The aforementioned result is  deduced from the (partial, but sufficient) classification of almost minimal pairs of Kodaira dimension zero given in \cite[\S 8]{Fujita-noncomplete_surfaces}. For the general statement see \cite{Kojima_k0}. 

More precisely, let $S\in \cS_{0}$ and let $(X_0,D_0)$ be a smooth completion of $S$. Then by the Miyanishi's theory of peeling (see \cite[p.\ 107]{Miyan-OpenSurf} or Section \ref{sec:peeling} below), there is a sequence of birational morphisms
\begin{equation*}
(X_0,D_0)\toin{\psi_{1}}(X_{1},D_{1})\toin{\psi_{2}}\dots \toin{\psi_{n}} (X_{n},D_{n}),
\end{equation*}
such that $(X_{n},D_{n})$ is almost minimal, hence known, and each $\psi_{i}$ is an snc-minimalization of $D_{i-1}+A$, where $A\subseteq X_{i-1}$ is a $(-1)$-curve not contained in $D_{i-1}$ such that $A\cdot D_{i-1}=1$. The latter process is called a \emph{half-point attachment} in \cite[8.15]{Fujita-noncomplete_surfaces}. Therefore, to find $S$ it suffices to identify an $(X_{n},D_{n})$ on the list in \cite[8.70]{Fujita-noncomplete_surfaces} (cf.\  \cite[Proposition 1.5]{Kojima_k0}), and perform suitable half-point attachments.

The proof in \cite[Theorem 2]{GM_k<2} proceeds exactly along these lines. However, what is meant in loc.\ cit.\ by a half-point attachment is just one blowup, the contraction of $A$ (cf.\ e.g.\ \cite[Definition 4.15]{FZ-deformations}). Nonetheless, if $A$ meets a $(-2)$-curve $C\subseteq D_{i-1}$ which is not branching in $D_{i-1}$, then after the contraction of $A$, the image of $D_{i-1}$ is not snc-minimal, and $\psi_{i}$ should contract the image of $C$, too. 

This is why in \cite{GM_k<2} only $S_{1,1}$ is reconstructed. Indeed, it is shown (essentially) that $n=2$, $X_{2}=\P^{1}\times \P^{1}$ and $D_{2}$ is a union of two vertical and two horizontal lines. But $(X,D)\to (X_{2},D_{2})$ is obtained by two blowups at smooth points of $D_2$, see Figure \ref{fig:S00}. The smooth completions of $S_{p_1,p_2}$, which are obtained from $(X_{2},D_{2})$ by two \emph{sequences of blowups} over $D_{2}$, see Figure \ref{fig:Skl}, are missing.

In Section \ref{sec:proof} we repeat the above proof, filling in the missing part. Our approach contains no essentially new ingredient. However, since the assumption $S\in \cS_{0}$ is substantially stronger than just $\kappa(S)=0$, we do not need to rely on the structure theorem \cite[8.70]{Fujita-noncomplete_surfaces}, as it is done in \cite{GM_k<2}. So for the convenience of the reader, we make our proof more  self-contained. The only general result we refer to is the theorem of Kawamata \cite[2.2]{Kawamata-classification_surfaces} which asserts that if $\kappa(S)=0$ then the positive part of the Zariski decomposition of $K_{X_{n}}+D_{n}$ is numerically trivial, see \cite[II.6.2.1]{Miyan-OpenSurf}. 
\bigskip

\textbf{Acknowledgments.} We would like to thank the referee for valuable comments, including pointing out a flaw in the earlier version of the proof. The first author would like to thank prof.\ Frank Kutzschebauch for drawing his attention to the article \cite{FKN_k0} and for many insightful discussions on the subject.

\section{Preliminaries}\label{sec:prelims}

\subsection{Log surfaces}\label{sec:log_surfaces}

We now briefly recall the language of log surfaces. For a complete introduction we refer to \cite{Fujita-noncomplete_surfaces}.

By a \emph{curve} we mean an irreducible, reduced variety of dimension $1$.

Let $X$ be a smooth projective surface. A curve $C\subseteq X$ is called an \emph{$n$-curve} if $C\cong \P^{1}$ and $C^{2}=n$. In particular, a $(-1)$-curve is an exceptional curve of a blowup, and a $0$-curve is a fiber of a $\P^{1}$-fibration.

Let $D$ be an effective $\Q$-divisor on $X$. By a \emph{component} of $D$ we always mean an irreducible component. The number of components of $D$ is denoted by $\#D$. The \emph{branching number} of a component $C\subseteq D$ is
\begin{equation*}
\beta_{D}(C)=C\cdot (D-C).
\end{equation*}
We say that $C$ is a \emph{tip} of $D$ if $\beta_{D\redd}(C)=1$, and \emph{branching} in $D$  if $\beta_{D\redd}(C)\geq 3$. 

Assume that $D$ is \emph{simple normal crossing} (\emph{snc}), that is, $D$ is reduced, all components of $D$ are smooth and meet transversally, at most two at each point. A $(-1)$-curve $C\subseteq D$ is called \emph{superfluous} if $1\leq \beta_{D}(C)\leq 2$ and $C$ meets each component of $D$ at most once: in other words, after contraction of $C$, the image of $D$ remains snc. We say that $D$ is \emph{snc-minimal} if it contains no superfluous $(-1)$-curves. An \emph{snc-minimalization} of $D$ is the contraction of all superfluous $(-1)$-curves in $D$ and its images.

Assume further that $D$ has connected support and $\beta_{D}(C)\leq 2$ for all components $C$ of $D$. We say that $D$ is a \emph{chain} if at least one inequality is strict, otherwise we say that $D$ is \emph{circular}. We order the components $T_{1},\dots, T_{m}$ of $D$ in such a way that $T_{i}\cdot T_{i+1}=1$ for $i\in \{1,\dots, m-1\}$, and $T_{1}$ is a tip of $D$ in case $D$ is a chain. The sequence of integers $(-T_{1}^{2},\dots, -T_{m}^{2})$ is then called a \emph{type} of $D$. We often abuse notation and write 
\begin{equation*}
%$
D=[-T_{1}^{2},\dots,-T_{m}^{2}]\mbox{ if $D$ is a chain and } D=\lc -T_{1}^{2},\dots, -T_{m}^{2}\rc \mbox{ if $D$ is circular}.
%$ 
\end{equation*}
A sequence consisting of an integer $a$ repeated $k$ times will be abbreviated as $(a)_{k}$.

An effective divisor $D$ is called a \emph{tree} if $D\redd$ is snc and has no circular subdivisor. A \emph{fork} is a tree with exactly one branching component and three maximal twigs. A \emph{rational} tree (chain, fork...) is a tree (chain,  fork...) whose components are all rational.

We say that a reduced divisor $D=\sum D_{i}$ is \emph{negative definite} if its intersection matrix $[D_{i}\cdot D_{j}]_{1\leq i,j\leq r}$ is.

Let again $D$ be an snc divisor. A chain $T\subseteq D$ is a \emph{twig} of $D$ if it contains a tip of $D$ and its components are non-branching in $D$. We order a twig in such a way that its first component is a tip of $D$. A twig is called \emph{admissible} if all its components are rational and have self-intersection number at most $-2$. An (admissible) twig of $D$ is \emph{maximal} if it is maximal in the set of (admissible) twigs ordered by inclusion of supports. A \emph{$(-2)$-twig} is a twig whose components are $(-2)$-curves.

Let $T$ be an admissible twig of $D$. Then $T$ is negative definite, so there is a unique $\Q$-divisor $\Bk_{D}(T)\leq T$, called the \emph{bark} of $T$,  such that for every component $T_{0}$ of $T$ 
\begin{equation*}
T_{0}\cdot\Bk_{D}(T)=T_{0}\cdot (K_{X}+D),
\end{equation*}
that is, $T_{0}\cdot \Bk_{D}(T)=-1$ if $T_{0}$ is a tip of $D$ and $T_{0}\cdot \Bk_{D}(T)=0$ otherwise. An important result for the theory of peeling asserts that the coefficients of $\Bk_{D}(T)$ are strictly between $0$ and $1$  \cite[II.3.3]{Miyan-OpenSurf}.

Assume that $D$ is a connected snc-minimal divisor which is not negative definite. Then $\Bk D$ is defined as the sum of barks of all maximal admissible twigs of $D$. We put $D^{\#}=D-\Bk D$.
\smallskip

Let $D$ be an snc divisor on $X$. We say that a blowup $X'\to X$ at a point $p\in D$ is \emph{outer} with respect to $D$ if $p$ is a smooth point of $D$, otherwise it is called \emph{inner}. We say that a birational map $X\map Y$ \emph{touches} a divisor if it is not an isomorphism in any of its neighborhoods.
\smallskip

A \emph{log smooth} pair $(X,D)$ consists of a smooth projective surface $X$ and an snc divisor $D$ on $X$. Any smooth surface $S$ admits a \emph{log smooth completion}, that is, a log smooth pair $(X,D)$ such that $S\cong X\setminus D$. We say that such a completion is \emph{minimal} if it does not dominate birationally any other log smooth completion, or, equivalently, if $D$ is snc-minimal.

\subsection{Affine surfaces whose coordinate rings are factorial and have trivial units}

For a divisor $D$ on a smooth surface $X$, we denote by $\Z[D]$ the free abelian group whose basis is the set of components of $D$. There is a natural group homomorphism $\Z[D]\to\Pic(X)$. The following lemma is well known, cf.\ \cite[1.17]{Fujita-noncomplete_surfaces} or \cite[Lemma 2.2]{GM_k<2}. 

\begin{lem}[Criterion for $S\in \cS_{\kappa}$]\label{lem:S0}
	Let $(X,D)$ be a smooth completion of a smooth affine surface $S$.
	\begin{enumerate}
		\item\label{item:UFD} $\C[S]$ is a UFD if and only if $\Z[D]\to\Pic(X)$ is surjective.
		\item\label{item:units} $\C[S]^{*}=\C^{*}$ if and only if $\Z[D]\to\Pic(X)$ is injective.
	\end{enumerate}
	Assume that $S\in \cS_{\kappa}$. Then $\Z[D]=\Pic(X)=\NS(X)$ and $q(X)=0$. If $\kappa<2$ then $X$ is rational.
\end{lem}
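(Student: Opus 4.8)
The plan is to derive the entire lemma from the localization (excision) sequence relating the divisor class groups of the smooth varieties $X$ and $S=X\setminus D$. Writing $D=\sum_i D_i$, the standard exact sequence for class groups (cf.\ \cite[1.17]{Fujita-noncomplete_surfaces}), together with the identification of principal divisors supported on $D$ with units of $\C[S]$, assembles into
\begin{equation*}
1 \to \C^{*} \to \C[S]^{*} \toin{\operatorname{div}} \Z[D] \to \Pic(X) \to \Pic(S) \to 0 .
\end{equation*}
Here the leftmost $\C^{*}$ consists of the constants, the only nowhere-vanishing regular functions on the connected projective $X$; the map $\operatorname{div}$ sends a unit to its (necessarily $D$-supported) principal divisor; and $\Z[D]\to\Pic(X)$ is the map in the statement. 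Since $X$ and $S$ are smooth, class groups agree with Picard groups throughout. The whole lemma can then be read off from this sequence.

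For part \ref{item:UFD}, since $S$ is smooth affine, $\C[S]$ is a regular, hence normal, Noetherian domain, so it is factorial if and only if its class group vanishes, i.e.\ $\Pic(S)=0$. By exactness at $\Pic(S)$ this is equivalent to surjectivity of $\Z[D]\to\Pic(X)$. For part \ref{item:units}, a unit of $\C[S]$ is precisely a rational function on $X$ whose divisor is supported on $D$, so the image of $\operatorname{div}$ equals $\ker(\Z[D]\to\Pic(X))$. Hence $\C[S]^{*}=\C^{*}$, i.e.\ $\operatorname{div}$ is the zero map, if and only if that kernel is trivial, i.e.\ if and only if $\Z[D]\to\Pic(X)$ is injective.

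Assuming now $S\in\cS_{\kappa}$, parts \ref{item:UFD} and \ref{item:units} give that $\Z[D]\to\Pic(X)$ is an isomorphism, so $\Pic(X)$ is a finitely generated free abelian group. The subgroup $\Pic^{0}(X)$ is a connected complex torus, hence a divisible group; a finitely generated divisible abelian group is trivial, so $\Pic^{0}(X)=0$. Consequently $q(X)=\dim\Pic^{0}(X)=0$, and the natural surjection $\Pic(X)\to\NS(X)$, having trivial kernel, is an isomorphism, so $\Z[D]=\Pic(X)=\NS(X)$.

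It remains to prove rationality of $X$ when $\kappa(S)<2$, and this is where affineness genuinely enters: the complement of $D$ being affine forces $D$ to support a big divisor (via Goodman's theorem on affine complements), so $D$ is big. Arguing by contradiction, suppose $X$ is not rational. Since $q(X)=0$, Castelnuovo's criterion yields $P_{2}(X)\geq 1$, so $2K_{X}$ is linearly equivalent to an effective divisor and $\kappa(X)\geq 0$. Then $2(K_{X}+D)=2K_{X}+2D$ is the sum of an effective and a big divisor, hence big, giving $\kappa(S)=\kappa(X,K_{X}+D)=2$, a contradiction. Therefore $X$ is rational. I expect this last step to be the only real obstacle: parts \ref{item:UFD} and \ref{item:units} are formal once the excision sequence is in hand, whereas the rationality assertion requires correctly extracting bigness of $D$ from the affineness of $S$ and combining it with Castelnuovo's criterion.
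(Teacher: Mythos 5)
Your proof is correct and follows essentially the same route as the paper's: excision/localization identifies $\C[S]^{*}/\C^{*}$ and $\mathrm{Cl}(S)$ with the kernel and cokernel of $\Z[D]\to\Pic(X)$, the finitely-generated-plus-divisible argument kills $\Pic^{0}(X)$ (the paper phrases this via the exponential sequence), and boundary positivity plus Castelnuovo's criterion gives rationality. If anything, your final step is more careful than the paper's as written: you invoke Castelnuovo in its correct form via $P_{2}(X)$ and only claim bigness of a divisor supported on $D$, whereas the paper writes $p_{g}(X)=0$ and asserts ampleness — your version is the one that is literally correct, though both arguments are the same in substance.
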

\begin{proof}
	\ref{item:UFD} By \cite[II.6.2]{Hartshorne_AG}, $\C[S]$ is factorial if and only if the divisor class group $\mathrm{Cl}(S)$ of $S$ is trivial. It follows from \cite[II.6.5(c)]{Hartshorne_AG} that $\mathrm{Cl}(S)$ is the cokernel of the natural map $\Z[D]\to\mathrm{Cl}(X)$. Eventually, $\mathrm{Cl}(X)=\Pic(X)$ because $X$ is smooth \cite[II.6.16]{Hartshorne_AG}.
	
	\ref{item:units} A regular function on $S$ is invertible if and only if it extends to $f\in \C(X)$ with all zeros and poles in $D$, that is, with $\mathrm{div}(f)\in \Z[D]$. Two rational functions on $X$ with the same divisor differ by an element of $\O_{X}(X)^{*}=\C^{*}$, so $f\mapsto \mathrm{div}(f)$ gives an isomorphism of $\C[S]^{*}/\C^{*}$ with the kernel of $\Z[D]\to \Pic(X)$.
	
	For the remaining part, we follow \cite[Lemma 2.2]{GM_k<2}. Assume that $S\in \cS_{\kappa}$. Then \ref{item:UFD}, \ref{item:units} give $\Z[D]=\Pic(X)$. The exponential sequence induces an exact sequence
	\begin{equation*}
	0\to H^{1}(X,\Z)\to H^{1}(X,\O_{X})\to \Pic(X)\to H^{2}(X,\Z).
	\end{equation*}
	Since $H^{1}(X,\O_{X})$ is a vector space over $\C$, and by \ref{item:UFD} $\Pic(X)$ is a finitely generated abelian group, we obtain $q(X)=h^{1}(X,\O_{X})=0$ and that $\Pic(X)\to H^{2}(X,\Z)$ is injective, i.e.\ $\Pic(X)=\NS(X)$. Now to prove that $X$ is rational, it remains to show that $p_{g}(X)=0$ \cite[IV.6.2]{Hartshorne_AG}. Assume $\kappa<2$. Because $S$ is affine, $D$ supports an  effective ample divisor, so if $|K_{X}|\neq \emptyset$ then for $m\gg 0$ we have $h^{0}(m(K_{X}+D))\sim m^{2}$, hence $\kappa=2$; a contradiction.
\end{proof}

\begin{rem}\label{rem:open_UFD}
	Lemma \ref{lem:S0}\ref{item:UFD} implies that an open affine subset of a smooth factorial surface is factorial, and that the converse holds if and only if its complement is a principal divisor. This is a special case of the Nagata lemma, see e.g.\ \cite[Lemma 19.20]{Eisenbud_comm-alg}
\end{rem}

\begin{cor}[$S_{p_1,p_2}\in \cS_{\kappa}$]\label{cor:S_k}
	Let $(X,D)$ be the log smooth completion of $S_{p_1,p_2}$ as in Construction \ref{constr}. Then $\Pic(X)=\Z[D]$. In particular,  $\C[S_{p_1,p_2}]$ is a UFD and $\C[S_{p_1,p_2}]=\C^{*}$.
\end{cor}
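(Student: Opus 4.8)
The plan is to apply Lemma \ref{lem:S0} directly. By parts \ref{item:UFD} and \ref{item:units} of that lemma, it suffices to prove that the natural homomorphism $\Z[D]\to \Pic(X)$ is an isomorphism; the conclusions about $\C[S_{p_1,p_2}]$ being a UFD with trivial units then follow immediately. So the entire task reduces to understanding $\Pic(X)$ in terms of the geometry of Construction \ref{constr}.

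The key observation is that $X$ is obtained from $\P^1\times \P^1$ by the blowup morphism $\phi\colon X\to \P^1\times \P^1$, which is a composition of $d_1+d_2 = (\deg p_1+1)+(\deg p_2+1)$ point blowups. First I would record that $\Pic(\P^1\times \P^1)\cong \Z^2$ is freely generated by the classes of the rulings $[\ll_{1,t}]$ and $[\ll_{2,t}]$. Since each blowup enlarges the Picard group by one free $\Z$-summand generated by the class of the new exceptional curve, we get $\Pic(X)\cong \Z^{2+d_1+d_2}$, freely generated by the pullbacks of the two rulings together with the total transforms of the exceptional curves of the successive blowups. I expect this to be the routine, bookkeeping part of the argument.

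The heart of the proof is to exhibit $\Z[D]$ as a full-rank sublattice that actually coincides with $\Pic(X)$. First I would count: the rank of $\Pic(X)$ is $2+d_1+d_2$, so by the rank count it is enough to show that the components of $D$ (equivalently, their classes) both span $\Pic(X)$ and are $\Z$-linearly independent. The natural strategy is to produce an explicit change of basis between the geometric generators of $\Pic(X)$ listed above and the classes of the components of $D$. Reading off the dual graph in Figure \ref{fig:Skl}, the components of $D$ are the proper transforms $L_{j,0}, L_{j,\infty}$ of the four rulings through the blown-up structure, together with all but the last exceptional curve in each of the two chains $[(2)_{m}]$ created over $(0,1)$ and $(1,0)$ (recall that $A_1$ and $A_2$, the exceptional curves of the last blowups, are \emph{excluded} from $D$). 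The count of components of $D$ matches $2+d_1+d_2$ precisely because exactly the two curves $A_1,A_2$ are omitted, and this is the crucial point that makes the map an isomorphism rather than merely injective or merely surjective.

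The main obstacle, and where I would spend the most care, is verifying that this specific collection of curves — with $A_1,A_2$ removed — still spans $\Pic(X)$ over $\Z$ (not just over $\Q$). Concretely, I would express the classes of the exceptional divisors $A_1$ and $A_2$ as integral combinations of the classes of the components of $D$: because each chain of blowups is a linear tower, the total transform of the last exceptional curve can be recovered integrally from the proper transforms of the earlier ones together with the rulings, using the relations $\phi^*\ll_{j,t} = L_{j,t} + (\text{exceptional contributions})$. Once $[A_1]$ and $[A_2]$ lie in the image of $\Z[D]$, surjectivity follows since the geometric generators all do; injectivity then follows from the rank count, giving $\Z[D]=\Pic(X)$. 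Since $S_{p_1,p_2}$ is rational and $q=0$, one also reads off $\Pic(X)=\NS(X)$ as in Lemma \ref{lem:S0}, completing the corollary.
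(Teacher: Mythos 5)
Your proof is correct and takes essentially the same route as the paper: both rest on the relation $\phi^{*}(\ll_{j,0}-\ll_{j,\infty})=A_{j}+T_{j}+L_{j,0}-L_{j,\infty}=0$ in $\Pic(X)$ (all multiplicities being $1$ because each blowup center lies on just the most recent exceptional curve), which expresses $A_{1},A_{2}$ integrally in terms of components of $D$. The only cosmetic difference is that the paper computes the presentation $\Pic(X)=\Z[D+A_{1}+A_{2}]/\Z[A_{1}+A_{2}]$ directly, while you establish surjectivity of $\Z[D]\to\Pic(X)$ and then get injectivity from the rank count $\#D=2+d_{1}+d_{2}=\rho(X)$ — the same content.
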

\begin{proof}
	We use the notation introduced in Construction \ref{constr}. The group $\Pic(X)$ is generated by the components of $\phi^{*}(\ll_{1,\infty}+\ll_{1,0}+\ll_{2,\infty}+\ll_{2,0})\redd=D+A_{1}+A_{2}$, with relations $0=\phi^{*}(\ll_{j,0}-\ll_{j,\infty})=A_{j}+T_{j}+L_{j,0}-L_{j,\infty}$, $j\in \{1,2\}$, where $T_{j}$ is the twig of $D$ meeting $L_{j,0}$. Therefore, $\Pic(X)=\Z[D+A_{1}+A_{2}]/\Z[A_{1}+A_{2}]=\Z[D]$. The remaining assertion follows from Lemma \ref{lem:S0}.
\end{proof}

\subsection{$\P^{1}$-fibrations}

A \emph{fibration} of a smooth surface $S$ is a surjective morphism onto a curve whose general fiber is irreducible and reduced.  A curve on $S$ is called \emph{vertical} (resp.\ \emph{horizontal}) if $C\cdot F=0$ (resp.\ $C\cdot F>0$) for a fiber $F$; it is called an \emph{$n$-section} if $C\cdot F=n$. Any divisor $D$ decomposes uniquely as $D=D\vert+D\hor$, where all components of $D\vert$ are vertical and all components of $D\hor$ are horizontal.

A $\P^{1}$- (resp.\ $\C^{1}$-, $\C^{*}$-) fibration is a fibration whose general fiber is isomorphic to $\P^{1}$ (resp.\ $\C^{1}$, $\C^{*}\de \C^{1}\setminus \{0\}$). A fiber not isomorphic to a general one will be called \emph{degenerate}. 

The surfaces in $\cS_{0}$ do not admit $\C^{1}$-fibrations by the Iitaka Easy Addition Theorem \cite[Theorem 11.9]{Iitaka_AG}, but they admit plenty of $\C^{*}$-fibrations (eg.\ produced by Lemma \ref{lem:producing_Cst}\ref{item:HP_Cst}), which we will use in Proposition \ref{prop:fibrations}\ref{item:vf} as first integrals for complete algebraic vector fields, see \cite[Theorem 1.6]{KKL_avf}. The structure of $\C^{*}$-fibrations is well described in \cite[\S 7]{Fujita-noncomplete_surfaces}, however, for our purposes it will be more convenient to study them directly, by completing them to $\P^{1}$-fibrations. 

Let $F$ be a degenerate fiber of a $\P^{1}$-fibration of a smooth projective surface. Then $F$ is obtained from a $0$-curve by a sequence of blowups, hence its geometry is easy to understand, see eg.\ \cite[\S 4]{Fujita-noncomplete_surfaces}. In particular, $F$ is a rational tree with no branching $(-1)$-curves; moreover, if a $(-1)$-curve has multiplicity $1$ in $F$ then it is a tip of $F$, and $F$ contains another $(-1)$-curve \cite[7.3]{Fujita-noncomplete_surfaces}.

The following observation, see \cite[7.14(1)]{Fujita-noncomplete_surfaces}, will be used several times.

\begin{lem}[our $\C^{*}$-fibrations are untwisted]\label{lem:untwisted}
	Let $(X,D)$ be a log smooth completion of a smooth affine surface $S$ such that $\C[S]$ is a UFD. Let $H_{1},\dots, H_{r}$ be all components of $D\hor$ for some $\P^{1}$-fibration of $X$. Put $h_j=H_{j}\cdot F$ for a fiber $F$. Then $\gcd(h_{1},\dots, h_{r})=1$.
\end{lem}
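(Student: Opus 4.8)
The plan is to reduce the statement to the existence of a single divisor class pairing to $1$ with a general fiber. Let $\pi\colon X\to B$ denote the $\P^{1}$-fibration and let $F$ be a general fiber, with class $[F]\in\Pic(X)$. Suppose for a moment that we have produced a class $\xi\in\Pic(X)$ with $\xi\cdot F=1$. Since $\C[S]$ is a UFD, Lemma \ref{lem:S0}\ref{item:UFD} gives that $\Z[D]\to\Pic(X)$ is surjective, so we may write $\xi\equiv\sum_{i}a_{i}D_{i}$ in $\Pic(X)$, the sum running over all components $D_{i}$ of $D$ with $a_{i}\in\Z$. Intersecting with $F$ and using bilinearity of the intersection form, the components of $D\vert$ contribute $0$ by the very definition of verticality, while the horizontal components $H_{1},\dots,H_{r}$ contribute $h_{j}=H_{j}\cdot F$. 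Hence $1=\xi\cdot F$ is an integral combination of $h_{1},\dots,h_{r}$, which forces $\gcd(h_{1},\dots,h_{r})=1$. Thus everything reduces to constructing $\xi$.

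To produce $\xi$ it suffices to exhibit a section of $\pi$, i.e.\ a curve $C\subseteq X$ with $\pi|_{C}$ an isomorphism onto $B$, for then $C\cdot F=1$ and we take $\xi=[C]$. By Lemma \ref{lem:S0} the surface $X$ is rational, so $B\cong\P^{1}$ and the generic fiber $X_{\eta}$ is a smooth projective curve of genus $0$ over the field $K=\C(B)$. Since $K$ is the function field of a curve over $\C$, it is a $C_{1}$ field by Tsen's theorem, so $\mathrm{Br}(K)=0$ and $X_{\eta}\cong\P^{1}_{K}$; in particular $X_{\eta}$ has a $K$-rational point. This rational point is a rational section $B\map X$ of $\pi$, which extends to a morphism $B\to X$ because $B$ is a smooth curve and $X$ is projective; its image is the desired section. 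Alternatively, one may contract the superfluous $(-1)$-curves in the fibers of $\pi$ to reach a Hirzebruch surface $\F_{n}$, whose tautological section $\Sigma$ satisfies $\Sigma\cdot f=1$ for a fiber $f$; the proper transform of $\Sigma$ on $X$ then meets $F$ exactly once, and its class may be taken as $\xi$.

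The only genuinely nontrivial input is the construction of $\xi$ in the second paragraph; the reduction in the first paragraph is a formal consequence of the surjectivity of $\Z[D]\to\Pic(X)$ together with the splitting $D=D\vert+D\hor$. I expect no further obstacle, since the existence of a section of a $\P^{1}$-fibration of a smooth rational surface is standard, and the factoriality of $\C[S]$ enters only through Lemma \ref{lem:S0}\ref{item:UFD}.
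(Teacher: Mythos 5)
Your proof is correct and follows essentially the same route as the paper: Tsen's theorem produces a $1$-section, the surjectivity of $\Z[D]\to\Pic(X)$ from Lemma \ref{lem:S0}\ref{item:UFD} expresses its class as an integral combination of components of $D$, and intersecting with $F$ gives $1=\sum_{j}a_{j}h_{j}$, hence $\gcd(h_1,\dots,h_r)=1$. One small remark: your appeal to Lemma \ref{lem:S0} for rationality of $X$ is not justified under the stated hypotheses (that conclusion requires $S\in\cS_{\kappa}$ with $\kappa<2$, not merely factoriality of $\C[S]$), but this is harmless, since Tsen's theorem makes $\C(B)$ a $C_{1}$ field for \emph{any} smooth base curve $B$, so the identification $B\cong\P^{1}$ is never actually needed.
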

\begin{proof}
	By Tsen's theorem, $X$ contains a $1$-section $H$. By Lemma \ref{lem:S0}\ref{item:UFD}, $H\equiv V+\sum_{j=1}^{r} a_{j}H_{j}$ for some vertical $V$ and $a_{1},\dots, a_{r}\in \Z$. Intersecting with $F$ gives $1=\sum_{j=1}^{r} a_{j}h_{j}$, so $\gcd(h_1,\dots, h_r)=1$.
\end{proof}

\subsection{Standard completions and elementary transformations}\label{sec:standard}

A minimal log smooth completion of an open surface may not be unique. However, \cite[Definition 2.13]{FKZ-weighted-graphs} distinguishes a class of \emph{standard completions}, which gives a convenient tool to tell affine surfaces apart. We now recall this definition. A rational chain is \emph{standard} if it is of type
\begin{equation*}
[(0)_{2k+1}]\quad\mbox{or}\quad[(0)_{2k},a_{1},\dots, a_{l}]\quad\mbox{for some integers }k,l\geq 0\mbox{ and }a_{1},\dots,a_{l}\geq 2.
\end{equation*}
A rational circular divisor is \emph{standard} if it is of type
\begin{equation*}
\lc(0)_{2k},a_{1},\dots, a_{l}\rc\mbox{ or }
\lc(0)_{k},a\rc\mbox{ or }
\lc(0)_{2k},1,1\rc\quad\mbox{for some }k,l\geq 0,\ a\geq 0\mbox{ and }a,a_{1},\dots,a_{l}\geq 2.
\end{equation*}
Let now $D$ be any snc divisor, and let $B$ be the sum of all components of $D$ which are either branching or non-rational; so every connected component of  $D-B$ is rational, chain or circular. We say that $D$ is standard if every connected component of $D-B$ is standard in the above sense. A log smooth completion $(X,D)$ is \emph{standard} if $D$ is. Note that the log smooth completions of $S_{p_1,p_2}$ in Figure \ref{fig:boundaries} are standard.
\smallskip

Let $D$ be an snc divisor on $X$, and let $C\subseteq D$ be a non-branching $0$-curve. If $\beta_{D}(C)=2$ choose $p\in C\cap (D-C)$, otherwise let $p$ be any point of $C$. A \emph{flow} (or \emph{elementary transformation}) \emph{on} $C$ is the birational map $(X,D)\map (X',D')$, where $D'$ is the reduced total transform of $D$, defined as a blowup at $p$, followed by the contraction of the proper transform of $C$. We remark that in Section \ref{sec:AAut} we will use the word \enquote{flow} in its standard meaning, i.e.\ flow of a vector field: this will lead to no confusion.

Let $\phi\colon (X,D)\map (X',D')$ be a flow on $C\subseteq D$. Then  $\phi|_{X\setminus D}\colon X\setminus D\to X'\setminus D'$ is an isomorphism. In particular, if the components of $D$ generate $\Pic(X)$ (respectively, are $\Z$-linearly independent in $\Pic(X)$), then by Lemma \ref{lem:S0} the same is true for the components of $D'$ in $\Pic(X')$. Moreover, $\phi$ does not touch components of $D$ other than $C$ and the components of $D$ meeting $C$. If $C$ is not a tip of $D$, then $\phi$ replaces a subchain $[a,0,b]\subseteq D$, whose middle component is $C$, by $[a+1,0,b-1]\subseteq D'$.
\smallskip

We are ready to formulate the result used to distinguish the surfaces $S_{p_1,p_2}$ for different $\{p_1,p_2\}$.

\begin{lem}[{\cite[Corollary 3.36]{FKZ-weighted-graphs}}]\label{lem:boundaries}
	Any smooth affine surface admits a standard log smooth completion. Any two such completions differ by a sequence of flows on some $0$-curves in the boundary.
\end{lem}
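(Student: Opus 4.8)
The plan is to treat the two assertions separately, proving existence by an explicit normalization of the dual graph and uniqueness by resolving the birational map relating two completions. For existence, I would start from an arbitrary snc-minimal log smooth completion $(X,D)$, which exists by snc-minimalization. Writing $B$ for the sum of the branching and non-rational components of $D$, every connected component of $D-B$ is a rational chain or circle, and it suffices to bring each such piece to the standard form of Section \ref{sec:standard} without touching $S=X\setminus D$. Since a flow on a $0$-curve --- and more generally any blowup at a point of $D$ followed by the contraction of a boundary curve --- is supported on the boundary and restricts to an isomorphism over $S$ (cf.\ Section \ref{sec:standard}), I may freely perform these moves. The normalization of a rational chain is the classical Danilov--Gizatullin procedure: introduce a $0$-curve by a flow, slide it along the chain to absorb the non-negative weights and to split off $(-1)$-curves, and repeat; this terminates in a chain of type $[(0)_{2k+1}]$ or $[(0)_{2k},a_{1},\dots,a_{l}]$ with $a_{i}\geq 2$. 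The circular case is analogous.

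For uniqueness, let $(X,D)$ and $(X',D')$ be two standard completions of $S$. The identity on $S$ extends to a birational map $\Phi\colon X\map X'$. Resolving its indeterminacy yields a common smooth completion $(\tilde X,\tilde D)$ together with morphisms $\pi\colon \tilde X\to X$ and $\pi'\colon \tilde X\to X'$; since $\Phi$ is an isomorphism over $S$, both $\pi$ and $\pi'$ are compositions of blowups centred on the boundary. Thus $D$ and $D'$ are joined by a zig-zag of blowups and blowdowns taking place entirely inside the boundary, and the task is to show that this zig-zag can be replaced by a sequence of flows on $0$-curves.

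I would prove this by induction on the total number of blowups in $\pi$ and $\pi'$, analysing the moves locally on the weighted graph. The key observation is that a blowup immediately followed by a blowdown at a point of the resulting exceptional locus is, in the relevant configurations, precisely a flow on a $0$-curve; so the combinatorial content is to reorganize the zig-zag so that each elementary step preserves the snc-minimal standard form and is realized by such a flow. The main obstacle is exactly this bookkeeping: one must control how a blowup at a point where a chain of $D-B$ meets $B$ propagates through the standardization, and show that the reorganization terminates without leaving the class of standard graphs. This is the substance of the Danilov--Gizatullin theory of elementary transformations of weighted graphs, in the refined form of \cite{FKZ-weighted-graphs}, and it is where all the combinatorial work resides.
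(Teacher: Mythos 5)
The paper offers no proof of this lemma beyond the citation itself: it is quoted verbatim from \cite[Corollary 3.36]{FKZ-weighted-graphs}, and since your proposal defers its combinatorial core to exactly that reference (existence via the Danilov--Gizatullin normalization of the boundary graph, uniqueness by resolving the birational map and then invoking the refined theory of \cite{FKZ-weighted-graphs} for the reorganization step), it is essentially the same approach --- both ultimately rest the statement on the cited theorem. The one caveat: your outline should not be mistaken for an independent proof, since the step you describe as bookkeeping --- rearranging an arbitrary boundary-supported zig-zag of blowups and blowdowns into a sequence of flows on $0$-curves joining two standard forms --- is not a routine induction but is precisely the content of the cited corollary (and, as a minor point, your existence sketch needs genuine blowups, not flows, to create the $0$-curves before they can be slid along a chain).
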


\begin{cor}[$S_{p_1,p_2}$ are non-isomorphic]\label{cor:st}
	Let $S_{p_1,p_2}$ be as in Theorem \ref{THM}. Then
	\begin{enumerate}
		\item\label{item:st_uniq} The standard completion $(X,D)$ of $S_{p_1,p_2}$ is unique up to an isomorphism, with $D$ as in Figure \ref{fig:boundaries}.
		\item\label{item:distinct} The surfaces $S_{p_1,p_2}$ and $S_{p_1',p_2'}$ are not isomorphic unless $\{p_1,p_2\}=\{p_1',p_2'\}$.
		\item \label{item:Aut} If $p_1\neq p_2$ then $\Aut(S_{p_1,p_2})$ is trivial. Otherwise, $\Aut(S_{p,p})=\Z_{2}$ is generated by $(x_1,x_2)\mapsto (x_2,x_1)$.
	\end{enumerate}
\end{cor}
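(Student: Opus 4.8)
The plan is to read everything off the standard boundary via Lemma~\ref{lem:boundaries}, and to pin down the polynomials themselves by descending the relevant isomorphisms back to $\P^{1}\times\P^{1}$. I first fix, for each $S_{p_1,p_2}$, the completion of Construction~\ref{constr}, performing the auxiliary blow-up and the contraction of $L_{1,\infty}+L_{2,0}$ in the mixed case $p_{j}=1\neq p_{3-j}$ so that the boundary $D$ becomes the standard divisor of Figure~\ref{fig:boundaries}. (That these are standard is immediate: deleting the branching curves $L_{1,0},L_{2,0}$ leaves the two $(-2)$-twigs $[(2)_{\deg p_1}],[(2)_{\deg p_2}]$ and the chain $[0,0]$, all standard.) For \ref{item:st_uniq} I invoke Lemma~\ref{lem:boundaries}: two standard completions of $S_{p_1,p_2}$ differ by flows on $0$-curves of $D$. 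The only $0$-curves are the components of $L_{1,\infty}+L_{2,\infty}$ (or their analogues), each flanked by one of the (possibly twig-bearing) curves $L_{1,0},L_{2,0}$ and by the other $0$-curve. Applying the rule $[a,0,b]\rightsquigarrow[a+1,0,b-1]$ to such a curve produces a chain with a weight equal to $1$ (or a negative weight), destroying standardness; I will check that no composition of flows returns to a standard divisor other than $D$, up to the reversal of the cycle interchanging $L_{1,0}\leftrightarrow L_{2,0}$ and $L_{1,\infty}\leftrightarrow L_{2,\infty}$. Hence the standard completion is unique up to isomorphism.

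For \ref{item:distinct} and \ref{item:Aut} let $\Phi\colon S_{p_1,p_2}\to S_{p_1',p_2'}$ be an isomorphism (with $S_{p_1',p_2'}=S_{p_1,p_2}$ in the automorphism case). Being biregular off the boundary, $\Phi$ is a birational map of the standard completions, so by \ref{item:st_uniq} and Lemma~\ref{lem:boundaries} it extends, up to the reversal, to an isomorphism of pairs $\bar\Phi\colon(X,D)\to(X',D')$. Comparing the three graphs of Figure~\ref{fig:boundaries} forces the same case and equal twig lengths, that is $\{\deg p_1,\deg p_2\}=\{\deg p_1',\deg p_2'\}$. To recover the polynomials I use that $\bar\Phi$ preserves the maximal $(-2)$-twigs, and that each $A_{j}\subseteq S$ is the unique curve outside $D$ meeting the corresponding twig tip (or, if $p_{j}=1$, the curve $L_{j,0}$); thus $\bar\Phi$ carries $\{A_1,A_2\}$ to $\{A_1',A_2'\}$. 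Contracting each $A_{j}$ together with its twig (and undoing the auxiliary modification in the mixed case) reconstructs $\phi\colon X\to\P^{1}\times\P^{1}$ intrinsically, so $\bar\Phi$ lies over an automorphism $g$ of $(\P^{1}\times\P^{1},\ \ll_{1,0}+\ll_{2,0}+\ll_{1,\infty}+\ll_{2,\infty})$ taking the infinitely near centres of $(p_1,p_2)$ to those of $(p_1',p_2')$.

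It remains to classify $g$. The subgroup of $\Aut(\P^{1}\times\P^{1})$ fixing the four lines is generated by the scalings $x_{j}\mapsto\lambda_{j}x_{j}$, the inversions $x_{j}\mapsto x_{j}^{-1}$ and the transposition $(x_1,x_2)\mapsto(x_2,x_1)$. An inversion interchanges $\ll_{j,0}$ with $\ll_{j,\infty}$ and hence moves a twig off $\ll_{j,0}$, so it is excluded; and requiring $g$ to preserve the two centres $(0,1),(1,0)$ (lying on $\ll_{1,0},\ll_{2,0}$) forces $\lambda_{1}=\lambda_{2}=1$. Thus $g$ is the identity or the transposition. Reading the centres off the recursion of Construction~\ref{constr}, the identity gives $p_1=p_1',\,p_2=p_2'$, while the transposition, which exchanges the blow-ups over $(0,1)$ and $(1,0)$, gives $p_1=p_2',\,p_2=p_1'$; either way $\{p_1,p_2\}=\{p_1',p_2'\}$, proving \ref{item:distinct}. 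Taking $S_{p_1',p_2'}=S_{p_1,p_2}$ yields \ref{item:Aut}: a $\bar\Phi$ over $g=\id$ is the unique lift of $\id$ and so equals $\id_{S}$, whereas the transposition $(x_1,x_2)\mapsto(x_2,x_1)$ descends to an automorphism precisely when $p_1=p_2$; hence $\Aut(S_{p_1,p_2})$ is trivial for $p_1\neq p_2$ and equals $\Z_{2}$ for $p_1=p_2$.

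I expect the main obstacle to be \ref{item:st_uniq}: carefully controlling the flows on the $[0,0]$-chain and verifying that the curves $L_{1,0},L_{2,0}$ with their fixed $(-2)$-twigs leave no standard divisor reachable except $D$ and its reversal. Once this rigidity is in place the rest is essentially formal—the twigs and the curves $A_1,A_2$ give an intrinsic recipe for $\phi$, and matching the infinitely near centres with the coefficients of $p_1,p_2$ is the direct computation already contained in Construction~\ref{constr}; the only extra bookkeeping is the auxiliary blow-up and contraction in the mixed case, which must be undone compatibly with $\bar\Phi$.
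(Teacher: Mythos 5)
Your overall route is the same as the paper's: fix the completions of Construction \ref{constr}, use Lemma \ref{lem:boundaries} to reduce everything to the standard boundary, transport the curves $A_1,A_2$, contract them with their twigs to descend to $\P^1\times\P^1$, and classify the automorphisms preserving the four lines and the two centres. However, two of the steps you rely on are genuinely missing, and they are precisely the steps carrying the content of the corollary. First, in \ref{item:st_uniq} your argument is that a \emph{single} flow applied to the standard $D$ destroys standardness, followed by the promise ``I will check that no composition of flows returns to a standard divisor other than $D$.'' That promised check is the whole theorem: a composition of flows passes through non-standard intermediate boundaries, and each flow shifts weights around the cycle (creating new $0$-curves in new positions), so nothing you have said rules out a long composition terminating at a standard divisor with different weights. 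The paper never tracks compositions; it uses invariants: flows do not touch admissible twigs and preserve both the non-weighted dual graph and the total sum of self-intersection numbers, standardness of the target forces two adjacent cycle components to have weight $0$, and --- the ingredient you have no substitute for --- the remaining two components must have \emph{negative} self-intersection, since each is vertical and not a full fiber for the $\P^1$-fibration induced by one of the $0$-curves (being a fiber would make it linearly equivalent to that $0$-curve, contradicting the injectivity of $\Z[D]\to\Pic(X)$, Lemma \ref{lem:S0}\ref{item:units}). The sum invariant then pins the weights to $(-1,-1)$. Without an argument of this kind, \ref{item:st_uniq} is not proved, and \ref{item:distinct}, \ref{item:Aut} collapse with it.

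Second, in \ref{item:distinct}--\ref{item:Aut} you transport $\{A_1,A_2\}$ by asserting that each $A_j$ ``is the unique curve outside $D$ meeting the corresponding twig tip.'' As stated this is false: infinitely many curves on $X$ pass through points of the twig tip. What you actually need is that the image $\tau(A_{j})$ --- a $(-1)$-curve meeting $D$ transversally in a single point of the tip --- must coincide with the corresponding curve $A_{j}'$, and this uniqueness requires proof. The paper gets it from $\Pic(X_i)=\Z[D_i]$: the class of $\tau(A_{j,1})$ is forced to equal that of $A_{j,2}$, and then $\tau(A_{j,1})\cdot A_{j,2}=A_{j,2}^2<0$ implies the two irreducible curves are equal. (Alternatively one could invoke a classification of curves with $\Gamma\cap S\cong\C^1$ as in Lemma \ref{lem:C1}, but you cite neither.) Once these two gaps are filled, your descent to $\P^1\times\P^1$, the exclusion of inversions and non-trivial scalings, and the matching of infinitely near centres with the coefficients of $p_1,p_2$ are correct and agree with the paper's proof.
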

\begin{proof}
	\ref{item:st_uniq} By Lemma \ref{lem:boundaries}, $D$ differs from the one in Figure \ref{fig:boundaries} by a sequence of flows. Because all twigs of the one in Figure \ref{fig:boundaries} are admissible, they are not touched by any flow. Since a flow preserves the non-weighted graph and the sum of self-intersection numbers; we only need to check that the latter agree.
	
	Consider the case when $p_1,p_2\neq 1$ or $p_1=p_2=1$. Then any such $D$ contains a rational circular divisor $L$ with components, say, $L_{1},L_{2},L_{3},L_{4}$, such that $L_{i}\cdot L_{i+1}=1$ for $i\in \{1,2,3\}$; if $p_{j}\neq 1$ for $j\in \{1,2\}$ then $L_{2+j}$ meets a twig of $D$ of type $[(2)_{\deg p_j}]$, and $L_{1}^{2}=0$, i.e.\ $L_{1}$ is the $0$-curve on which the last flow is performed. If $p_1,p_2\neq 1$ then the chain $L_{1}+L_{2}$ is standard, and if $p_1=p_2=1$ then the whole $L$ is standard. In any case, we infer that $L_{2}^{2}=0$. It follows that $L_{3}^{2},L_{4}^{2}<0$: indeed, for $i\in \{1,2\}$, $L_{i+2}$ is vertical for the $\P^{1}$-fibration of $X$ induced by $|L_{i}|$, and it is not a fiber because otherwise $L_{i+2}=L_{i}$ in $\Pic(X)$, contrary to Lemma \ref{lem:S0}\ref{item:units}. Because the sum $L_{1}^{2}+L_{2}^{2}+L_{3}^{2}+L_{4}^{2}$ is not changed by a flow, we get $L_{3}^{2}+L_{4}^{2}=-2$, so $L_{3}^{2}=L_{4}^{2}=-1$, as claimed.
	
	In case $p_j\neq 1$, $p_{3-j}= 1$ we similarly obtain that $D$ has a rational circular divisor $L_{1}+L_{2}+L_{3}$, where $L_{3}$ meets a twig $[(2)_{\deg p_j}]$ and $L_{1}$ is a $0$-curve. The chain $L_{1}+L_{2}$ is standard, so $L_{2}^{2}=0$, and $L_{3}^{2}=L_{1}^{2}+L_{2}^{2}+L_{3}^{2}=1$.
	
	\ref{item:distinct}, \ref{item:Aut} Let $S_{p_1,p_2}\to S_{p_1',p_2'}$ be an isomorphism. By Lemma \ref{lem:boundaries}, it extends to an isomorphism, say $\tau\colon (X_1,D_1)\to(X_2,D_2)$, between their standard completions, which by \ref{item:st_uniq} are both as in Construction \ref{constr}. If they are as in Figure \ref{fig:Sk0}, we reverse the last step of Construction \ref{constr} and lift $\tau$ accordingly, so that $(X_{i},D_{i})$ are as in Figure \ref{fig:Skl} or \ref{fig:S00} (but possibly no longer standard). 
	
	For $i,j\in \{1,2\}$ let $A_{j,i}\subseteq X_{i}$ be as in Figure \ref{fig:boundaries}. Because by Lemma \ref{lem:S0} $\Pic(X_{i})=\Z[D_{i}]$, the curve $\tau(A_{j,1})$ is linearly equivalent to $A_{j,2}$. In fact, $\tau(A_{j,1})=A_{j,2}$ because $\tau(A_{j,1})\cdot A_{j,2}=A_{j,2}^{2}<0$. Therefore, $\tau$ descends to an automorphism $\tau_{0}$ of $\P^{1}\times \P^{1}$ fixing $\ll_{1,0}+\ll_{1,\infty}+\ll_{2,0}+\ll_{2,\infty}$ and the points $(0,1)$, $(1,0)$, so  $\tau_{0}(x_1,x_2)=(x_1,x_2)$ or $(x_2,x_1)$. Hence $\tau$ or $\tau\circ \epsilon$, where $\epsilon(x_1,x_2)=(x_2,x_1)$, is trivial on each exceptional curve of $\phi\colon X\to \P^{1}\times \P^{1}$. This shows that $\{p_1,p_2\}=\{p_1',p_2'\}$ and $\tau=\id$ or $\epsilon$.
\end{proof}

\begin{rem}[{$S_{p_1,p_2}$ are stably non-isomorphic, cf.\ \cite[Corollary 4.3]{FKN_k0}}]
	Because $\kappa(S_{p_1,p_2})=0$ by Corollary \ref{cor:S_kappa} below, \cite{Iitaka_Fujita-cancellation}  implies that $S_{p_1,p_2}\times \C^{n}\not\cong S_{p'_1,p'_2}\times \C^{n}$ for any $n\geq 0$ and $\{p_1,p_2\}\neq \{p_1',p_2'\}$.
\end{rem}

%indent
We conclude with a well-known application of flows, which will be useful in the proof of Lemma \ref{lem:producing_Cst}.

\begin{lem}[{\cite[6.13]{Fujita-noncomplete_surfaces}, cf.\ \cite[Lemma 4.14]{FZ-deformations}}]\label{lem:rational-twigs-are-admissible}
	Let $(X,D)$ be a log smooth pair with $\kappa(X\setminus D)\geq 0$. Let $T$ be a rational twig of $D$. Then $C^2\leq -1$ for every component $C$ of $T$. In particular, if $D$ is snc-minimal then all rational twigs of $D$ are admissible.
\end{lem}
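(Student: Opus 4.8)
The plan is to argue by contradiction: assuming some component $C$ of the rational twig $T$ has $C^{2}\geq 0$, I will manufacture a $\C^{1}$-fibration of $S\de X\setminus D$, which is impossible because a $\C^{1}$-fibration forces $\kappa(S)=-\infty$ by the Iitaka Easy Addition Theorem \cite[Theorem 11.9]{Iitaka_AG}, contradicting the hypothesis $\kappa(S)\geq 0$.

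The heart of the argument is a reduction to the case where the offending curve is a $0$-curve sitting at the tip of the twig, carried out by birational modifications that do not change $S$ (hence preserve $\kappa$). First I would lower $C^{2}$ to $0$ by repeatedly blowing up a node of $D$ lying on $C$: each such blowup inserts a rational $(-1)$-curve into the chain, drops $C^{2}$ by one, keeps all components rational and the twig a twig, and does not touch $S$ (since the new exceptional curve is added to the boundary). Once $C$ is a $0$-curve, I would use elementary transformations (flows) on $0$-curves, which preserve $S$ and the non-weighted dual graph, see Section~\ref{sec:standard}: a sequence of flows turns the tip-side neighbour of the $0$-curve into a $0$-curve, so that a $0$-curve can be slid step by step toward the tip along the twig, until the tip $T_{1}$ itself is a $0$-curve. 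Throughout, the modified boundary remains an snc divisor whose twig containing $T_{1}$ is rational.

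With $T_{1}$ a $0$-curve tip, the pencil $|T_{1}|$ defines a $\P^{1}$-fibration of $X$ in which $T_{1}$ is a fibre. Since $T_{1}$ is a tip, the only component of $D$ meeting it is its unique neighbour, which is therefore a $1$-section, while every other component $C'$ of $D$ satisfies $C'\cdot T_{1}=0$ and is hence vertical. Thus a general fibre meets $D$ in exactly one point, so its intersection with $S$ is $\C^{1}$, and restriction gives the desired $\C^{1}$-fibration of $S$ — the contradiction. For the final assertion, if $D$ is snc-minimal then no component of the twig can be a $(-1)$-curve: a $(-1)$-curve in a twig is non-branching with $1\leq \beta_{D}\leq 2$ and, $D$ being snc, meets each component at most once, so it would be superfluous. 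Combined with $C^{2}\leq -1$ this yields $C^{2}\leq -2$ for every component, so the rational twig is admissible.

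I expect the main obstacle to be the interior case: justifying that the flows genuinely transport a $0$-curve all the way to the tip while keeping the configuration a rational twig, so that the single-section computation of the previous paragraph applies. The tip case itself is immediate once $T_{1}^{2}=0$; the real work is the combinatorial bookkeeping of the flow sequence and checking that no flow creates a branching component or destroys rationality, for which the invariance of the dual graph under flows (Section~\ref{sec:standard}) is exactly what is needed.
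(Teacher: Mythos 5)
Your proposal is correct and takes essentially the same route as the paper's proof: reduce the offending component to a $0$-curve by blowing up at a node (and infinitely near points), slide the $0$-curve to the tip by elementary transformations — exactly the paper's induction via \enquote{$\pm T_{k-1}^{2}$ flows on $T_{k}$} — and then use $|T_{1}|$ to get a $\P^{1}$-fibration restricting to a $\C^{1}$-fibration of $X\setminus D$, contradicting $\kappa\geq 0$ by Iitaka Easy Addition. The bookkeeping you flag as the main obstacle is precisely what the paper's one-line induction handles, so there is no gap.
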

\begin{proof}
	Let $T_{k}$ be the $k$-th component of $T$, put $T_0=0$. Suppose $T_{k}^{2}\geq 0$ for some $k\in \{1,\dots, \#T\}$. Write $\{p\}=T_{k}\cap (D-T_{k}-T_{k-1})$, or choose $p\in T_{k}\setminus T_{k-1}$ if that intersection is empty. Blowing up $T_{k}^{2}$ times at $p$ and its infinitely near points on the proper transforms of $T_{k}$, we can assume that $T_{k}=[0]$. If $k>1$, then after $\pm T_{k-1}^{2}$ flows on $T_{k}$, we can assume $T_{k-1}=[0]$, too. Thus by induction, we can assume $k=1$, i.e.\ $T_{1}=[0]$ is a tip of $D$. Then $|T_{1}|$ induces a $\P^{1}$-fibration of $X$ which restricts to a $\C^{1}$-fibration of $X\setminus D$, so $\kappa(X\setminus D)=-\infty$ by the Iitaka Easy Addition theorem; a contradiction.
\end{proof}

\subsection{Graph $3$-manifolds and their normal forms}\label{sec:graph_manifolds}

An important topological invariant of an affine surface $S$ is its \emph{fundamental group at infinity},  $\pi_{1}^{\infty}(S)\de\invlim \pi_{1}(S\setminus K)$, where $\invlim$ runs over all compact $K\subseteq S$. If $(X,D)$ is a log smooth completion of $S$, then $\pi_{1}^{\infty}(S)=\pi_{1}(M)$, where $M$ is the boundary of a nice tubular neighborhood of $D$, constructed in \cite{Mumford-surface_singularities} as a plumbed manifold whose graph is the dual graph of $D$, see \cite[4.6.2]{GS_Kirby}. More precisely, $\mathrm{Tub}(D)$ is a union, taken over components $C$ of $D$, of disk bundles $\xi_C$ with Euler numbers $C^2$, where for any component $C'$ of $D$ meeting $C$, a fiber of $\xi_C$ over $C\cap C'$ is  glued to a neighborhood of $C\cap C'$ in $C$.  

For a general introduction to $3$-manifolds we refer to \cite{Hatcher_3M,AFW_3M}. 
We denote by $\bS^{k}$, $\bD^{k}$ the (real) $k$-dimensional sphere and disk, respectively, and by $\bT^{k}=(\bS^{1})^{k}$ the $k$-dimensional torus. We say that a $3$-manifold is \emph{prime} if it cannot be written as a connected sum of two $3$-manifolds other than $\bS^{3}$, and \emph{irreducible} if any embedded $\bS^{2}$ bounds a ball. A prime, oriented $3$-manifold is either irreducible or $\bS^{1}\times \bS^{2}$.

If $M$ is prime, and not a lens space (which will be the case for $S=S_{p_{1},p_{2}}$), then $\pi_{1}(M)$ determines $M$ uniquely, up to a diffeomorphism \cite[Theorem 2.1.2]{AFW_3M}. Therefore, in this case to distinguish the homeomorphism type of $S$ at infinity, it suffices to describe $\pi_1(M)$, which, in turn, is given by the graph of $D$, once put in a normal form \cite[Theorem 4.2]{Neumann-plumbing_graphs}. The definition of normal form is similar to the one of standard boundary, but requires some more flexibility to allow more operations between $3$-manifolds than just blowing up and down. We now recall the definition of graph $3$-manifolds following \cite{Neumann-plumbing_graphs}. We refer to \cite{EN_torus-links} for details and relation to the JSJ-decomposition (graph manifolds are exactly those for which the latter consists only of Seifert fibered spaces: we compute it for our $M$ in Proposition \ref{prop:JSJ}).

A \emph{graph manifold} is a $3$-manifold which is a union of $\bS^{1}$-bundles over compact surfaces, glued according to some  graph, as follows. An $\bS^{1}$-bundle over $C$ with Euler number $e$ is represented by a vertex of weight $e$ and two additional numbers $g$, $r$, where $g$ is the genus of $C$ ($g<0$ if $C$ is non-orientable)  and $r=b_{0}(\d C)$. They are skipped whenever both are zero, i.e.\ $C\cong \P^{1}$. An edge between two vertices $v_{1}$, $v_{2}$ represents gluing of a fiber of $v_{1}$ to a small loop around a point in a base of $v_2$, and vice versa. Multiple edges and loops are allowed. An edge is labeled by \enquote{$+$} if the gluing respects the chosen orientations, and \enquote{$-$} if it reverses both of them.  Clearly, reversing the sign of all edges adjacent to a single vertex, or of any edge adjacent to a vertex with $g<0$, does not change the manifold. Hence we will label only the edges contained in circular subgraphs with no vertices of $g<0$, see \cite[p.\ 304]{Neumann-plumbing_graphs}.

For example, the boundary $M$ from \cite{Mumford-surface_singularities}, described above, is a graph manifold associated to the dual graph of $D$, with all edges labeled with \enquote{$+$}. For such graph, we use the notions of branching numbers, chains, twigs as in Section \ref{sec:log_surfaces}, a vertex being \enquote{rational} if $g=r=0$.

A connected, rational graph $\Gamma$ is \emph{normal} if the following three conditions hold. First, all non-branching vertices of $\Gamma$ have weights at most $-2$. Second, if a vertex with $\beta=3$ meets two twigs of type $[2]$ then $\Gamma$ is a fork. Third, if $\Gamma=\lc (2)_{k} \rc$ for some $k> 0$ then at least two edges are labeled with \enquote{$-$}. To extend this definition to non-rational graphs, one needs to exclude some additional special cases, see \cite[\S 4]{Neumann-plumbing_graphs}. We will not need this extension. In general, a graph is \emph{normal} if all its connected components are.

For a graph $3$-manifold $N$, \cite[Theorem 4.1]{Neumann-plumbing_graphs} gives an algorithm to reduce its graph to a normal form, which we denote by $\Gamma(N)$. This algorithm uses certain operations R0--R8 defined in Proposition 2.1 loc.\ cit, which do not change the graph manifold. Among these, R1 is blowing down a vertex with weight $\pm 1$, and R3 is an \enquote{absorption} of a cylinder $(\bD^{1}\times \bS^{1})\times \bS^{1}$ corresponding to a rational vertex of weight $0$ and $\beta=2$.  The graph $\Gamma(-N)$ can be directly computed from $\Gamma(N)$  using \cite[Theorem 7.1]{Neumann-plumbing_graphs}.

Now we can state the main result of \cite{Neumann-plumbing_graphs} used to distinguish $\pi_{1}^{\infty}(S_{p_1,p_2})$ for different degrees of $p_{j}$.

\begin{lem}[{\cite[Theorems 4.2, 4.3]{Neumann-plumbing_graphs}}]\label{lem:graphs}
	Let $N,N'$ be graph $3$-manifolds.
	\begin{enumerate}
		\item\label{item:Gamma}  $\Gamma(N)=\Gamma(N')$ if and only if $N$ and $N'$ are orientation-preserving diffeomorphic.
		\item\label{item:irr} If $\Gamma$ is connected then $N$ is prime.
	\end{enumerate}
\end{lem}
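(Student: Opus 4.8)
The plan is to treat a graph $3$-manifold as a union of Seifert fibered pieces glued along boundary tori, and to show that the normal form $\Gamma(N)$ records precisely the intrinsic gluing data, so that it becomes a complete invariant of the oriented diffeomorphism type. The backbone of the argument is the JSJ (torus) decomposition together with the classification of Seifert fibrations, see \cite{EN_torus-links}.

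For the easy implication of \ref{item:Gamma}, I would check that each of the reduction moves R0--R8 from \cite[Proposition~2.1]{Neumann-plumbing_graphs} is induced by an orientation-preserving diffeomorphism of the plumbed manifold: R1 (blowing down a $\pm 1$-vertex) is a handle slide in the plumbing, R3 (absorbing a rational $0$-vertex with $\beta=2$) absorbs a product cylinder $(\bD^{1}\times \bS^{1})\times \bS^{1}$ into a neighbouring piece, and the sign changes correspond to reparametrizations of the $\bS^{1}$-bundles. Since $\Gamma(N)=\Gamma(N')$ means both graphs reduce to a common normal form by such moves, $N$ and $N'$ are orientation-preserving diffeomorphic.

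The substance lies in the converse, where one must reconstruct $\Gamma(N)$ intrinsically from $N$. Each vertex of weight $e$, genus $g$ and $r$ boundary circles determines an $\bS^{1}$-bundle piece, and the edges are the gluing tori; for a manifold in normal form these tori are exactly the canonical incompressible tori of the JSJ decomposition, and the Seifert fibration on each piece is the canonical one. I would then invoke uniqueness of the JSJ decomposition up to isotopy, together with the uniqueness of Seifert fibrations on the pieces (Seifert, Waldhausen), to recover the graph up to the equivalences already built into the normal form. The main obstacle---and the real content of \cite[\S 4--7]{Neumann-plumbing_graphs}---is the analysis of the exceptional pieces carrying a \emph{non-unique} Seifert fibration (lens spaces, small Seifert manifolds over $\bS^{2}$, $\bS^{1}\times \bS^{2}$, and torus bundles); the three normality conditions (non-branching weights $\leq -2$, the fork condition for two $[2]$-twigs, and the sign condition on $\lc (2)_{k}\rc$) are engineered exactly to kill the residual ambiguities these cases produce, and verifying that they suffice is a careful case-by-case analysis.

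For \ref{item:irr}, I would argue that a connected normal graph gives an irreducible manifold, apart from the prime manifold $\bS^{1}\times \bS^{2}$: the normality conditions force the gluing tori to be incompressible and exclude any reducing $\bS^{2}$, so gluing irreducible Seifert pieces along incompressible tori keeps the result irreducible. Since a connected graph corresponds to a single such gluing, $N$ cannot split as a nontrivial connected sum, hence is prime. Equivalently, one can note that the normal form of a connected sum is the disjoint union of the normal forms of the summands, so connectedness of $\Gamma$ directly rules out a nontrivial decomposition.
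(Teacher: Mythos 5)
This lemma is not proved in the paper at all: it is quoted, with its proof, from \cite[Theorems 4.2, 4.3]{Neumann-plumbing_graphs}, and the paper's only contribution is to apply it (in Corollary \ref{cor:not_homeo} and Proposition \ref{prop:JSJ}). So the comparison has to be with Neumann's original argument. Measured against that, your outline identifies the correct skeleton: the easy direction of \ref{item:Gamma} because each move R0--R8 is realized by an orientation-preserving diffeomorphism of the plumbed manifolds; the converse via Waldhausen's classification of graph manifolds (in modern language, uniqueness of the JSJ decomposition plus uniqueness of Seifert fibrations on the pieces); and the observation that the three normality conditions are calibrated precisely to the exceptional pieces admitting non-unique Seifert fibrations (lens spaces, small Seifert manifolds over $\bS^{2}$, torus bundles, $\bS^{1}\times\bS^{2}$). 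However, as a proof this has a genuine gap which you yourself flag: the entire mathematical content of the converse is the case-by-case verification that these conditions kill all residual ambiguities, and this is deferred rather than carried out. Deferring it is legitimate if, like the paper, one treats the statement as a citation; it does not constitute a proof of the statement.

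There is also a concrete problem in your treatment of \ref{item:irr}. The "equivalent" second argument --- that the normal form of a connected sum is the disjoint union of the normal forms of the summands --- conflates two different operations: plumbing on a disjoint union of graphs yields a \emph{disjoint union} of manifolds, not a connected sum. Connected sums enter Neumann's calculus through the splitting move at a rational vertex of weight $0$ with several edges, and the correct statement is that a connected graph in normal form contains no such vertex, so no nontrivial splitting can occur. Your first argument --- irreducible Seifert pieces glued along incompressible tori remain irreducible, with $\bS^{1}\times\bS^{2}$ the only prime non-irreducible exception --- is the right one, but note that the claim "the normality conditions force the gluing tori to be incompressible" is again exactly the deferred case analysis (it fails for arbitrary connected plumbing graphs, e.g.\ those containing rational $0$-vertices), not a formality one can wave through.
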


	 	 \begin{figure}[ht]	
	\begin{tabular}{cc}
		\begin{subfigure}[t]{0.45\textwidth}
			\centering	 	 			
			\includegraphics[scale=0.3]{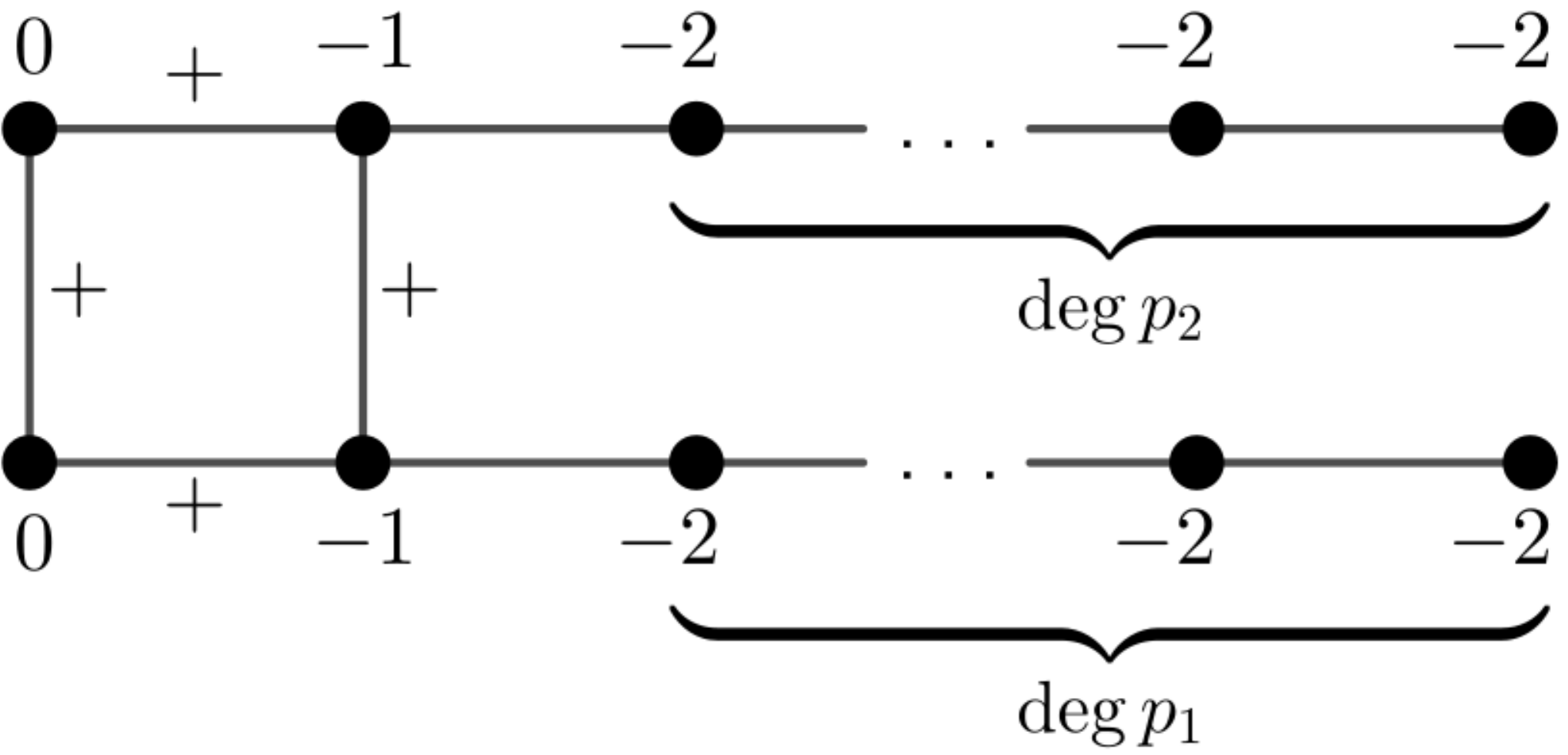}
			\caption{Graph of $M$ as in \cite{Mumford-surface_singularities}, see Figure \ref{fig:Skl}}
			\label{fig:g_1}
		\end{subfigure}	
		&
		\begin{subfigure}[t]{0.45\textwidth}
			\centering	 	 			
			\includegraphics[scale=0.3]{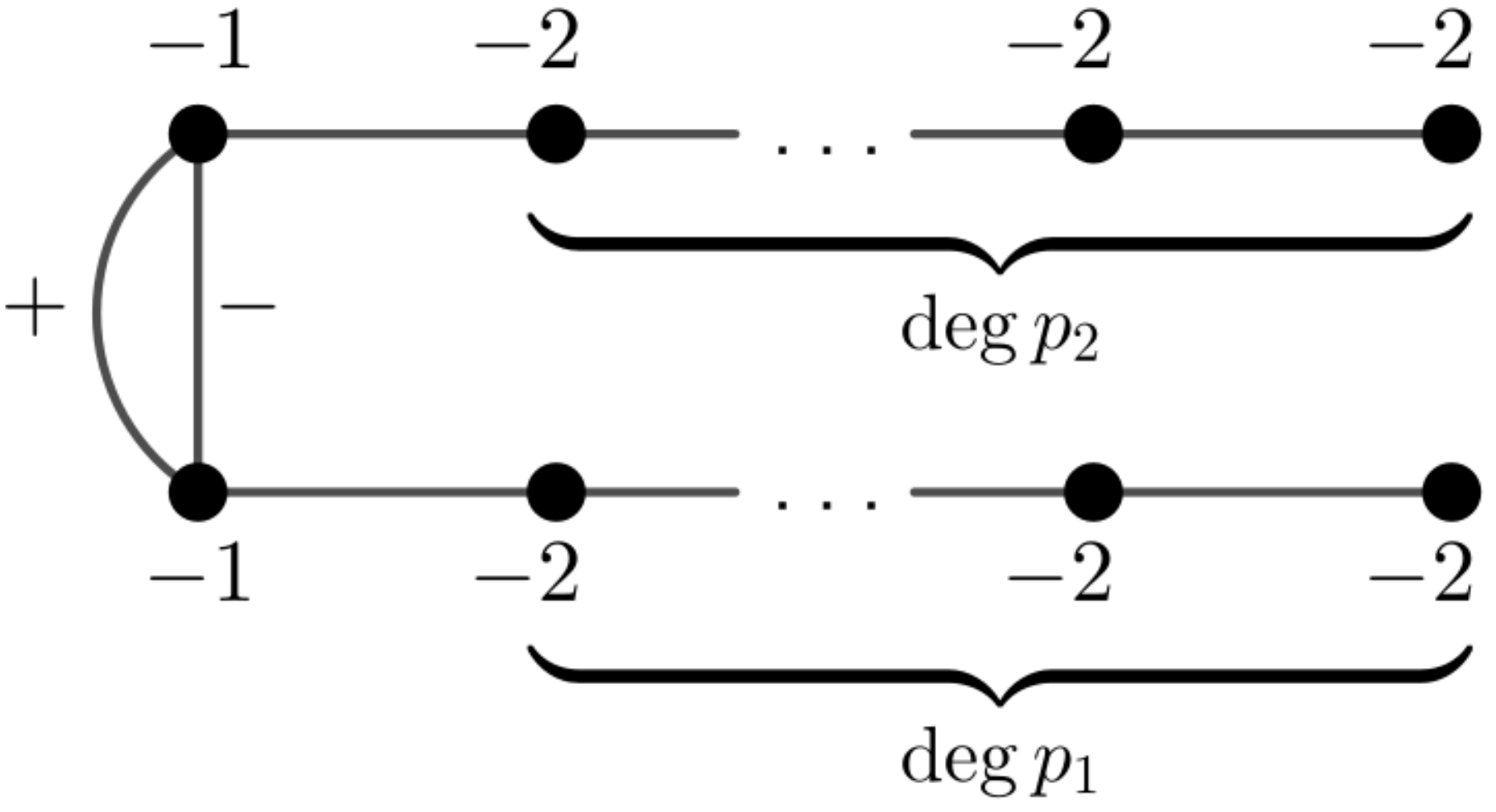}
			\caption{$\Gamma(M)$ for $p_{1},p_{2}\neq 1$}
			\label{fig:g_2}
		\end{subfigure}
		\\
		\begin{subfigure}[t]{0.45\textwidth}
			\centering	 	 			
			\includegraphics[scale=0.35]{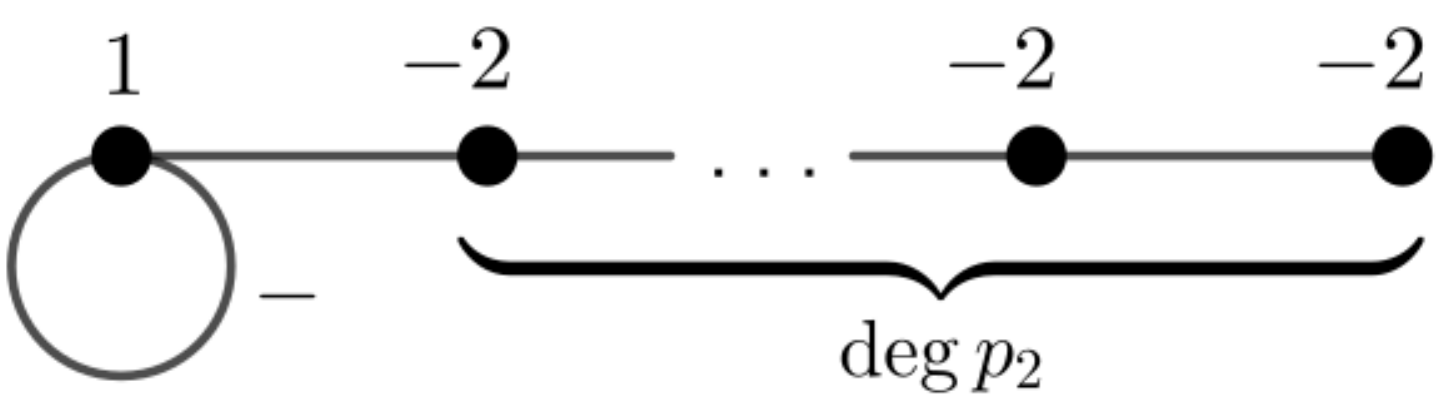}
			\caption{$\Gamma(M)$ for $p_{1}=1$, $p_{2}\neq 1$}
			\label{fig:g_3}
		\end{subfigure}	
		&
		\begin{subfigure}[t]{0.45\textwidth}
			\centering
			\includegraphics[scale=0.3]{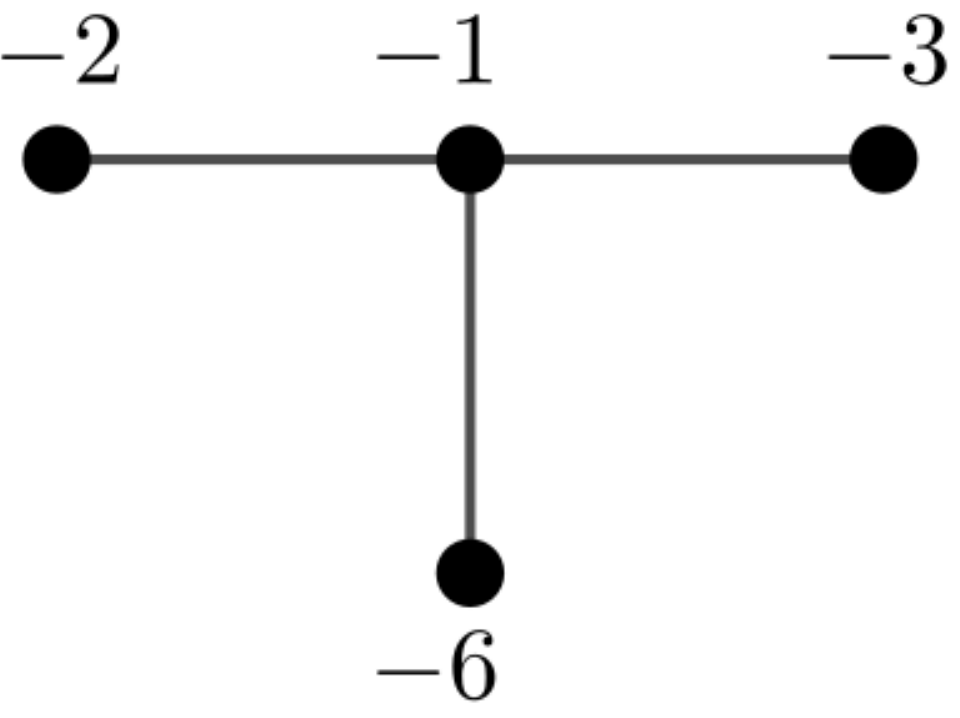}
			\caption{$\Gamma(-M)$ for $p_{1},p_{2}=1$, see Proposition \ref{prop:JSJ}\ref{item:Moser}}
			\label{fig:g_4}
		\end{subfigure}
	\end{tabular}
	\caption{Graphs of $M=\d\mathrm{Tub}(D)$ in the proof of Corollary \ref{cor:not_homeo}}
	\label{fig:graphs}
\end{figure}

\begin{cor}\label{cor:not_homeo}
	The surfaces $S_{p_{1},p_{2}}$ and $S_{p_{1}',p_{2}'}$  are not homeomorphic for $\{\deg p_{1},\deg p_{2} \}\neq \{\deg p_{1}',\deg p_{2}' \}$.
\end{cor}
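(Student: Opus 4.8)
The plan is to distinguish these open surfaces by the diffeomorphism type of the boundary $M=\d\mathrm{Tub}(D)$ of a tubular neighborhood of the boundary divisor, reducing the question to an equality of Neumann normal forms of plumbing graphs.

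First I would record the topological reduction. By \cite[1.21]{Fujita-noncomplete_surfaces} the homotopy type at infinity, and in particular $\pi_{1}^{\infty}(S)$, is a homeomorphism invariant, and $\pi_{1}^{\infty}(S_{p_1,p_2})=\pi_{1}(M)$, where $M$ is the graph manifold associated in \cite{Mumford-surface_singularities} to the dual graph of $D$. Since this graph is connected, $M$ is prime by Lemma \ref{lem:graphs}\ref{item:irr}, and, as noted in Section \ref{sec:graph_manifolds}, it is not a lens space; hence $\pi_{1}(M)$ determines $M$ up to diffeomorphism. Thus a homeomorphism $S_{p_1,p_2}\cong S_{p_1',p_2'}$ yields a diffeomorphism $M\cong M'$, a priori either orientation-preserving or orientation-reversing.

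Next I would compute the normal form $\Gamma(M)$ using the calculus of \cite{Neumann-plumbing_graphs}. From Construction \ref{constr}, the dual graph of $D$ is the $4$-cycle carrying the proper transforms $L_{1,0},L_{2,0},L_{1,\infty},L_{2,\infty}$, of self-intersections $-1,-1,0,0$, with the two $0$-curves adjacent, together with a $(-2)$-twig of length $\deg p_{1}$ attached at $L_{1,0}$ and a $(-2)$-twig of length $\deg p_{2}$ attached at $L_{2,0}$; this is the graph of Figure \ref{fig:g_1}. Applying the reductions R0--R8 — absorbing the two rational weight-$0$, $\beta=2$ vertices $L_{1,\infty},L_{2,\infty}$ and then blowing down the resulting $\pm1$-vertices — collapses the central cycle and brings the graph to the normal form displayed in Figures \ref{fig:g_2}, \ref{fig:g_3} (according to whether neither, exactly one, or both of the $p_{j}$ equal $1$); the orientation-reversed form $\Gamma(-M)$ is obtained via \cite[Theorem 7.1]{Neumann-plumbing_graphs}, e.g.\ Figure \ref{fig:g_4}. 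The crucial point is that the two admissible $(-2)$-twigs contain no weight $0$ or $\pm1$ vertex, so no reduction touches, shortens, or merges them: they survive into $\Gamma(M)$ with lengths $\deg p_{1}$ and $\deg p_{2}$. Consequently the unordered pair $\{\deg p_{1},\deg p_{2}\}$ (equivalently, the data $d_{j}=\deg p_{j}+1$) is read off directly from $\Gamma(M)$, and likewise from $\Gamma(-M)$. Combining the two steps, a homeomorphism $S_{p_1,p_2}\cong S_{p_1',p_2'}$ gives $\Gamma(M')\in\{\Gamma(M),\Gamma(-M)\}$ by Lemma \ref{lem:graphs}\ref{item:Gamma}, and comparing twig lengths forces $\{\deg p_{1},\deg p_{2}\}=\{\deg p_{1}',\deg p_{2}'\}$.

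The main obstacle I expect is the explicit reduction to normal form, carried through the several boundary cases: when some $p_{j}=1$ the corresponding twig disappears and the central cycle shortens, changing the combinatorial type (cf.\ Figures \ref{fig:g_2}--\ref{fig:g_4}), so each case must be treated separately and one must verify that the remaining twigs still encode the degrees. A secondary difficulty is the orientation ambiguity, i.e.\ confirming that $\Gamma(-M)$ also recovers $\{\deg p_{1},\deg p_{2}\}$; here the identification of $M$ with the $0$-surgery on the $2$-bridge knot $K_{[2d_1,2d_2]}$ from Theorem \ref{thm:diff} is a convenient cross-check, since passing to the mirror knot (which realizes $-M$) preserves the unordered pair $\{d_{1},d_{2}\}$.
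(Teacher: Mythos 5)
Your proposal is correct and follows essentially the same route as the paper: compute $\Gamma(M)$ from the dual graph of $D$ via Neumann's moves (Figures \ref{fig:g_1}--\ref{fig:g_4}), handle the orientation ambiguity via \cite[Theorem 7.1]{Neumann-plumbing_graphs}, and pass between $\pi_{1}^{\infty}(S)=\pi_{1}(M)$ and the diffeomorphism type of $M$ using primeness, the non-lens-space condition, and \cite[Theorem 2.1.2]{AFW_3M}. The only imprecision is your claim that the $(-2)$-twigs survive ``likewise'' in $\Gamma(-M)$: as the paper notes, orientation reversal replaces a twig $[(2)_{d}]$ by the single vertex $[d+2]$, so the degrees are still recoverable but by reading weights rather than twig lengths --- exactly the point you flagged and which Theorem 7.1 settles.
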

\begin{proof}
	For $S_{p_{1},p_{2}}$ and $S_{p_{1}',p_{2}'}$, $\{\deg p_{1},\deg p_{2}\}\neq\{\deg p_{1}',\deg p_{2}'\}$ let $M$, $M'$ be a boundary of tubular neighborhoods of respective $D\subseteq X$ as in Construction \ref{constr}.  Such $M$ is a graph manifold represented by Figure \ref{fig:g_1}. Absorption (move R3 in \cite{Neumann-plumbing_graphs}) of a vertex corresponding to $L_{1,\infty}$ reduces it to a graph in Figure \ref{fig:g_2}. It is in normal form if $p_{1},p_{2}\neq 1$. If, say, $p_{1}=1$ then the normal form is obtained by blowing down (move R1) the vertex corresponding to $L_{1,0}$. If $p_{2}\neq 1$, then the resulting graph in Figure \ref{fig:g_3} is normal, otherwise it needs to be replaced by a standard graph of a Seifert fiber space (move R7). In our case this graph is a graph $E_{9}$, which after orientation reversing gives the one in Figure \ref{fig:g_4}, cf.\ Proposition \ref{prop:JSJ}\ref{item:Moser} and \cite[p.\ 309]{Neumann-plumbing_graphs}. 
	 	 	
	Hence $\Gamma(M)\neq \Gamma(M')$, so by Lemma \ref{lem:graphs}\ref{item:Gamma} $M$ is not orientation-preserving diffeomorphic to $M'$. Reversing orientation in Figures \ref{fig:graphs}\subref{fig:g_2}-\subref{fig:g_3} inverts the edge signs and replaces a twig $[(2)_{d}]$ with $[d+2]$, so the resulting graph is different from the previous ones. Therefore, $M\not\cong_{\mathrm{diffeo}}M'$, so $M\not\cong_{\mathrm{homeo}}M'$, see \cite[1.1]{AFW_3M}. Because $\Gamma(M)$ is connected and not a rational chain, $M$ is prime, and not a lens space (see \cite[6.1]{Neumann-plumbing_graphs} for lens space graphs), so by \cite[2.1.2]{AFW_3M} $\pi_{1}(M)\not\cong \pi_{1}(M')$. The result follows because $\pi_{1}^{\infty}(S_{p_{1},p_{2}})=\pi_{1}(M)$ \cite{Mumford-surface_singularities}.
\end{proof}

\subsection{Almost minimal models}\label{sec:peeling}

We now recall the construction of an almost minimal model for a log smooth pair $(X,D)$. We consider only the case when $X\setminus D$ is affine, for a complete treatment we refer to \cite[\S 3]{Miyan-OpenSurf}, see p.\ 107 loc.\ cit.\ for a summary. The exposition here is based on \cite[\S 3]{Palka-minimal_models}.

The motivation for this construction to get an explicit description of a log MMP run for the pair $(X,D)$, such that the log terminal singularities on the image of $(X,D)$ are introduced as late as possible: in fact, \emph{an almost minimal model} is a log resolution of the minimal model in the sense of Mori. Having understood the latter, one can use this theory to reconstruct the initial pair $(X,D)$. 

The almost minimal model will be created by iterating birational morphisms of the following type.%, introduced in \cite{Fujita-noncomplete_surfaces}.

\begin{dfn}[{half-point attachment, cf.\ \cite{Fujita-noncomplete_surfaces}}]\label{def:hp}
Let $(X,D)$ be a log smooth pair. Assume that
\begin{equation}\label{eq:hp}
A\subseteq X\mbox{ is a }(-1)\mbox{-curve such that } A\not\subseteq D\mbox{ and } A\cdot D=1.
\end{equation}
The composition of the contraction of $A$ and new superfluous $(-1)$-curves in the subsequent images of $D$:
\begin{equation*}
\phi_{A}\colon (X,D)\to (X',D'),
\end{equation*} 
where $D'=(\phi_{A})_{*}D$, is called a \emph{half-point attachment} (of $A$).
\end{dfn}
\begin{rem*}
	In general, $\phi_{A}$ is \emph{not} uniquely determined by $A$. Indeed, it may happen that after some contraction within $\phi_{A}$, there appear two superfluous $(-1)$-curves. For example, if $A$ meets the middle component $T'$ of a subchain $T=[2,2,2]$ of $D$, whose components are non-branching in $D$, then after the contraction of $A+T'$, both components of the image of $T$ are superfluous $(-1)$-curves in the image of $D$, but only one of them can be contracted.
	
	However, such ambiguity never happens if $\C[X\setminus D]$ is a UFD. Indeed, Lemma \ref{lem:producing_Cst}\ref{item:HP_C} shows that in this case $\phi_{A}$ contracts exactly $A$ and a maximal $(-2)$-twig of $D$ meeting $A$ (if such occurs).
\end{rem*}

\begin{lem}\label{lem:HPD_k}
	Let $(X,D)\to (X',D')$ be a half-point attachment. Then for all $m\geq 0$ we have $h^{0}(m(K_{X'}+D'))=h^{0}(m(K_{X}+D))$. In particular, $\kappa(X'\setminus D')=\kappa(X\setminus D)$.
\end{lem}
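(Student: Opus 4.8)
The plan is to reduce the whole statement to the single-blowup case and then track how the pluri-log-canonical sections behave under one inner contraction. By definition, a half-point attachment $\phi_A\colon (X,D)\to(X',D')$ factors as a composition of contractions of $(-1)$-curves, namely $A$ itself followed by successively contracting the superfluous $(-1)$-curves that appear in the images of $D$. So it suffices to prove the equality $h^0(m(K_{X'}+D'))=h^0(m(K_X+D))$ for a \emph{single} step, i.e.\ when $\psi\colon X\to X'$ is the blowdown of one $(-1)$-curve $E$ and $D=\psi^*D'+c E$ for the appropriate coefficient $c$, and then iterate. The point of the half-point attachment hypothesis is precisely that each contracted curve meets the (reduced total transform of the) boundary in such a way that the logarithmic canonical divisor pulls back to the right thing.

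The key computation is the following. For a single blowdown $\psi\colon X\to X'$ of a $(-1)$-curve $E$ with center $p\in D'$, one has $K_X=\psi^*K_{X'}+E$. I would distinguish according to how $E$ sits relative to the boundary. When $E$ is one of the curves contracted in $\phi_A$ — either $A$ itself (with $A\cdot D=1$, so $p$ is a smooth point of the boundary) or a superfluous $(-1)$-curve $C$ with $1\le\beta_D(C)\le 2$ meeting each boundary component at most once — the reduced total transform satisfies $D=\psi^*(D'_{\mathrm{red}})-E$ as divisors, or equivalently $\psi_*D=D'$ with $E$ appearing in $D$ with coefficient $1$ and $\operatorname{mult}_p D'=1$. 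Combining these, $K_X+D=\psi^*(K_{X'}+D')$, so that $m(K_X+D)=\psi^*(m(K_{X'}+D'))$ for every $m\ge 0$. The projection formula together with $\psi_*\O_X=\O_{X'}$ (and $R^0$-invariance of global sections under a birational morphism of smooth projective surfaces, since $\psi$ has connected fibers) then gives
\begin{equation*}
H^0(X,m(K_X+D))=H^0\bigl(X',\psi_*\O_X(m\psi^*(K_{X'}+D'))\bigr)=H^0(X',m(K_{X'}+D')),
\end{equation*}
hence equality of the $h^0$. The crux is verifying $K_X+D=\psi^*(K_{X'}+D')$ at each stage, i.e.\ that the discrepancy $+1$ of $K_X$ over $E$ is exactly cancelled by the drop $-1$ in the boundary coefficient, which is guaranteed because every curve contracted in a half-point attachment meets the current boundary in a single reduced point.

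I expect the main obstacle to be purely bookkeeping: ensuring that at \emph{every} intermediate contraction inside $\phi_A$ the contracted $(-1)$-curve really does meet the current reduced boundary transversally in one point, so that the clean relation $K+D=\psi^*(K'+D')$ holds step by step and the boundary stays reduced (snc). This is where one must invoke the definition of superfluous $(-1)$-curve, which is tailored to make exactly this true, and — to handle the potential ambiguity flagged in the Remark after Definition \ref{def:hp} — the UFD hypothesis via Lemma \ref{lem:producing_Cst}\ref{item:HP_C}, identifying the contracted locus as $A$ together with a maximal $(-2)$-twig. Once that geometric input is in place, the cohomological conclusion and the statement about $\kappa$ (which follows immediately, since $\kappa$ is defined through the asymptotic growth of these same $h^0(m(K+D))$) are formal.
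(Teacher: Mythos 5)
Your reduction to single blowdowns is sound, but the identity you call the crux, $K_X+D=\psi^*(K_{X'}+D')$, fails at exactly the intermediate contractions that make a half-point attachment more than one blowdown. It does hold when the contracted curve is $A$ itself: there $A\not\subseteq D$, $\operatorname{mult}_p D'=1$, so $\psi^*D'=D+A$ and the discrepancies cancel. It also holds for a superfluous boundary component $C\subseteq D$ with $\beta_D(C)=2$, since then $p$ is a node of $D'$ and $\psi^*D'=(D-C)+2C=D+C$. But for a superfluous boundary component with $\beta_D(C)=1$, i.e.\ a tip of the current boundary, one has $\operatorname{mult}_p D'=1$, hence $\psi^*D'=(D-C)+C=D$ and
\begin{equation*}
K_X+D=\psi^*K_{X'}+C+\psi^*D'=\psi^*(K_{X'}+D')+C,
\end{equation*}
with an uncancelled term $+C$. (Relatedly, your two supposedly equivalent formulations of the key claim are inconsistent: for $E=A$ the coefficient of $E$ in $D$ is $0$, not $1$; while for a boundary component, coefficient $1$ together with $\operatorname{mult}_p D'=1$ gives $D=\psi^*D'$, not $D=\psi^*D'-E$.) This $\beta=1$ case is not marginal --- it is the typical one in this paper: when $A$ meets a $(-2)$-twig of $D$ (Figure \ref{fig:HP}, Lemma \ref{lem:producing_Cst}\ref{item:HP_C}), the half-point attachment contracts that twig tip by tip, and every one of those contractions is of this kind.

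The gap is repairable with one extra observation: in the $\beta=1$ case the error term is effective and $\psi$-exceptional, so $\psi_*\O_X\bigl(m\psi^*(K_{X'}+D')+mC\bigr)=\O_{X'}\bigl(m(K_{X'}+D')\bigr)$ and $h^0$ is still unchanged; composing the steps gives $K_X+D=\phi_A^*(K_{X'}+D')+\Delta$ with $\Delta\geq 0$ exceptional, which suffices. With this correction your argument becomes a legitimate alternative to the paper's proof, which sidesteps the stepwise case analysis: there one observes $A\cdot(K_X+D+A)=-1<0$, so $mA$ lies in the fixed part of $|m(K_X+D+A)|$, whence $|m(K_X+D)|\cong|m(K_X+D+A)|\cong|m(K_{X'}+D')|$, the last isomorphism because $D+A$ is precisely the reduced total transform of $D'$ under the whole of $\phi_A$. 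Finally, drop the appeal to the UFD hypothesis and to Lemma \ref{lem:producing_Cst}\ref{item:HP_C}: the present lemma has no UFD assumption and must hold for an arbitrary choice of $\phi_A$ (the ambiguity in the Remark after Definition \ref{def:hp} is harmless, as your argument treats whichever superfluous curves get contracted), and the citation would in any case be circular, since the proof of Lemma \ref{lem:producing_Cst} invokes Lemma \ref{lem:HPD_k}.
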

\begin{proof}
	Let $A$ be as in \eqref{eq:hp}. We have $A\cdot(K_{X}+D+A)=-1<0$, so for any $m>0$, $mA$ is in the fixed locus of $|m(K_{X}+D+A)|$. Therefore, $|m(K_{X}+D)| \cong |m(K_{X}+D+A)| \cong |m(K_{X'}+D')|$ because $X\to X'$ contracts only components of $D+A$.
\end{proof}

\begin{cor}\label{cor:S_kappa}
	The surfaces $S_{p_1,p_2}$ from Theorem \ref{THM} are in $\cS_{0}$.
\end{cor}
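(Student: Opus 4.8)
The goal is to prove Corollary~\ref{cor:S_kappa}, namely that each $S_{p_1,p_2}$ lies in $\cS_0$. By definition this requires three things: that $\C[S_{p_1,p_2}]$ is factorial, that $\C[S_{p_1,p_2}]^{*}=\C^{*}$, and that the logarithmic Kodaira dimension equals zero. The first two have already been established in Corollary~\ref{cor:S_k}, which shows $\Pic(X)=\Z[D]$ for the log smooth completion $(X,D)$ of Construction~\ref{constr} and then invokes Lemma~\ref{lem:S0}. So the only remaining point is the computation $\kappa(S_{p_1,p_2})=0$.

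The plan is to reduce this to the base case $S_{1,1}$ via the half-point attachment machinery. First I would observe that the completion $(X,D)$ of $S_{p_1,p_2}$ in Construction~\ref{constr} is obtained from the completion of $S_{1,1}$ (the pair $(X_2,D_2)=(\P^1\times\P^1,\ \ll_{1,0}+\ll_{1,\infty}+\ll_{2,0}+\ll_{2,\infty})$, cf.\ Figure~\ref{fig:S00}) precisely by reversing a sequence of half-point attachments. Indeed, the blowup sequences defining $X\to\P^1\times\P^1$ over $(0,1)$ and $(1,0)$ each terminate in the $(-1)$-curves $A_1,A_2$, and each $A_j$ meets the divisor $D$ once with $A_j\cdot D=1$, satisfying \eqref{eq:hp}. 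Contracting $A_j$ together with the adjacent $(-2)$-twig $[(2)_{\deg p_j}]$ (which becomes superfluous after the contraction of $A_j$) is exactly a half-point attachment in the sense of Definition~\ref{def:hp}; performing these for $j=1,2$ brings $(X,D)$ down to $(\P^1\times\P^1, D_2)$.

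Next I would apply Lemma~\ref{lem:HPD_k}, which asserts that a half-point attachment preserves $h^0(m(K_X+D))$ for all $m\ge 0$, and in particular preserves the logarithmic Kodaira dimension. Chaining the (at most two) half-point attachments, I conclude
\[
\kappa(S_{p_1,p_2})=\kappa(X\setminus D)=\kappa\bigl((\P^1\times\P^1)\setminus D_2\bigr)=\kappa(S_{1,1}).
\]
It then remains to verify directly that $\kappa(S_{1,1})=0$. Here $(\P^1\times\P^1)\setminus D_2\cong (\C^*)^2$, whose log canonical divisor is $K_{\P^1\times\P^1}+D_2 \sim 0$, so $h^0(m(K+D_2))=1$ for all $m\ge 0$ and $\kappa=0$.

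The argument is essentially bookkeeping once the structure is in place, so I do not expect a serious obstacle. The one point requiring care is the verification that each contraction is genuinely a half-point attachment—i.e.\ that after contracting $A_j$ the image of the $(-2)$-twig consists of superfluous $(-1)$-curves contracted in turn, and that the hypothesis $A_j\cdot D=1$ of \eqref{eq:hp} holds. This is guaranteed by the UFD property (Corollary~\ref{cor:S_k}) together with the remark following Definition~\ref{def:hp}, which says that when $\C[X\setminus D]$ is a UFD the half-point attachment is unambiguous and contracts exactly $A_j$ and the maximal $(-2)$-twig meeting it. With that in hand the reduction to $S_{1,1}$ and the triviality of $K_{\P^1\times\P^1}+D_2$ finish the proof.
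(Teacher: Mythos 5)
Your proposal is correct and follows essentially the same route as the paper: the algebraic conditions come from Corollary~\ref{cor:S_k}, and $\kappa=0$ is obtained by viewing the completion from Construction~\ref{constr} as the result of two half-point attachments over $(\P^{1}\times\P^{1},D_{2})$ and invoking Lemma~\ref{lem:HPD_k}. Two blemishes in the write-up are worth flagging. First, $(\P^{1}\times\P^{1},\,\ll_{1,0}+\ll_{1,\infty}+\ll_{2,0}+\ll_{2,\infty})$ is a log smooth completion of $\C^{*}\times\C^{*}$, not of $S_{1,1}$: the completion of $S_{1,1}$ is the blowup of this pair at $(0,1)$ and $(1,0)$, with boundary the proper transforms of the four lines (that is what Figure~\ref{fig:S00} depicts), and $S_{1,1}$ itself is $(\C^{*})^{2}$ with two copies of $\C^{1}$ attached. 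The mislabeling is harmless, because what you actually compute at the end is $\kappa\bigl((\P^{1}\times\P^{1})\setminus D_{2}\bigr)=\kappa\bigl((\C^{*})^{2}\bigr)=0$ via $K_{\P^{1}\times\P^{1}}+D_{2}\sim 0$, which is exactly the paper's argument with $\C^{*}\times\C^{*}$ in place of your ``$S_{1,1}$''. Second, your closing paragraph justifies the half-point attachment structure by the remark following Definition~\ref{def:hp}; but that remark rests on Lemma~\ref{lem:producing_Cst}, whose hypotheses include $\kappa(S)=0$ --- the very statement being proved --- so taken literally this appeal would be circular (and the remark says nothing about $A_{j}\cdot D=1$, which is in any case pure geometry of Construction~\ref{constr}). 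Fortunately the appeal is redundant: your earlier direct verification --- $A_{j}$ is the last exceptional curve, $A_{j}\cdot D=1$, and its contraction renders the adjacent $(-2)$-chain successively superfluous, with the image of $L_{j,0}$ ending as a $0$-curve so the process stops --- already establishes everything needed, and is the justification you should keep.
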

\begin{proof}
	By Corollary \ref{cor:S_k}, $S_{p_1,p_2}\in \cS_{\kappa}$ for some $\kappa$. Construction \ref{constr} provides a log smooth completion of $S_{p_1,p_2}$ obtained from a log smooth completion of $\C^{*}\times \C^{*}$ by two half-point attachments. Hence by Lemma \ref{lem:HPD_k}, $\kappa=\kappa(S_{p_1,p_2})=\kappa(\C^{*}\times \C^{*})=0$.
\end{proof}

We now state the main result of the theory of peeling. Recall that $D^{\#}=D-\Bk D$, and $\Bk D$ is a $\Q$-effective divisor with coefficients in $(0,1)$, supported on the twigs of $D$.

\begin{prop}[Construction of an almost minimal model]\label{prop:MMP}
	Let $(X,D)$ be a minimal log smooth completion of a smooth affine surface. Then there is a birational morphism
		\begin{equation}\label{eq:MMP}
		\psi\colon (X,D)\toin{\psi_{1}}(X_{1},D_{1})\toin{\psi_{2}}\dots \toin{\psi_{n}} (X_{n},D_{n})
		\end{equation}
	such that, for all $i\in \{1,\dots, n\}$:
	\begin{enumerate}
		\item\label{item:psi_hpd} $\psi_{i}$ is a half-point attachment.
		\item\label{item:X_i-smooth} $D_{i}=(\psi_{i})_{*}D_{i-1}$ is snc-minimal, $(X_{i},D_{i})$ is log smooth and $X_{i}\setminus D_{i}$ is an affine open subset of $X\setminus D$.
		\item\label{item:psi_kappa} $h^{0}(m(K_{X_{i}}+D_{i}))=h^{0}(m(K_{X}+D))$ for all $m\geq 0$. In particular, $\kappa(X_{i}\setminus D_{i})=\kappa(X\setminus D)$.
		\item\label{item:psi_min} If $\kappa(X\setminus D)\geq 0$ then $K_{X_{n}}+D_{n}^{\#}$ is nef. Otherwise, either $X_{n}$ admits a $\P^{1}$-fibration with $(K_{X_{n}}+D_n^{\#})\cdot F<0$ for a fiber $F$; or $-(K_{X_{n}}+D_{n})$ becomes ample after the contraction of $\Bk D_n$.
	\end{enumerate}
\end{prop}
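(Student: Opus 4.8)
The plan is to build $\psi$ one half-point attachment at a time, iterating the contraction of a $(K_{X_{i}}+D_{i}^{\#})$-negative curve and using nefness of $K_{X_{i}}+D_{i}^{\#}$ as the stopping criterion, following Miyanishi's peeling \cite[III.\S3]{Miyan-OpenSurf} (see also \cite[\S3]{Palka-minimal_models}). I would set $(X_{0},D_{0})=(X,D)$, which by hypothesis is snc-minimal, log smooth, with $X_{0}\setminus D_{0}$ affine, and proceed inductively. Assuming $(X_{i},D_{i})$ has been produced with the properties in \ref{item:X_i-smooth}, I stop if $K_{X_{i}}+D_{i}^{\#}$ is nef. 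Otherwise there is a curve $\ell$ with $(K_{X_{i}}+D_{i}^{\#})\cdot\ell<0$, and by the logarithmic cone theorem on the smooth projective surface $X_{i}$ I may take $\ell$ to span an extremal ray. If $\ell^{2}\geq 0$ then $\ell$ moves in a pencil and gives a $\P^{1}$-fibration with $(K_{X_{i}}+D_{i}^{\#})\cdot F<0$ on a fiber $F$; this is the first terminal alternative of \ref{item:psi_min}, and (since such a fibration forces $\kappa=-\infty$) I stop. If the extremal contraction maps $X_{i}$ to a point, then $-(K_{X_{i}}+D_{i})$ becomes ample after contracting $\Bk D_{i}$, the second terminal alternative, again with $\kappa=-\infty$. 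In the remaining case $\ell$ is a $(-1)$-curve $A$, and the crux is to show that $A$ realises a half-point attachment in the sense of Definition \ref{def:hp}.

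The step I expect to be the main obstacle is precisely this verification that the extremal $(-1)$-curve $A$ satisfies \eqref{eq:hp}, i.e.\ $A\not\subseteq D_{i}$ and $A\cdot D_{i}=1$. The guiding computation is that for any $(-1)$-curve $A\not\subseteq D_{i}$ meeting $D_{i}$ transversally in one point,
\begin{equation*}
(K_{X_{i}}+D_{i}^{\#})\cdot A=-1+D_{i}^{\#}\cdot A=-\Bk D_{i}\cdot A\leq 0,
\end{equation*}
using $K_{X_{i}}\cdot A=-1$ and $D_{i}^{\#}=D_{i}-\Bk D_{i}$, so that such an $A$ is negative exactly when it meets the bark. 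I would then rule out the pathological configurations: the case $A\cdot D_{i}=0$ is impossible since the affine surface $X_{i}\setminus D_{i}$ contains no complete curve; every twig component $C$ of $D_{i}$ satisfies $(K_{X_{i}}+D_{i}^{\#})\cdot C=0$ by the defining property of $\Bk D_{i}$, which together with snc-minimality constrains the boundary components that could be negative; and the case $A\cdot D_{i}\geq 2$ must be excluded using that $D_{i}^{\#}$ has coefficient $1$ on every non-twig component while the coefficients of $\Bk D_{i}$ lie in $(0,1)$ (Lemma \ref{lem:rational-twigs-are-admissible} and \cite[II.3.3]{Miyan-OpenSurf}). This structural case analysis, showing that the negative extremal curve can be taken off $D_{i}$ and meets $D_{i}$ once along a $(-2)$-twig, is the substance of the theory of peeling; granting it, $\phi_{A}$ contracts $A$ together with the superfluous $(-1)$-curves it produces in that twig, as in the remark after Definition \ref{def:hp}, and I set $(X_{i+1},D_{i+1})=\phi_{A}(X_{i},D_{i})$.

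It then remains to check termination together with the listed properties. Since each $\phi_{A}$ contracts at least the curve $A$, the Picard number $\rho(X_{i})$ drops strictly; as $\rho\geq 1$, the process halts after finitely many steps, yielding \eqref{eq:MMP} with \ref{item:psi_hpd}. Property \ref{item:psi_kappa} follows from Lemma \ref{lem:HPD_k} applied at each step. For \ref{item:X_i-smooth}, snc-minimality and log smoothness of $D_{i+1}$ hold by the definition of a half-point attachment, while $X_{i+1}\setminus D_{i+1}$ is affine and open in $X_{i}\setminus D_{i}$: indeed $\phi_{A}$ is an isomorphism over $X_{i+1}\setminus D_{i+1}$ with preimage $X_{i}\setminus(D_{i}\cup A)$, so the complement is obtained from the affine surface $X_{i}\setminus D_{i}$ by deleting the single affine line $A\setminus(A\cap D_{i})$, keeping $D_{i+1}$ the support of an ample divisor and hence the complement affine by Goodman's criterion. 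Finally, for the dichotomy \ref{item:psi_min}, if the process halts with $K_{X_{n}}+D_{n}^{\#}$ nef I am done; if it halts at one of the two terminal alternatives then, as observed above, $\kappa(X\setminus D)=-\infty$. Thus when $\kappa(X\setminus D)\geq 0$ only the nef case can occur, and otherwise one of the two remaining cases holds, which completes the plan.
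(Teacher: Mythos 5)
Your scaffolding (induct on half-point attachments, terminate because $\rho$ drops, get \ref{item:psi_kappa} from Lemma \ref{lem:HPD_k}, read the dichotomy \ref{item:psi_min} off the type of the extremal contraction) agrees with the paper's. But the step you yourself isolate as the crux --- that the negative extremal curve $A$ can be taken off $D_{i}$ and meets $D_{i}$ exactly once --- is a genuine gap, and the mechanism you propose for it fails. The coefficient argument ($D_{i}^{\#}$ has coefficient $1$ on non-twig components, bark coefficients lie in $(0,1)$) only shows that $A$ meets $D_{i}$ inside $\Supp\Bk D_{i}$; it cannot exclude $A\cdot D_{i}\geq 2$. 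Concretely, if $A$ is a $(-1)$-curve, $A\not\subseteq D_{i}$, meeting the tips of two $(-2)$-twigs $[(2)_{k}]$ and $[(2)_{k'}]$ of $D_{i}$ with $k,k'\geq 2$, then the coefficients of $D_{i}^{\#}$ at those tips are $\tfrac{1}{k+1}$ and $\tfrac{1}{k'+1}$, so
\begin{equation*}
(K_{X_{i}}+D_{i}^{\#})\cdot A=-1+\tfrac{1}{k+1}+\tfrac{1}{k'+1}<0,
\qquad\text{while}\qquad A\cdot D_{i}=2 .
\end{equation*}
Moreover, since any irreducible curve of negative self-intersection spans an extremal ray of $\overline{NE}(X_{i})$, your cone-theorem-on-$X_{i}$ procedure is perfectly capable of selecting exactly such an $A$, for which $\phi_{A}$ is \emph{not} a half-point attachment, so the induction breaks. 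What actually rules such configurations out is not a coefficient bound but negative definiteness of $A+\Bk D_{i}$, which is the hypothesis of Miyanishi's computation \cite[II.3.7.1(2)]{Miyan-OpenSurf} (that computation, plus connectedness of $D_{i}$, is how one gets $A\cdot D_{i}=1$); and that negative definiteness is simply not available in your setup.

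This is precisely what the paper's different route buys. The paper does not apply the cone theorem to $(X_{i},D_{i}^{\#})$: it first contracts $\Supp\Bk D_{i}$ via $\alpha\colon (X_{i},D_{i})\to (Y,D_{Y})$, notes $\alpha^{*}(K_{Y}+D_{Y})=K_{X_{i}}+D_{i}^{\#}$, and runs Mori theory on the log terminal pair $(Y,D_{Y})$. A birational-type extremal curve $\ll\subseteq Y$ has $\ll^{2}<0$, and since $\alpha^{*}\ll$ is orthogonal to the negative definite divisor $\Exc\alpha=\Supp\Bk D_{i}$, the lattice spanned by $A=\alpha^{-1}_{*}\ll$ and $\Supp\Bk D_{i}$ is negative definite --- exactly the input needed for \cite[II.3.7.1(2)]{Miyan-OpenSurf}. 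In the example above, the class $A+\Bk T+\Bk T'$ equals $\alpha^{*}\ll$ and has square $\tfrac{1}{3}>0$, so the MMP on $Y$ never selects that curve, whereas your procedure can. Passing to $Y$ also handles the case $A\subseteq D_{i}$, which you only gesture at: a $(K+D^{\#})$-negative component of $D_{i}$ would be a tip of $D_{i}-\Supp\Bk D_{i}$ meeting at most one bark component, hence would extend an admissible twig, contradicting maximality in the definition of $\Bk D_{i}$. To repair your proof you should either reproduce this detour through $(Y,D_{Y})$, or supply an independent argument that the curve you select makes $A+\Bk D_{i}$ negative definite; as written, the selection step can pick a curve for which the conclusion is false.
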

\begin{proof}[Sketch of a proof]
	Let $\alpha\colon (X,D)\to (Y,D_{Y})$ be the contraction of $\Supp\Bk D$, i.e.\ of all admissible twigs of $T$. Then $\alpha^{*}(K_{Y}+D_{Y})=K_{X}+D^{\#}$. Indeed, for every component $T_{0}$ of $\Bk D$, we have $T_{0}\cdot (K_{X}+D-\alpha^{*}(K_{Y}+D_{Y}))=T_{0}\cdot(K_{X}+T_{0})+\beta_{D}(T_{0})=-2+\beta_{D}(T_{0})=T_{0}\cdot \Bk D$ by definition of bark, so $K_{X}+D-\alpha^{*}(K_{Y}+D_{Y})=\Bk D$ because $\Bk D$ is negative definite.
	
	In particular, $(Y,D_{Y})$ is log terminal. General theorems of Mori theory imply that either $(Y,D_{Y})$ is minimal, so \ref{item:psi_min} holds for $n=0$, or there is a log extremal curve $\ll\subseteq Y$ such that $\ll^{2}<0$ and $\ll\cdot (K_{Y}+D_{Y})<0$. Put $A=\alpha^{-1}_{*}\ll$. We claim that $A$ satisfies \eqref{eq:hp}.
	
	We have $0>\ll\cdot (K_{Y}+D_{Y})=A \cdot(K_{X}+D^{\#})$. Suppose $A\subseteq D$. Then $0>-2+\beta_{D}(A)-A\cdot \Bk D$, hence $A$ is a tip of $D-\Supp\Bk D$ and meets at most one component of $\Bk D$. This means that $A$ is a component of an admissible twig of $D$, a contradiction with the definition of $\Bk D$.
	
	Therefore, $A\not \subseteq D$, so $A\cdot D^{\#}>0$. Thus $A\cdot K_{X}<0$, which together with $A^{2}<0$ implies that $A$ is a $(-1)$-curve. Moreover, $A\cdot D^{\#}<-A\cdot K_{X}=1$, so $A$ meets $D$ only on $\Bk D$. Now a computation \cite[II.3.7.1(2)]{Miyan-OpenSurf}, which is rather complicated, but uses nothing more than negative definiteness of $A+\Bk D$, shows that $A$ meets each connected component of $D$ at most once. Because $X\setminus D$ is affine, it follows that $A\cdot D=1$, so $A$ satisfies \eqref{eq:hp}, as claimed.
	
	We now put $\psi_{1}=\phi_{A}\colon (X,D)\to(X_{1},D_{1})$, for some choice of $\phi_{A}$ such that $D_{1}$ is snc-minimal. Then for $i=1$, \ref{item:psi_hpd},\ref{item:X_i-smooth} hold by definition and \ref{item:psi_kappa} holds by Lemma \ref{lem:HPD_k}. Next, we replace $(X,D)$ by $(X_{1},D_{1})$ and repeat the procedure, which eventually ends since $\rho(X)$ drops after each step.
\end{proof}

\section{Proof of Theorem \ref{THM}}\label{sec:proof}
\setcounter{claim}{0}

Recall that $S_{p_1,p_2}\in \cS_{0}$ by Corollary \ref{cor:S_kappa}, and $S_{p_1,p_2}\not\cong S_{p_1',p_2'}$ for $\{p_1,p_2\}\neq \{p_1',p_2'\}$ by Corollary \ref{cor:st}\ref{item:distinct}. It remains to show that any $S\in \cS_{0}$ is isomorphic to some $S_{p_1,p_2}$. Let $(X,D)$ be a minimal log smooth completion of $S$. The idea of the proof is to view $\phi$ from Construction \ref{constr} as some almost minimalization \eqref{eq:MMP} of $(X,D)$. By Proposition \ref{prop:MMP} each intermediate pair $(X_{i},D_{i})$ in \eqref{eq:MMP} is a log smooth completion of an affine open subset of $X\setminus D$, of the same Kodaira dimension. Hence under the assumptions of Theorem \ref{THM}, $\kappa(X_{i}\setminus D_{i})=0$ and $\C[X_{i}\setminus D_{i}]$ is a UFD by Remark \ref{rem:open_UFD}. 

To reconstruct $(X,D)$ we need to understand each half-point attachment $\psi_{i+1}\colon (X_{i},D_{i})\to(X_{i+1},D_{i+1})$. This is done in Lemma \ref{lem:producing_Cst}, which is a version of \cite[Lemma III.4.4.3]{Miyan-OpenSurf}, with almost the same proof.

\begin{lem}\label{lem:producing_Cst}
	Let $S$ be a smooth affine surface. Assume that $\kappa(S)=0$ and $\C[S]$ is UFD. Let $(X,D)$ be a minimal log smooth completion of $S$ and let $\phi_{A}$ be a half-point attachment of some $A\subseteq X$. Then
	\begin{enumerate}
		\item\label{item:HP_Cst} $A\cap S\cong \C^{1}$ is a smooth fiber of some $\C^{*}$-fibration of $S$
		\item\label{item:HP_C} $\Exc \phi_{A}=[1,(2)_{k}]$, for some $k\geq 0$, is a twig of $D+A$. Its image does not lie on a rational twig of $(\phi_A)_{*}D$.
	\end{enumerate}
	\begin{figure}[htbp]
		\begin{tabular}{ccc}
			\begin{subfigure}[t]{0.4\textwidth}
				\includegraphics[scale=0.34]{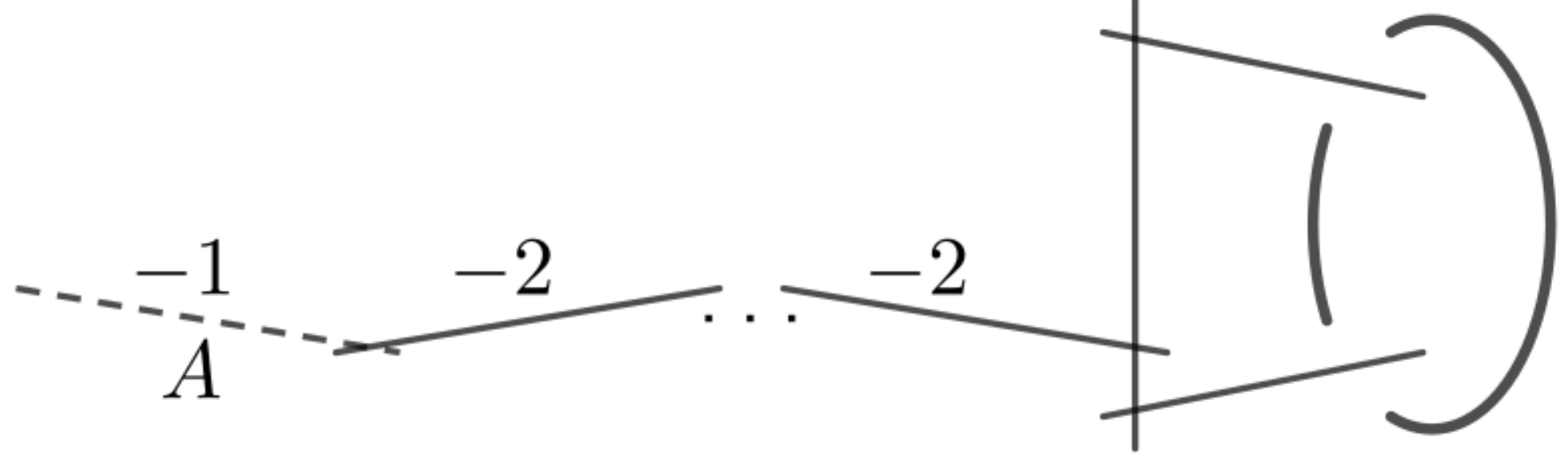}
			\end{subfigure}
			&
			\raisebox{1cm}{$\xrightarrow{\quad \phi_{A}\quad}$}
			&
			\begin{subfigure}[t]{0.2\textwidth}
				\includegraphics[scale=0.34]{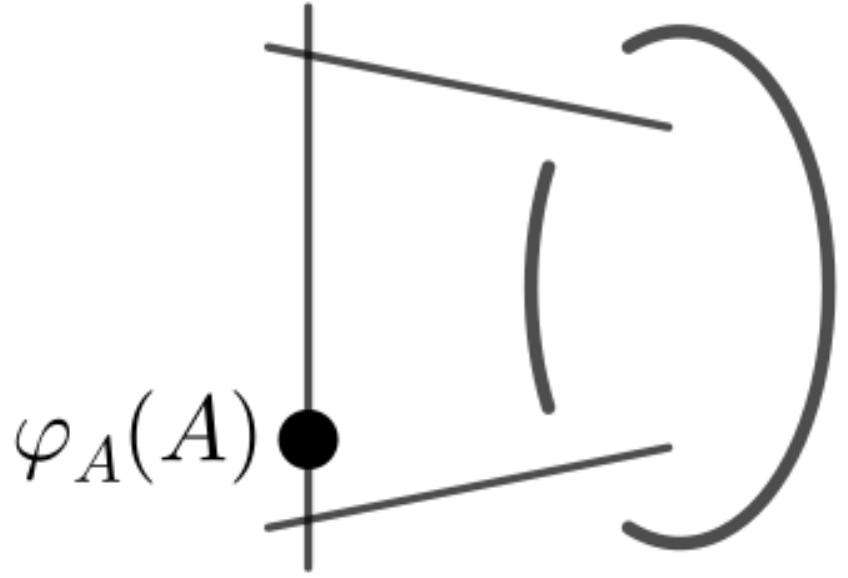}
			\end{subfigure}
		\end{tabular}
		\caption{A half-point attachment to $S$ if $\C[S]$ is a UFD and $\kappa(S)=0$, see Lemma \ref{lem:producing_Cst}.}
		\label{fig:HP}
	\end{figure}
\end{lem}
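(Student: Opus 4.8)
The plan is to analyze the half-point attachment $\phi_A$ of a $(-1)$-curve $A$ meeting $D$ in a single point, under the standing hypotheses $\kappa(S)=0$ and $\C[S]$ a UFD, and to produce a $\C^{*}$-fibration witnessing part \ref{item:HP_Cst}. Recall from \eqref{eq:hp} that $A^{2}=-1$, $A\not\subseteq D$, $A\cdot D=1$. First I would contract $A$ and record which superfluous $(-1)$-curves appear: after blowing down $A$, the unique point $A\cap D$ becomes a node of the image of $D$, and if $A$ met a non-branching $(-2)$-curve of $D$ the image of that curve becomes a $(-1)$-curve. The key structural claim for part \ref{item:HP_C} is that the only components of $D$ that can be successively contracted by $\phi_A$ form a $(-2)$-twig of $D$ meeting $A$, so that $\Exc\phi_A=[1,(2)_k]$ is a twig of $D+A$; here the hypothesis that $\C[S]$ is a UFD is what rules out the ambiguity described in the Remark after Definition \ref{def:hp}, since by Lemma \ref{lem:S0} the components of $D$ are $\Z$-linearly independent in $\Pic(X)$, which prevents $A$ from meeting two distinct contractible twigs (that would force a dependence, or equivalently a second boundary curve that the UFD condition forbids).

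Next, to get the fibration, I would exhibit a $\P^{1}$-fibration of $X$ for which $A$ is a $1$-section and the completed chain $A+\Exc$ collapses to a fiber component. Concretely, consider the linear system $|A+\Exc\phi_A|$ or, more robustly, build the fibration on the final model: after $\phi_A$ contracts $[1,(2)_k]$, the image of the top of the twig becomes a $0$-curve, and $|0\text{-curve}|$ induces a $\P^{1}$-fibration whose pullback to $X$ is the desired fibration. The curve $A$ meets a general fiber once, so $A$ is a $1$-section; since $A\cdot D=1$ and $A\setminus D\cong\C^{1}$, the restriction $A\cap S\cong\C^{1}$ is a full fiber of the restricted fibration on $S$, and I would check the general fiber of that restriction is $\C^{*}$ rather than $\C^{1}$. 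The latter is exactly where $\kappa(S)=0$ is used: a $\C^{1}$-fibration would force $\kappa(S)=-\infty$ by the Iitaka Easy Addition theorem (as invoked already in the proof of Lemma \ref{lem:rational-twigs-are-admissible}), so the general fiber must be $\C^{*}$, giving a $\C^{*}$-fibration of which $A\cap S$ is a smooth fiber.

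For the final sentence of \ref{item:HP_C}, that the image of $\Exc\phi_A$ does not lie on a rational twig of $(\phi_A)_*D$, I would argue by contradiction: if it did, then $(\phi_A)_*D$ would have a rational twig containing this $0$-curve image, but by Lemma \ref{lem:rational-twigs-are-admissible} all rational twigs of an snc-minimal boundary are admissible (self-intersections $\leq -2$), contradicting the presence of a $0$-curve. This simultaneously confirms that the $0$-curve produced above is genuinely branching or otherwise non-twig, which is what lets the induced pencil be a fibration of the whole surface rather than degenerating.

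The main obstacle I expect is the rigorous identification of $\Exc\phi_A$ as precisely a single $(-2)$-twig and the exclusion of the branching ambiguity, i.e.\ making the UFD hypothesis do its work through the linear independence of boundary components in $\Pic(X)$; this is the delicate combinatorial-homological point, and it is exactly the step where \cite[Lemma III.4.4.3]{Miyan-OpenSurf} is adapted. By contrast, producing the $\C^{*}$-fibration once the twig structure is known is comparatively formal, relying only on Tsen/Lemma \ref{lem:untwisted} to see the generic fiber type and on Iitaka Easy Addition to upgrade $\C^{1}$ to $\C^{*}$.
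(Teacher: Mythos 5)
Your proposal inverts the paper's logic and, as written, neither part can be completed. For part \ref{item:HP_Cst}, the fibration you construct does not exist: contracting the chain $[1,(2)_{k}]$ increases the self-intersection of the adjacent component $C$ of $D$ by exactly one (only the last contraction touches $C$), so the contracted model contains a component of self-intersection $C^{2}+1$, and no $0$-curve appears; likewise $A+\Exc\phi_{A}$ contracts to a smooth point, hence is negative definite, so its linear system is $0$-dimensional and induces no pencil. The $0$-curve you are implicitly thinking of is the image of a \emph{second} $(-1)$-curve $C'\not\subseteq D+A$ which completes $A+\Exc\phi_{A}$ to a full fiber $[1,(2)_{k},1]$; but in the paper the existence of $C'$ is \emph{deduced from} the $\C^{*}$-fibration, so it cannot be used to construct that fibration. (Your sketch is also internally inconsistent: if $A$ were a $1$-section, then $A\cap S$ would be horizontal and could not be a fiber of the restriction to $S$.) The paper proves \ref{item:HP_Cst} first, with no combinatorics at all: since $\C[S]$ is a UFD, the prime divisor $A\cap S$ is the zero divisor of a regular function $f$; then $f(S\setminus A)\subseteq\C^{*}$, $\kappa(S\setminus A)=0$ by Lemma \ref{lem:HPD_k}, and Kawamata's addition theorem combined with Easy Addition forces the general fiber of $f$ to be $\C^{*}$. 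Your proposal never uses the decisive consequence of factoriality, namely that prime divisors on $S$ are principal.

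For part \ref{item:HP_C}, which is the heart of the lemma, you have no argument. First, the ambiguity in the remark after Definition \ref{def:hp} is not about $A$ \enquote{meeting two distinct contractible twigs} (impossible, since $A\cdot D=1$); it is about $A$ meeting a non-tip component of a $(-2)$-chain, and about where the chain of contractions inside $\phi_{A}$ terminates. Second, $\Z$-linear independence of the components of $D$ in $\Pic(X)$ is equivalent to $\C[S]^{*}=\C^{*}$ by Lemma \ref{lem:S0}\ref{item:units}; this is \emph{not} a hypothesis of the lemma (UFD gives only surjectivity, Lemma \ref{lem:S0}\ref{item:UFD}), and even if it were available, you derive nothing from it --- you defer to \enquote{adapting \cite[Lemma III.4.4.3]{Miyan-OpenSurf}}, which is precisely the content that has to be supplied. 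The paper supplies it by extending the $\C^{*}$-fibration of \ref{item:HP_Cst} to a $\P^{1}$-fibration on a blowup of $X$, using Lemma \ref{lem:untwisted} (the second place where UFD enters) to see that the horizontal boundary consists of two $1$-sections, concluding that the fiber through $A$ is $[1,(2)_{k},1]$, and then descending this structure back to $X$. Finally, your argument for the last sentence of \ref{item:HP_C} rests on the same false premise as your fibration construction: the image of $\Exc\phi_{A}$ is a point lying on a component of self-intersection $C^{2}+1$, which need not be $\geq 0$, so Lemma \ref{lem:rational-twigs-are-admissible} yields no contradiction until the fiber structure $[1,(2)_{k},1]$ is established.
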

\begin{proof}
\ref{item:HP_Cst} Because $\C[S]$ is a UFD, $A|_{S}$ is a divisor of zeros of some $f\in \C[S]$. Let $F_{\mathrm{aff}}$ be some general fiber of $f$. It is reduced and irreducible because $A|_{S}$ is. Put $S'=S\setminus A$, $U'=f(S')\subseteq \C^{*}$. By Lemma \ref{lem:HPD_k} $\kappa(S')=\kappa(S)=0$, so the Kawamata addition theorem \cite[Theorem 11.15]{Iitaka_AG} gives $0=\kappa(S')\geq \kappa(F_{\mathrm{aff}})+\kappa(U')\geq \kappa(F_{\mathrm{aff}})+\kappa(\C^{*})=\kappa(F_{\mathrm{aff}})\geq 0$, where the last inequality follows from the Iitaka Easy Addition theorem. Hence $\kappa(F_{\mathrm{aff}})=0$, which implies that $\kappa(F_{\mathrm{aff}})\cong \C^{*}$ because $F_{\mathrm{aff}}$ is smooth and affine. Thus $f$ is a $\C^{*}$-fibration of $S$, with $A|_{S}$ as one of its fibers.

\ref{item:HP_C} Let $T$ be a maximal admissible twig of $D$ meeting $A$ in its first tip; put $T=0$ if there is no such. Since $S$ is affine, $D\neq T$. Let $C$ be the component of $D-T$ meeting $A+T$. % We need to show that $C$ does not lie in a rational twig of $D-T$.
Let $\tau\colon \hat{X}\to X$ be a minimal sequence of blowups over $D$ such that the $\C^{*}$-fibration from \ref{item:HP_Cst}  extends to a $\P^{1}$-fibration of $\hat{X}$; let $F$ be a fiber containing $\hat{A}\de \tau^{-1}_{*}A$. Put $\hat{D}=(\tau^{*}D)\redd$. Note that $\tau$ does not touch $\hat{A}$: indeed, otherwise the only base point of $\tau^{-1}$ is the unique common point of $A$ and $D$, so  $F\subseteq \tau^{*}A$ is negative definite, which is false.

If a $(-1)$-curve $L\subseteq \hat{D}\vert$ is superfluous in $\hat{D}$, then $L$ is not a component of a rational twig of $\hat{D}$. Indeed, $L\not\subseteq \Exc\tau$ since $\tau$ is minimal, and $\tau(L)^{2}\neq -1$ since $D$ is snc-minimal; hence $\tau(L)^{2}\geq 0$, and the claim follows from Lemma \ref{lem:rational-twigs-are-admissible}. Let $\sigma$ be the contraction of all vertical superfluous $(-1)$-curves in $\hat{D}$ and its images. By our claim, $\sigma$ does not touch rational twigs of $\hat{D}$. Put $D'=\sigma_{*}\hat{D}$, $F'=\sigma_{*}F$, $A'\de \sigma_{*}\hat{A}$.

If $\sigma$ touches $\hat{A}$ then $T=0$ and $\tau^{-1}_{*}C\subseteq \Exc\sigma$ does not lie in a rational twig of $\hat D$, so the result follows. Assume $\sigma$ does not touch $\hat A$, so $A'$ is a $(-1)$-curve of multiplicity one in a fiber $F'$. Now,  $F'-A'$ contains another $(-1)$-curve, say $C'$. By definition of $\sigma$, $\beta_{D'}(C')\geq 3$. Since $\beta_{F'\redd}(C')\leq 2$, $C'$ meets $D'\hor$. By Lemma \ref{lem:untwisted}, $D'\hor$ consists of two $1$-sections, so $C'$ has multiplicity one in $F'$. Hence $\beta_{F'\redd}(C')=1$, so $C'\cdot D'\hor=2$, which implies that $(F'-C')\cdot D'\hor=0$, and that $C'$ is a unique $(-1)$-curve in $F'-A'$. Therefore, $F'=[1,(2)_{k},1]$ for some $k\geq 0$. In other words, $T'\de F'-A'-C'=[(2)_{k}]$ is zero or a maximal twig of $D'$.  Applying Lemma \ref{lem:rational-twigs-are-admissible} to image of $D'$ after contraction of $A'+T'$, we infer that $C'$ does not lie in a rational twig of $D'-T'$. 

Since $\sigma$ does not touch rational twigs of $D'$, $\hat{T}\de \sigma^{-1}_{*}T'$ is zero or a maximal twig of $\hat{D}$; and the component $\hat{C}\de \sigma^{-1}_{*}C'$ meeting it does not lie in a rational twig of $\hat{D}-\hat{T}$. Thus $\tau_{*}\hat{T}=T$, $\tau_{*}\hat{C}=C$: indeed, otherwise $\tau$ touches $\hat{C}$, so $C^{2}\geq 0$, and $C$ lies in an admissible twig of $D$, which is impossible by Lemma \ref{lem:rational-twigs-are-admissible}. Therefore, $T=[(2)_{k}]$, and $C$ does not lie in any rational twig of $D$, as claimed.
\end{proof}

Our next lemma is a standard result concerning curves of canonical type, see eg.\ \cite[I.3.3.1]{Miyan-OpenSurf}. It will give a description of the almost minimal model of $(X,D)$, or, more precisely, the pair obtained from $(X,D)$ after all possible half point attachments, called a \emph{strongly minimal model} in \cite[II.4.9]{Miyan-OpenSurf}.

Recall that $\F_{m}$ for an integer $m\geq 0$ denotes the $m$-th Hirzebruch surface $\P(\O_{\P^{1}}\oplus \O_{\P^{1}}(m))$.

\begin{lem}[strongly minimal models]\label{lem:O}
	Let $S$ be a smooth rational affine surface such that $\C[S]$ is a UFD. Let $(X,D)$ be a minimal log smooth completion of $S$. Assume that $K_{X}+D\equiv 0$ and that all $(-1)$-curves on $X$ are contained in $D$. Then  one of the following holds:
		\begin{enumerate}
			\item\label{item:Fm_4} $X=\F_{m}$ for some $m\neq 1$ and  $D=\lc0,m,0,-m\rc$,
			\item\label{item:Fm_3} $X=\F_{m}$ for some $m\neq 1$ and  $D=\lc0,m,-m-2 \rc$,
			\item\label{item:triangle} $X=\P^{2}$ and  $D=\lc -1,-1,-1 \rc$, i.e.\ $D$ is a triangle,
			\item\label{item:conic} $X=\P^{2}$ and  $D=\lc -1,-4\rc $, i.e.\ $D$ is a sum of a line and a conic which meet at two points.
		\end{enumerate}
\end{lem}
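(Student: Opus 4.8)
The plan is to first show that $D$ must be an anticanonical cycle of rational curves, and then to pin down $X$ together with the cycle type by a case analysis governed by the Picard number $\rho(X)$ and, on minimal models, by the ruling.

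Since $S$ is affine, $D$ is connected and $\Supp D$ carries an ample divisor. From $K_X+D\equiv 0$ and adjunction for the reduced connected divisor $D$ we get $2p_a(D)-2=(K_X+D)\cdot D=0$, so $p_a(D)=1$. Applying adjunction componentwise gives $\beta_D(D_i)=D_i\cdot(D-D_i)=-D_i\cdot(K_X+D_i)=2-2g(D_i)$. Hence either some $D_i$ has $g=1$ and $\beta=0$, forcing $D=D_i$ to be a single smooth elliptic curve, or every $D_i$ is rational with $\beta_D(D_i)=2$, so the dual graph of $D$ is connected and $2$-regular, i.e. $D$ is a circular rational divisor. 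The elliptic case is excluded by the UFD hypothesis: by Lemma \ref{lem:S0}\ref{item:UFD} the single class $[D]$ would generate $\Pic(X)=\NS(X)$, forcing $\rho(X)=1$, hence $X=\P^2$ and $D\in|{-K_{\P^2}}|=|\O(3)|$, but $3H$ does not generate $\Pic(\P^2)=\Z H$. Thus $D$ is a cycle of $n\de\#D\geq 2$ rational curves.

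Next I would split according to $n$. If $n\geq 3$ the cycle is simple, so any $(-1)$-curve contained in $D$ meets exactly two components, each once, hence is superfluous — impossible, as $D$ is snc-minimal. Since by hypothesis every $(-1)$-curve of $X$ lies in $D$, it follows that $X$ has no $(-1)$-curve, i.e. $X$ is minimal, so $X=\P^2$ or $X=\F_m$ with $m\neq 1$. For $X=\P^2$, surjectivity of $\Z[D]\to\Pic(\P^2)=\Z$ forces the component degrees $d_i$ to have $\gcd(d_i)=1$, while $\sum d_i=\deg(-K_{\P^2})=3$ and $n\geq 3$ leave only $d_1=d_2=d_3=1$, a triangle of three lines (self-intersections $1$), which is case \ref{item:triangle}. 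For $X=\F_m$ I would use the ruling $\pi\colon\F_m\to\P^1$ with fibre $f$, so $D\cdot f=-K_X\cdot f=2$. By Lemma \ref{lem:untwisted} the horizontal part $D\hor$ cannot be a single irreducible $2$-section (that would give $\gcd=2$), hence $D\hor$ consists of two sections $s_1,s_2$. In the cycle each fibre component has $\beta=2$ and meets no other fibre, so it meets both $s_1$ and $s_2$; therefore the cycle alternates between sections and fibres and has length $4$ (if $s_1\cap s_2=\emptyset$, two fibres) or length $3$ (if $s_1\cdot s_2=1$, one fibre). Writing $s_i=C_0+k_i f$ and using that an irreducible section is either $C_0$ or satisfies $k_i\geq m$, together with $s_1+s_2+(\#\text{fibres})f=-K_X=2C_0+(m+2)f$, forces $\{s_1,s_2\}=\{C_0,\,C_0+mf\}$ in the length-$4$ case (self-intersections $-m,m$), which is case \ref{item:Fm_4}, and $\{s_1,s_2\}=\{C_0,\,C_0+(m+1)f\}$ in the length-$3$ case (self-intersections $-m,m+2$), which is case \ref{item:Fm_3}.

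Finally, for $n=2$ the two component classes generate $\NS(X)$, so $\rho(X)\leq 2$. If $\rho(X)=1$ then $X=\P^2$ and, as above, the components have degrees $\{1,2\}$, i.e. $D$ is a line and a conic meeting at two points (self-intersections $1,4$) — case \ref{item:conic}. If $\rho(X)=2$ then $X$ is a Hirzebruch surface; $\F_1$ is impossible since it has infinitely many $(-1)$-curves, which cannot all lie in the two-component divisor $D$, so $m\neq 1$ and $X=\F_m$ is minimal. Then $D=s_1+s_2$ is a sum of two sections meeting at two points, and a short determinant-and-parity computation shows their classes $(1,k_1),(1,k_2)$ with $k_1+k_2=m+2$ never satisfy $|k_2-k_1|=1$, so they fail to generate $\NS(\F_m)$, contradicting Lemma \ref{lem:S0}\ref{item:UFD}. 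Hence $n=2$ yields only case \ref{item:conic}. The routine but delicate part, which I expect to be the main obstacle, is exactly this Hirzebruch bookkeeping: verifying cleanly and exhaustively that irreducibility of the two sections, the untwistedness from Lemma \ref{lem:untwisted}, and the numerical identity $D\equiv-K_X$ leave no cycles other than those in \ref{item:Fm_4} and \ref{item:Fm_3}, and dually that every length-$2$ cycle on a Hirzebruch surface violates the UFD condition.
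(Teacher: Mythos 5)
Your route is genuinely different from the paper's: you establish directly on $X$ that $D$ is a cycle of rational curves and then split according to $n=\#D$, whereas the paper first contracts $(-1)$-curves inside $D$ until the underlying surface has no $(-1)$-curves at all (checking a posteriori that no contraction actually occurred), so that $\F_1$ never enters the picture. Most of your argument is correct, and for $n\geq 3$ the observation that a $(-1)$-curve in a simple cycle would be superfluous is a clean shortcut. However, there is a genuine gap in the case $n=2$, $\rho(X)=2$: your exclusion of $X=\F_1$ rests on the claim that $\F_1$ contains infinitely many $(-1)$-curves. This is false: $\F_1$ contains exactly one $(-1)$-curve, the negative section $C_0$. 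A single curve can perfectly well lie on a two-component divisor, so your stated reason excludes nothing.

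The gap is not cosmetic, because your determinant-and-parity computation also fails to exclude $m=1$. Concretely, let $\ell\subseteq\P^{2}$ be a line avoiding the center $p$ of the blowup $\F_1\to\P^{2}$ and $Q$ a smooth conic through $p$ meeting $\ell$ transversally, and let $D\subseteq\F_1$ be the sum of the proper transforms, of classes $C_0+f$ and $C_0+2f$. Then $D$ is an snc-minimal cycle of two rational curves meeting at two points, $D\equiv -K_{\F_1}$, the complement is affine and rational, and the classes $(1,1),(1,2)$ have determinant $1$, so $\Z[D]\to\Pic(\F_1)$ is an isomorphism and $\C[S]$ is a UFD; in particular this configuration passes every test in your proof, yet $D=\lc -1,-3\rc$ is not on the list. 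The only hypothesis it violates is the one your argument never correctly invokes in this case: $C_0\not\subseteq D$, so not all $(-1)$-curves of $X$ lie in $D$. The repair is to use that hypothesis honestly: on $\F_1$ the unique $(-1)$-curve $C_0$ must be a component of $D$, which forces $D=C_0+(C_0+3f)$, whose classes have determinant $3$, contradicting surjectivity of $\Z[D]\to\Pic(X)$ (Lemma \ref{lem:S0}\ref{item:UFD}); and the remaining option of a fiber plus a $2$-section (which you also leave implicit for $m\neq 1$ when you assert $D=s_1+s_2$) is killed by Lemma \ref{lem:untwisted}, exactly as in your $n\geq 3$ analysis. With these two points patched, your argument goes through.
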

\begin{proof}
	Let $\tau\colon (X,D)\to (X',D')$ be a contraction of some $(-1)$-curves in $D$ and its images, such that all singularities of $D'$ are ordinary double points. Since $D$ is snc-minimal, $\Bs\tau^{-1}\subseteq \Sing D$. Let $A'$ be a $(-1)$-curve in $X'$. Then $A'\cdot D'=-A'\cdot K_{X'}=1$. If $A'\not\subseteq D'$ then $A'\cap \Sing D'=\emptyset$, so $\tau$ does not touch $A\de\tau^{-1}_{*}A$. Therefore, $A\subseteq X$ is a $(-1)$-curve not contained in $D$, contrary to our assumption. Hence $A'\subseteq D'$, so $\beta_{D'}(A')=A'\cdot D'-A^2=2$. Since no two components of $D'$ are tangent, $\#A'\cap (D'-A')=2$. After the contraction of $A'$, the singularities of the image of $D'$ are still at most ordinary double points. 
	
	Thus we can assume that $X'$ has no $(-1)$-curves, i.e.\ $X'=\P^2$ or $\F_{m}$ for some $m\neq 1$. By Lemma \ref{lem:S0}\ref{item:UFD}, the components of $D$ generate $\Pic(X)$, so the components of $D'$ generate $\Pic(X')$, too. 
	
	Assume first that $X'=\P^{2}$. Then $D'\equiv -K_{\P^{2}} \equiv 3\cdot [\mbox{line}]$. Because the components of $D'$ generate $\Pic(\P^{2})$, $D'$ is reducible. In particular, $D'$ has no singular component, so $\tau=\id$ and $D$ is as in \ref{item:triangle} or \ref{item:conic}.
	
	Assume now that $X'=\F_{m}$. Then $2=\rho(X')\leq \#D'$. Because $X'\setminus D'$ is affine, $D'$ is connected, so $\beta_{D'}(C)\geq 1$ for every component $C$ of $D'$. Hence $0=C\cdot (K_{X'}+D')=2p_{a}(C)-2+\beta_{D'}(C)\geq 2p_{a}(C)-1$, It follows that $C$ is smooth, so $\tau=\id$. Moreover, $C$ is rational and $\beta_{D}(C)=2$, so $D$ is circular. 
	
	Denote by $F,T\in\Pic(\F_{m})$ the classes of the fiber of $\F_{m}$ and of the section with $T^{2}=-m$, which is unique if $m\neq 0$. We have $F\cdot D=-F\cdot K_{\F_{m}}=2$, so by Lemma \ref{lem:untwisted} $D\hor$ consists of two $1$-sections, say $H_{i}\equiv a_{i}F+T$ for $i\in\{1,2\}$. We can assume $a_1\geq a_2$. Note that either $a_{i}=0$ or $0\leq H_{i}\cdot T=a_{i}-m$, so $a_{i}\geq m$. Hence $a_1\geq m$ and either $a_2\geq m$ or $H_2\equiv T$. Let $v$ be the number of fibers in $D$. Because all fibers are disjoint and meet $H_{1}$ and $H_{2}$, from the fact that $\beta_{D}(H_j)=2$, $j\in \{1,2\}$ we infer that 
	\begin{equation}\label{eq:Hirz}
	2-v=H_{1}\cdot H_{2}=a_{1}+a_{2}-m.
	\end{equation}
	Suppose $v=0$. Then because $H_{1},H_{2}$ generate $\Pic(\F_{m})=\Z[F+T]$, we have $\det \left[ \begin{smallmatrix} a_{1} & 1 \\ a_{2} & 1 \end{smallmatrix}\right]=\pm 1$, so $a_{1}-a_{2}=1$. Now \eqref{eq:Hirz} gives $a_{2}=\tfrac{1}{2}(m+1)$, so $2\nmid m$, hence $m\geq 3$ and $0\neq a_2<m$, a contradiction
	
	Thus $v\in \{1,2\}$. It follows that $H_{2}\equiv T$, because otherwise $a_{1},a_{2}\geq m$ and \eqref{eq:Hirz} gives $1\geq  a_{1}+a_{2}-m\geq m$, so $m=0$ and $1\geq a_{1}+a_{2}\geq 2$, which is false. Hence $a_{2}=0$, $H_{2}^{2}=-m$ and by \eqref{eq:Hirz} $a_{1}=m+2-v$, so $H_{1}^{2}=m+4-2v$. We conclude that \ref{item:Fm_4} holds if $v=2$ and \ref{item:Fm_3} holds if $v=1$.
\end{proof}

\begin{rem*}
  Lemma \ref{lem:O} will imply that almost minimal models for surfaces in $\cS_{0}$ are of type $(O)$ in \cite[8.70]{Fujita-noncomplete_surfaces}, cf.\ \cite[Proposition 1.5(1)]{Kojima_k0}. For definition and basic properties of this type see \cite[8.8, 8.12]{Fujita-noncomplete_surfaces}. Parts \ref{item:Fm_4}-\ref{item:conic} correspond to cases 4, 6, 8 of 9 in \cite[Theorem 3.1]{Kojima_k0}. We will not use these consequences.	
\end{rem*}

\begin{proof}[Proof of Theorem \ref{THM}]
	Let $(X,D)$ be a minimal log smooth completion of $S\in \cS_{0}$ and let 
	\begin{equation*}
	\psi\colon (X,D)\to (X_{n},D_{n})
	\end{equation*}
	be an almost minimalization as in \eqref{eq:MMP}. By Proposition \ref{prop:MMP}\ref{item:psi_min} $\kappa(X_n\setminus D_n)=\kappa(X\setminus D)=0$, so \cite[II.6.2.1]{Miyan-OpenSurf} implies that $K_{X_{n}}+D_{n}^{\#}\equiv 0$, that is, $K_{X_{n}}+D_{n}\equiv \Bk D_{n}$. By Proposition \ref{prop:MMP}\ref{item:psi_hpd} $\psi$ is a sequence of half-point attachments, so by Lemma \ref{lem:producing_Cst}\ref{item:HP_C} no base point of $\psi^{-1}$ lies on an admissible twig of $D_n$, i.e.\ $\Bs\psi^{-1}\cap \Supp\Bk D_{n}=\emptyset$. Hence $\psi^{-1}_{*}\Bk D_{n}=\psi^{*}\Bk D_{n}\equiv \psi^{*}(K_{X_{n}}+D_{n})$, which by Lemma \ref{lem:S0} writes uniquely as a sum of components of $D$, with integral coefficients. But $\psi^{-1}_{*}\Bk D_{n}$ is a sum of components of $D$ with coefficients in $[0,1)$. Hence $\psi^{-1}_{*}\Bk D_{n}=0$, which implies that
	\begin{equation}\label{eq:K+D=0}
	K_{X_{n}}+D_{n}\equiv 0.
	\end{equation}
	We claim that there is a sequence of half-point attachments
	\begin{equation}\label{eq:claim}
	\phi\colon (X,D)\to (Z,D_{Z}) \mbox{ such that } (Z,D_Z) \mbox{ is as in Lemma \ref{lem:O}}.
	\end{equation}
	Indeed, let $(X_{n},D_{n})\to (Z',D_{Z'})$ be any sequence of half-point attachments. By Remark \ref{rem:open_UFD},  $\C[Z'\setminus D_{Z}']$ is a UFD, and $K_{Z'}+D_{Z'}\equiv 0$  by \eqref{eq:K+D=0}. Let $A\subseteq Z'$ be a $(-1)$-curve, $A\not\subseteq D_{Z'}$.  Then $A\cdot D_{Z'}=-A\cdot K_{Z'}=1$, so $A$ satisfies \eqref{eq:hp}. After a half-point attachment of $A$, the image of $Z'$ has smaller rank, so after finitely many half-point attachments, we get $(Z,D_{Z})$ with no $(-1)$-curves off $D_Z$, as needed.
	
	We need to show that, for a suitable log smooth completion $(X,D)$ of $S$, the morphism $\phi$ from \eqref{eq:claim} can be chosen as in Construction \ref{constr}. Denote by $A_{1},\dots, A_{r}$ the curves not contained in $D$ which are contracted by $\phi$ (in particular, $r \geq n$). Write $\phi(A_{i})=\{b_{i}\}\subseteq Z$ for $i\in \{1,\dots, r\}$. By Lemma \ref{lem:producing_Cst}\ref{item:HP_C} each $b_{i}$ lies on exactly one component $C_{i}$ of $D_{Z}$. Lemma \ref{lem:S0} gives $\#D=\rho(X)$, so
	\begin{equation*}
	\#D_{Z}-\rho(Z)=\#D+r-\rho(X)=r,
	\end{equation*}
	hence $r=2$ if $(Z,D_Z)$ is as in Lemma \ref{lem:O}\ref{item:Fm_4},\ref{item:triangle} and $r=1$ if $(Z,D_Z)$ is as in Lemma \ref{lem:O}\ref{item:Fm_3},\ref{item:conic}. 
	
	Let $R_{1},\dots, R_{r}$ be a basis of $\ker[\Z[D_{Z}]\to \Pic(Z)]$, and let $a_{ij}$ be the coefficient of $C_{i}$ in $R_{j}$. We have $\Pic(X)=\Pic(Z)\oplus\Z[\Exc\phi]=\Z[\phi^{*}D_{Z}]/(\phi^{*}R_{1},\dots,\phi^{*}R_{i})$. Because by Lemma \ref{lem:producing_Cst}\ref{item:HP_C} each $A_{i}$ has multiplicity $1$ in $\phi^{*}C_{i}$ and $0$ in $\phi^{*}(D_{Z}-C_{i})$, Lemma \ref{lem:S0}\ref{item:units} gives 
	\begin{equation}\label{eq:det}
	\det [a_{ij}]_{1\leq i,j\leq r}=\pm 1.
	\end{equation}

	Now we treat each of the cases \ref{item:Fm_4}--\ref{item:conic} from Lemma \ref{lem:O} separately. We would like to reduce everything to case \ref{item:Fm_4} with $m=0$. So in each of the remaining cases, we  construct $\sigma\colon Z\map Z'$, which is an isomorphism on $Z\setminus D_{Z}$ and on some neighborhood of $b_{1},\dots, b_{r}$; and $\phi''\colon (Z',\sigma_{*}D_{Z})\to (Z'',D_{Z''})$, which is an identity or a half-point attachment, such that $(D_{Z''},D_{Z''})$ is as in one of the cases not considered yet. Then we lift $\sigma$ so that the following diagram is commutative:
	\begin{equation*}
	\xymatrix{
		(X,D) \ar[r]^{\phi} \ar@{-->}[d]^{\sigma'} & (Z,D_{Z}) \ar@{-->}[d]^{\sigma} & \\	
		(X',D') \ar[r]^{\phi'} & (Z',D_{Z'}) \ar[r]^{\phi''} & (Z'',D_{Z''})
	}
	\end{equation*}
	where $D'$, $D_{Z'}$, $D_{Z''}$ are reduced total transforms of $D$, and $(X',D')$ is another log smooth completion of $S$. Eventually, we replace $\phi$ with $\phi''\circ \phi'$, which is another sequence of  half-point attachments. 
	
	Consider case \ref{item:conic}, i.e.\ $r=1$, $D_{Z}$ is a sum of a line $L$ and a conic $C$, and $\#L\cap C=2$. We have $R_{1}=C-2L$, so by \eqref{eq:det} $b_{1}\in C$. Define $\sigma$ as a blow up at a point of $C\cap L$ followed by a flow (see Section \ref{sec:standard}) on the proper transform of $L$. Then $(Z',D_{Z}')$ is as in case \ref{item:Fm_3}, so we put $\phi''=\id$.
	
	Consider case \ref{item:triangle} and let $L\subseteq D_{Z}$ be the line not containing $b_{1},b_{2}$. As before, we define $\sigma$ as a blowup at $L\cap (D_{Z}-L)$, followed by a flow on the proper transform of $L$. Then $(Z',D_{Z}')$ is as in case \ref{item:Fm_4}.
	
	Consider case \ref{item:Fm_3}. Write $D_{Z}=F+T+H$, where $F^{2}=0$, $T^{2}=-m$ and $H^{2}=m+2$. We can assume that $b_{1}\in F$: indeed, otherwise after $\pm C_{1}^{2}$ flows on $F$ we obtain a pair with that property. Now $r=1$ and $R_{1}=(m+1)F+T-H$, so \eqref{eq:det} gives $m=0$. Let $\sigma$ be a flow on $T$ such that $(\sigma_{*}F)^{2}=1$, followed by a blowup at $\sigma(F\cap H)$. Let now $\phi''$ be the contraction of a proper transform of a member of $|T|$ passing through $F\cap H$. The resulting pair is as in \ref{item:Fm_4}.
	
	Eventually, consider case \ref{item:Fm_4}. Write $D_{Z}=F_{1}+T+F_{2}+H$, where $F_{1}^{2}=F_{2}^{2}=F_{1}\cdot F_{2}=0$ and as before $T^{2}=-m$, $b_{1},b_{2}\not\in T$. We have $r=2$ and $R_{1}=F_{1}-F_{2}$, $R_{2}=mF_{2}+T-H$, so \eqref{eq:det} implies (after renaming $b_{i}$, $F_{i}$ if necessary) that $b_{1}\in F_{1}$, $b_{2}\in H+T$. After $m$ flows on $F_{2}$ we can assume that $m=0$, so $Z\cong \P^{1}\times \P^{1}$. We can choose coordinates on $Z$ such that $F_{1}=\ll_{1,0}$, $F_{2}=\ll_{1,\infty}$, $b_2\in H=\ll_{2,0}$, $T=\ll_{2,\infty}$, $b_{1}=(0,1)$, $b_{2}=(1,0)$. Now by Lemma \ref{lem:producing_Cst}\ref{item:HP_C}, $\phi\colon (X,D)\to (Z,D_{Z})$ is as in Construction \ref{constr}.
\end{proof}

\section{Diffeomorphism type of $S_{p_{1},p_{2}}$}\label{sec:diff}

In this section we prove Theorem \ref{thm:diff}. To this end, we need to translate Construction \ref{constr} to the language of Kirby calculus, where blowing up corresponds to attaching handles. We give a quick overview of handlebodies and Kirby diagrams in Section \ref{sec:Kirby_overview}.  In Proposition  \ref{prop:handles} we show that Construction \ref{constr} amounts to attaching two $2$-handles to $\bT^{2}\times \bD^{2}$, whose interior is identified with $\C^{*}\times \C^{*}$, see Figure \ref{fig:handles}. Theorem \ref{thm:diff} follows by sliding its $2$-handle over the attached ones. The boundary $M$ of the obtained $4$-manifold is a $0$-surgery on the knot $K$ from Figure \ref{fig:knot}. It is interesting to note that this is one of the few \emph{exceptional} surgeries on $K$, classified in  \cite{BW_surgery}. More precisely, while all but finitely many surgeries on $K$ are hyperbolic, $M$ contains incompressible tori. Decomposition of $M$ along these tori is described in Proposition \ref{prop:JSJ}. While interesting in its own right, it gives another way to distinguish $S_{p_1,p_2}$, for different $\{\deg p_{1}, \deg p_{2}\}$. 
\smallskip

We begin with the following observation.
\begin{prop}\label{prop:htp}
	For every $p_{1},p_{2}$, $S_{p_1,p_2}$ is homotopically equivalent to $\bS^{2}$. 
\end{prop}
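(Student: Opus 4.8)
The plan is to realize $S = S_{p_1,p_2}$ as $\C^*\times\C^*$ with two contractible pieces glued in along the exceptional curves $A_1,A_2$, and to read off its homotopy type from a van Kampen/Mayer--Vietoris decomposition. By Construction \ref{constr} the morphism $\phi$ restricts to an isomorphism $S\setminus(A_1\cup A_2)\xrightarrow{\sim}\C^*\times\C^*$, so I set $U=S\setminus(A_1\cup A_2)\cong\C^*\times\C^*\htp\bT^2$ and let $V_j$ be a tubular neighborhood in $S$ of $A_j\cap S$. Since $A_j$ is the final $(-1)$-curve over its centre and meets $D$ in a single point, $A_j\cap S\cong\C^1$ (cf.\ Theorem \ref{thm:AAut}) has trivial normal bundle; hence each $V_j\cong\C^2$ is contractible, the two are disjoint, and $U\cap V_j\simeq\bS^1$ is generated by the meridian $\gamma_j$ of $A_j$. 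This gives an open cover $S=U\cup V_1\cup V_2$ to which van Kampen and Mayer--Vietoris apply.

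The key computation is to identify the class of $\gamma_j$ in $\pi_1(\C^*\times\C^*)=\Z^2=\langle t_1,t_2\rangle$, where $t_i$ generates the $i$-th factor. Writing $H_1(\C^*\times\C^*)=\Z^2$ via the periods $\tfrac{1}{2\pi i}\oint d\log x_i$, the $i$-th coordinate of $\gamma_j$ equals $\mathrm{ord}_{A_j}(x_i)$. As $\mathrm{div}(x_i)=\phi^*(\ll_{i,0}-\ll_{i,\infty})$ and $A_j$ lies over $(0,1)$ or $(1,0)$, only $\phi^*\ll_{j,0}$ can contain $A_j$, so $\mathrm{ord}_{A_j}(x_{3-j})=0$. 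Tracking multiplicities through the blow-ups of Construction \ref{constr}, where at each stage one blows up a \emph{finite} point of the latest exceptional curve, hence a point lying off the proper transform of $\ll_{j,0}$, shows that $A_j$ occurs in $\phi^*\ll_{j,0}$ with multiplicity exactly $1$. Therefore $\gamma_j\mapsto t_j$: the inclusion $U\cap V_j\hookrightarrow U$ carries the generator of $\pi_1(U\cap V_j)$ to $t_j$. I expect this multiplicity-one verification to be the only delicate point; everything else is formal.

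With this, van Kampen yields $\pi_1(S)=\pi_1(\C^*\times\C^*)/\langle\langle\gamma_1,\gamma_2\rangle\rangle=\Z^2/\langle t_1,t_2\rangle=0$. In Mayer--Vietoris the same assignment $\gamma_j\mapsto t_j$ is exactly the isomorphism $H_1(U\cap V)\xrightarrow{\sim}H_1(U)$; using $U\cap V\simeq\bS^1\sqcup\bS^1$, $V\simeq\mathrm{pt}\sqcup\mathrm{pt}$ and $U\htp\bT^2$, the sequence forces $H_0(S)=\Z$, $H_1(S)=0$, $H_2(S)=\Z$ and $H_i(S)=0$ for $i\geq3$. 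Thus $S$ is simply connected with the integral homology of $\bS^2$. By Hurewicz $\pi_2(S)\cong H_2(S)=\Z$; choosing $\alpha\colon\bS^2\to S$ representing a generator, $\alpha_*$ is an isomorphism on all homology groups between simply connected spaces, so by Whitehead's theorem $\alpha$ is a homotopy equivalence and $S\htp\bS^2$.
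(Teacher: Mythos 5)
Your proof is correct, but it takes a genuinely different route from the paper's. The paper's argument is much shorter: it cites \cite[Lemma 3.4]{Kal_exotic-measures} to get $\pi_{1}(S)=\pi_{1}(\C^{2})=\{1\}$ directly from the description of $S$ as an iterated affine modification of $\C^{2}$, computes $\etop(S)=\etop(\C^{2})-1+2=2$ from the same picture (the contractible set $\{x_1x_2=0\}$ is replaced by the two disjoint copies of $\C^{1}$ inside $A_{1}\cup A_{2}$), and then uses the fact that $S$, being Stein, has the homotopy type of a CW-complex of real dimension two to force $H_{2}(S;\Z)=\Z$ and $H_{i}(S;\Z)=0$ for $i\geq 3$; the Hurewicz--Whitehead endgame is the same as yours. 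Your cover $S=U\cup V_{1}\cup V_{2}$ with $U\cong\C^{*}\times\C^{*}$ replaces both external inputs: van Kampen substitutes for Kaliman's lemma, and Mayer--Vietoris substitutes for the Stein/CW-dimension argument, so your proof is self-contained. The multiplicity-one claim you single out as the delicate point is indeed correct, and is in fact already recorded in the paper: in Corollary \ref{cor:S_k} one has $\phi^{*}\ll_{j,0}=L_{j,0}+T_{j}+A_{j}$ as a \emph{reduced} divisor, so $\mathrm{ord}_{A_j}(x_j)=1$, $\mathrm{ord}_{A_j}(x_{3-j})=0$, and the meridian $\gamma_{j}$ maps to $t_{j}$ as you claim. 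Your decomposition also has the virtue of anticipating Proposition \ref{prop:handles}, where $V=X\setminus \mathrm{int}\,\mathrm{Tub}(D)$ is exhibited as $\bT^{2}\times\bD^{2}$ with two $2$-handles attached along the standard generators of $\pi_{1}(\bT^{2})$ --- the compact counterpart of your open cover --- and the classes you call $\gamma_{j}$ reappear there. Two small points of care in your write-up: since $V_{1}\cap V_{2}=\emptyset$, the three-set van Kampen theorem does not apply verbatim (triple intersections must be connected), so one should formally glue in two steps, first forming $U\cup V_{1}$ and then adjoining $V_{2}$ --- harmless, as each step has connected intersection $\simeq\bS^{1}$; and the identification $V_{j}\cong\C^{2}$ should be read as a diffeomorphism of a trivialized disk bundle over $\C^{1}$ (all one needs is contractibility). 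What the paper's approach buys is brevity and robustness --- no meridian computation is needed, and the argument generalizes to any affine modification; what yours buys is that every ingredient is verified by hand from Construction \ref{constr}.
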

\begin{proof}	
	Viewing $S=S_{p_1,p_2}$ as an iterated affine modification of $\C^{2}$, we infer from \cite[Lemma 3.4]{Kal_exotic-measures} that $\pi_{1}(S)=\pi_{1}(\C^{2})=\{1\}$. This modification replaces $\{x_1x_2=0\}\subseteq \C^{2}$, which is contractible, by the affine part of $A_1+A_2$, which is a disjoint union of two copies of $\C^{1}$. Hence $\etop(S)=\etop(\C^{2})-1+2=2$. Because $S$, being Stein, has a homotopy type of a CW-complex of (real) dimension two \cite[11.2.6]{GS_Kirby}, it follows that $H_{i}(S,\Z)=\Z$ for $i=2$ and $0$ for $i>2$. Thus $S$ has weak homotopy type of $\bS^{2}$. Applying the Whitehead theorem to the continuous map $\bS^{2}\to S$ given by a generator of $\pi_{2}(S)=H_{2}(S;\Z)=\Z$, we infer that $\bS^{2}\htp S$, as claimed.
\end{proof}

\subsection{Overview of Kirby calculus}\label{sec:Kirby_overview}

We now briefly recall the language of Kirby diagrams of $4$-manifolds, for a complete treatment see \cite[\S 4-5]{GS_Kirby}. A ($4$-dimensional) $k$-handle $h$ for $k\in \{0,\dots, 4\}$ is a copy of $\bD^{k}\times \bD^{4-k}$ attached to a boundary of a $4$-manifold $V$ via an embedding $\phi\colon \d\bD^{k}\times \bD^{4-k}\to \d V$. The images of $\d\bD^{k}\times 0$ and $0\times \d \bD^{4-k}$ are called the \emph{attaching} and \emph{belt} spheres of $h$, the image of $\bD^{k}\times 0$ is called the \emph{core} of $h$, see  \cite[Fig.\ 4.1]{GS_Kirby}. Smoothing corners gives a $4$-manifold $V\cup_{\phi}h$ which depends only on the attaching sphere of $h$ and, if $k=2$, on the integer called \emph{framing} of $h$, which equals the linking number of its attaching circle and its push-off along a transverse vector field \cite[4.1, 4.5]{GS_Kirby}. For example, blowing up is the same as attaching a $2$-handle with framing $-1$ \cite[p.\ 150]{GS_Kirby}.

Any $4$-manifold $V$ can be obtained from $\bD^{4}$ by attaching handles of increasing index. There are two ways of modifying  such handle-decomposition without changing the diffeomorphism type of $V$. First, if the attaching sphere of a $k$-handle meets the belt sphere of a $(k-1)$-handle transversally in a single point, these two handles can be \emph{canceled} \cite[4.2.9]{GS_Kirby}. Second, a $2$-handle $h$ can be \emph{slid over} a $2$-handle $h_{0}$ by pushing the attaching circle of $h$ through the belt circle of $h_0$ \cite[4.2.10]{GS_Kirby}. We use the same letter to denote a handle before and after the slide.

A handle-decomposition of $V$ is encoded by a \emph{Kirby diagram}, which is a (decorated) link in $\bS^{3}$, i.e.\ in the boundary of the initial $0$-handle. A $2$-handle is represented by its attaching circle with a framing coefficient. Attaching a $1$-handle to $V$ is the same as drilling a tubular neighborhood of a properly embedded $\bD^{2}\subseteq V$ \cite[p.\ 168]{GS_Kirby}, so it is represented by a circle $\d\bD^{2}\subseteq \d V$, with a dot to distinguish it from a $2$-handle. All dotted circles in a Kirby diagram form an unlink. Once $1$- and $2$-handles are attached, there is usually (e.g.\ if $V$ is closed or $\d V$ is connected and $V$ is simply connected, which is our case) a unique way to attach the remaining ones \cite[p.\ 148]{GS_Kirby}, so they are not drawn. We use the same letters for handles and corresponding knots in the diagram. 

For example, a diagram consisting of one unknot with coefficient $e$ represents a $\bD^{2}$-bundle over $\bS^{2}$ of Euler number $e$ \cite[Fig.\ 4.20]{GS_Kirby}. The diagram in Figure \ref{fig:handles} without the leftmost and rightmost circle represents $\bT^{2}\times \bD^2$, see \cite[Fig.\ 4.36]{GS_Kirby}. Indeed, attaching the $1$-handles gives $(\bT^{2}\setminus \{\mbox{disk}\}) \times \bD^{2}$, whose fundamental group is freely generated by loops based in the initial $0$-handle and going along the cores of the $1$-handles. The $2$-handle, attached along their commutator, caps off the puncture.

Handle cancellation and sliding is represented on the Kirby diagram as follows. First, if a dotted circle meets only one non-dotted circle, once, then the corresponding $1$- and $2$-handle cancel \cite[Fig.\ 5.38]{GS_Kirby}. Second, sliding a $2$-handle $t$ over $t_0$ replaces $t$ by its band-sum with a parallel copy $\tilde{t}_0$ of $t_0$ \cite[Fig.\ 5.4]{GS_Kirby}. More precisely, choose an arc in $t$ and $\tilde{t}_0$ and connect their endpoints such that the obtained \enquote{square} bounds a band which is disjoint from the rest of the diagram. Recall that $\lk(t_0,\tilde{t}_0)$ is the framing of $t_0$. The new framing of $t$ is the sum of framings of $t$ and $t_0$ plus $2\lk(t,t_0)$ \cite[p.\ 142]{GS_Kirby}. It is important to remember that the orientations of $t$ and $\tilde{t}_0$ should match. We denote by $-t$ the knot $t$ taken with opposite orientation. The components of a standard Hopf link drawn as in the left of Figure \ref{fig:added_torus} are always meant to have counterclockwise orientation and linking number $+1$. 

\subsection{Handle decompositions and proof of Theorem \ref{thm:diff}}\label{sec:diff_proof}

\begin{notation}\label{not:d}
	Let $S=S_{p_{1},p_{2}}$ be as in Construction \ref{constr}. For $j\in \{1,2\}$ put $d_{j}=\deg p_{j}+1$, $T_{j,d_j}=L_{j,0}$, and denote by $T_{j,i}$, $i\in \{1,\dots, d_{j}-1\}$ the $i$-th component of the twig $T_{j}$ meeting $A_{j}$, see Figure \ref{fig:Skl}. Let $\mathrm{Tub}(D)$ be the tubular neighborhood of $D$ in $X$ constructed in \cite{Mumford-surface_singularities}. Put $M=\d \mathrm{Tub}(D)$ as in Section \ref{sec:graph_manifolds} and $V=X\setminus \mathrm{int}\, \mathrm{Tub}(D)$. Clearly, $S\diff \mathrm{int}\,  V$. We may and will  assume $d_1\leq d_2$.
\end{notation}

\begin{prop}\label{prop:handles}
	The $4$-manifold $V$ is obtained from $\bT^{2} \times \bD^2$ by attaching two $2$-handles with framings $-d_{1}$, $-d_{2}$ along the standard generators of $\pi_{1}(\bT^{2})$. Its Kirby diagram is given by Figure \ref{fig:handles}.
\end{prop}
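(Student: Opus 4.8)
The plan is to compute $V$ by Kirby calculus, starting from the standard handle picture of $\bT^2\times\bD^2$ and realizing Construction \ref{constr} as handle attachments. First I recall, as in Section \ref{sec:Kirby_overview}, that $\mathrm{int}(\bT^2\times\bD^2)\cong\C^*\times\C^*$ and that $\bT^2\times\bD^2$ is given by the diagram in Figure \ref{fig:handles} with the leftmost and rightmost circles deleted: two dotted $1$-handles, whose cores are the standard generators $\mu_1,\mu_2$ of $\pi_1(\bT^2)$, together with a $2$-handle along their commutator. The key preliminary reduction is that a compact manifold $V$ with $\mathrm{int}\,V\diff S$ is determined up to diffeomorphism by $S$ alone (uniqueness of the collared end), so any completion of $S$ may be used to compute it. Blowing up a point of the boundary divisor and keeping the exceptional curve inside the boundary changes neither $S=X\setminus D$ nor, therefore, $V$; hence the only curves that matter are the two exceptional curves $A_1,A_2$ excluded from $D$. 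As in the proof of Proposition \ref{prop:htp}, $S=(\C^*\times\C^*)\cup(A_1\cap S)\cup(A_2\cap S)$ with each $A_j\cap S\cong\C^1$, so $V$ is obtained from $\bT^2\times\bD^2$ by attaching one $2$-handle $h_j$, capping the disk $A_j\cap V$, for each $j\in\{1,2\}$.

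Next I identify the attaching circles. The curve $A_j$ lies over the smooth point $(0,1)$ for $j=1$ (resp.\ $(1,0)$ for $j=2$) of the line $\ll_{j,0}=\{x_j=0\}$, together with its infinitely near points, which in Construction \ref{constr} are finite points on the successive exceptional curves prescribed by the coefficients of $p_j$. Adding the exceptional $\C^1$ over such a point amounts to capping off the meridian of $\{x_j=0\}$, and since the point is an interior point of the edge $\ll_{j,0}$ of the toric boundary (not a corner), this meridian is the standard generator $\mu_j$ of $\pi_1(\bT^2)=\pi_1(\C^*\times\C^*)$. The choice of infinitely near centres affects the framing but not this homotopy class, so $h_j$ is attached along $\mu_j$, i.e.\ parallel to the core of the $j$-th $1$-handle; in particular $h_1$ and $h_2$ are attached along disjoint, unlinked circles.

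The main point is the framing. It is \emph{not} simply the self-intersection $A_j^2=-1$ in $X$; rather it must be read off after reducing the handle diagram of the whole tower to the form $\bT^2\times\bD^2\cup h_1\cup h_2$. To compute it I translate the sequence of $d_j$ blowups over $(0,1)$ (resp.\ $(1,0)$) into handles: each blowup is a $-1$-framed $2$-handle, and together they form a Hopf-linked chain realizing the $(-1)$-curve $A_j$ together with the $(-2)$-twig $T_j=[(2)_{d_j-1}]$ of Notation \ref{not:d}. The $d_j-1$ handles of $T_j$ belong to $\mathrm{Tub}(D)$ and are therefore absorbed when passing to $V$; sliding and cancelling them against the surviving handle produces the claimed framing $-d_j$. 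I would organise this as an induction on $d_j$: for $d_j=1$ a single blowup attaches a $-1$-framed handle along $\mu_j$, and the inductive step adjoins one further blow-up together with one $(-2)$-twig component, lowering the framing by one. This bookkeeping---tracking how the successive slides change the framing while the attaching circle stays in the class $\mu_j$---is the technical heart of the argument and the step most prone to sign and orientation errors.

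Finally I assemble the two handles. Superimposing $h_1$ (along $\mu_1$, framing $-d_1$) and $h_2$ (along $\mu_2$, framing $-d_2$) on the diagram of $\bT^2\times\bD^2$, and using that $\mu_1,\mu_2$ are disjoint standard generators, yields exactly Figure \ref{fig:handles}. As a consistency check one verifies that the resulting $V$ is simply connected with $H_2(V)\cong\Z$, matching $S\htp\bS^2$ from Proposition \ref{prop:htp}, and that the induced boundary agrees with the plumbed manifold $M=\d\mathrm{Tub}(D)$ of Section \ref{sec:graph_manifolds}.
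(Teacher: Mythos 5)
Your decomposition $V=\bT^{2}\times\bD^{2}\cup h_{1}\cup h_{2}$, with attaching circles in the classes of the standard generators, is the right picture, but the argument has two genuine gaps, one of them at the step you yourself call the technical heart. First, the \enquote{uniqueness of the collared end} you invoke is not an available fact for smooth $4$-manifolds: whether compact smooth $4$-manifolds with diffeomorphic interiors must be diffeomorphic is tied to the open smooth $4$-dimensional s-cobordism problem, and the analogous statement fails in higher dimensions. What you actually need --- that blowups centered on the boundary divisor do not change $V$, so that the closure $W$ of the complement of the two $2$-handles is $\bT^{2}\times\bD^{2}$ --- is true, but for a concrete reason you should state: $\phi$ is an isomorphism off $D+A_{1}+A_{2}$, and one may take $\mathrm{Tub}(D+A_{1}+A_{2})=\phi^{-1}\bigl(\mathrm{Tub}(\ll_{1,0}+\ll_{1,\infty}+\ll_{2,0}+\ll_{2,\infty})\bigr)$, so that $W\diff \P^{1}\times\P^{1}\setminus \mathrm{int}\,\mathrm{Tub}(\ll_{1,0}+\ll_{1,\infty}+\ll_{2,0}+\ll_{2,\infty})\diff\bT^{2}\times\bD^{2}$; this is essentially what the paper's proof does by working inside $\phi^{-1}(\bD)$ for a large disk $\bD\subseteq\C^{2}$.

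Second, and more seriously, the framing computation is not set up correctly. Treating each blowup as a $(-1)$-framed $2$-handle and then declaring that the $d_{j}-1$ handles of the twig $T_{j}$ are \enquote{absorbed} because they lie in $\mathrm{Tub}(D)$ conflates two different operations: deleting handles from a handle decomposition, and removing tubular neighborhoods of the corresponding spheres. The latter is what defines $V$, and in Kirby calculus it is performed by the surface-complement technique of \cite[\S 6.2]{GS_Kirby}: each component $T_{j,i}$ of $D$ becomes a dotted circle $t_{j,i}'$, each plumbing point a $0$-framed $2$-handle, and removing the open cores additionally attaches a $(-2)$-framed parallel copy $t_{j,i}$ of each removed sphere together with a $3$-handle. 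This is exactly what the paper does (Figures \ref{fig:link} and \ref{fig:V}); the framings $-d_{j}$ then emerge from an explicit cascade of handle slides (Lemma \ref{lem:slides}), with the $3$-handles cancelled using $\pi_{1}(V)=\{1\}$ from Proposition \ref{prop:htp}. Without this bookkeeping (or an equivalent one), the assertion that \enquote{sliding and cancelling produces the claimed framing $-d_{j}$} is a restatement of the conclusion, not a proof; note in particular that the self-intersections $-1$ and $-2$ of $A_{j}$ and the components of $T_{j}$ enter the slides with shifts $2\lk(\cdot,\cdot)$, so the final framing is in no way an obvious sum. Relatedly, a Kirby diagram requires the attaching circle up to \emph{isotopy} in $\bT^{3}$ together with its framing, whereas you only pin down its class in $\pi_{1}(\bT^{2})$, which every framing and many isotopy classes share.
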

\begin{proof} 
	Let $\phi$ be as in Construction \ref{constr}. Let $\bD\subseteq \C^{2}\subseteq \P^1\times \P^1$ be a closed disk of radius $2$ centered at $(0,0)$, so that the base points $(0,1)$, $(1,0)$ of $\phi^{-1}$ belong to $\mathrm{int}\, \bD$. Put $X_0\de \phi^{-1}(\bD)\subseteq X$, $\phi_{0}\de \phi|_{X_0}\colon X_0\to \bD$ and $\ll_{i,j}'=\ll_{i,j}\cap \bD$. Figure \ref{fig:link} shows  $\d \phi_{0}^{-1}(\ll_{1,0}' \cup \ll_{2,0}')$, where the attaching circles of the $2$-handles are drawn together with their framings.
	 	 \begin{figure}[ht]
	 	 	\includegraphics[scale=0.35]{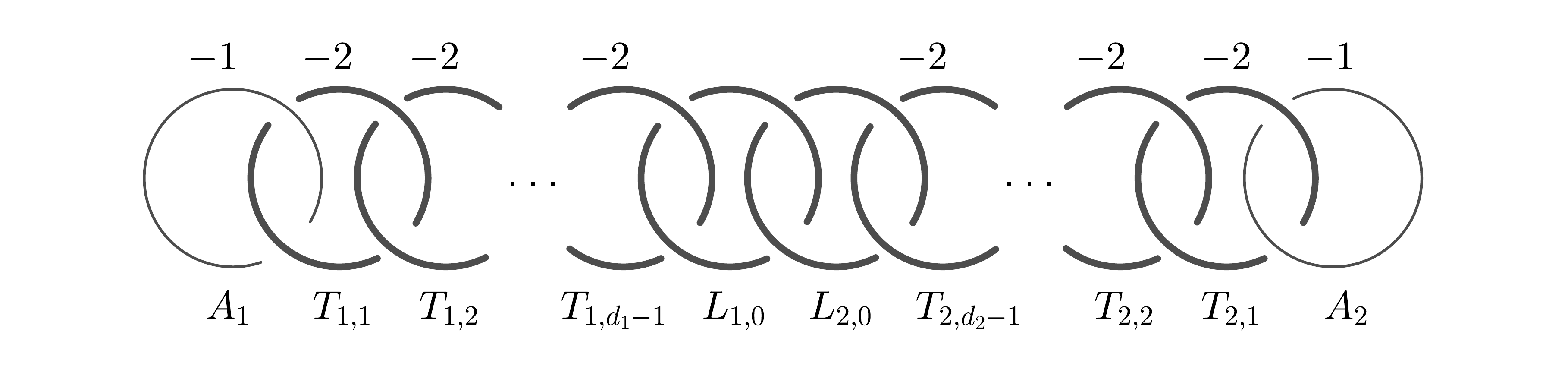}
	 	 	\caption{Underlying disks of $L_{1,0}$, $L_{2,0}$ and handles attached to $\C^{2}$ by $\phi_{0}$.}
	 	 	\label{fig:link}
	 	 \end{figure}
	 	 	 
	 To construct $V$, we need to remove from $X_0$ a tubular neighborhood of $\phi_{0}^{-1}(\ll_{1,0}' \cup \ll_{2,0}')\setminus (A_{1}\cup A_{2})$. To do this, we first remove from the $0$-handle all disks whose boundaries are drawn in Figure \ref{fig:link} in bold, and then we remove the cores of the corresponding $2$-handles (everything suitably thickened).
	 
	 By \cite[p. 214]{GS_Kirby}, the first step gives a Kirby diagram obtained from Figure \ref{fig:link} as follows. Replace each circle corresponding to $T_{j,i}$ for $j\in \{1,2\}$, $i\in \{1,\dots, d_{j}\}$ by a dotted circle $t_{j,i}'$, and each twist corresponding to $T_{j,i+1}\cap T_{j,i}$ (resp.\ $L_{0,1}\cap L_{0,2}$) by a $0$-framed $2$-handle $h_{j,i}$ (resp.\ $h$), linked with the dotted circles as in Figure \ref{fig:added_torus}. We denote by $a_{j}$ the $2$-handle corresponding to $A_{j}$.

	 	 \begin{figure}[ht]
	 	 	\includegraphics[scale=0.35]{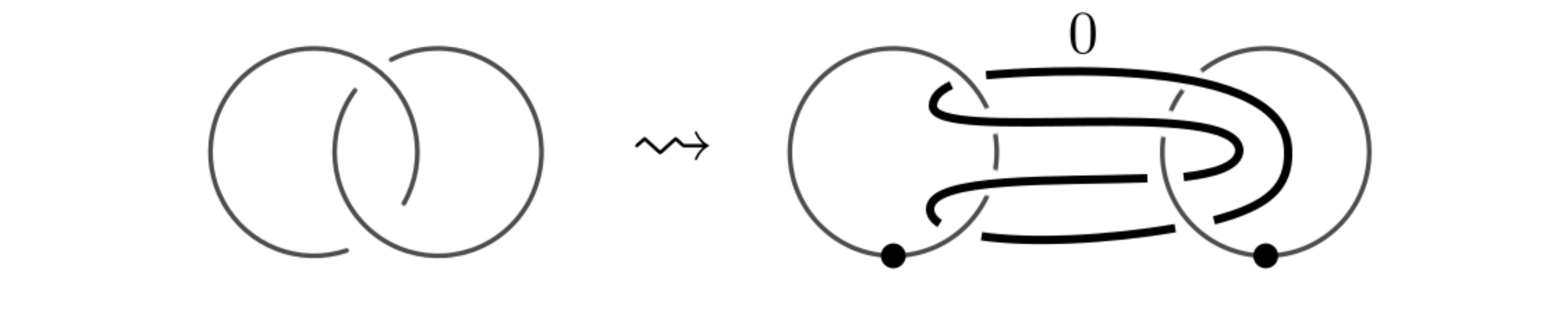}
	 	 	\caption{Intersection of two removed disks, see \cite[Fig.\ 6.27]{GS_Kirby}.}
	 	 	\label{fig:added_torus}
	 	 \end{figure}
	By \cite[p.\ 224]{GS_Kirby}, the next step amounts to attaching a $2$-handle along a parallel copy of each removed circle, together with a certain $3$-handle. In our case, for each $T_{j,i}$, $j\in \{1,2\}$, $i\in \{1,\dots, d_{j}-1\}$ we attach a $(-2)$-framed $2$-handle $t_{j,i}$ such that $\lk(t_{j,i},t_{j,i}')=-2$ and $\lk(t_{j,i},t_{j,i+1}')=\lk(t_{j,i},t_{j,i-1}')=1$, where we put $t_{j,0}'=a_{j}$. The resulting Kirby diagram is shown in  Figure \ref{fig:V} (where we only draw the part corresponding to $L_{1,0}+L_{2,0}+T_{2}+A_{2}$ and skip the subscript \enquote{$2,$}: the other part is analogous). 
\begin{figure}[ht]
	\includegraphics[scale=0.32]{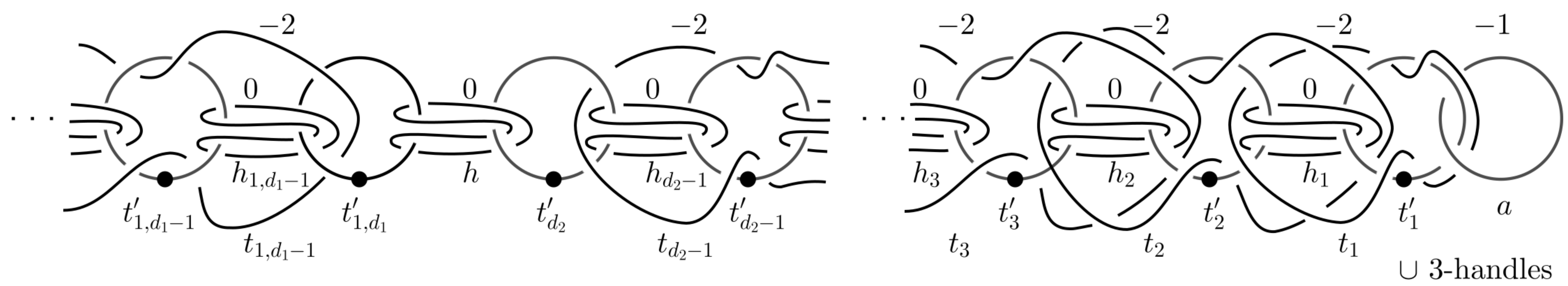}
	\caption{Right half of the Kirby diagram for $V=X\setminus \mathrm{Tub}(D)$.}
	\label{fig:V}
\end{figure}
	
	We will simplify this diagram by handle slides. First, we slide $t_{j,1}$ over $a_{j}$, see Figure \ref{fig:first_slide}.   After this slide, $t_{j,1}$ unlinks from $a$, meets $t_{j,1}'$ once with $\lk(t_{j,1},t_{j,1}')=-1$ and changes framing to $-1$, while the rest of the diagram does not change. 
	\begin{figure}[htbp]
		\begin{tabular}{ccccc}
			\begin{subfigure}[t]{0.22\textwidth}
				\includegraphics[scale=0.3]{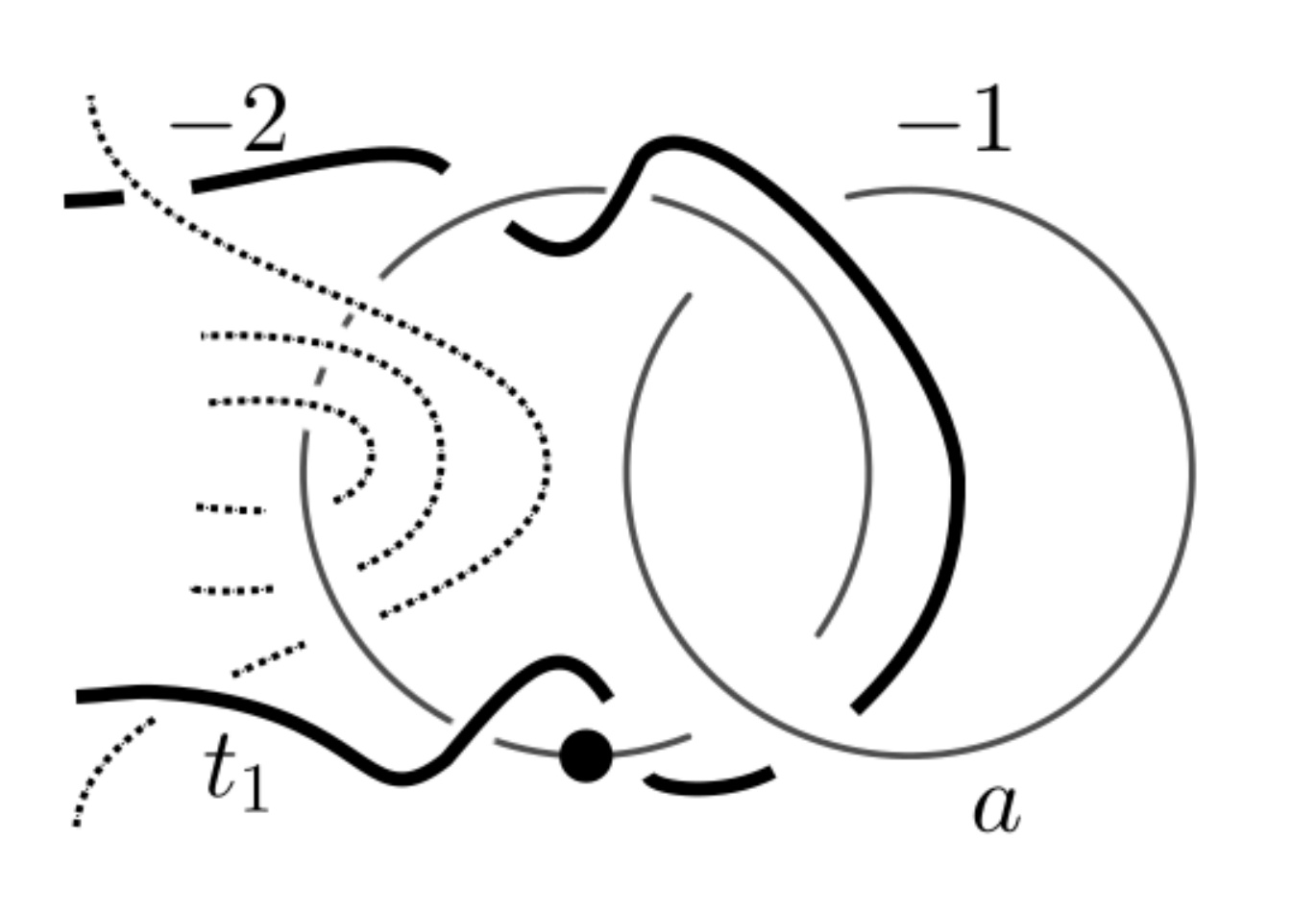}
			\end{subfigure}
			&
			\raisebox{1.2cm}{\scalebox{1.5}{$\rightsquigarrow$}}
			&
			\begin{subfigure}[t]{0.22\textwidth}
				\includegraphics[scale=0.3]{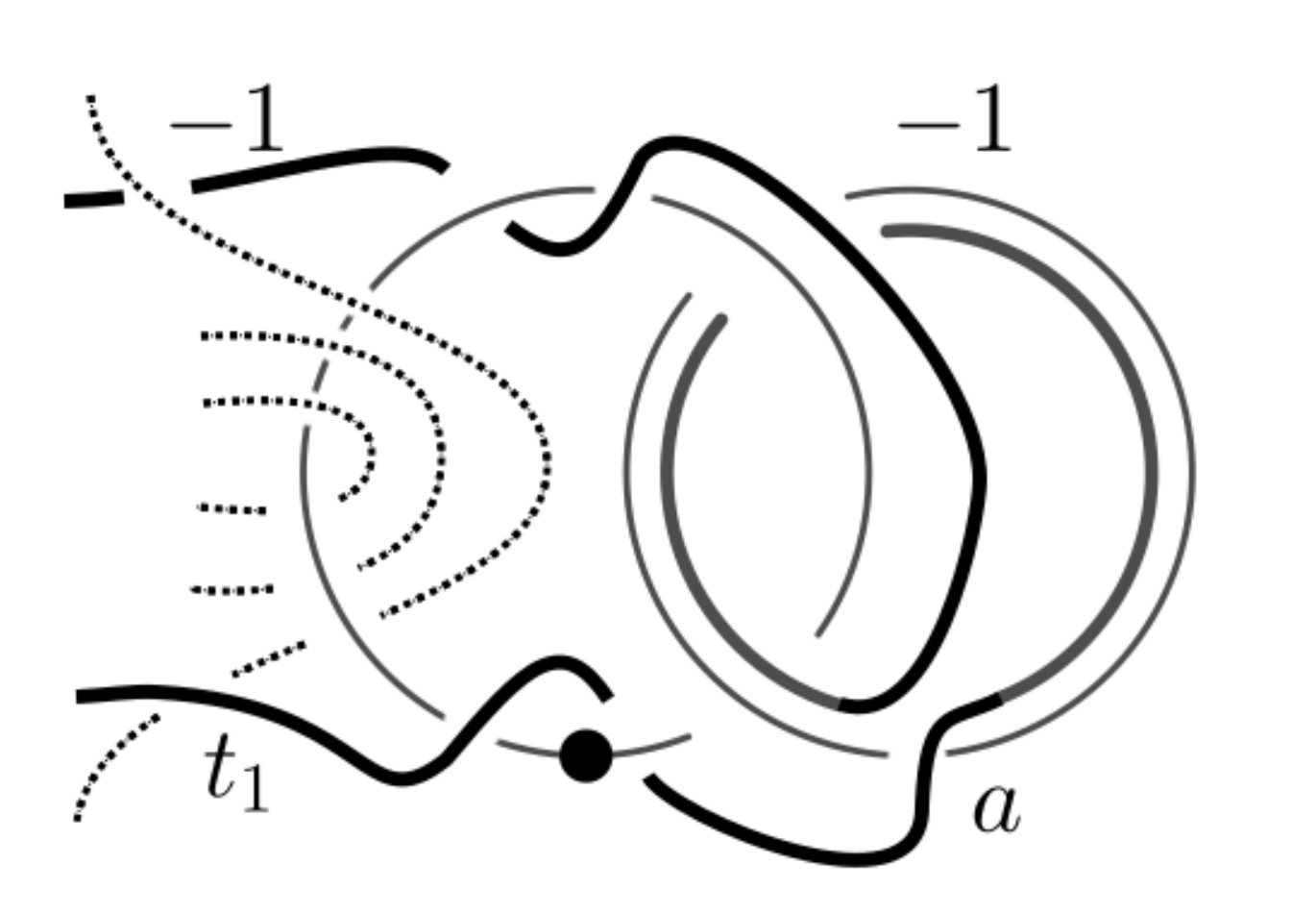}
			\end{subfigure}
			&
			\raisebox{1.2cm}{\scalebox{1.5}{$\sim$}}
			&
			\begin{subfigure}[t]{0.22\textwidth}
				\includegraphics[scale=0.3]{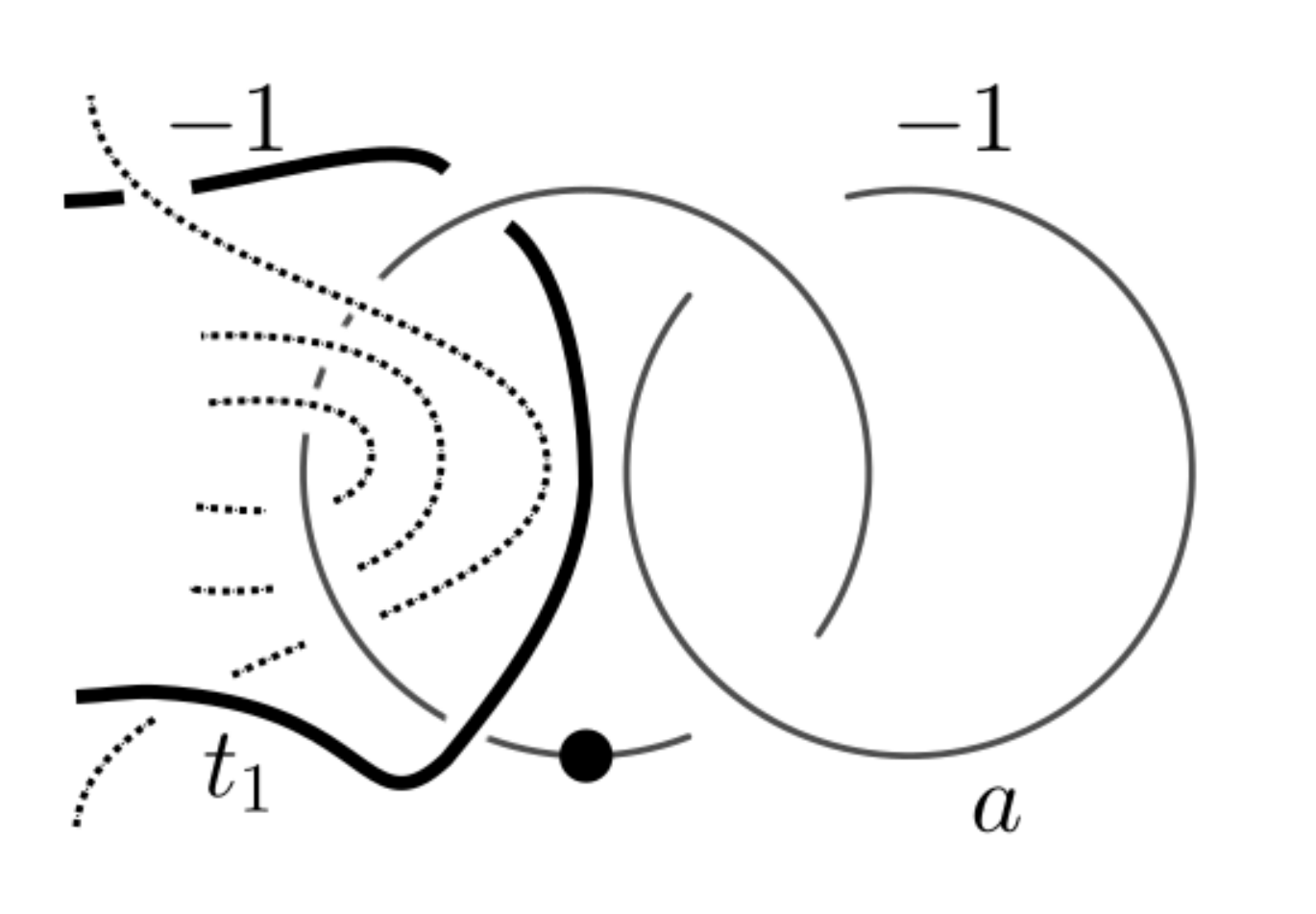}
			\end{subfigure}
		\end{tabular}
		\caption{Sliding $t_{j,1}$ over $a_j$.}
		\label{fig:first_slide}
	\end{figure}		
	Applying Lemma \ref{lem:slides}\subref{fig:ind_0} below inductively to pairs $(t_0,t)=(t_{j,i},t_{j,i+1})$ for $i=1,\dots,d_j-1$, we obtain a diagram in Figure \ref{fig:V_disjoint}. 
	 	 \begin{figure}[ht]
	 	 	\includegraphics[scale=0.32]{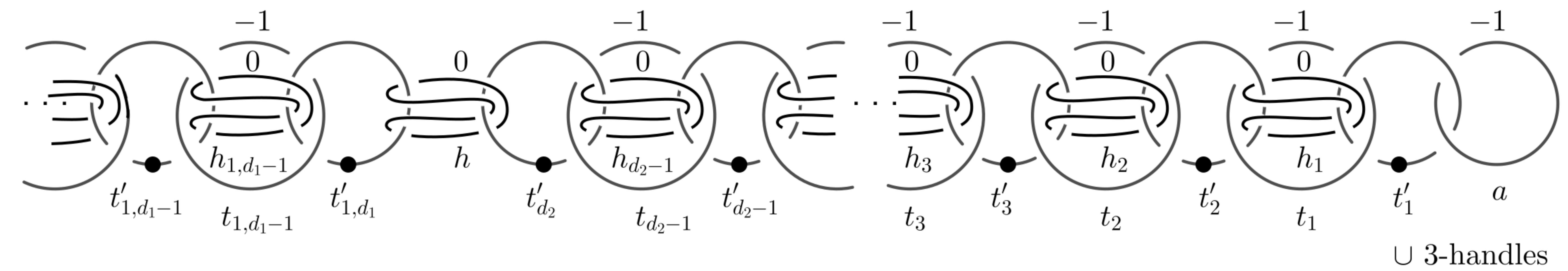}
	 	 	\caption{Right half of the Kirby diagram after sliding $t_{i}$ over $t_{i-1}$ for $i=1,2,\dots, d_{2}-1$}
	 	 	\label{fig:V_disjoint}
	 	 \end{figure}	
 	 
	Now by Lemma \ref{lem:slides}\subref{fig:sliding_torus} each $h_{i,j}$ can be slid to a $0$-framed unknot disjoint from the rest of the diagram. Since $\pi_{1}(V)=\{1\}$ by Proposition \ref{prop:htp}, \cite[p.\ 148]{GS_Kirby} shows that all $h_{j,i}$'s cancel with all $3$-handles (recall that there are $d_1+d_2-2$ of them, attached while removing the cores of $T_{j,i}$'s). Applying Lemma \ref{lem:slides}\subref{fig:a_0} inductively  to pairs $(a,t)=(t_{j,i},t_{j,i+1})$, $i=1,\dots,d_j-1$ gives Figure \ref{fig:handles}.
\end{proof}

In the above proof, we used the following standard exercise in handle slides, cf.\ \cite[p.\ 142]{GS_Kirby}.

\begin{lem}\label{lem:slides}
	Let $\ll_{1}$, $\ll_{2}$ be links in the left and right of one of the Figures \ref{fig:slides}\subref{fig:ind_0}-\subref{fig:a_0}. Define $\ll\subseteq \ll_1$ as $t_0$ in \subref{fig:ind_0}, $t+h$ in \subref{fig:sliding_torus} and $t+c_0+a$ in \subref{fig:a_0}. Let $\kk$ be a Kirby diagram containing $\ll_{1}$ in such a way that $\ll$ is disjoint from $\kk-\ll_{1}$. Then $\kk$ is equivalent to a diagram obtained from $\kk$ by replacing $\ll_{1}$ with $\ll_{2}$.
\end{lem}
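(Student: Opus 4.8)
The plan is to verify each of the three equivalences in Figure~\ref{fig:slides} as a local statement about handle slides, since Lemma~\ref{lem:slides} asserts precisely that replacing $\ll_1$ by $\ll_2$ inside any ambient Kirby diagram $\kk$ yields an equivalent diagram, provided the distinguished sublink $\ll$ is unlinked from the rest. The key point that makes this a genuinely \emph{local} verification is the disjointness hypothesis: because $\ll$ (which is the part that gets moved or slid) meets $\kk-\ll_1$ in nothing, every handle slide we perform is supported in a ball disjoint from the rest of the diagram, so the ambient link plays no role and we only need to check the pictured transformation in isolation.

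First I would treat \subref{fig:ind_0}: here the move is to slide $t$ over $t_0$, i.e.\ to replace $t$ by its band-sum with a parallel push-off $\tilde t_0$ of $t_0$, following \cite[Fig.\ 5.4]{GS_Kirby}. The verification amounts to a framing bookkeeping: after the band-sum the new framing of $t$ is the sum of the framings of $t$ and $t_0$ plus $2\lk(t,t_0)$, as recalled in Section~\ref{sec:Kirby_overview} from \cite[p.\ 142]{GS_Kirby}, and the new linking numbers of $t$ with the dotted circles are obtained by adding the corresponding linking numbers of $\tilde t_0$. I would simply read off from the two pictures that these arithmetic relations are consistent, orient $t$ and $\tilde t_0$ so that they match (the orientation convention is fixed at the end of Section~\ref{sec:Kirby_overview}), and confirm that $t_0$ itself is unchanged, so that $\ll=t_0$ remains available and disjoint as claimed.

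Next, \subref{fig:sliding_torus} is the assertion that the $0$-framed handle $h$, together with $t$, can be slid so that $h$ becomes a $0$-framed unknot split off from the rest. This is the same maneuver used to peel off the commutator-handle in the identification of $\bT^2\times\bD^2$ in Section~\ref{sec:Kirby_overview}; the slides of $h$ over $t$ (and its neighbors) unlink $h$ from every dotted circle while preserving its $0$-framing, because each slide over a $0$-framed or suitably-framed component contributes the right correction. Finally \subref{fig:a_0} is the slide of $t$ over $a$ that both unlinks $t$ from $a$ and adjusts the framings; here $\ll=t+c_0+a$ and I would check that after the band-sum $t$ links $c_0$ as pictured while $a$ and $c_0$ are untouched, which is again pure linking-number arithmetic.

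The main obstacle, such as it is, is not conceptual but bookkeeping: one must fix a consistent orientation convention on every strand and then carefully track the framing change $2\lk(t,t_0)$ and the additive effect on each linking number through the band-sum, making sure the pictured framings and crossings in $\ll_2$ are exactly what the slide formula produces. Since all three moves are single (or short iterated) handle slides over a single component, and the disjointness hypothesis localizes everything, the verification reduces in each case to matching the framing/linking data of the left and right pictures against the slide rule of \cite[p.\ 142]{GS_Kirby}; I would present each as a one-line arithmetic check accompanied by the relevant figure reference rather than a long computation.
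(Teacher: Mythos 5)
Your localization remark is sound and matches how the lemma is meant to be used: since $\ll$ is unlinked from $\kk-\ll_{1}$, the bands and isotopies can be chosen in a region missing the rest of the diagram, so the three moves are indeed local verifications. The genuine gap is in \emph{how} you propose to verify them. You reduce each move to checking that the framings and pairwise linking numbers of the left and right pictures are consistent with the slide formula of \cite[p.~142]{GS_Kirby}. That check is necessary but nowhere near sufficient: framing and linking data do not determine a framed link up to isotopy (the Whitehead link and the two-component unlink have identical linking numbers), and a band sum can introduce knotting and clasping that is invisible to this arithmetic. Moreover, the outcome of a handle slide depends on the choice of band, so one must exhibit a specific band for which the resulting link \emph{is isotopic} to $\ll_{2}$. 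This geometric step is exactly the content of the paper's proof: for \subref{fig:ind_0} it rotates $t_{0}$ so that both $t_{0}$ and $t$ pass under $c_{0}$, performs the band sum at that crossing, and then isotopes the right-hand part of $t$ into place (Figure \ref{fig:ind}); analogous explicit isotopies are given in Figures \ref{fig:st_proof} and \ref{fig:sliding_a}. No one-line arithmetic check can replace these isotopies, and your proposal contains no substitute for them.

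Beyond this, two of the three moves involve more than the single slide you describe, so your description of what $\ll_{2}$ even is goes wrong. In \subref{fig:sliding_torus} the paper's maneuver is to \emph{add and then subtract} $t$: after the first band sum the lower part of $h$ becomes a parallel copy of $t$, and only the second slide, over $-t$, splits $h$ off as a $0$-framed unknot; your phrase \enquote{slides of $h$ over $t$ (and its neighbors)} does not capture this two-step structure, and \enquote{each slide over a $0$-framed or suitably-framed component contributes the right correction} is not an argument. In \subref{fig:a_0} the move is a slide \emph{followed by a $1$-handle/$2$-handle cancellation}: after sliding $t$ over $a$, the knot $t$ becomes disjoint from the dotted circle $c_{0}$, so $c_{0}$ meets only $a$, geometrically once, and the pair $(a,c_{0})$ cancels and disappears from the diagram. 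You instead assert that after the band sum \enquote{$t$ links $c_{0}$ as pictured while $a$ and $c_{0}$ are untouched}, i.e.\ you keep $a$ and $c_{0}$ in the final diagram and miss the cancellation entirely, which is inconsistent with the right-hand side of Figure \ref{fig:slides}\subref{fig:a_0} (and with how the lemma is applied in Proposition \ref{prop:handles} and Theorem \ref{thm:diff}).
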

	 	 \begin{figure}[ht]
	 	 	\begin{tabular}{cc}
	 	 		\begin{subfigure}[t]{0.55\textwidth}
	 	 			\includegraphics[scale=0.17]{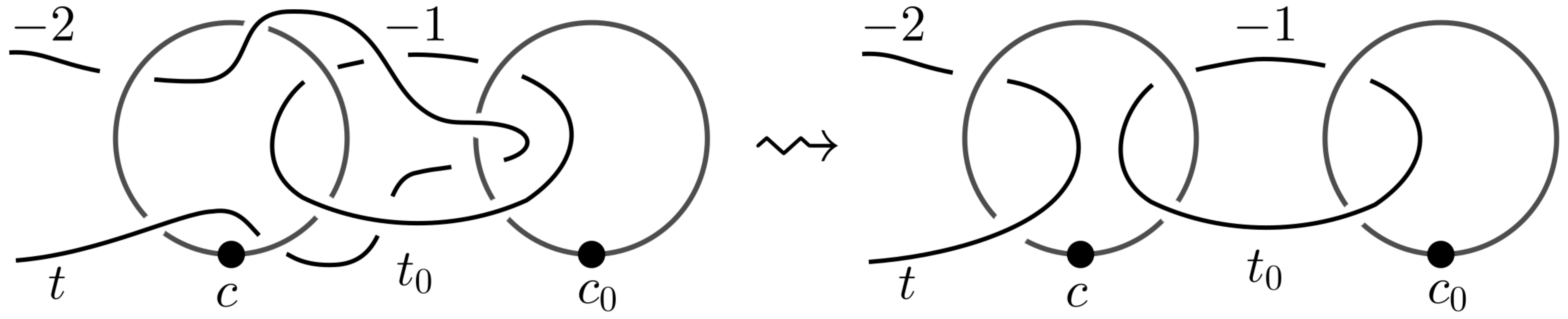}
	 	 			\caption{Sliding $t$ over $t_0$}
	 	 			\label{fig:ind_0}
	 	 		\end{subfigure}	
	 	 		&
	 	 		\setcounter{subfigure}{2}
	 	 		\begin{subfigure}[t]{0.45\textwidth}
	 	 			\includegraphics[scale=0.27]{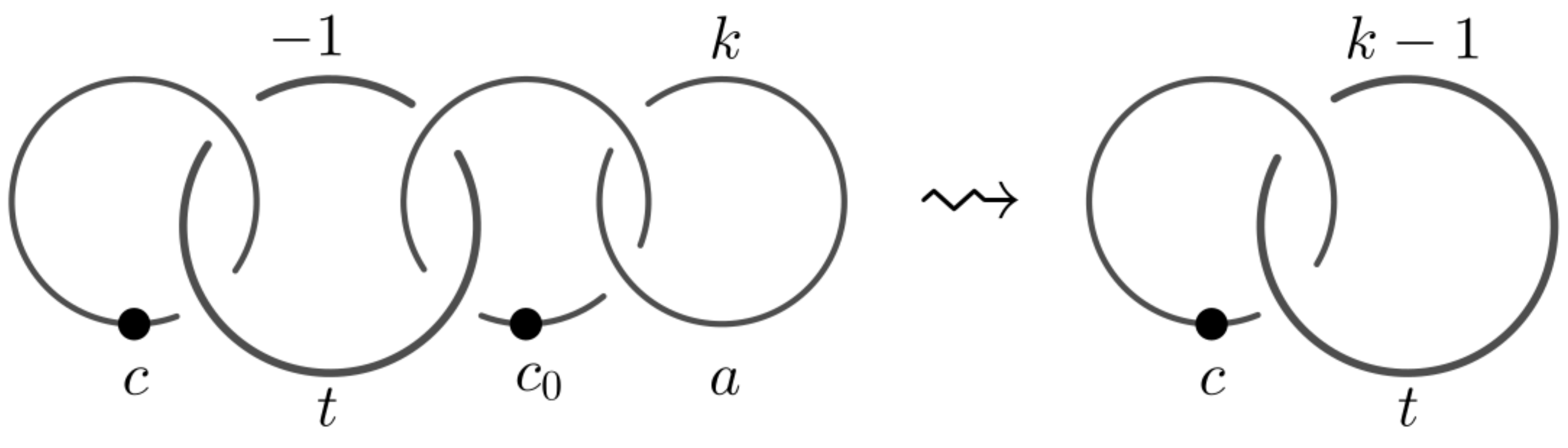}
	 	 			\caption{Sliding $t$ over $a$ and canceling $(a,c_0)$}
	 	 			\label{fig:a_0}
	 	 		\end{subfigure}
	 	 		\\
	 	 		\multicolumn{2}{c}{
	 	 		\setcounter{subfigure}{1}
	 	 			\begin{subfigure}[t]{\textwidth}
	 	 				\centering
	 	 				\includegraphics[scale=0.35]{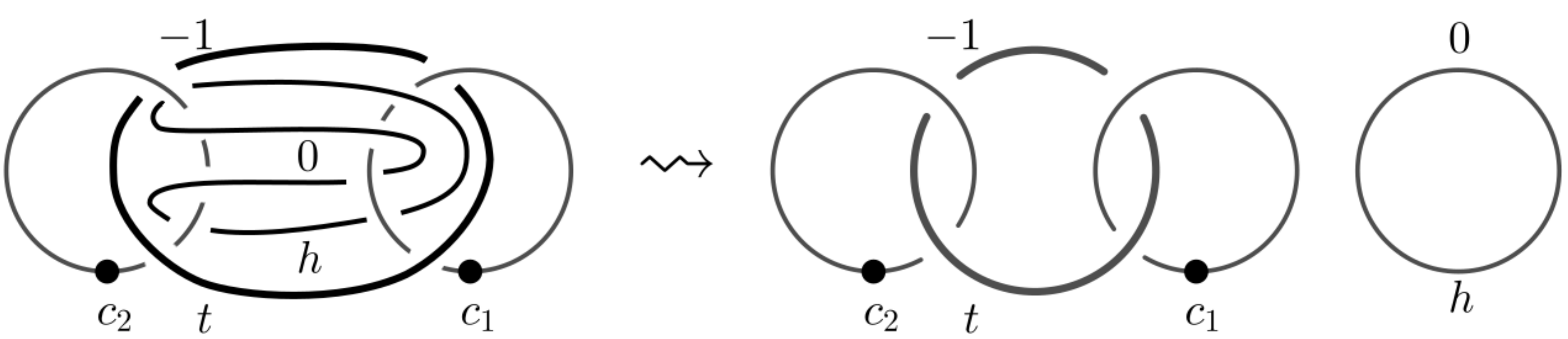}
	 	 				\caption{After adding and subtracting $t$, a $2$-handle $h$ like in Figure \ref{fig:added_torus} is ready to be canceled with a $3$-handle}
	 	 				\label{fig:sliding_torus}
	 	 			\end{subfigure}
	 	 		}
	 	 	\end{tabular}
	 	 	\caption{Lemma \ref{lem:slides}: handle slides applied inductively in the proof of Proposition \ref{prop:handles}}
	 	 	\label{fig:slides}
	 	 \end{figure}
\begin{proof}
	\subref{fig:ind_0}
	In order to have a clear picture, we draw $t_0$ rotated around a horizontal line, so that both $t_0$ and $t$ go under the bottom part of $c_0$, see Figure \ref{fig:ind}. We draw a parallel copy of $t_0$ in such a way that it twists over $t_0$ near that crossing, and cut this twist to make a band-sum with $t$. Now the part of $t$ on the right of the vertical diameter of $c$ can be isotoped to the latter, which proves the claim. 
	\
	\begin{figure}[ht]
		\begin{subfigure}{\textwidth}
			\begin{tabular}{ccccc}
				\begin{subfigure}[t]{0.3\textwidth}
					\includegraphics[scale=0.19]{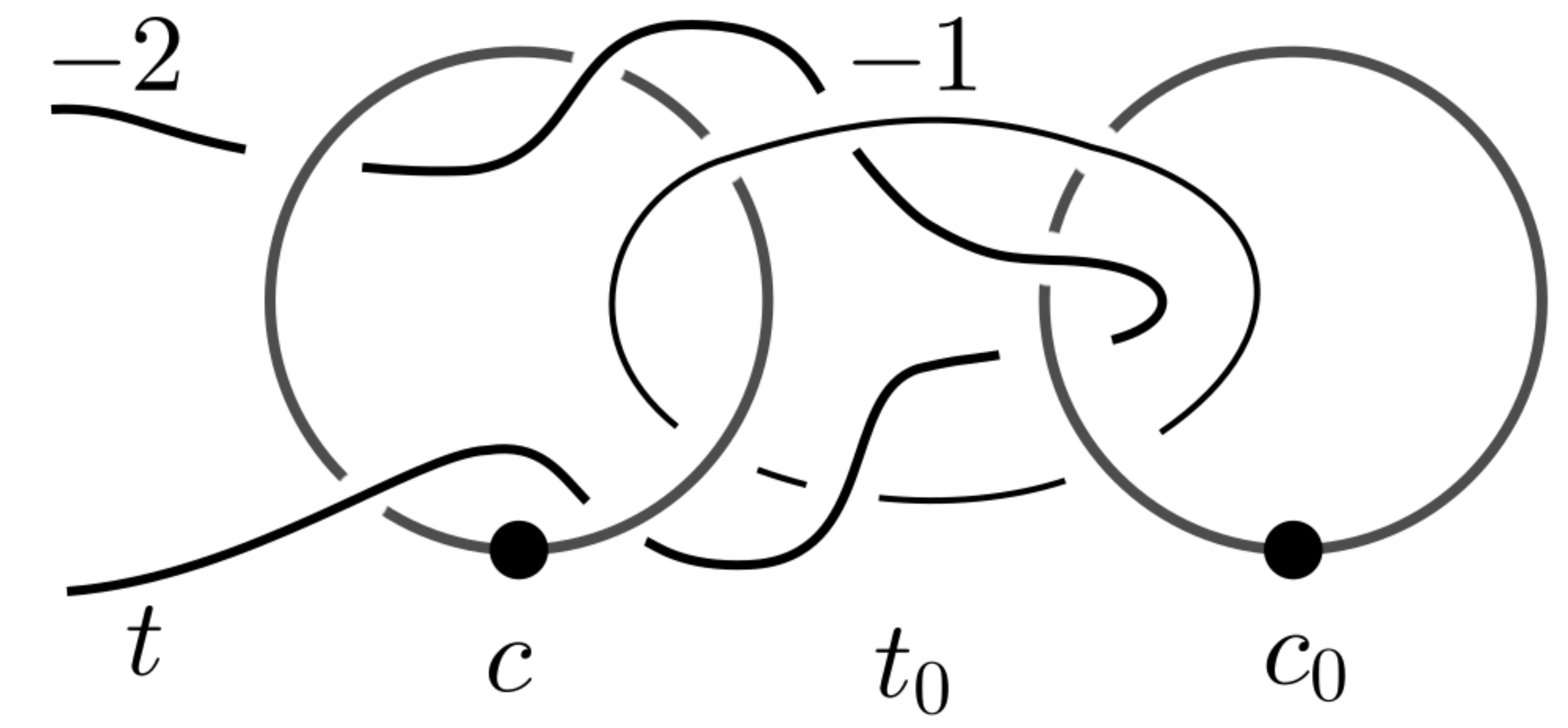}
				\end{subfigure}
				&
				\raisebox{1.2cm}{\scalebox{1.5}{$\rightsquigarrow$}}
				&
				\begin{subfigure}[t]{0.27\textwidth}
					\includegraphics[scale=0.19]{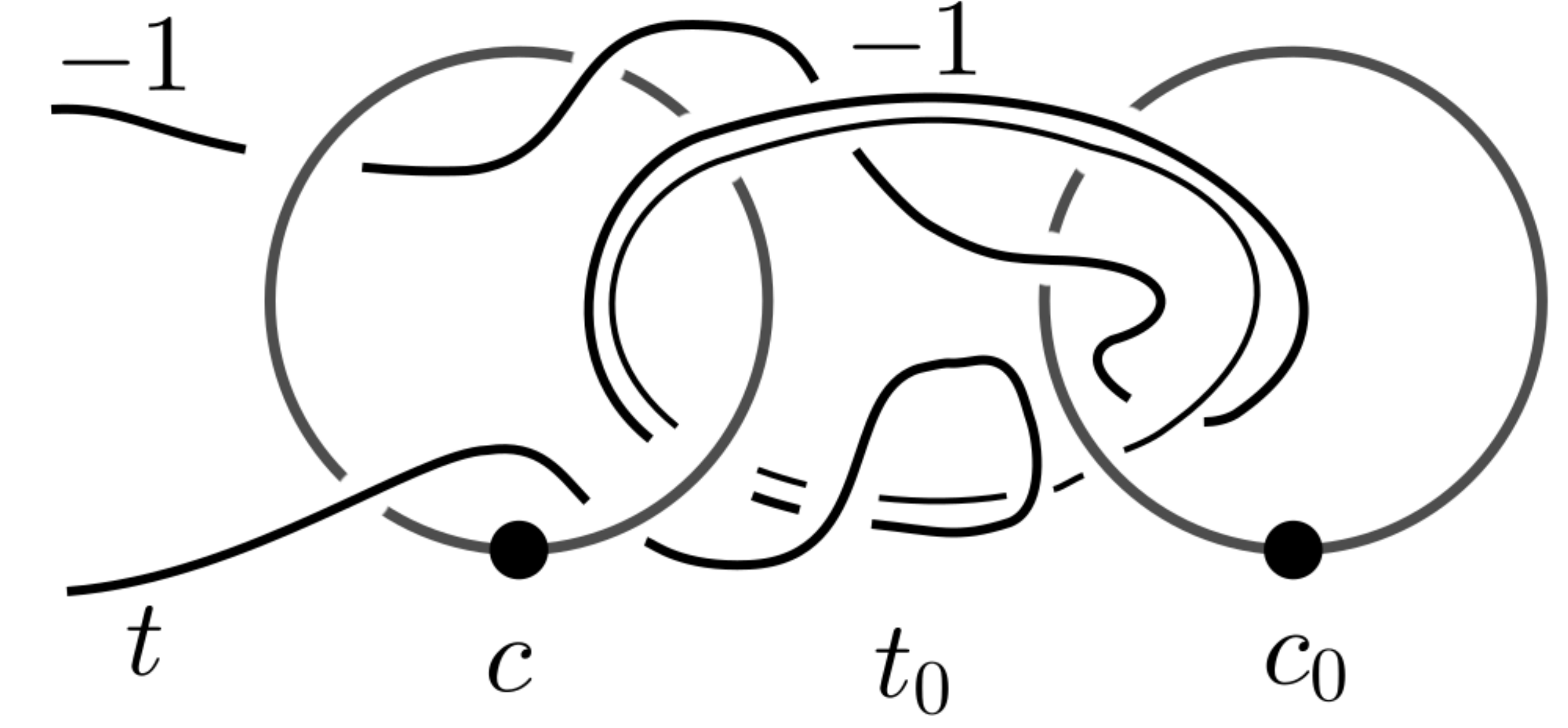}
				\end{subfigure}
				&
				\raisebox{1.2cm}{\scalebox{1.5}{$\sim$}}
				&
				\begin{subfigure}[t]{0.27\textwidth}
					\includegraphics[scale=0.19]{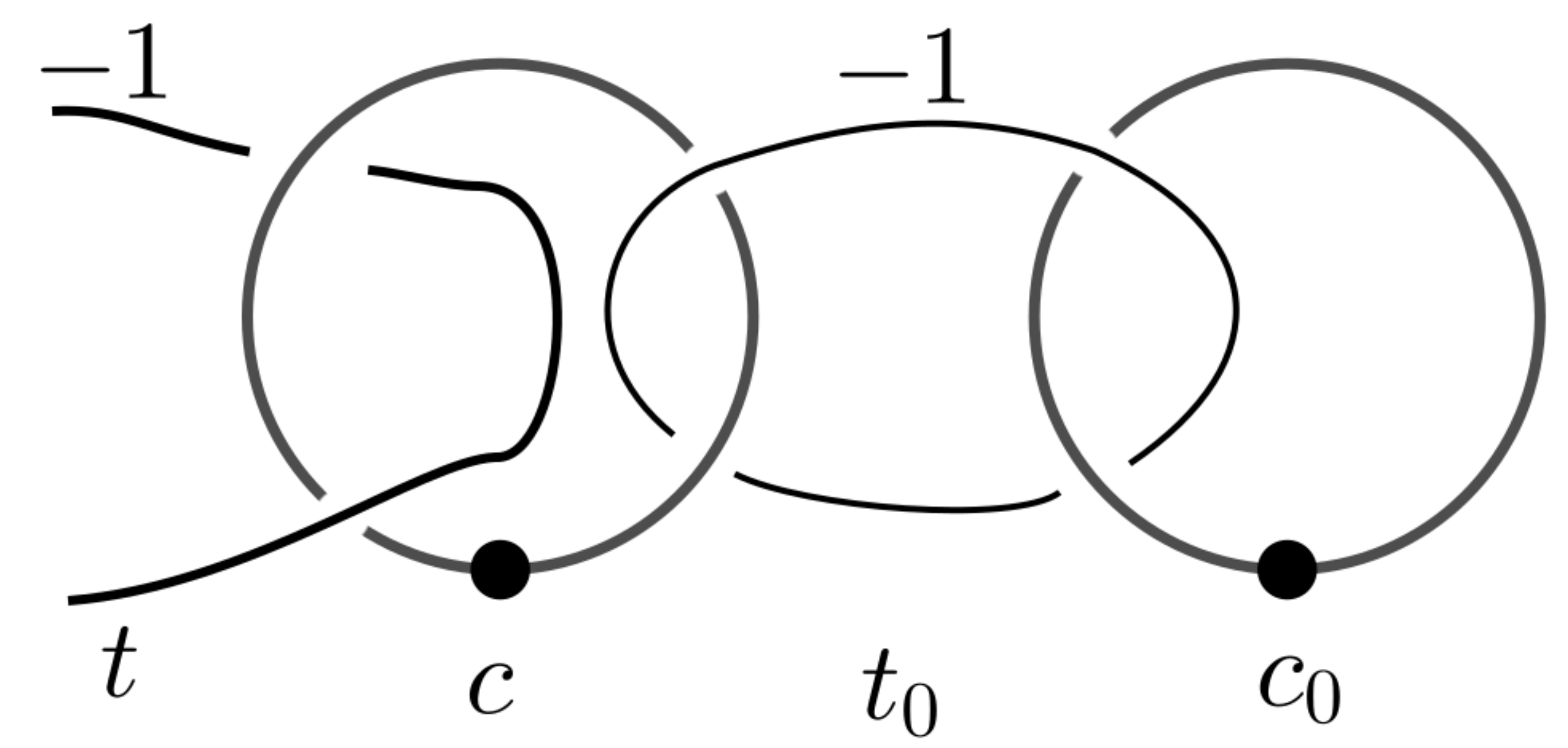}
				\end{subfigure}
			\end{tabular}
			\caption{Sliding $t$ over $t_0$}
			\label{fig:ind}
		\end{subfigure}
	\smallskip

	\begin{subfigure}{\textwidth}
		\begin{tabular}{ccccc}
			\begin{subfigure}[t]{0.27\textwidth}
				\includegraphics[scale=0.35]{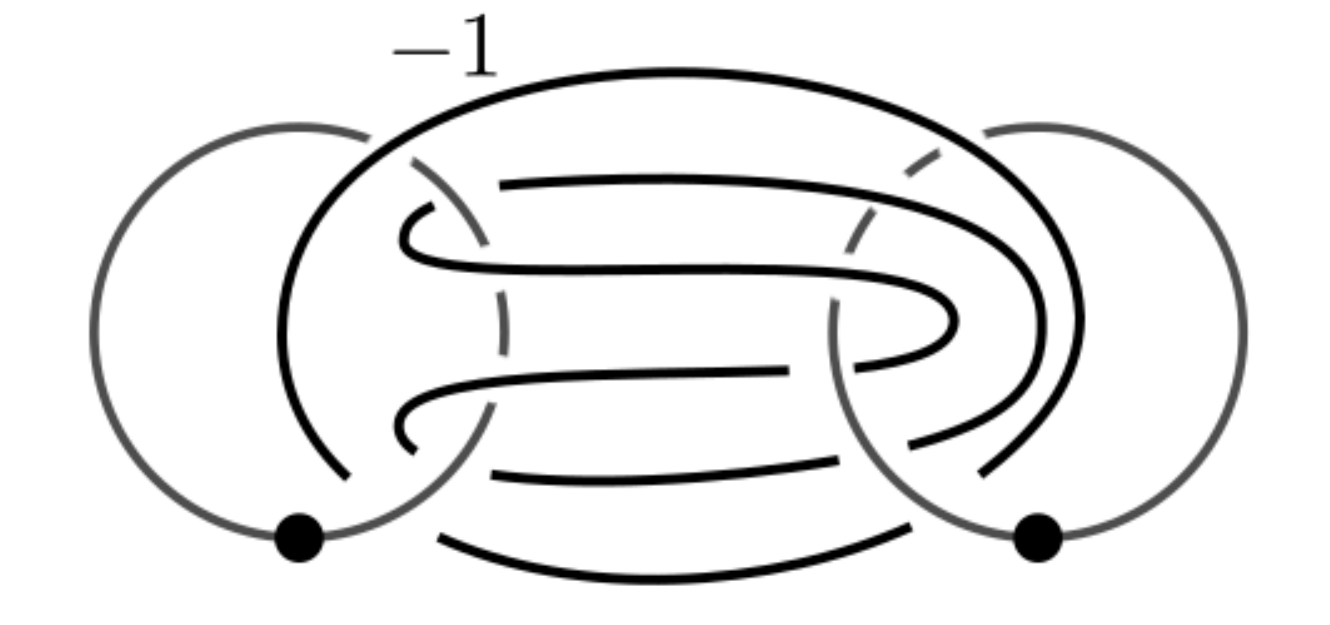}
			\end{subfigure}
			&
			\raisebox{1cm}{\scalebox{1.5}{$\rightsquigarrow$}}
			&
			\begin{subfigure}[t]{0.27\textwidth}
				\includegraphics[scale=0.35]{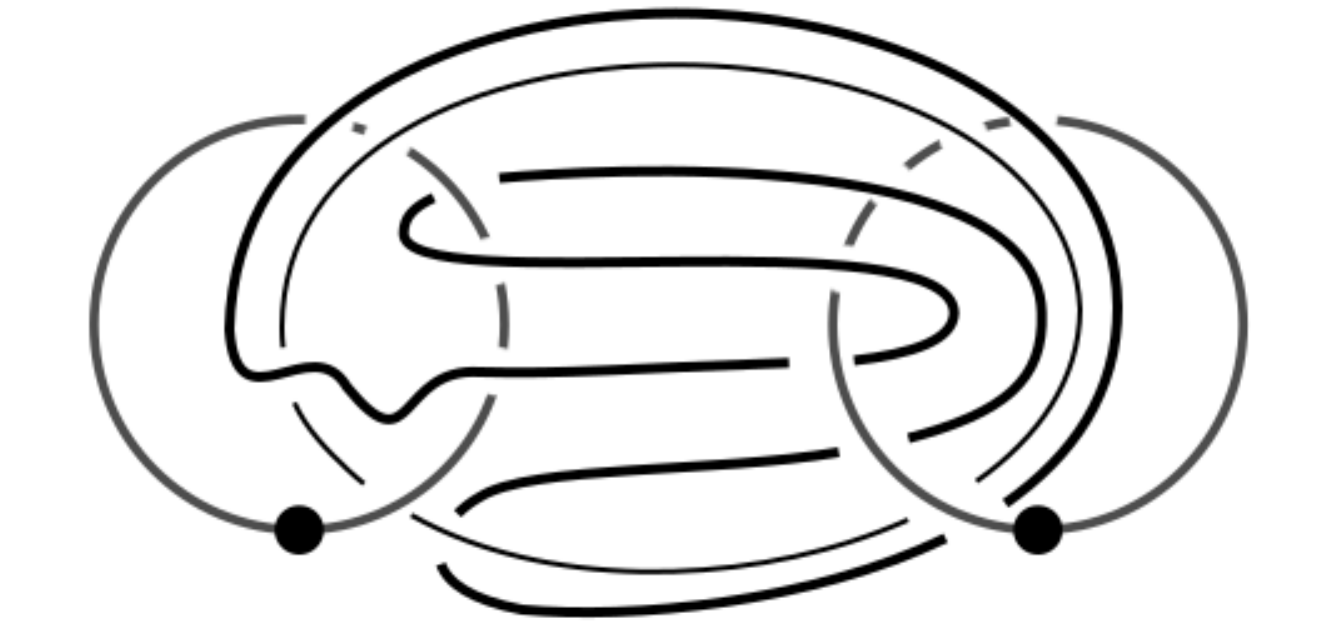}
			\end{subfigure}
			&
			\raisebox{1cm}{\scalebox{1.5}{$\sim$}}
			&
			\raisebox{-.2cm}{
				\begin{subfigure}[t]{0.27\textwidth}
					\includegraphics[scale=0.35]{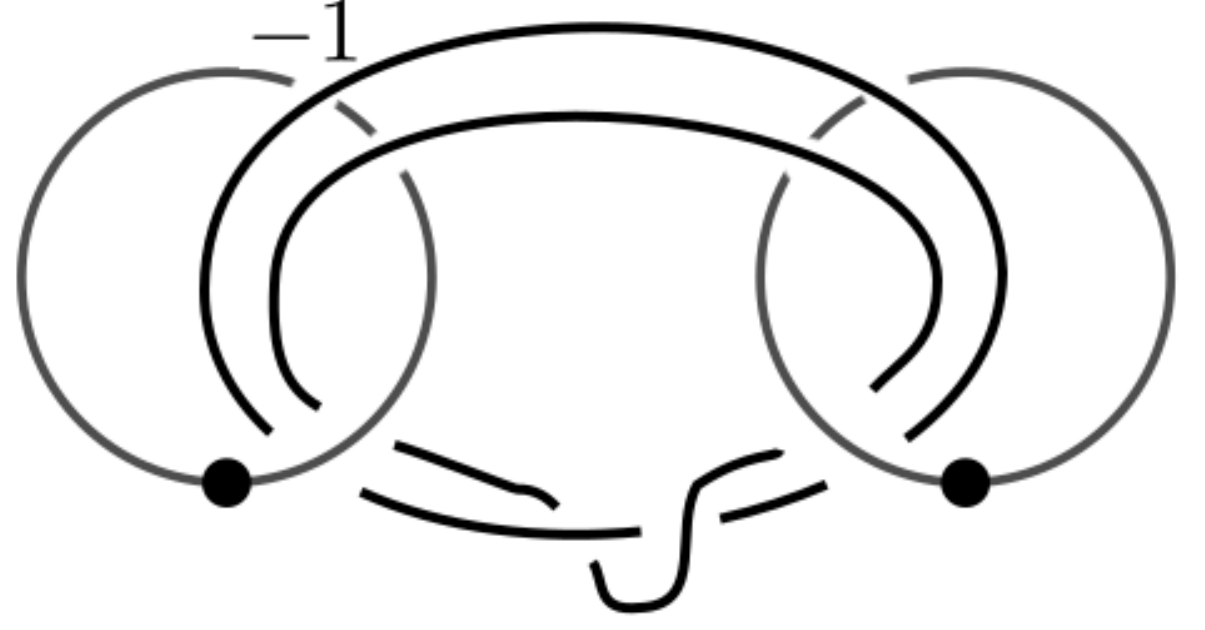}
				\end{subfigure}
			}
		\end{tabular}
		\caption{After sliding over $t$, $h$ becomes its parallel copy.}
		\label{fig:st_proof}
	\end{subfigure}
	\smallskip
	
	\begin{subfigure}{\textwidth}
		\begin{tabular}{ccccc}
			\begin{subfigure}[t]{0.26\textwidth}
				\includegraphics[scale=0.3]{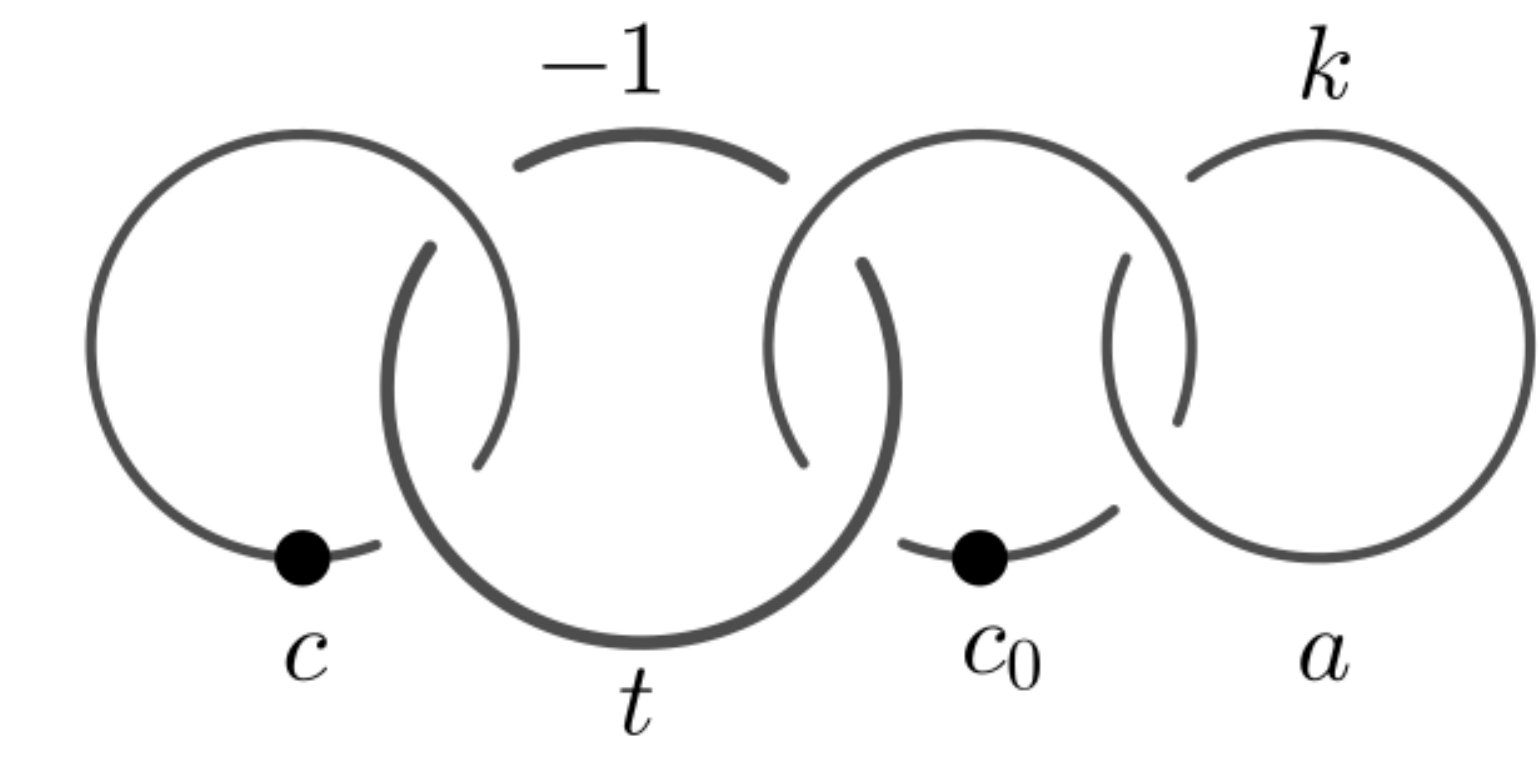}
			\end{subfigure}
			&
			\raisebox{1.2cm}{\scalebox{1.5}{$\rightsquigarrow$}}
			&
			\begin{subfigure}[t]{0.28\textwidth}
				\includegraphics[scale=0.3]{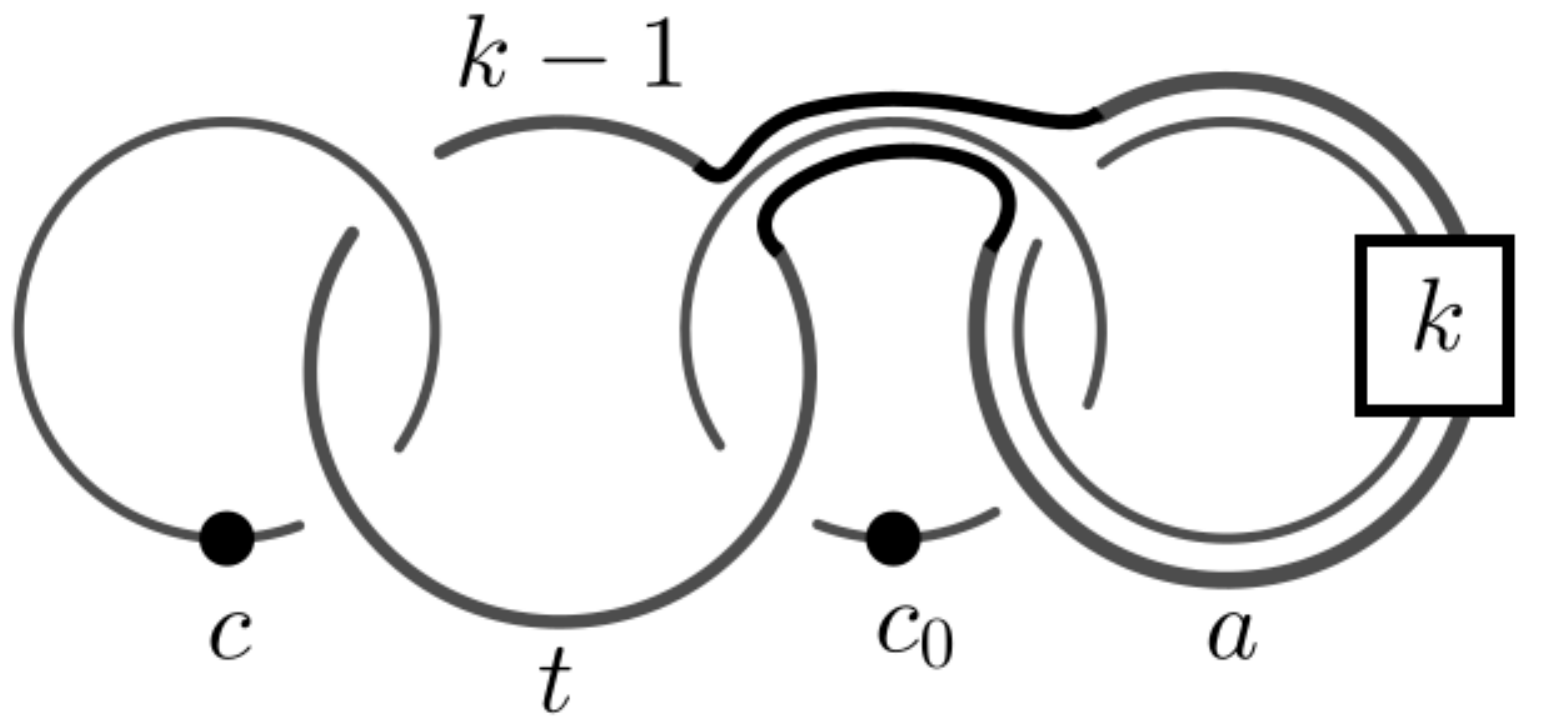}
			\end{subfigure}
			&
			\raisebox{1.2cm}{\scalebox{1.5}{$\rightsquigarrow$}}
			&
			\begin{subfigure}[t]{0.27\textwidth}
				\includegraphics[scale=0.3]{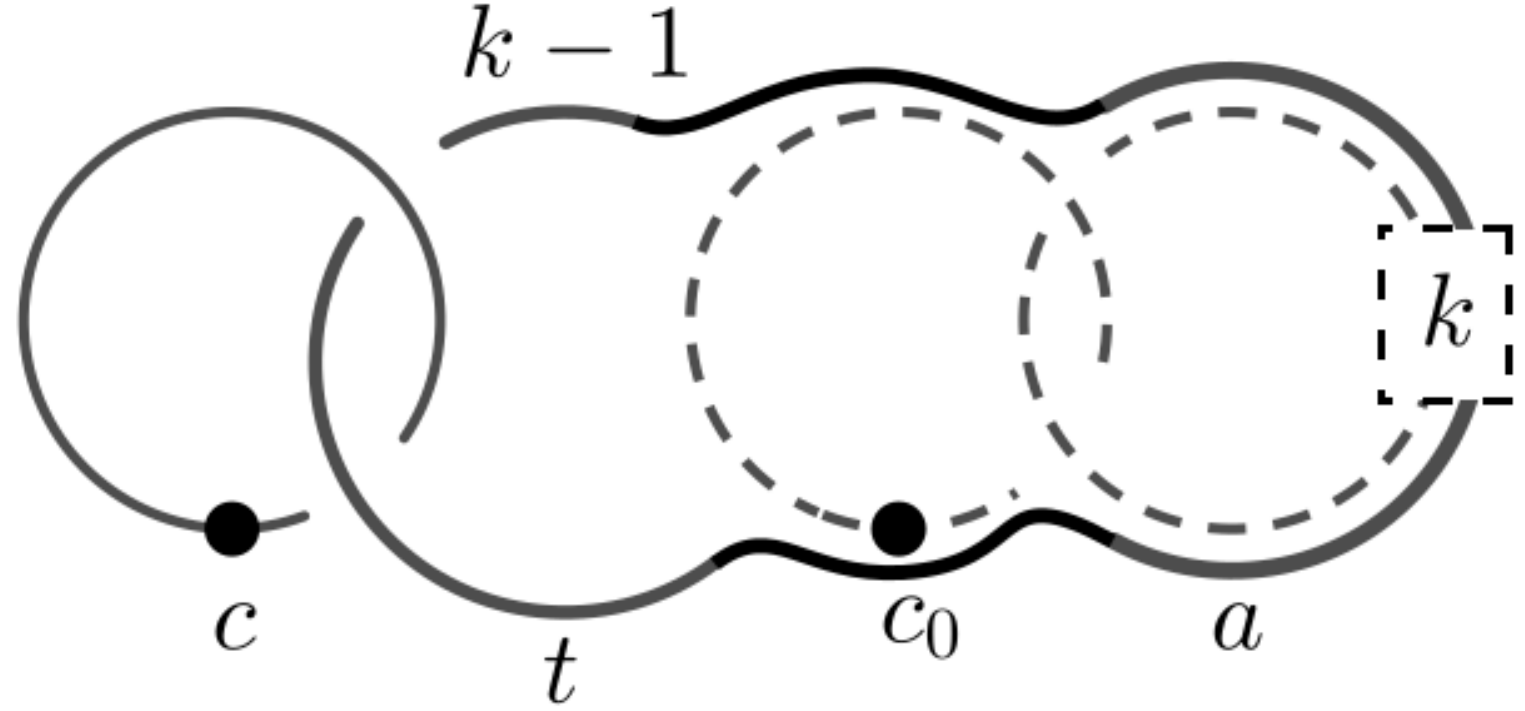}
			\end{subfigure}
		\end{tabular}
		\caption{Canceling the rightmost $1/2$-handle pair.}
		\label{fig:sliding_a}
	\end{subfigure}
		\caption{Proof of Lemma \ref{lem:slides}.}
		\label{fig:sliding_lemma}
\end{figure}

	\subref{fig:sliding_torus}	 As in \subref{fig:ind_0}, we draw $t$ rotated so that both $h$ and $t$ go under the bottom part of $c_2$, and do the band-sum there, see Figure \ref{fig:st_proof}. Then the bottom part of $h$ untangles from $c_1$, $c_2$, and becomes a parallel copy of $t$. Sliding it over $-t$ unlinks it from the rest of the  diagram.
	
	\subref{fig:a_0} After sliding $t$ over $a$, it becomes disjoint from $c_0$ and meets $c$ as before, see Figure \ref{fig:sliding_a}, where the box stands for $k$ twists. Moreover, $c_0$ now meets only $a$, once, so these handles cancel.	
\end{proof}

\begin{proof}[Proof of Theorem \ref{thm:diff}]
	Recall that by Proposition \ref{prop:handles}, $S$ is diffeomorphic to an interior of a $4$-manifold with Kirby diagram as in Figure \ref{fig:handles}, and by Corollary \ref{cor:not_homeo} such $S$'s are not homeomorphic for different $\{d_1,d_2\}$. Therefore, it remains to prove that Kirby diagrams in Figures \ref{fig:handles} and \ref{fig:knot} are equivalent.
	
\begin{figure}[ht]	
\begin{subfigure}{.8\textwidth}
	\begin{tabular}{ccc}
		\begin{subfigure}[t]{0.5\textwidth}
			\includegraphics[scale=0.33]{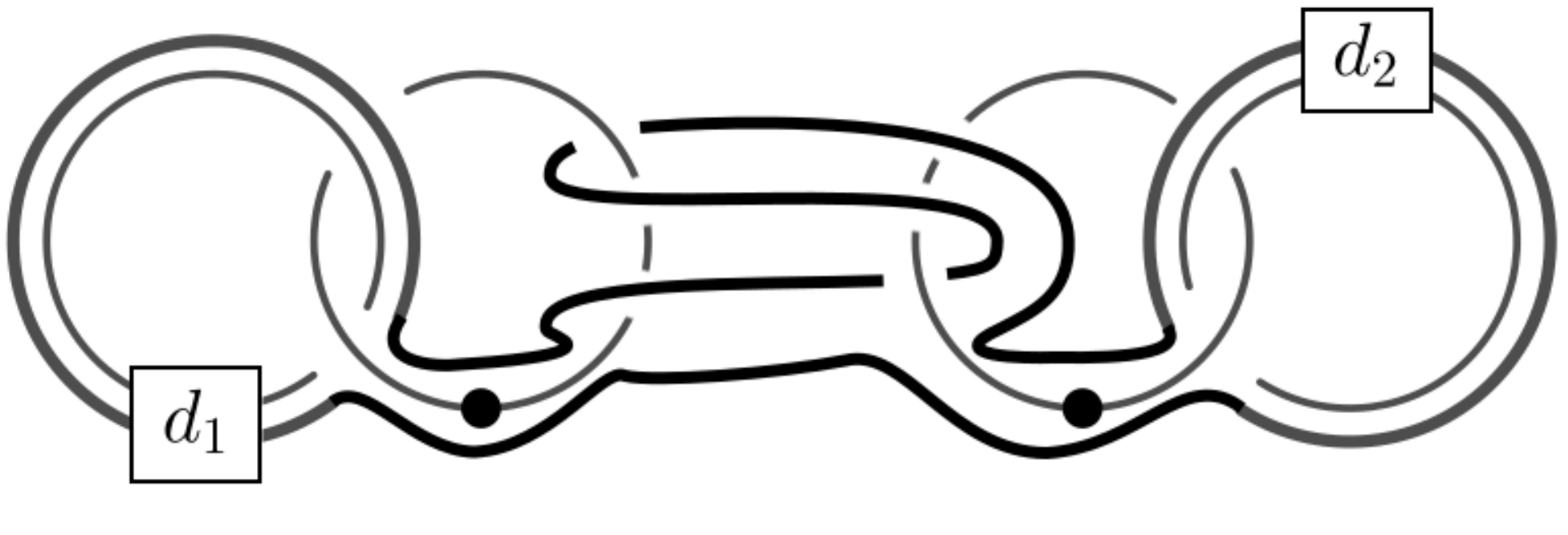}
		\end{subfigure}
		&
		\raisebox{1.2cm}{\scalebox{1.5}{$\sim$}}
		&
		\begin{subfigure}[t]{0.5\textwidth}
			\includegraphics[scale=0.33]{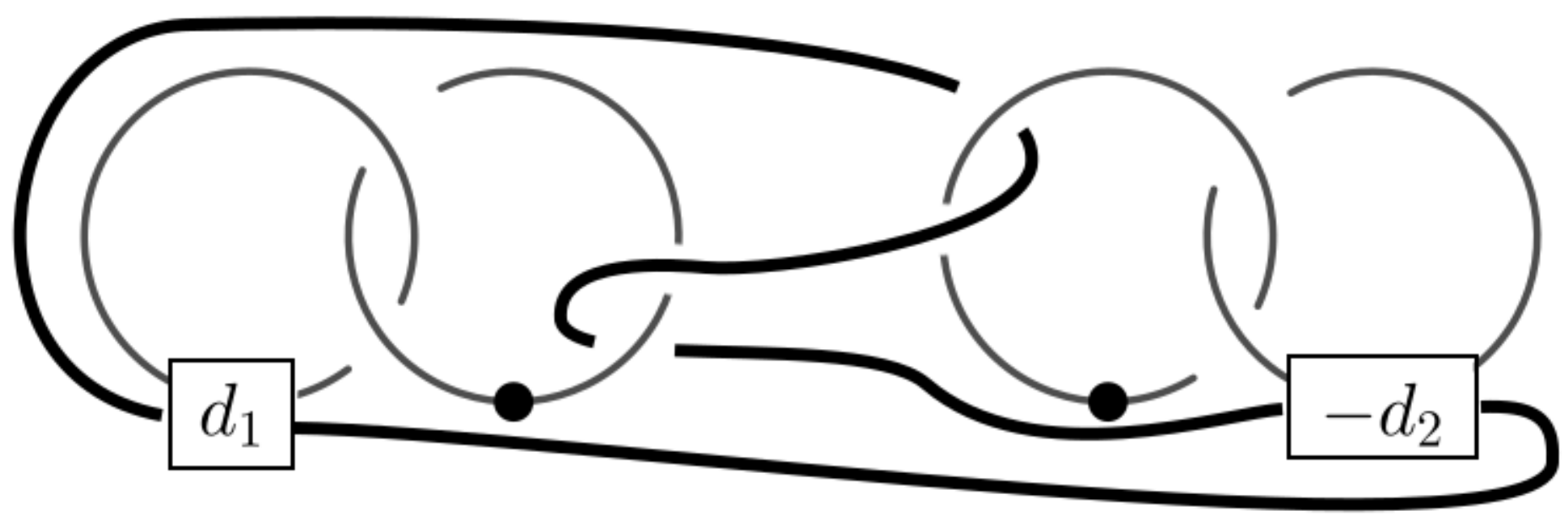}
		\end{subfigure}
	\end{tabular}
	\caption{Adding $a_2-a_1$ to $h$.}
	\label{fig:h}
\end{subfigure}
\begin{subfigure}{.8\textwidth}
	\begin{tabular}{ccc}
		\begin{subfigure}[t]{0.5\textwidth}
			\includegraphics[scale=0.33]{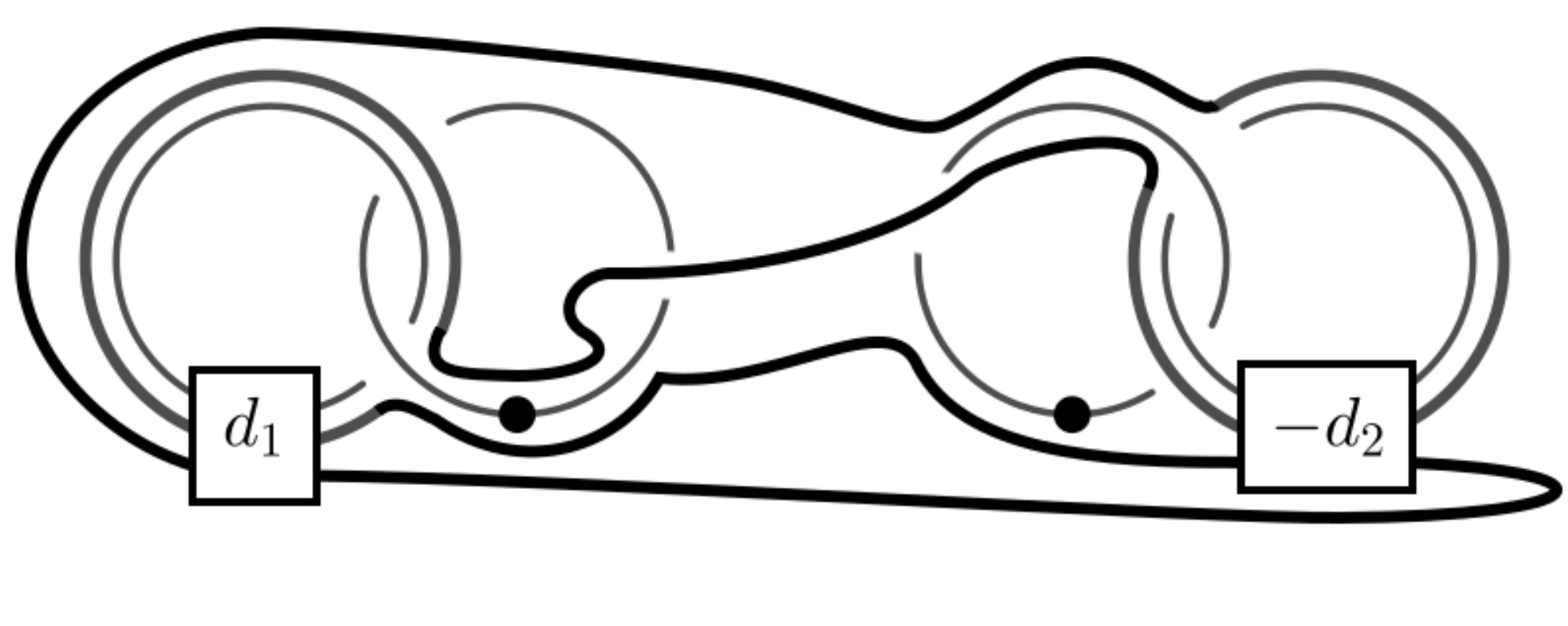}
		\end{subfigure}
		&
		\raisebox{1.2cm}{\scalebox{1.5}{$\sim$}}
		&
		\begin{subfigure}[t]{0.5\textwidth}
			\includegraphics[scale=0.33]{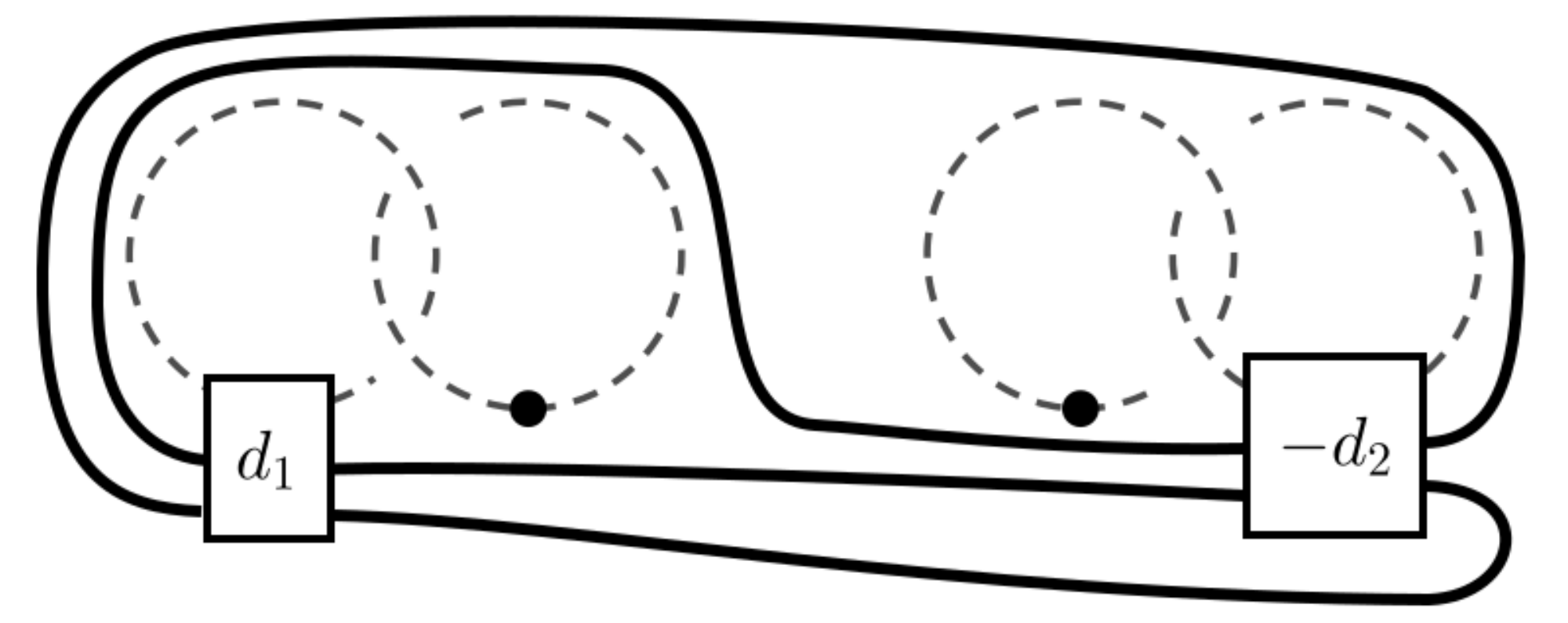}
		\end{subfigure}
	\end{tabular}
	\caption{Subtracting $a_2-a_1$ from $h$ and canceling pairs $(c_j,a_j)$.}
	\label{fig:hn}
	\end{subfigure}
\caption{Proof of Theorem \ref{thm:diff}}%, cf.\ {\cite[5.1.8(b)]{GS_Kirby}}}
\label{fig:proof_diff}
\end{figure}		

	Let $h$, $a_{j}$ be the $0$- and $(-d_{j})$-framed $2$-handles in Figure \ref{fig:handles}, and let $c_{j}$ be the $1$-handle meeting $a_{j}$. We will add and subtract $a_2-a_1$ from $h$ so that the latter untangles from $c_1$, $c_2$. Then we cancel pairs $(c_j,a_j)$. This is done in Figure \ref{fig:proof_diff}, which we now explain; see also \cite[5.1.8(b)]{GS_Kirby} for the case $d_1=d_2=1$.
	
	We start with Figure \ref{fig:handles}, but draw it with $a_2$ rotated along a horizontal axis. Next, we draw parallel copies of $-a_{1}$ and $a_{2}$, where a box stands for $d_{j}$ twists. Then we band sum $h$ with those copies near the points where $h$ and $a_{j}$ go under $c_{j}$, see Figure \ref{fig:h}. This move resolves nearby crossings of $h$ and $c_j$, leaving  $\lk(h,c_j)=-1$. Now we rotate $a_2$ again (this changes the sign in the box) and repeat the above move, but with reversed orientation of $a_j$, i.e.\ we add $a_1$ and subtract $a_2$. As before, the crossings of $h$ with $c_j$ get resolved, so these $1$-handles cancel with $a_j$, see Figure \ref{fig:hn}. After the first slide, $h$ twisted along $a_j$ in the boxes, so now it twists there over itself. A look at Figure \ref{fig:hn} shows that we do obtain a knot as in Figure \ref{fig:knot}. The framing of $h$ remains $0$ since we added and subtracted the same thing. 	
\end{proof}

\subsection{Computation of $\pi_{1}^{\infty}(S_{p_{1},p_{2}})$ and a relation with knot surgeries}\label{sec:pi_1}

Proposition \ref{prop:handles} allows us to compute the fundamental group of $V$ (which is trivial by Proposition \ref{prop:htp}), and, more interestingly, that of $M$, which equals $\pi_{1}^{\infty}(S)$, see \cite{Mumford-surface_singularities}.

\begin{prop}\label{prop:pi_1}
The fundamental group at infinity of $S$ has a presentation
\begin{equation}\label{eq:pi_1}
\pi_{1}^{\infty}(S)=
\langle \delta_{1},\delta_{2},\lambda\ | \ \delta_{1}=[\gamma_2,\lambda^{-1}],\ \delta_{2}=[\gamma_1,\lambda],\ [\gamma_1,\gamma_2]=1\rangle, \quad \mbox{ where } \gamma_{j}=\delta_{j}^{d_j}.
\end{equation}
Moreover, $\gamma_j$ for $j\in \{1,2\}$ is the attaching circle of the $(-d_j)$-framed $2$-handle in Figure \ref{fig:handles}, and $\lambda$ is the belt sphere of the $0$-framed one.
\end{prop}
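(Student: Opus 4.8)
The plan is to compute $\pi_1(M)$ directly from the Kirby diagram in Figure \ref{fig:handles}, which by Proposition \ref{prop:handles} presents the $4$-manifold $V$ with $\partial V = M$. The standard recipe (see \cite[\S 5.3]{GS_Kirby}) gives a presentation of $\pi_1$ of the $4$-manifold from a handle decomposition, and a presentation of the boundary $\pi_1(M)$ by including one extra relation for each $2$-handle coming from its belt sphere bounding the cocore disk, together with relations recording how the attaching circles sit in $\pi_1$ of the union of $0$- and $1$-handles. First I would read off the $1$-handles: as recalled in Section \ref{sec:Kirby_overview}, before the two $2$-handles are attached the diagram presents $\mathbb{T}^2\times \bD^2$ minus a disk, whose fundamental group is free on two generators, say $\gamma_1,\gamma_2$, the loops through the cores of the two $1$-handles. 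The $0$-framed $2$-handle $h$ is attached along the commutator $[\gamma_1,\gamma_2]$; its belt sphere is the loop I will call $\lambda$.

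Next I would track the two remaining $2$-handles. The handle $a_j$ has framing $-d_j$ and is attached along $\gamma_j$ linked with $\lambda$ in the manner displayed in the figure; here $\delta_j$ denotes the meridian (belt sphere) of $a_j$. The key local computation is to express the relations imposed on $M = \partial V$ by surgery on these handles. For the boundary, each $2$-handle contributes the relation that its belt circle $\delta_j$ equals the conjugated commutator dictated by the linking pattern and the framing: reading the diagram, $a_1$ runs parallel to $\gamma_2$ while encircling $\lambda$, which yields $\delta_1 = [\gamma_2,\lambda^{-1}]$, and symmetrically $\delta_2 = [\gamma_1,\lambda]$, with $\gamma_j = \delta_j^{d_j}$ recording that the framing $-d_j$ makes the attaching circle wind $d_j$ times. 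The commutator relation $[\gamma_1,\gamma_2]=1$ survives from the $0$-framed handle $h$, whose attaching circle is $[\gamma_1,\gamma_2]$; since $h$ is present it kills this commutator in $\pi_1(V)$, and on the boundary it persists as a defining relation. Assembling these gives exactly the presentation \eqref{eq:pi_1}.

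I would carry out the identification in two passes. First, a Wirtinger-style or van Kampen computation on the three-dimensional picture $M=\partial V$, decomposing $M$ as the $1$-handlebody boundary with the $2$-handle surgeries performed, to justify that the generators are $\delta_1,\delta_2,\lambda$ and that the only relations are the three displayed ones. The cleanest route is to note that $\pi_1(V)=1$ by Proposition \ref{prop:htp}, which forces the relations among $\gamma_1,\gamma_2,\lambda$ inside $V$; then $\pi_1(M)\to \pi_1(V)$ being the quotient by the normal subgroup generated by the belt spheres $\delta_1,\delta_2$ of the $2$-handles lets me recover $\pi_1(M)$ as the group before killing those meridians. Concretely, $\pi_1(M)$ is generated by $\gamma_1,\gamma_2,\lambda,\delta_1,\delta_2$ subject to the $2$-handle attaching relations and the linking relations, and eliminating redundant generators leaves \eqref{eq:pi_1}. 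Second, I would verify the geometric labeling claim: that $\gamma_j$ is the attaching circle of the $(-d_j)$-framed handle and $\lambda$ the belt sphere of the $0$-framed handle, which is read straight off Figure \ref{fig:handles} and consistent with the commutator relation coming from the way $\mathbb{T}^2\times \bD^2$ is built.

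The main obstacle will be getting the relations $\delta_j=[\gamma_{3-j},\lambda^{\pm 1}]$ with the correct conjugators, signs, and orientations: the commutators record exactly how each attaching circle threads through the belt region of $h$ and around the opposite $1$-handle, so an orientation or linking-sign error propagates into a wrong group. I would pin this down carefully by choosing basepoints and orientations as fixed in Section \ref{sec:Kirby_overview} (counterclockwise Hopf components with linking number $+1$) and reading the over/under crossings in the figure, cross-checking against the special case $d_1=d_2=1$ and against the fact that abelianizing \eqref{eq:pi_1} must give $H_1(M)$ compatible with $M$ being a $0$-surgery on the knot $K_{[2d_1,2d_2]}$ from Theorem \ref{thm:diff}. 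This abelianization check, together with the known $\pi_1(V)=1$, serves as the main sanity test that the presentation is correct.
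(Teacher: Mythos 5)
Your outline follows the same route as the paper's own proof: use Proposition \ref{prop:handles} to see that $M=\d V$ is obtained from $\d(\bT^{2}\times\bD^{2})=\bT^{3}$ by surgery along the two attaching circles $\gamma_{1},\gamma_{2}$, and then compute $\pi_{1}(M)$ by van Kampen, with generators $\gamma_{1},\gamma_{2},\lambda,\delta_{1},\delta_{2}$ and the surgery relations. However, the step that carries all the content --- the derivation of the commutator relations --- is not justified correctly in your proposal. You claim that ``$a_{1}$ runs parallel to $\gamma_{2}$ while encircling $\lambda$, which yields $\delta_{1}=[\gamma_{2},\lambda^{-1}]$.'' This misreads the diagram: $a_{1}$ is attached along $\gamma_{1}$ (it runs through the first $1$-handle) and has nothing to do with $\gamma_{2}$. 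The relation $\delta_{1}=[\gamma_{2},\lambda^{-1}]$ is a statement about the \emph{meridian} $\delta_{1}$ of $a_{1}$: inside $\bT^{3}$, the complement of the coordinate circle $\gamma_{1}$ is $\bS^{1}\times(\bT^{2}\setminus\{\mathrm{pt}\})$, where the punctured torus is spanned by $\gamma_{2}$ and $\lambda$, so the meridian of $\gamma_{1}$ is the boundary of that punctured torus, i.e.\ the commutator $[\gamma_{2},\lambda^{-1}]$ in $\pi_{1}$ of the complement. This identification of the meridians of the deleted tori is the single geometric input of the paper's proof, and it is absent from yours; without it the presentation \eqref{eq:pi_1} is asserted rather than derived.

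The second genuine flaw is your ``cleanest route'' via $\pi_{1}(V)=1$. The map $\pi_{1}(M)\to\pi_{1}(V)$ is indeed surjective with kernel normally generated by the belt circles $\delta_{1},\delta_{2}$, but a group is not determined by such a quotient: there is no well-defined ``group before killing those meridians,'' so Proposition \ref{prop:htp} cannot be used to force or recover the relations --- it can only serve as a consistency check, as can your abelianization test $H_{1}(M)\cong\Z$. Relatedly, your opening description of the recipe conflates the two presentations: a belt sphere bounding its cocore disk gives the relation $\delta_{j}=1$ in $\pi_{1}(V)$, not a relation in $\pi_{1}(M)$; for the boundary one must instead work in the link complement in $\#_{2}\,\bS^{1}\times\bS^{2}$ and kill the surgery slopes $\lambda_{j}\mu_{j}^{f_{j}}$, which is where $\gamma_{j}=\delta_{j}^{d_{j}}$ comes from (longitude equals $d_{j}$-th power of the meridian), not from the attaching circle ``winding $d_{j}$ times.'' To complete the proof along your lines you would need to actually perform the van Kampen computation on the surgery description --- which is precisely what the paper does in three lines once $M$ is recognized as surgery on $\bT^{3}$ --- rather than appeal to the quotient $\pi_{1}(V)$ or to linking-pattern heuristics.
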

\begin{proof}
	Let $\gamma_{1},\gamma_{2},\lambda\in \pi_{1}(\d (\bT^{2}\times \bD^{2}))=\pi_{1}(\bT^{3})$ be the standard generators. Proposition \ref{prop:handles} shows that $M$ is obtained from $\bT^{3}$ by a surgery along $\gamma_{1},\gamma_{2}$ with coefficients $d_{1}$, $d_{2}$. The meridians of deleted tori are $\mu_{1}\de[\gamma_{2},\lambda^{-1}], \mu_{2}\de [\gamma_{1},\lambda]$. Now by van Kampen theorem, each surgery replaces the relation $\mu_j=1$ in $\pi_{1}(\bT^{3})$ by $\mu_j=\delta_j$, where $\delta_j\in \pi_{1}(M)$ is the corresponding attaching circle, satisfying $\delta_{j}^{d_j}=\gamma_{j}$.
\end{proof}

\begin{rem}\label{rem:pi_1-Mumford}
	The presentation \eqref{eq:pi_1} can also be obtained from the plumbing construction in \cite{Mumford-surface_singularities}, see Section \ref{sec:graph_manifolds}. If $D$ is a tree, \cite{Mumford-surface_singularities} shows how to calculate $\pi_{1}(M)$ in terms of fibers of the plumbed $\bS^{1}$-bundles, using van Kampen theorem. For our $D$, one needs to use the van Kampen theorem for groupoids \cite{Brown_van-Kampen}, say, to glue $\d \mathrm{Tub}(L_{1,\infty}+L_{2,\infty})$ to the rest  of $\d \mathrm{Tub}(D)$ along two disjoint plumbing tori, each containing a base point. This gives the additional generator $\lambda$, which is the sum of paths joining the two base points in each of the glued pieces. In this interpretation, $\gamma_{j}$ is the fiber over $L_{j,0}$. Indeed, in the proof of Proposition \ref{prop:handles} it was drawn parallel to the boundary of a disk corresponding to $T_{j,d_j-1}$, which goes once around $L_{j,0}$. In turn, $\delta_{j}$ is a fiber over $T_{j,1}$, that is, a singular fiber of a lens space $L(1,d_{j})$ attached to $\d \mathrm{Tub}(L_{j,0})$.
\end{rem}

In the proof of Corollary \ref{cor:not_homeo}, we have essentially seen the JSJ decomposition of $M$. We will now show how to see it from the above description of $M$, as a $0$-surgery on $K_{[2d_1,2d_2]}$. In order to settle the notation, we recall the basic definitions following \cite{Hatcher_3M}.

Let $N$ be a $3$-manifold. A surface $\Sigma$ in $N$ is \emph{incompressible} if every loop in $\Sigma$ bounding a disk in $N$ bounds a disk in $\Sigma$, too. We write $N|\Sigma$ for a $3$-manifold (with boundary) obtained from $N$ by removing $\Sigma$ together with some tubular neighborhood. 
Assume that $N$ is irreducible (see Section \ref{sec:graph_manifolds}). Then $N$ has a collection $\Sigma$ of incompressible tori such that each component of $N|\Sigma$ is either a Seifert fiber space or has no incompressible tori. A minimal  such $\Sigma$  (in the sense of inclusion) is unique up to an isotopy \cite[Theorem 1.9]{Hatcher_3M}, and $N|\Sigma$ is called a \emph{JSJ-decomposition} of $N$. 

We now recall the notation for Seifert fibered spaces following \cite[\S 2.1]{Hatcher_3M}. Let $B$ be a compact surface of genus $g$ with $r$ boundary components, and let $M_0\to B_0$ be an $\bS^{1}$-bundle, where $B_0$ is $B$ with $k$ disks $D_{1},\dots, D_{k}$ removed. The preimage $T_{i}\subseteq M_{0}$ of $\d D_{i}$ is a torus: we attach to it a solid torus $\bD^{2}\times \bS^{1}$ in such a way that a meridian $0\times \bS^{1}$ is attached to a loop in $T_{i}$ whose preimage in its universal cover is a line of slope $\tfrac{p_{i}}{q_{i}}$. The resulting manifold is a \emph{Seifert fibered space} $M(g,r;\tfrac{p_{1}}{q_{1}},\dots, \tfrac{p_{k}}{q_k})$. It is a graph manifold. Its graph has one vertex corresponding to $M_0$ and $k$ maximal twigs, whose type is a continued fraction expansion of $\tfrac{p_i}{q_i}$. This graph is normal whenever a Seifert fibration is unique, see \cite[5.7]{Neumann-plumbing_graphs}. Few important exceptions, including lens spaces $\bS^3/\Z_{p}$, are listed in \cite[Theorem 2.3]{Hatcher_3M}. Their normal graphs are described in \cite[6.1]{Neumann-plumbing_graphs}.

Let again $M=\d \mathrm{Tub}(D)$ be as in Notation \ref{not:d}. The following result can be inferred from the computation of $\Gamma(M)$ in Figure \ref{fig:graphs} using the algorithm in \cite[\S 5]{Neumann-plumbing_graphs}. Nonetheless, we sketch a direct argument using the geometric description of incompressible tori in $M$ deduced in \cite[Lemma 2.2]{BW_surgery} from \cite{HT_inc}.

\begin{prop}[JSJ decomposition of $M$]\label{prop:JSJ}
	Let $\Sigma_{1},\Sigma_{2}\subseteq M$ be the plumbing tori at $L_{1,0}\cap L_{2,0}$ and $L_{1,0}\cap L_{2,\infty}$, see Section \ref{sec:graph_manifolds}. Then the JSJ decomposition of $M$ is
	\begin{enumerate}
		\item\label{item:d_1>1} $M|(\Sigma_{1}\sqcup \Sigma_2)=M(2,0;\tfrac{1}{d_1})\sqcup M(2,0;\tfrac{1}{d_2})$ if $d_{1},d_{2}\neq 1$,
		\item\label{item:d_1=1} $M|\Sigma_1=M(2,0;\tfrac{1}{d_2})$ if $d_{1}=1$ and $d_2>1$,
		\item\label{item:Moser} $M=M(0,0;\tfrac{1}{2},\tfrac{1}{3},\tfrac{1}{6})$ if $d_1=d_2=1$.
	\end{enumerate}
	Let $\gamma_{j},\delta_{j}$ be as in Proposition \ref{prop:pi_1}. Then $\delta_{j}$ is the singular fiber of $M(2,0;\tfrac{1}{d_j})$ and $\gamma_{j}$ is a general one.
\end{prop}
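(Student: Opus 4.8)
The plan is to read the candidate decomposition off the plumbing graph of $D$ and then certify its minimality through the geometry of incompressible tori in the exterior of $K_{[2d_1,2d_2]}$; this is the ``direct argument'' announced above, with the calculus of \cite[\S 5]{Neumann-plumbing_graphs} kept in reserve as an alternative bookkeeping device. Recall from Notation \ref{not:d} that the graph of $D$ is the $4$-cycle on $L_{1,0},L_{2,0},L_{1,\infty},L_{2,\infty}$ with weights $-1,-1,0,0$, carrying the two $(-2)$-twigs $T_1=[(2)_{d_1-1}]$ and $T_2=[(2)_{d_2-1}]$ at the branching vertices $L_{1,0}$ and $L_{2,0}$. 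The tori $\Sigma_1,\Sigma_2$ are the plumbing tori over the two cycle-edges issuing from $L_{1,0}$, so cutting along $\Sigma_1\sqcup\Sigma_2$ detaches the vertex $L_{1,0}$ together with its twig from the three remaining cycle-vertices and from $T_2$.

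First I would recognise each complementary piece as a Seifert fibred space by exhibiting the relevant subgraph as star-shaped. The component of $L_{1,0}$ is the star with centre $L_{1,0}$, single leg $T_1$, and two free boundary tori produced by the cuts; since the continued fraction of $[(2)_{d_1-1}]$ equals $d_1/(d_1-1)$, it is a Seifert fibred space with two boundary tori and a single exceptional fibre of order $d_1$, i.e.\ $M(2,0;\tfrac1{d_1})$. For the other component I would first apply the absorption move R3 of \cite{Neumann-plumbing_graphs} to the two $0$-vertices $L_{1,\infty},L_{2,\infty}$, each of which contributes only a $\bT^{2}\times\bD^{1}$ collar; this brings it to the star with centre $L_{2,0}$ and leg $T_2$, i.e.\ $M(2,0;\tfrac1{d_2})$. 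This yields part \ref{item:d_1>1}. The fibre identifications are then immediate from Remark \ref{rem:pi_1-Mumford}: in $M(2,0;\tfrac1{d_j})$ the general fibre is the plumbing fibre over the centre $L_{j,0}$, namely $\gamma_j$, while $\delta_j$, the fibre over $T_{j,1}$, is the exceptional one.

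To conclude that $\{\Sigma_1,\Sigma_2\}$ is exactly the JSJ system I would use Theorem \ref{thm:diff} to view $M$ as the $0$-surgery on the $2$-bridge knot $K_{[2d_1,2d_2]}$ and invoke \cite[Lemma 2.2]{BW_surgery}, which transcribes the Hatcher--Thurston classification \cite{HT_inc} of incompressible surfaces in $2$-bridge exteriors: up to isotopy the only incompressible tori in $M$ are $\Sigma_1,\Sigma_2$, and the Seifert fibrings on the two sides of each $\Sigma_i$ induce distinct slopes, so no $\Sigma_i$ can be absorbed. Together with both complementary pieces being Seifert fibred, this identifies the decomposition as the minimal, hence characteristic, one. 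The degenerate cases fall out of the same picture: when $d_1=1$ the twig $T_1$ is empty and the $L_{1,0}$-piece collapses to $\bT^{2}\times\bD^{1}$, so $\Sigma_1$ and $\Sigma_2$ become isotopic and a single torus survives, giving $M|\Sigma_1=M(2,0;\tfrac1{d_2})$ as in part \ref{item:d_1=1}; when $d_1=d_2=1$ both twigs vanish and the graph reduces, exactly as in the computation of $\Gamma(M)$ in Figure \ref{fig:graphs}, to that of the atoroidal Seifert space $M(0,0;\tfrac12,\tfrac13,\tfrac16)$, the $0$-surgery on the trefoil $K_{[2,2]}$, so the JSJ is trivial (part \ref{item:Moser}).

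I expect the genuine difficulty to lie precisely in this minimality step. Producing the tori and fibring the pieces is routine graph-manifold bookkeeping; what needs real input is ruling out further incompressible tori and, in the small pieces where the Seifert fibration need not be unique (cf.\ \cite[Theorem 2.3]{Hatcher_3M}), making sure the fibration used is the JSJ one. This is what forces the appeal to \cite{BW_surgery,HT_inc}, and it is exactly the phenomenon that breaks the symmetry between the generic case \ref{item:d_1>1} and the exceptional cases \ref{item:d_1=1} and \ref{item:Moser}.
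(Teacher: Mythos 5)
Your plan follows the paper's proof in its overall architecture: read the Seifert pieces off the plumbing graph of $D$ (absorbing the $0$-weighted vertices $L_{1,\infty},L_{2,\infty}$), identify $\gamma_{j},\delta_{j}$ via Remark \ref{rem:pi_1-Mumford}, dispose of case \ref{item:Moser} through the $0$-surgery on the trefoil, and obtain the tori from Theorem \ref{thm:diff} together with \cite{BW_surgery} and \cite{HT_inc}. So this is not a different route; the issue is a genuine gap at the one step the paper treats as the crux.

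The gap is in the sentence claiming that \cite[Lemma 2.2]{BW_surgery} says that ``up to isotopy the only incompressible tori in $M$ are $\Sigma_1,\Sigma_2$''. What that lemma (via \cite{HT_inc}) actually provides is a description of the incompressible tori in the \emph{surgery} picture of $M$: they are the capped-off once-punctured tori $S_{1}(0)$, $S_{1}(1)$, built from a quadrilateral $\qq$ in a plane separating the two twist regions of $K_{[2d_1,2d_2]}$ (respectively from its complement in the compactified plane) by attaching two bands. Your tori $\Sigma_{1},\Sigma_{2}$ are defined in the \emph{plumbing} picture of $\d \mathrm{Tub}(D)$. Theorem \ref{thm:diff} matches the two pictures of $M$ as manifolds, but it does not by itself match the tori, and without that identification your graph-side computation of $M|(\Sigma_1\sqcup\Sigma_2)$ and the knot-side finiteness statement never connect. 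This is precisely what the second half of the paper's proof supplies: using the handle decomposition of Proposition \ref{prop:handles} and Remark \ref{rem:pi_1-Mumford}, it checks that the loops running over the two bands of $S_{1}(0)$ are isotopic to the fibers $\gamma_{1},\gamma_{2}$ over $L_{1,0},L_{2,0}$, so $S_{1}(0)$ caps to $\Sigma_{1}$, and a change-of-base-point (regluing) argument identifies the capped $S_{1}(1)$ with $\Sigma_{2}$. Two smaller omissions of the same kind: incompressibility of the capped tori in $M$ (not merely of $S_{1}(i)$ in the knot exterior) is not automatic and is obtained in the paper from Przytycki's unknottedness criterion \cite{Przytycki_inc}; and minimality of the system is certified there not by your ``distinct slopes'' assertion (which \cite{BW_surgery} does not state for the $\Sigma_{i}$, and which you would still have to prove while controlling the non-uniqueness of Seifert fibrations on small pieces), but by checking that the relevant normal-form graphs do not occur in Neumann's lists of Seifert-fibered graph manifolds \cite[5.7, 6.1]{Neumann-plumbing_graphs}, with a fibration-matching argument of the kind you sketch relegated to the remark following the proposition.
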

\begin{proof}
	By Theorem \ref{thm:diff}, $M$ is a $0$-surgery on $\bS^{3}$ along $K=K_{[2d_1,2d_2]}$. Because $K_{[2,2]}$ is just a trefoil, \ref{item:Moser} follows from \cite[Proposition 3.1]{Moser_knots} which completely describes surgeries on torus knots. Assume $d_2>1$. If $d_1>1$, too, then cutting along $\Sigma_1\sqcup \Sigma_2$ in case \ref{item:d_1>1} corresponds to removing the edges of the circular subgraph in Figure \ref{fig:g_2}, and decorating the leftmost vertices with $r=2$, i.e.\ puncturing the base spheres twice. If $d_1=1$ then cutting along $\Sigma_1$ corresponds to removing the loop in Figure \ref{fig:g_3} and adding boundary as before. This shows the equalities in \ref{item:d_1>1}--\ref{item:d_1=1}, and proves that $M$ and, if $d_1=1$, $M|\Sigma_i$ for $i\in \{1,2\}$ are not Seifert fibered: indeed, $\Gamma(M|\Sigma_i)$ is not one of the graphs in \cite[5.7, 6.1]{Neumann-plumbing_graphs}. It remains to show that $\Sigma_{1},\Sigma_{2}\subseteq M$ are incompressible.
	
	The $3$-manifold $M$ is irreducible by \cite[Theorem 2]{HT_inc} (it also follows from Lemma \ref{lem:graphs}\ref{item:irr}). Put $M_0=\bS^{3}\setminus \mathrm{Tub}(K)\subseteq M$. Put $M_0=\bS^{3}\setminus \mathrm{Tub}(K)\subseteq M$. By \cite[Proposition 1(1) and the proof of Proposition 2]{HT_inc} there are, up to isotopy, at most two  incompressible surfaces $\Sigma_0\subseteq M_0$ such that the surfaces $\Sigma\subseteq M$ obtained by capping $\Sigma_0$ are tori (see also proof of \cite[Lemma 2.2]{BW_surgery}). Such $\Sigma_0$ are denoted by $S_{1}(0)$ and $S_{1}(1)$ in \cite{HT_inc}, and they have exactly one boundary component each. By \cite[Proposition 1.5(a)]{Przytycki_inc} they are unknotted (see Definition 1.3 loc. cit), so by Theorem 1.4 loc.\ cit, $\Sigma$ remains incompressible. Moreover, by \cite[Theorem 1(e)]{HT_inc} $S_1(0)$ is isotopic to $S_1(1)$ if and only if $d_1=1$. Thus our claim will follow once we show that $\Sigma=\Sigma_1,\Sigma_2$ for $\Sigma_0=S_1(0),S_1(1)$. 
	
	To this end, we recall the description of $S_{1}(0)$, $S_{0}(1)$ from \cite[p.\ 227, Fig.\ 1]{HT_inc}. Consider Figure \ref{fig:knot} as a knot in $\R^{3}$, and let $\pp$ be a vertical plane between the \enquote{twists}. It meets the knot in four points. Connect them to get a quadrilateral  $\qq\subseteq \pp$: it is a $0$-handle for $S_{1}(0)$. Now attach to it two $1$-handles along the bands with $d_{1}$ and $d_{2}$ twists, respectively: this gives a punctured torus $S_{1}(0)$. The torus $S_{1}(1)$ is constructed the same way, with $\qq$ replaced by its complement in the compactification $\bar{\pp}=\bS^{2}\times \{\mathrm{pt}\}\subseteq (\bS^{2}\times \R)\sqcup \{\pm \infty\}=\bS^{3}$.
	
	The loops based at $\qq$ and going around the $1$-handles of $S_1(0)$ are isotopic to the non-dotted circles in Figure \ref{fig:handles}, which by Remark \ref{rem:pi_1-Mumford} are fibers $\gamma_j$ over $L_{j,0}$. Hence $S_{1}(0)$ caps to $\Sigma_1$. To see the $1$-handles of $S_{1}(1)$, we need to change the base point from $\qq$ to its complement. The gluing described in Remark \ref{rem:pi_1-Mumford} shows that this amounts to changing, say, $L_{2,0}$ to $L_{2,\infty}$. This way we get $\Sigma_2$, as claimed.
\end{proof}

\begin{rem}[A direct argument for minimality of $\Sigma$]In the above proof, we have used the classification of graph manifolds to show that the collections $\Sigma_1\sqcup\Sigma_2$ in \ref{item:d_1>1} and $\Sigma_1$  in \ref{item:d_1=1} of incompressible tori decomposing $M$ to Seifert fibered spaces is minimal. However, this can be also seen directly, as follows.

First, we note that since $\Gamma(M|\Sigma_{i})$ is a tree, one can use van Kampen theorem as in \cite{Mumford-surface_singularities} to compute
\begin{equation}\label{eq:pi_1-Sigma}
\pi_{1}(M|\Sigma_{1})=\langle \delta_{1},\delta_{2}|[\gamma_1,\gamma_2]=1 \rangle, \mbox{ where } \delta_j\mbox{ and } \gamma_j=\delta_{j}^{d_{j}}\mbox{ are as in Remark \ref{rem:pi_1-Mumford}.}
\end{equation}

Suppose that $d_1,d_2>1$, but $M|\Sigma_i$ is Seifert fibered for some $i\in \{1,2\}$. Then by \cite[Proposition 1.11]{Hatcher_3M}, the incompressible torus $\Sigma_{3-i}\subseteq M|\Sigma_{i}$ is either horizontal or vertical. In the first case, it covers the base orbifold, which has two boundary components, so comparing Euler characteristics as done in \cite[p.\ 22]{Hatcher_3M} shows that $M|\Sigma_i$ has no singular fibers, so $M|\Sigma_{i}\cong (\bD^{1}\times \bS^{1})\times \bS^{1}$, a contradiction with \eqref{eq:pi_1-Sigma}. In the second case, cutting $M|\Sigma_{i}$ further along $\Sigma_{3-i}$  preserves the Seifert fibration: therefore by the equality in \ref{item:d_1>1}, it restricts to the unique one on $M(2,0;\tfrac{1}{d_{j}})$. Hence after gluing $\Sigma_{3-i}$ back, the fiber of one fibration becomes a loop around the degenerate fiber of the other. This is a contradiction, because the former is central in $\pi_{1}$, and the latter is not.

Suppose now that $d_{2}>1$, but $M$ is Seifert fibered. Again, the incompressible torus $\Sigma_{1}$ is either horizontal or vertical. In the first case, since $M|\Sigma_{1}$ is connected, \cite[p.\ 22]{Hatcher_3M} implies that $M|\Sigma_1\cong \Sigma_1\times \bD^{1}$, a contradiction with \eqref{eq:pi_1-Sigma}. In the second case, we obtain that $M|\Sigma_{1}$ is a Seifert fiber space. We have shown that this is possible only if $d_{1}=1$, in which case the general fiber is represented by $\gamma_{1}\in \pi_{1}(M)$. In particular, $\langle \gamma_{1}\rangle \subseteq \pi_{1}(M)$ is a (nontrivial)  central subgroup. Hence by \eqref{eq:pi_1} $\delta_{2}=[\gamma_{1},\lambda]=1$ and $\delta_{1}=[\delta_{2}^{d_{2}},\lambda^{-1}]=1$, so $\gamma_{1}=1$; a contradiction.
\end{rem}

\section{$\C^{*}$-fibrations and complete algebraic vector fields on $S_{p_1,p_{2}}$}\label{sec:AAut}

In this section we prove Theorem \ref{thm:AAut}\ref{item:AAut_thm}. It follows from \cite[Theorem 1.6]{KKL_avf} that a flow of any complete algebraic vector field on $S_{p_{1},p_{2}}$ preserves some $\C^{*}$-fibration of $S_{p_{1},p_{2}}$ (see Proposition \ref{prop:fibrations}\ref{item:vf} for details). Therefore, to understand $\AAut(S_{p_{1},p_{2}})$ we need to classify all $\C^{*}$-fibrations of $S_{p_{1},p_{2}}$. 

Their degenerate fibers are affine curves with Euler characteristic at least $\etop(\C^{*})$, see \cite[III.1.8]{Miyan-OpenSurf}, i.e.\ unions of $\C^{*}$ and $\C^{1}$. Therefore, we begin by classifying all curves in $S_{p_{1},p_{2}}$ which are isomorphic to $\C^{1}$. This is done in Lemma \ref{lem:C1}. In Lemma \ref{lem:C*_lines} we show that any $\C^{*}$-fibration of $S_{p_{1},p_{2}}$ comes from a $\C^{*}$-fibration of a complement of suitably chosen pair of disjoint $\C^{1}$'s; which by Lemma \ref{lem:tori} turns out to be a torus $\C^{*}\times \C^{*}$. The union of such tori is exactly the complement of $Z$ from Theorem \ref{thm:AAut}\ref{item:AAut_thm}.

\begin{notation}\label{not:S}
	As in Notation \ref{not:d}, we put $d_j=\deg p_{j}+1$ and write $S\de S_{p_{1},p_{2}}$. We use notation introduced in Construction \ref{constr} and denote by $T_{j}$ the maximal $(-2)$-twig of $D$ meeting $A_{j}$. Additionally, we denote by $\cc=\{x_1+x_2=1\}\subseteq \P^{1}\times \P^{1}$ the curve of type $(1,1)$ passing through $(\infty,\infty)$ and both base points $(1,0)$, $(0,1)$ of $\phi^{-1}$, and by $C\de\phi^{-1}_{*}\cc$ its proper transform on $X$. 
	
	It would be sometimes convenient to work with explicit equations of $S$. To write them, denote by $\hat{p}_{j}=t^{d_{j}-1}p_{j}(t^{-1})$, $j\in \{1,2\}$ the polynomial obtained from $p_{j}$ by writing its coefficients in the opposite order. Note that $\hat{p}_{j}(0)=1$ because $p_{j}$ is monic. Now $S\subseteq \C^{4}=\Spec\C[x_1,x_2,y_1,y_2]$ is given by
	\begin{equation}\label{eq:S}
	y_1x_1^{d_1}=x_2-\hat{p}_1(x_1),\quad y_2x_2^{d_2}=x_1-\hat{p}_2(x_2).
	\end{equation}
	Note that $S$ is in fact a surface in $\C^{3}$ because, say, $x_1$ can be computed from the second equation.
\end{notation}

\begin{lem}[{$\C^{1}$'s in $S$, cf.\ \cite[Lemma 8.1]{Kal-Ku1_AL-theory}}]\label{lem:C1}
	A curve $\Gamma\subseteq X$ satisfies $\Gamma\cap S\cong \C^{1}$ if and only if one of the following holds:
	\begin{enumerate}
		\item\label{item:A} $\Gamma=A_{j}$ for some $j\in \{1,2\}$. Then $\Gamma^{2}=-1$ and $\Gamma\cdot D=\Gamma\cdot T_j=1$.
		\item\label{item:L} $\Gamma=L_{j,1}$ and $p_{j}=t^{d_{j}-1}$ for some $j\in \{1,2\}$. Then $\Gamma^{2}=-d_{j}$, $\Gamma\cdot D=\Gamma\cdot L_{3-j,\infty}=1$ and $\Gamma\cdot A_{3-j}=1$. 
		\item\label{item:C} $\Gamma=C$ and $p_{j}=1$ or $t^{d_{j}-2}(t-1)$ for both $j\in \{1,2\}$. Then $\Gamma^{2}=2-d_{1}-d_{2}$, $\Gamma\cdot D=2$, $\Gamma$ meets $D$ at $L_{1,\infty}\cap L_{2,\infty}$ and $\Gamma\cdot A_{j}=1$ for $j\in \{1,2\}$.
	\end{enumerate}
	Moreover, $A_{1}\cdot A_{2}=0$, $L_{1,1}\cdot L_{2,1}=1$ and $A_{j}\cdot L_{j,1}=C\cdot L_{j,1}=0$ for $j\in \{1,2\}$.
\end{lem}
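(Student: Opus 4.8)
The plan is to work with the explicit equations \eqref{eq:S} of $S$ in $\C^{4}=\Spec\C[x_1,x_2,y_1,y_2]$ from Notation~\ref{not:S}. Suppose $\Gamma\subseteq X$ satisfies $\Gamma\cap S\cong\C^{1}$; since the components of $D$ are disjoint from $S$, we have $\Gamma\not\subseteq D$, and $\Gamma$ is the closure in $X$ of $\Gamma\cap S$. Writing $\Gamma\cap S=\Spec\C[t]$, the four coordinate functions restrict to polynomials $u,v,w_1,w_2\in\C[t]$ (the images of $x_1,x_2,y_1,y_2$) which \emph{generate} $\C[t]$, because $x_1,x_2,y_1,y_2$ generate $\C[S]$ and $\Gamma\cap S$ is a closed subvariety of $S$. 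By \eqref{eq:S} they satisfy
\begin{equation*}
w_1u^{d_1}=v-\hat p_1(u),\qquad w_2v^{d_2}=u-\hat p_2(v),
\end{equation*}
where $\hat p_j$ has degree $\le d_j-1$ and constant term $1$. I would then run a case analysis according to whether $u=x_1|_\Gamma$ and $v=x_2|_\Gamma$ are constant.

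If both $u$ and $v$ are constant, then $\phi(\Gamma)$ is a point, so $\Gamma$ lies in a fibre of $\phi$; the only curves in a fibre of $\phi$ meeting $S$ are the last exceptional curves $A_1,A_2$, which gives case \ref{item:A}. (One checks directly from the equations that $A_j\cap S\cong\C^{1}$, e.g.\ $A_1\cap S=\{x_1=0\}=\{(0,1,y_1,1-p_2(1))\}$.) If exactly one of them, say $u$, is constant equal to $c$, then the first equation shows that $c=0$ forces $v\equiv1$ (excluded, as $v$ is nonconstant), while for $c\neq0$ a degree comparison in the second equation forces $c=1$, forces the polynomial governing the base point on $\{x_1=1\}$ to be a pure power $t^{\deg-1}$, and forces $w_2=0$; then $\C[t]=\C[v]$ and $\Gamma=L_{1,1}$. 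Symmetrically one obtains $L_{2,1}$. This is case \ref{item:L}.

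The heart of the argument is the case where $u,v$ are both nonconstant, with $a=\deg u\ge1$, $b=\deg v\ge1$. Comparing leading terms in each equation gives a dichotomy: in the first equation either $w_1=0$ and $v=\hat p_1(u)$, or $w_1\neq0$ and $b\ge d_1a$; likewise for the second with $(a,b,d_1,\hat p_1)$ replaced by $(b,a,d_2,\hat p_2)$. Running through the four resulting sub-cases, the degree inequalities collapse (using $d_1,d_2\ge1$) to force $v=1-u$ in every case, i.e.\ $\phi(\Gamma)\subseteq\cc=\{x_1+x_2=1\}$ and hence $\Gamma=C$; moreover each sub-case pins down $p_1,p_2$ to lie in $\{1,\,t^{d_j-2}(t-1)\}$ exactly as in case \ref{item:C}, the four sub-cases matching the four admissible choices of $(p_1,p_2)$. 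Finally the generation condition $\C[u,v,w_1,w_2]=\C[t]$, together with $v=1-u$ and the $w_i$ being constant or zero, forces $\deg u=1$, confirming $\Gamma\cap S\cong\C^{1}$.

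It remains to record the intersection numbers and the \emph{moreover} relations, and to verify the converse (that each listed curve really meets $S$ in a $\C^{1}$ under the stated hypotheses). Both are read off from the explicit blow-up sequence of Construction~\ref{constr}: for each curve one tracks how its proper transform threads through the towers of blow-ups over $(0,1)$ and $(1,0)$, giving $A_j^2=-1$, $L_{j,1}^2=-d_{j}$ and $C^2=2-d_1-d_2$ (since $\cc$ is a $(1,1)$-curve with $\cc^2=2$, whose transform passes through all $d_1+d_2$ centres), together with the stated values of $\Gamma\cdot D$, $\Gamma\cdot A_{3-j}$ and of $A_1\cdot A_2$, $L_{1,1}\cdot L_{2,1}$, $A_j\cdot L_{j,1}$, $C\cdot L_{j,1}$. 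I expect the main difficulties to be purely bookkeeping: first, checking that the leading-coefficient comparison in the both-nonconstant case is exhaustive and really yields only $v=1-u$ with the precise conditions on $p_1,p_2$; and second, tracking each proper transform through the blow-up tower carefully enough to certify the listed self-intersections and incidences (which is also where the index conventions of Figure~\ref{fig:Skl} must be respected).
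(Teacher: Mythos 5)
Your proposal takes a genuinely different route from the paper's proof, and as an outline it is sound. The paper argues geometrically: the \enquote{if} direction tracks proper transforms through the two blow-up towers of Construction \ref{constr}, and the \enquote{only if} direction splits according to whether $\Gamma$ is vertical for one of the rulings $|L_{j,\infty}|$ (giving $A_j$ or $L_{j,1}$) or horizontal for both, in which case the auxiliary $\P^{1}$-fibration induced by $|L_{1,0}+L_{2,0}|$ forces $\phi(\Gamma)=\cc$. Your trichotomy on $u=x_1|_\Gamma$, $v=x_2|_\Gamma$ (both constant / exactly one constant / both nonconstant) is the exact algebraic counterpart of that split, and the case analysis does close: in the both-nonconstant case the degree comparison first forces the relevant $d_j$ and $\deg \hat p_j$ to equal $1$, after which the $w_i$ are constants determined by linear equations, giving $v=1-u$ and exactly the four admissible pairs $(p_1,p_2)$ of case \ref{item:C}. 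What your approach buys is a self-contained argument inside $\C[t]$ using only \eqref{eq:S}; what it does not buy is the intersection data in the lemma and the \emph{moreover} part, which you must still read off the blow-up tower exactly as the paper does.

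One point you must flag rather than gloss over: your computation attaches the condition in case \ref{item:L} to the \emph{other} polynomial than the lemma as printed. For $\Gamma=L_{1,1}$ your degree argument in the second equation of \eqref{eq:S} gives $\hat p_2=1$, i.e.\ $p_2=t^{d_2-1}$ --- the polynomial governing the tower over $(1,0)=\ll_{1,1}\cap\ll_{2,0}$ --- whereas case \ref{item:L} as stated, and the paper's own proof of it, demand $p_1=t^{d_1-1}$. Relative to the labels of Construction \ref{constr} (where $p_1$ governs the blow-ups over $(0,1)$, through which $\ll_{2,1}$, not $\ll_{1,1}$, passes), your version is the correct one, and it is the version used elsewhere in the paper: Lemma \ref{lem:tori}\ref{item:AL} removes $A_j+L_{j,1}$ under the hypothesis $p_{3-j}=1$, and Theorem \ref{thm:AAut}\ref{item:AAut_thm} pairs $L_{3-j,1}\cap A_j$ with the condition $p_j=1$. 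So your sentence \enquote{this is case \ref{item:L}} is accurate only after interchanging $p_j,d_j$ with $p_{3-j},d_{3-j}$ in the stated condition and self-intersection; in particular your deferred bookkeeping will return $L_{j,1}^{2}=-d_{3-j}$, not $-d_j$, since it is the $d_{3-j}$ blow-ups over $\ll_{j,1}\cap\ll_{3-j,0}$ that can touch $\ll_{j,1}$ (the paper's proof of \ref{item:L} contains the same index swap). State this discrepancy explicitly; otherwise your argument, spliced in, proves a statement literally different from the printed one. A small slip to correct as well: on $A_1\cap S$ the last coordinate is $y_2=-p_2(1)$, not $1-p_2(1)$.
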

\begin{proof}
	Clearly, $A_{j}\cap S\cong \C^{1}$ and $A_{j}^{2}=-1$ as in \ref{item:A}. We will now check when $L_{j,1}\cap S \cong \C^{1}$. Because $\ll_{j,1}$ meets $\phi_{*}D$ transversally on $\ll_{3-j,0}$ and $\ll_{3-j,\infty}$, we have  $L_{j,1}\cdot(D+A_{3-j})=2$ and $L_{j,1}$ meets $L_{3-j,\infty}$, so $L_{j,1}\cap S\cong \C^{1}$ if and only if $L_{j,1}$ meets $A_{3-j}$, that is, all $d_{j}$ blowups over $\ll_{j,1}\cap \ll_{3-j,0}$ touch the image of $L_{j,1}$ (hence $L_{j,1}^{2}=\ll_{j,1}^{2}-d_{j}=-d_{j}$). The proper transform of $\ll_{j,1}$ after the first blowup meets the exceptional curve at the point with coordinate $(x_{3-j}-1)/x_{j}=0$ (see Construction \ref{constr}), so the next blowup, if occurs, touches it if and only if the coefficient near $t^{d_{j}-2}$ in $p_{j}$ is zero. Because these two curves meet transversally, the infinitely near points on the next proper transforms will have coordinate zero, too. Following Construction \ref{constr} we conclude that $L_{j,1}\cap S\cong \C^{1}$ if and only if $p_{j}=t^{d_{j}-1}$, as in \ref{item:L}.

	To check when $C\cap S\cong \C^{1}$, we argue in a similar way. We have $C\cdot(D+A_{1}+A_{2})=4$ and $C$ meets $D$ at $L_{1,\infty}\cap L_{2,\infty}$, so $C\cap S\cong \C^{1}$ if and only if all the blowups in the decomposition of $\phi$ touch the image of $C$. After the first blowup over $\ll_{j,1}\cap \ll_{3-j,0}$ the proper transform of $\cc$ meets the exceptional curve at a point of coordinate $(x_{3-j}-1)/x_{j}=1$, so all the remaining blowups touch it if and only if $p_{j}=1$ (i.e.\ there are no more blowups) or $p_{j}=t^{d_{j}-1}-t^{d_{j}-2}$, since as before the next infinitely near points will have coordinate zero on the exceptional curves. In this case, $C^{2}=\cc^{2}-(d_{1}+d_{2})=2-d_{1}-d_{2}$, as in \ref{item:C}.
	\smallskip
	
	It remains to show that if $\Gamma\cap S\cong \C^{1}$ for some curve $\Gamma\subseteq X$ then $\Gamma$ equals $A_{j}$, $L_{1,j}$ or $C$. Consider first the case when $\Gamma$ is vertical for the $\P^{1}$-fibration induced by $|L_{j,\infty}|$, so $\Gamma\subseteq \phi^{*}\ll_{j,t}$ for some $t\in \C^{1}$ because $\Gamma\not\subseteq D$. If $t\not\in\{0,1\}$ then $\Gamma\cap S\cong \C^{*}$, which is false. If $t=0$ then $\Gamma=A_{j}$ because $\phi^{*}\ll_{j,0}=L_{j,0}+T_{j}+A_{j}$. If $t=1$ then $\Gamma=L_{1,j}$ or $A_{3-j}$ because $(\phi^{*}\ll_{j,1})\redd=L_{j,1}+T_{3-j}+A_{3-j}$.
	
	Consider now the case when $\Gamma$ is horizontal for both $\P^{1}$-fibrations induced by $|L_{j,\infty}|$, $j\in \{1,2\}$. Then $\Gamma$ meets both $L_{j,\infty}$, so the point $\Gamma\cap D$ equals $L_{1,\infty}\cap L_{2,\infty}$. Since $\Gamma$ is disjoint from $L_{1,0}+L_{2,0}=[1,1]$, the linear system of the latter induces a $\P^{1}$-fibration for which $\Gamma$ is vertical. The curves $L_{j,\infty}$ are $1$-sections for this $\P^{1}$-fibration, so $\Gamma\cdot L_{j,\infty}=1$. It follows that $\phi(\Gamma)\subseteq \P^{1}\times \P^{1}$ is of type $(1,1)$, passes through $(\infty,\infty)$, and because $\Gamma\cdot L_{j,0}=0$, $\phi(\Gamma)$ passes through the base point of $\phi^{-1}$ on $\ll_{j,0}$, that is, through $(0,1)$ for $j=1$ and $(1,0)$ for $j=2$. Thus $\phi(\Gamma)=\cc$, as claimed.
\end{proof}

\begin{lem} \label{lem:C*_lines}
	For any $\C^{*}$-fibration of $S$ one can find on $S$ two disjoint, vertical curves isomorphic to $\C^{1}$, whose closures $\Gamma_{1},\Gamma_{2}\subseteq X$ satisfy $\Gamma_{i}^{2}\geq -1$, with strict inequality if $\Gamma_{i}\cdot D\geq 2$.
\end{lem}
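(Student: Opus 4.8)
The plan is to complete the given $\C^{*}$-fibration to a $\P^{1}$-fibration and then read off the two vertical $\C^{1}$'s from the topological Euler characteristic together with the explicit list of $\C^{1}$'s in Lemma \ref{lem:C1}. First I would extend the $\C^{*}$-fibration $\rho\colon S\to B$ to a $\P^{1}$-fibration $\bar\rho\colon \bar X\to \P^{1}$ of a (possibly blown-up) log smooth completion of $S$. Since $\C[S]$ is a UFD, Lemma \ref{lem:untwisted} forces this fibration to be untwisted, so the horizontal part of the boundary consists of exactly two $1$-sections $H_{1},H_{2}$, and every general fiber of $\rho$ is a $\C^{*}$. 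Because $S\htp \bS^{2}$ by Proposition \ref{prop:htp}, we have $\etop(S)=2$; as $\etop(\C^{*})=0$, additivity of $\etop$ over the fibration gives $\sum_{b}\etop(\rho^{-1}(b))=2$, the sum running over the finitely many degenerate fibers, independently of whether $B$ is affine.

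Next I would analyse a single degenerate fiber. Its affine part $\rho^{-1}(b)$ is a union of curves each isomorphic to $\C^{1}$ or $\C^{*}$, meeting in nodes, so $\etop(\rho^{-1}(b))=\#\{\C^{1}\text{-components}\}-\#\{\text{nodes lying in }S\}$. Hence the positive contributions to the sum come only from vertical $\C^{1}$'s, and by Lemma \ref{lem:C1} each such curve is one of $A_{1},A_{2},L_{1,1},L_{2,1},C$, with the self-intersections and incidences recorded there. The decisive point is a dichotomy governed by the self-intersection: a vertical component $\Gamma$ of multiplicity $m$ in its fiber satisfies $\sum_{E}m_{E}(\Gamma\cdot E)=-m\,\Gamma^{2}$, where $E$ runs over the other fiber components. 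For a \emph{good} curve, i.e.\ $\Gamma=A_{j}$ with $\Gamma^{2}=-1$ and $\Gamma\cdot D=1$, this is compatible with $\Gamma$ being an isolated $\C^{1}$ in its affine fiber, contributing $+1$ to $\etop$; whereas for a \emph{bad} curve, i.e.\ $\Gamma=L_{j,1}$ with $d_{j}\geq 2$ (so $\Gamma^{2}=-d_{j}\leq -2$) or $\Gamma=C$ with $d_{1}+d_{2}\geq 3$, the relation forces $\Gamma$ to acquire enough affine neighbours that the Euler characteristic of its affine piece drops to $0$ or below.

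Combining this with the incidences of Lemma \ref{lem:C1} ($A_{1}\cdot A_{2}=0$, $A_{j}\cdot L_{j,1}=0$, and $L_{1,1}\cdot L_{2,1}=L_{j,1}\cdot A_{3-j}=C\cdot A_{j}=1$) I would extract two \emph{disjoint good} vertical $\C^{1}$'s. When $d_{1},d_{2}\geq 2$ the only good curves on $S$ are $A_{1}$ and $A_{2}$, so $\etop(S)=2$ forces both of them to be vertical, and $A_{1}\cdot A_{2}=0$ makes them disjoint; the regimes with some $d_{j}=1$ admit $L_{j,1}$ (now a $(-1)$-curve) or $C$ as additional good curves, leaving room to select a disjoint pair. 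The self-intersection bound $\Gamma_{i}^{2}\geq -1$, strict when $\Gamma_{i}\cdot D\geq 2$, is then exactly the assertion that the two chosen curves are good, which holds by construction. The main obstacle is precisely this last bookkeeping: one must rule out configurations in which a bad vertical $\C^{1}$ (for instance $L_{1,1}$ with its unique boundary point lying on a \emph{vertical} component $L_{2,\infty}$) contributes positively while one of $A_{1},A_{2}$ is horizontal, since then only a single good vertical $\C^{1}$ would survive. I expect this to require sharpening the node count by the incidence relations, showing that a vertical $L_{j,1}$ drags $A_{3-j}$ into its fiber (as $L_{j,1}\cdot A_{3-j}=1$) and that the resulting nodes consume the available Euler characteristic unless both $A_{1}$ and $A_{2}$ are already vertical.
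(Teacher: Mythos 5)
Your reduction via $\etop$ and Lemma \ref{lem:C1} only settles the case where $A_{1},A_{2}$ are the \emph{only} curves in $S$ isomorphic to $\C^{1}$, and the step you defer to ``bookkeeping'' is exactly where the argument breaks. The Euler-characteristic count gives $\#\{\text{vertical }\C^{1}\text{'s}\}-\#\{\text{nodes in }S\}=2$; it forces two vertical $\C^{1}$'s but says nothing about \emph{which} curves from the list in Lemma \ref{lem:C1} they are. When the $p_{j}$ are special, that list contains ``bad'' curves ($L_{j,1}$ with $L_{j,1}^{2}\leq -2$, or $C$ with $C^{2}<0$), and your count is perfectly consistent with configurations that violate the lemma: for instance, a bad $L_{j,1}$ vertical in one fiber, $A_{j}$ vertical in another (recall $A_{j}\cdot L_{j,1}=0$), and $A_{3-j}$ horizontal, contributing $1+1=2$ with no nodes; or $L_{1,1}+L_{2,1}$ in a single fiber (one node, contribution $1$) together with a vertical $A_{1}$. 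Your proposed repair --- that $L_{j,1}\cdot A_{3-j}=1$ ``drags'' $A_{3-j}$ into the fiber of $L_{j,1}$ --- rests on a false principle: a curve meeting a vertical curve need not be vertical, since a horizontal $1$-section meets \emph{every} fiber, and $A_{3-j}$ passing as a section through the point $L_{j,1}\cap A_{3-j}$ is precisely the configuration you must exclude. The ``dichotomy'' is equally unjustified: the intersections forced by $F\cdot \Gamma=0$ can be absorbed entirely by boundary components of the completed fiber, so a bad vertical $\C^{1}$ can contribute positively. Indeed, for the $\C^{*}$-fibration induced by $x_{1}$, when $L_{1,1}\cap S\cong \C^{1}$ (so $L_{1,1}^{2}\leq -2$) the completed fiber over $1$ is a chain made of $L_{1,1}$, $A_{2}$ and boundary components, and its affine part has $\etop=1$.

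Excluding these configurations is the entire content of the lemma --- note that Proposition \ref{prop:fibrations}\ref{item:fibr} uses precisely the bound $\Gamma_{i}^{2}\geq -1$ to rule out $L_{j,1}$ with $d_{j}\geq 2$ --- and the paper achieves it without any Euler-characteristic count. It completes the fibration to a $\P^{1}$-fibration $(X',D')$, shows via Lemma \ref{lem:untwisted} that $D'\hor$ consists of two $1$-sections lying on the circular part of $D'$, and then analyzes fibers directly: if $D'\vert$ contains a $(-1)$-curve, the fibration is the standard one induced by $|L_{3-j,\infty}|$ and both $A_{1},A_{2}$ are vertical; otherwise the required curves are produced as the (necessarily irreducible) fibers through the nodes of $D'\hor$, supplemented by vertical $(-1)$-curves $A$ with $A\cdot D'=1$, whose existence needs a further case analysis, including a UFD argument to exclude a fiber of the form $D'\vert+\mu A$ with $\mu\geq 2$. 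An argument of this strength is needed exactly at the point you left open, and nothing in your proposal supplies it.
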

\begin{proof}
	Fix a $\C^{*}$-fibration $f$ of $S$. Let $\alpha_{1}\colon X''\to X$ be the minimal resolution of the base points of $f$ on $X$, and let $\alpha_{2}\colon X''\to X'$ be the contraction of all vertical $(-1)$-curves which are superfluous in the subsequent images of $(\alpha_{1}^{*}D)\redd$. Put $\alpha=\alpha_{2}\circ\alpha_{1}^{-1}$ and $D'=((\alpha^{-1})^{*}D)\redd$.
	
	Denote the circular subdivisor of $D$ by $R$. Then $\alpha_{1}^{*}R$ contains a unique circular subdivisor $R''$. Because all fibers are trees, $R''$ contains a horizontal component, say $H$. Suppose $R''-H$ is vertical. Then it is connected and meets $H$ twice, so $H$ is a $2$-section, in fact $H=D''\hor$; a contradiction with Lemma \ref{lem:untwisted}. Therefore, $R''$ contains two $1$-sections. In particular, $R''\hor=D''\hor$, so $\alpha_{1}$ is a composition of inner blowups with respect to the images of $R''$, hence it does not touch $\alpha_{1}^{*}(D-R)$. Because the latter contains no $(-1)$-curves and meets $R''$ in branching components of $D''$, $\alpha_{2}$ does not touch it, either. In other words, $\alpha$ is a composition of inner blowups on the images of $R''$ and does not touch the twigs of $D''$.
	
	In particular, $D'\hor$ consists of two $1$-sections, both contained in $R'=(\alpha_{2})_{*}R''$. 
	
	Consider the case when $D'\vert$ has a $(-1)$-curve, say $\Gamma$, and let $F$ be the fiber containing it. Because $D'$ has no superfluous vertical $(-1)$-curves, $\beta_{D'}(\Gamma)\geq 3$, so $\Gamma\subseteq R'$ meets a maximal twig $T$ of $D'$. Because $\alpha^{-1}$ does not touch $T$, we get $\alpha^{-1}_{*}(T+\Gamma)=T_{j}+L_{j,0}=[(2)_{d_{j}},1]$ for some $j\in \{1,2\}$. Since  $\beta_{F\redd}(\Gamma)\leq 2$, $\Gamma$ meets $R'\hor$. The latter consists of $1$-sections, so $\Gamma$ has multiplicity $1$ in $F$, and thus $\Gamma$ is a tip of $F$. Because $F$ contracts to a $0$-curve, it follows that $F=[1,(2)_{d_{j}},1]$. The $(-1)$-curve $\Gamma'=F-T-\Gamma$ satisfies $\Gamma'\cdot D'=1+(F-T-\Gamma)\cdot D'\hor=1$, so $A=\alpha^{-1}_{*}\Gamma'$ satisfies $A\cdot D=A\cdot T_{j}=1$, thus $A=A_{3-j}$ by Lemma \ref{lem:C1}. It follows that $F=\alpha^{*}(A_{j}+T_{j}+L_{3-j,0})$, so our $\P^{1}$-fibration is in fact induced by a pullback of $|L_{3-j,\infty}|$. Therefore, $\alpha=\id$ and the curves $A_{j}\cap S$ and $A_{3-j}\cap S$ are as required.
	
	Consider the case when  $D'\vert$ contains no $(-1)$-curves. Because $R'$ is circular, $R'\hor$ has $2-b_{0}(R'\vert)\in \{0,1,2\}$ nodes. Let $F_r$ be a fiber passing through a node $r\in R'\hor$. Then $(F_{r}\cdot D'\hor)_{r}=2$, so because each connected component of $D'\vert$ meets $D'\hor$, $F_{r}\cap D'=\{r\}$. Because $X'\setminus D'\cong S$ is affine, $F_{r}\cap (X'\setminus D')\cong \C^{1}$. The curve $F_{r}$ meets each $1$-section in $D'\hor$ once, so the above description of $\alpha$ shows that $\alpha_{2}$ touches $(\alpha_{2}^{-1})_{*}F_{r}$ at most once. Therefore,  $(\alpha^{-1}_{*}F_{r})^{2}\geq F_{r}^{2}-1=-1$, and the equality holds if and only if the proper transform of $F_{r}$ is touched by $\alpha_{2}$ and not by $\alpha_1$, in which case $\alpha^{-1}_{*}F_{r}\cdot D=1$. Hence $F_{r}\cap S$ is as in the statement of the lemma. 
	
	Similarly, if $A\subseteq X'$ is a vertical $(-1)$-curve such that $A\cdot D'=1$, then $A\not\subseteq D'$ by assumption, and  $\alpha_{2}$ does not touch $(\alpha_{2}^{-1})_{*}A$, hence $A\cap S$ is as in the statement, and clearly $A$ is disjoint from any $F_{r}$. Therefore, it remains to find $b_{0}(R'\vert)$ disjoint vertical $(-1)$-curves $A\subseteq X'$ such that $A\cdot D'=1$.
	
	Assume that every connected component $V$ of $R'\vert$ is contained in a fiber $F_{V}\neq V$. Let $A_V\subseteq F_V$ be a $(-1)$-curve. Then $A_V\not\subseteq D'$, $A_V\cdot D'\hor \leq (F_V-V)\cdot D'\hor=0$ and $A_V\cdot D'\vert=1$ because $F_V$ is a tree, so $A_V\cdot D'=1$. This ends the proof because  $A_V\cap A_{V'}=\emptyset$ for $V\neq V'$.
	
	Thus we can assume that $R'\vert$ contains a fiber. We have $3\leq \#R'\leq \#D'=\rho(X')$ by Lemma \ref{lem:S0}. Hence $X'$ contains a degenerate fiber $F$. Let $A$ be a $(-1)$-curve in $F$, so $A\not\subseteq D'$. 
	
	Suppose that $F\subseteq D'\vert+A$. Then $A$ is a unique $(-1)$-curve in $F$, so it has multiplicity $\mu\geq 2$ in $F$. Denote our $\C^{*}$-fibration of $S$ by $f$. Because $D'$ contains a fiber, $f(S)\subseteq\C^{1}$, and we can assume that $f(A\cap S)=\{0\}$. Because $\C[S]$ is a UFD, $A|_{S}$ is a divisor of a regular function, say $g$. Then $f/g^{\mu}\in \C[S]^{*}=\C^{*}$, so $f=\lambda g^{\mu}$ for some $\lambda\in \C^{*}$. A general fiber of such $f$ is reducible, a contradiction.
	
	Suppose $L\cdot D'\geq 2$ for every $(-1)$-curve $L\subseteq F$. Then $2\leq L\cdot D'\leq b_0(F\cap D')$ because $F$ is a tree, and $b_0(F\cap D')\leq F\cdot D'\hor=2$ because $D'$ is connected. It follows that $A$ is a unique $(-1)$-curve in $F$, and meets two connected components of $F\cap D'$. Because $F$ meets $D'\hor$ in components of multiplicity $1$, it follows that $F$ is a chain meeting $D'$ in tips contained in $D'\vert$. Hence $F\subseteq D'\vert+A$; a contradiction.
	
	Thus we can assume $A\cdot D'=1$. If $b_0(R'\vert)<2$ then the lemma follows, so we can assume $b_0(R'\vert)=2$. Lemma \ref{lem:S0}\ref{item:units} implies that $D'$ contains exactly one fiber, so some connected component $V$ of $R'\vert$ is contained in a degenerate fiber $F$ as above. Recall that $A\subseteq F$ is a $(-1)$-curve such that $A\not\subseteq D'$,  $A\cdot D'=1$ and $F\not\subseteq D'\vert+A$.
	
	For every component $L$ of $F-D'\vert$ we have $L\cdot D'\hor\leq (F'-V)\cdot D'\hor=0$, so since $F$ is a tree, $L\cdot D'=1$ and all such $L$'s are disjoint. If two of them are $(-1)$-curves then the lemma follows. Hence we can assume that $A$ is a unique $(-1)$-curve in $F$. Put $T=0$ if $A$ meets $V$, otherwise denote by $T$ the maximal twig of $D'$ meeting $A$. If $T\neq 0$ then $\alpha^{-1}$ does not touch $A+T$, hence by Lemma \ref{lem:C1}, $A+T=\alpha_{*}(A_{j}+T_{j})=[1,(2)_{d_{j}}]$ for some $j\in \{1,2\}$. Moreover, if $T$ meets a curve $L\not\subseteq D'$ with $L\cdot D'=1$ then Lemma \ref{lem:C1} gives $L=A$. Hence $T\cdot(F\redd-A-V)=0$. In any case, let $\tau$ be the contraction of $A+T$. Then $\tau$ does not touch $F\redd-V$, and the fiber $\tau_{*}F$ has a unique $(-1)$-curve, contained in $\tau_{*}V$. Perform further  inner contractions with respect to the images of the chain $\tau_{*}V$ until one of the components of the image of $F$ meeting a $1$-section in the image of $D'\hor$ becomes a $(-1)$-curve. Then the latter has multiplicity $1$ in the image of $F$, so  it is not its unique $(-1)$-curve. It follows that the image of $F$ is of type $[1,1]$ and both its components are in the image of $V$. Hence $F\subseteq D'\vert+A$; a contradiction. 	
\end{proof}

In the following, we fix coordinates on $S=S_{p_{1},p_{2}}$, treating it as a closed subset of $\C^{4}$ given by \eqref{eq:S}.

\begin{lem}[Open tori in $S$]\label{lem:tori}
	Fix an open subset $U\subseteq S$ and a ring homomorphism  $\sigma\colon \C[x_1,x_2,y_1,y_2]\to \C[v_{1}^{\pm 1},v_{2}^{\pm 1}]$ as in \ref{item:AA}--\ref{item:LC} below. Then $\sigma$ induces an isomorphism $U\cong \C^{*}\times \C^{*}$.
	\begin{enumerate}
		\item\label{item:AA} $U=S\setminus (A_1+A_2)$ and 
		$\sigma(x_{j})=v_{j}$, $\sigma(y_{j})= (v_{3-j}-\hat{p}_{j}(v_{j}))v_j^{-d_{j}}$ for $j\in \{1,2\}$;
		\item\label{item:AL} In case $p_{3-j}=1$ for some $j\in \{1,2\}$:
		 $U=S\setminus (A_{j}+L_{j,1})$ and
		 \begin{equation*}
		 \sigma(x_{j})=v_{j},\quad
		 \sigma(x_{3-j})=(v_{j}-1)v_{3-j}^{-1},\quad
		 \sigma(y_{j})=(v_{j}-1-\hat{p}_{j}(v_{j})v_{3-j})\cdot (v_{j}^{d_j}v_{3-j})^{-1},\quad
		 \sigma(y_{3-j})=v_{3-j};
		 \end{equation*}
		\item\label{item:LC} In case $p_{1}=p_{2}=1$:
		 $U=S\setminus (C+L_{j,1})$ for some $j\in \{1,2\}$ and
		 \begin{equation*}
		 \sigma(x_{j})=-(v_{2}+1)v_{1}^{-1},\quad
		 \sigma(x_{3-j})=-(v_{1}+v_{2}+1)\cdot (v_{1}v_{2})^{-1},\quad
		 \sigma(y_{j})=(v_{1}+1)v_{2}^{-1},\quad
		 \sigma(y_{3-j})=v_{2}.
		 \end{equation*}
	\end{enumerate}
The volume form $\tfrac{dv_{1}}{v_{1}}\wedge \tfrac{dv_{2}}{v_{2}}$ on $U$ extends, up to a sign, to a volume form $\tfrac{dx_{1}}{x_{1}}\wedge \tfrac{dx_{2}}{x_{2}}$.
\end{lem}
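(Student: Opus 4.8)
The plan is to read $\sigma$ in each case as the comorphism of an explicit morphism $\Phi\colon\C^{*}\times\C^{*}\to S$ and to build a rational inverse out of coordinate functions on $S$. First I would check that the Laurent polynomials $\sigma(x_1),\sigma(x_2),\sigma(y_1),\sigma(y_2)$ satisfy both equations of \eqref{eq:S}; this is a one-line substitution. In \ref{item:AA} one has $\sigma(y_j)\sigma(x_j)^{d_j}=v_{3-j}-\hat p_j(v_j)=\sigma(x_{3-j})-\hat p_j(\sigma(x_j))$, and in \ref{item:LC} the identity $(v_1+1)(v_2+1)=v_1v_2+v_1+v_2+1$ does the work (here $p_{3-j}=1$ gives $\hat p_{3-j}=1$, $d_{3-j}=1$). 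Hence $\sigma$ descends to a ring map $\C[S]\to\C[v_1^{\pm1},v_2^{\pm1}]$ and defines $\Phi$.

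Next I read off from $\sigma$ two regular functions $g_1,g_2\in\C[S]$ with $\sigma(g_1)=v_1$, $\sigma(g_2)=v_2$: namely $(g_1,g_2)=(x_1,x_2)$ in \ref{item:AA}, $(x_1,y_2)$ in \ref{item:AL}, and $(y_1y_2-1,y_2)$ in \ref{item:LC} (using $\sigma(y_1y_2-1)=(v_1+1)-1=v_1$). These define a rational map $\Psi=(g_1,g_2)\colon S\map\C^{*}\times\C^{*}$, regular on $W\de\{g_1g_2\neq 0\}$. Using \eqref{eq:S} I would then verify the two compositions: $\Psi\circ\Phi=\id$ is immediate from $\sigma(g_i)=v_i$, while $\Phi\circ\Psi=\id_W$ amounts to checking that on $W$ every coordinate is recovered from $(g_1,g_2)$ by exactly the formula defining $\sigma$ — for instance in \ref{item:AL} one has $x_2=(x_1-1)/y_2$ and $y_1=(x_2-\hat p_1(x_1))/x_1^{d_1}$ on $W$ precisely because $y_2\neq 0$ there, matching $\sigma(x_2)$ and $\sigma(y_1)$. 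This shows $\Phi\colon\C^{*}\times\C^{*}\to W$ is an isomorphism.

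It remains to identify $W$ with $U$, i.e.\ to show $V(g_1)\cup V(g_2)$ meets $S$ exactly along the two removed curves. Here $\{x_1=0\}\cap S=A_1\cap S$ and $\{x_2=0\}\cap S=A_2\cap S$ follow from Construction \ref{constr}, since the only component of $\phi^{*}\ll_{j,0}$ not contained in $D$ is $A_j$; this already settles \ref{item:AA}. For \ref{item:AL} and \ref{item:LC} one computes $y_2=(x_1-1)/x_2$ and $y_1y_2-1=(1-x_1-x_2)/(x_1x_2)$, whose vanishing off $A_1\cup A_2$ is the proper transform of $\ll_{1,1}$, resp.\ of $\cc=\{x_1+x_2=1\}$, that is $L_{1,1}\cap S$, resp.\ $C\cap S$. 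I expect the real obstacle to be the behaviour of these functions on the exceptional curves $A_1,A_2$, where the displayed expressions are indeterminate: one must evaluate in the chart of Construction \ref{constr}. For example on $A_1$ (case \ref{item:LC}) the function $y_1$ restricts to the parameter $x_2'=(x_2-1)/x_1$ while $y_2$ restricts to the constant $-1$, so $y_1y_2-1$ vanishes only at $x_2'=-1$, which is the point $C\cap A_1$; thus no spurious component appears and the zero loci are exactly $A_j$ and $L_{j,1}$, resp.\ $C$ and $L_{j,1}$. Pinning down these intersection points via Lemma \ref{lem:C1} then gives $W=U$.

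Finally, for the volume form I would substitute the expressions for $x_1,x_2$ in terms of $v_1,v_2$ and expand $\tfrac{dx_1}{x_1}\wedge\tfrac{dx_2}{x_2}$ into $d\log$'s. In \ref{item:AA} it is literally $\tfrac{dv_1}{v_1}\wedge\tfrac{dv_2}{v_2}$; in \ref{item:AL}, from $x_1=v_1$ and $x_2=(v_1-1)/v_2$ one gets $\tfrac{dx_2}{x_2}=\tfrac{dv_1}{v_1-1}-\tfrac{dv_2}{v_2}$, whence $\tfrac{dx_1}{x_1}\wedge\tfrac{dx_2}{x_2}=-\tfrac{dv_1}{v_1}\wedge\tfrac{dv_2}{v_2}$; and in \ref{item:LC}, writing $\tfrac{dx_1}{x_1}=\tfrac{dv_2}{v_2+1}-\tfrac{dv_1}{v_1}$ and $\tfrac{dx_2}{x_2}=\tfrac{dv_1+dv_2}{v_1+v_2+1}-\tfrac{dv_1}{v_1}-\tfrac{dv_2}{v_2}$, the two $(v_1+v_2+1)^{-1}$ contributions combine into $\tfrac{1}{v_1(v_2+1)}$ and cancel the remaining cross terms, leaving $\tfrac{dv_1}{v_1}\wedge\tfrac{dv_2}{v_2}$. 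In each case this is $\pm\tfrac{dv_1}{v_1}\wedge\tfrac{dv_2}{v_2}$, as asserted.
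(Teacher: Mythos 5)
Your proposal is correct and follows essentially the same route as the paper's proof: check that $\sigma$ kills the ideal of $S$, invert it via the same coordinate functions ($x_j$ in \ref{item:AA}, $x_j,y_{3-j}$ in \ref{item:AL}, $y_1y_2-1,y_{3-j}$ in \ref{item:LC}), identify the non-regularity locus of the inverse with the removed curves, and track the log volume form. You in fact supply details the paper compresses into its proof and the following remark — notably the verification that the zero loci of $y_{3-j}$ and $y_1y_2-1$ acquire no spurious components along $A_1,A_2$ (which uses $p_{3-j}=1$, resp.\ $p_1=p_2=1$), and the explicit $d\log$ computation — so nothing further is needed.
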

\begin{proof}
	We check that by a direct computation that $\sigma$ is zero on the ideal of $S$, so it gives a morphism $\C^{*}\times \C^{*}\to S$. It has a rational inverse, given by
	\begin{equation*}
	\mbox{\ref{item:AA} }v_{j}\mapsto x_{j},\ j\in \{1,2\};\quad 
	\mbox{\ref{item:AL} }v_{j}\mapsto x_{j}, v_{3-j}\mapsto y_{3-j};\quad
	\mbox{\ref{item:LC} }v_{1}\mapsto y_{1}y_{2}-1, v_{2}\mapsto y_{3-j}.
	\end{equation*}
	This inverse is regular on the complement of:
	\begin{equation*}
	\mbox{\ref{item:AA} }\{x_{1}x_{2}=0\}=(A_{1}+A_{2})|_{S};\quad 
	\mbox{\ref{item:AL} }\{x_{j}y_{3-j}=0\}=(A_{j}+L_{j,1})|_{S};\quad
	\mbox{\ref{item:LC} }\{y_{3-j}(1-y_{1}y_{2})=0\}=(L_{j,1}+C)|_{S};
	\end{equation*}	
	and sends $\tfrac{dv_{1}}{v_{1}}\wedge \tfrac{dv_{2}}{v_{2}}$ to $\pm \tfrac{dx_{1}}{x_{1}}\wedge \tfrac{dx_{2}}{x_{2}}$. This proves the lemma. 
\end{proof}
\begin{rem*}
	The formulas $\{y_{3-j}=0\}=L_{j,1}|_{S}$, $\{y_{1}y_{2}=1\}=C|_{S}$ used in the above proof do \emph{not} hold for general $p_{1},p_{2}$, but they do hold under the assumptions of \ref{item:AL} and \ref{item:LC}, respectively.
\end{rem*}

\begin{prop}[{Complete algebraic vector fields on $S_{p_{1},p_{2}}$, cf.\ \cite[Theorem 19]{Kal-Ku1_AL-theory}}]\label{prop:fibrations}\ 
	\begin{enumerate}
		\item\label{item:fibr} Every $\C^{*}$-fibration of $S$ restricts to a $\C^{*}$-fibration of one of the open tori $U\subseteq S$ from Lemma \ref{lem:tori}
		\item\label{item:extend} For any $U\subseteq S$ from Lemma \ref{lem:tori} and any $a_{j}\geq d_{j}$, $j\in \{1,2\}$, the complete vector field $\nu_{a_{1},a_{2}}=v_{1}^{a_{1}}v_{2}^{a_{2}}(a_{2}v_{1}\tfrac{\d}{\d v_{1}}-a_{1}v_{2}\tfrac{\d}{\d v_{2}})$ on $U$ extends to a  complete vector field on $S$.
		\item\label{item:vf} Every complete algebraic vector field on $S$ restricts to a complete vector field on some torus $U\subseteq S$ from Lemma \ref{lem:tori}. In particular, it has divergence zero with respect to the volume form $\tfrac{dx_1}{x_1}\wedge \tfrac{dx_2}{x_2}$.
		\item\label{item:AAut} For each open torus $U\subseteq S$ from Lemma \ref{lem:tori}, the group $\AAut(S)\cap \AAut(U)$ acts on $U$ $m$-transitively for every $m$.
	\end{enumerate}
\end{prop}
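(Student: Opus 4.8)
The plan is to treat the four parts in order, using Lemmas \ref{lem:C1}, \ref{lem:C*_lines} and \ref{lem:tori} as the geometric input and \cite[Theorem 1.6]{KKL_avf} as the bridge between $\C^{*}$-fibrations and complete vector fields. For \ref{item:fibr}, I would fix a $\C^{*}$-fibration $f$ of $S$ and apply Lemma \ref{lem:C*_lines} to produce two disjoint vertical curves isomorphic to $\C^{1}$ whose closures $\Gamma_{1},\Gamma_{2}\subseteq X$ satisfy $\Gamma_{i}^{2}\geq -1$, with strict inequality when $\Gamma_{i}\cdot D\geq 2$. By Lemma \ref{lem:C1} each $\Gamma_{i}$ is one of $A_{1},A_{2},L_{1,1},L_{2,1},C$, and the listed self-intersection numbers turn the inequalities of Lemma \ref{lem:C*_lines} into numerical constraints: each $L_{j,1}$ is admissible only when the polynomial forcing it to be a $\C^{1}$ equals $1$, and $C$ only when $p_{1}=p_{2}=1$. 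Reading off from the intersection table in Lemma \ref{lem:C1} which of these curves are disjoint, the only possible pairs $\{\Gamma_{1},\Gamma_{2}\}$ are exactly the three complements appearing in Lemma \ref{lem:tori}. Thus $U\de S\setminus(\Gamma_{1}\cup\Gamma_{2})$ is one of those tori, and, $\Gamma_{1},\Gamma_{2}$ being vertical, $f$ restricts to a $\C^{*}$-fibration of $U\cong\C^{*}\times\C^{*}$.

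For \ref{item:extend}, I would work in the coordinates of Lemma \ref{lem:tori} and simply compute $\nu_{a_{1},a_{2}}(x_{i})$ and $\nu_{a_{1},a_{2}}(y_{i})$. The images of the $x_{i}$ are monomials, while each $\nu_{a_{1},a_{2}}(y_{i})$ acquires a factor $v_{i}^{a_{i}-d_{i}}$; hence the bound $a_{i}\geq d_{i}$ is exactly what makes $\nu_{a_{1},a_{2}}$ preserve $\C[S]$, i.e.\ restrict to a regular (algebraic) vector field on $S$ that is moreover tangent to $\Gamma_{1},\Gamma_{2}$. Completeness is then immediate from the structure of the field: $h\de v_{1}^{a_{1}}v_{2}^{a_{2}}$ is a first integral (one checks $\nu_{a_{1},a_{2}}(h)=0$), it extends to a regular function on $S$, and on each of its fibers---generically a copy of $\C^{*}$, and on the boundary a copy of $\C^{1}$---the field restricts to a linear, hence complete, vector field. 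Since every trajectory is confined to a single fiber, $\nu_{a_{1},a_{2}}$ is complete on $S$.

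For \ref{item:vf}, let $\nu$ be a complete algebraic vector field on $S$. By \cite[Theorem 1.6]{KKL_avf} its flow preserves some $\C^{*}$-fibration $f$, which by \ref{item:fibr} restricts to a $\C^{*}$-fibration of a torus $U$ with $S\setminus U=\Gamma_{1}\cup\Gamma_{2}$. The flow permutes the finitely many degenerate fibers of $f$ and, being connected and therefore fixing each, preserves the distinguished $\C^{1}$-components $\Gamma_{1},\Gamma_{2}$; hence $\nu$ is tangent to them and restricts to a complete vector field on $U\cong\C^{*}\times\C^{*}$. The divergence statement then follows because the flow acts on $U$ by holomorphic automorphisms, and every such automorphism sends the nowhere-vanishing form $\tfrac{dv_{1}}{v_{1}}\wedge\tfrac{dv_{2}}{v_{2}}$ to a unit multiple $c\,v_{1}^{m}v_{2}^{n}\,\tfrac{dv_{1}}{v_{1}}\wedge\tfrac{dv_{2}}{v_{2}}$; continuity of the flow forces $m=n=0$ and shows $\operatorname{div}\nu$ is constant, and since the divergence of any algebraic field on $\C^{*}\times\C^{*}$ has vanishing constant term, this constant is $0$. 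By Lemma \ref{lem:tori} the form equals $\pm\tfrac{dx_{1}}{x_{1}}\wedge\tfrac{dx_{2}}{x_{2}}$, giving $\operatorname{div}\nu=0$ with respect to the latter.

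Finally, for \ref{item:AAut}, the fields $\nu_{a_{1},a_{2}}$ (and, using the symmetry of the torus, their counterparts) are complete on $U$ and, by \ref{item:extend}, extend to complete algebraic fields on $S$, so their flows lie in $\AAut(S)\cap\AAut(U)$. Their values $\nu_{a_{1},a_{2}}(p)=v_{1}^{a_{1}}v_{2}^{a_{2}}(p)\cdot(a_{2}v_{1},-a_{1}v_{2})$ span $T_{p}U$ at every $p\in U$, and the bracket relation $[\nu_{a},\nu_{b}]=(a_{2}b_{1}-a_{1}b_{2})\,\nu_{a+b}$ shows the Lie algebra they generate stays within this family. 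I would then conclude $m$-transitivity by invoking the Andersén--Lempert type criterion of \cite{KKL_avf} (cf.\ \cite[Theorem 19]{Kal-Ku1_AL-theory}), checking its hypotheses against this spanning and bracket behaviour. This last step---upgrading the spanning of tangent spaces and the supply of complete fields to genuine $m$-transitivity of the generated group---is the main obstacle, and is where the density-property machinery does the real work.
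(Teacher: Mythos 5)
Parts \ref{item:fibr} and \ref{item:extend} of your proposal are correct and essentially coincide with the paper's proof: for \ref{item:fibr} the case analysis via Lemmas \ref{lem:C1} and \ref{lem:C*_lines} is exactly the paper's argument, and for \ref{item:extend} your fiberwise-completeness computation is a reasonable substitute for the paper's citation of \cite[Corollary 2]{Andersen-tori}. The genuine problems are in \ref{item:vf} and \ref{item:AAut}.

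In \ref{item:vf} there are two gaps. First, you use \cite[Theorem 1.6]{KKL_avf} as if it directly handed you a $\C^{*}$-fibration preserved by the flow. What that theorem gives is a dichotomy: either \emph{all} complete algebraic vector fields on $S$ share a rational first integral, or each such field preserves a $\C^{1}$- \emph{or} $\C^{*}$-fibration. The paper therefore needs two extra steps that you skip entirely: it uses part \ref{item:extend} to exhibit complete fields with no common rational first integral (this is the actual logical role of \ref{item:extend} inside the proof of \ref{item:vf}), and it excludes the $\C^{1}$-alternative because $\kappa(S)=0$ forbids $\C^{1}$-fibrations by the Iitaka Easy Addition theorem. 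Second, your direct argument for divergence zero fails: a time-$t$ flow map is only a \emph{holomorphic} automorphism of $U\cong\C^{*}\times\C^{*}$, and nowhere-vanishing holomorphic functions on the torus have the form $e^{g}v_{1}^{m}v_{2}^{n}$ with $g$ holomorphic, not $c\,v_{1}^{m}v_{2}^{n}$ (the latter describes only the \emph{algebraic} units). So you cannot conclude that the Jacobian multiplier is constant in the space variables, hence not that $\operatorname{div}\nu$ is constant. The statement is true, but it is Andersén's theorem, which the paper cites as \cite[Corollary 1]{Andersen-tori}; it does not follow from your two-line argument.

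In \ref{item:AAut} you yourself flag the decisive step as "the main obstacle" and defer it to an unspecified Andersén--Lempert type criterion; tangent-space spanning plus the bracket relation $[\nu_{a},\nu_{b}]=(a_{2}b_{1}-a_{1}b_{2})\nu_{a+b}$ is not, by itself, a proof of $m$-transitivity of the generated group. The paper's mechanism is concrete and different: choose integers $a_{ij}\geq d_{j}$ with $\det[a_{ij}]=1$, set $\sigma_{i}=\nu_{a_{i1},a_{i2}}$ and $f_{i}=v_{1}^{a_{i1}}v_{2}^{a_{i2}}\in\ker\sigma_{i}\setminus\ker\sigma_{3-i}$, and work with the fields $q(f_{i})\cdot\sigma_{i}$ for arbitrary $q\in\C[t]$. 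These are still complete (each $f_{i}$ is a first integral of $\sigma_{i}$), still extend to complete fields on $S$ by \ref{item:extend}, and the proof of \cite[Propostion 8.9]{Kal-Ku1_AL-theory} shows that the group generated by elements of their flows already acts $m$-transitively on $U$. This trick --- multiplying the two shear-like fields by polynomials in their own first integrals, with the unimodularity condition ensuring $(f_{1},f_{2})$ give independent "coordinates" --- is the missing idea in your sketch; the flows of the fields $\nu_{a_{1},a_{2}}$ alone generate far too small a group.
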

\begin{proof}
	\ref{item:fibr} Fix a $\C^{*}$-fibration $p$ of $S$ and let $\Gamma_{1},\Gamma_{2}\subseteq X$ be as in Lemma \ref{lem:C*_lines}. Then $p$ restricts to a $\C^{*}$-fibration of $U=S\setminus (\Gamma_{1}+\Gamma_{2})$. We claim that $U$ is as in Lemma \ref{lem:tori}. Because for $i\in \{1,2\}$ $\Gamma_{i}\cap S\cong \C^{1}$, $\Gamma_{i}$ is one of the curves from Lemma \ref{lem:C1}. If $\{\Gamma_{1},\Gamma_{2}\}=\{A_{1},A_{2}\}$ then $U$ is as in Lemma \ref{lem:tori}\ref{item:AA}. Assume that $\Gamma_{1}=C$. Because $C\cdot D=2$, by Lemma \ref{lem:C*_lines} $0\leq \Gamma_{1}^{2}=2-d_1-d_2$, so $d_1=d_2=1$, i.e.\ $p_1=p_2=1$. In this case, $C\cap S$ meets $A_j \cap S$ for both $j\in \{1,2\}$, so $\Gamma_{2}=L_{1,j}$ for some $j\in \{1,2\}$ and therefore $U$ is as in Lemma \ref{lem:tori}\ref{item:LC}. We are left with the case when $\Gamma_{1}=L_{1,j}$ for some $j\in \{1,2\}$ and $\Gamma_{2}\neq C$. As before, the condition $\Gamma_{1}^{2}\geq -1$ from Lemma \ref{lem:C*_lines} implies by Lemma \ref{lem:C1}\ref{item:L} that $d_{j}=1$, i.\ e.\ $p_{j}=1$.  In this case, $L_{1,j}\cap S$ meets $A_{3-j}\cap S$ and $L_{3-j,1}\cap S$, so $\Gamma_{2}=A_{j}$ or $C$. The second case is excluded by assumption, so the first case holds and $U$ is as in Lemma \ref{lem:tori}\ref{item:AL}.
	
	\ref{item:extend} follows from a direct computation, see \cite[Corollary 2]{Andersen-tori}.
	
	\ref{item:vf} Part \ref{item:extend} implies that complete algebraic vector fields on $S$ do not share a rational first integral. Hence by \cite[Theorem 1.6]{KKL_avf}, for any complete algebraic vector field $\xi$ on $S$ there is a $\C^{1}$- or $\C^{*}$-fibration of $S$ preserved by the flow of $\xi$. Because $\kappa(S)=0$, $S$ admits no $\C^{1}$-fibrations, hence $\xi$ preserves one of the $\C^{*}$-fibrations from \ref{item:fibr}. In each case, the complement of $U$ is a union of vertical curves isomorphic to $\C^{1}$, so the flow of $\xi$ cannot send them to non-degenerate fibers in $U$. It follows that $\xi$ restricts to a complete vector field on $U$. In particular, by \cite[Corollary 1]{Andersen-tori} it preserves the volume form $\tfrac{dv_{1}}{v_{1}}\wedge \tfrac{dv_{2}}{v_{2}}$, where $v_{1},v_{2}$ are coordinates on $U\cong \C^{*}\times \C^{*}$. Lemma \ref{lem:tori} implies that $\xi$ preserves $\tfrac{dx_{1}}{x_{1}}\wedge \tfrac{dx_{2}}{x_{2}}$, too.
	
	\ref{item:AAut} Choose integers $a_{ij}$, $i,j\in \{1,2\}$ such that $a_{ij}\geq d_{j}$ and $\det [a_{ij}]=1$, and put $\sigma_{i}=\nu_{a_{i1},a_{i2}}$, $f_{i}=v_{1}^{a_{i1}}v_{2}^{a_{i2}}\in\ker \sigma_{i}\setminus \ker \sigma_{3-i}$. Now the result follows from  \cite[Propostion 8.9]{Kal-Ku1_AL-theory}. Indeed, although that result is formulated in loc.\ cit.\ for $G=\AAut(U)$, the proof actually shows $m$-transitivity for the group generated by the elements of the flows of $q(f_{i})\cdot \sigma_{i}$ for $q\in \C[t]$.
\end{proof}

\begin{proof}[Proof of Theorem \ref{thm:AAut}]Part \ref{item:Aut_thm} was shown in Corollary \ref{cor:st}\ref{item:Aut}. For \ref{item:AAut_thm}, note that the open subset $S\setminus Z$ of $S$ is precisely the union of all $U\subseteq S$ from Lemma \ref{lem:tori}. By Proposition \ref{prop:fibrations}\ref{item:vf}, $S\setminus Z$ is fixed by $\AAut(S)$. On the other hand, by Proposition \ref{prop:fibrations}\ref{item:AAut} each $U\subseteq S$ from Lemma \ref{lem:tori} is contained in a single orbit of $\AAut(S)$. Because every two such $U$, being open and dense in $S$, have nonempty intersection, they are all contained in the same orbit of $\AAut(S)$, which therefore equals $S\setminus Z$. Proposition \ref{prop:fibrations}\ref{item:AAut} implies that $\AAut(S)$ acts on this orbit $m$-transitively for every $m$, as claimed.
\end{proof}

\begin{rem}[Density property for $S_{p_{1},p_{2}}$]
	In view of Theorem \ref{thm:AAut}\ref{item:AAut_thm}, it is natural to ask for an even stronger property than $m$-transitivity of $\AAut(S)$, namely for the \emph{algebraic density property} (ADP), see \cite{Kal-Ku1_AL-theory}. Recall that a smooth affine variety $X$ has ADP if the Lie algebra $\operatorname{VF_{alg}}(X)$ of algebraic vector fields on $X$ coincides with the Lie algebra $\operatorname{Lie_{alg}}(X)$ generated by complete ones. Since in our case all vector fields preserve $Z$, one should rather ask if the \emph{relative} ADP holds, see \cite{KLM_relative-density}, namely, if there exists $l\geq 0$ such that $I^{l}\operatorname{VF_{alg}}(S)\subseteq \operatorname{Lie_{alg}}(S)$, where $I$ is the ideal of $Z$. This, however, is not true since by Lemma \ref{lem:tori}\ref{item:vf} all elements of $\operatorname{Lie_{alg}}(S)$ preserve the volume form $\omega=\tfrac{dx_1}{x_1}\wedge \tfrac{dx_2}{x_2}$. Nonetheless, one could ask for the relative \emph{volume} density property, defined as follows (see \cite{KK_avdp}). Let $X$ be a normal variety with a volume form $\omega$, and let $Y\subseteq X$ be a closed subset containing $\Sing X$. Denote by $\operatorname{VF_{alg}^{\omega}}(X,Y)$ the Lie algebra of those algebraic vector fields on $X$ which vanish on $Y$ and preserve the volume form $\omega$; and by  $\operatorname{Lie_{alg}^{\omega}}(X,Y)$ the Lie algebra generated by complete vector fields in $\operatorname{VF_{alg}^{\omega}}(X,Y)$. We say that $X$ has an algebraic volume density property (AVDP) relative to $Y$ if there exists $l\geq 0$ such that $I^{l}\operatorname{VF_{alg}^{\omega}}(X,Y)\subseteq \operatorname{Lie_{alg}^{\omega}}(X,Y)$, where $I$ is the ideal of $Y$. In this setting, one could ask the following:
\begin{question}
	Let $Z\subseteq S_{p_{1},p_{2}}$ be as in Theorem \ref{thm:AAut}\ref{item:AAut_thm}. Does $S_{p_{1},p_{2}}$ have AVDP relative to $Z$?
\end{question}
For $p_{1}=p_{2}=1$ (so $Z=\emptyset$), the answer to this question is positive \cite[Theorem 6]{KK_avdp}.%, $S_{1,1}$ has AVDP.
\end{rem}

\bibliographystyle{amsalpha}
\bibliography{bibl2018}

\providecommand{\bysame}{\leavevmode\hbox to3em{\hrulefill}\thinspace}
\providecommand{\MR}{\relax\ifhmode\unskip\space\fi MR }
% \MRhref is called by the amsart/book/proc definition of \MR.
\providecommand{\MRhref}[2]{%
  \href{http://www.ams.org/mathscinet-getitem?mr=#1}{#2}
}
\providecommand{\href}[2]{#2}
\begin{thebibliography}{AFW15}

\bibitem[AFW15]{AFW_3M}
M.~Aschenbrenner, S.~Friedl, and H.~Wilton, \emph{3-manifold groups}, EMS
  Series of Lectures in Mathematics, European Mathematical Society (EMS),
  Z\"{u}rich, 2015.

\bibitem[And00]{Andersen-tori}
E.~Anders\'{e}n, \emph{Complete vector fields on {$({\bf C}^*)^n$}}, Proc.
  Amer. Math. Soc. \textbf{128} (2000), no.~4, 1079--1085.

\bibitem[Bro67]{Brown_van-Kampen}
R.~Brown, \emph{Groupoids and van {K}ampen's theorem}, Proc. London Math. Soc.
  (3) \textbf{17} (1967), 385--401.

\bibitem[BW01]{BW_surgery}
M.~Brittenham and Y.~Q. Wu, \emph{The classification of exceptional {D}ehn
  surgeries on 2-bridge knots}, Comm. Anal. Geom. \textbf{9} (2001), no.~1,
  97--113.

\bibitem[Eis95]{Eisenbud_comm-alg}
D.~Eisenbud, \emph{Commutative algebra, with a view toward algebraic geometry},
  Graduate Texts in Mathematics, vol. 150, Springer-Verlag, New York, 1995.

\bibitem[EN85]{EN_torus-links}
D.~Eisenbud and W.~Neumann, \emph{Three-dimensional link theory and invariants
  of plane curve singularities}, Annals of Mathematics Studies, vol. 110,
  Princeton University Press, Princeton, NJ, 1985.

\bibitem[FKN19]{FKN_k0}
G.~Freudenburg, H.~Kojima, and T.~Nagamine, \emph{Smooth factorial affine
  surfaces of logarithmic {K}odaira dimension zero with trivial units},
  \arxiv{1910.03494}, 2019.

\bibitem[FKZ07]{FKZ-weighted-graphs}
H.~Flenner, S.~Kaliman, and M.~Zaidenberg, \emph{Birational transformations of
  weighted graphs}, Affine algebraic geometry, Osaka Univ. Press, Osaka, 2007,
  pp.~107--147.

\bibitem[Fuj82]{Fujita-noncomplete_surfaces}
T.~Fujita, \emph{On the topology of noncomplete algebraic surfaces}, J. Fac.
  Sci. Univ. Tokyo Sect. IA Math. \textbf{29} (1982), no.~3, 503--566.

\bibitem[FZ94]{FZ-deformations}
H.~Flenner and M.~Zaidenberg, \emph{{$\mathbb{Q}$}-acyclic surfaces and their
  deformations}, Classification of algebraic varieties ({L}'{A}quila, 1992),
  Contemp. Math., vol. 162, Amer. Math. Soc., Providence, RI, 1994,
  pp.~143--208.

\bibitem[GM88]{GM_k<2}
R.~V. Gurjar and M.~Miyanishi, \emph{Affine surfaces with {$\overline\kappa\leq
  1$}}, Algebraic geometry and commutative algebra, {V}ol. {I}, Kinokuniya,
  Tokyo, 1988, pp.~99--124.

\bibitem[GS99]{GS_Kirby}
R.~Gompf and A.~Stipsicz, \emph{{$4$}-manifolds and {K}irby calculus}, Graduate
  Studies in Mathematics, vol.~20, American Mathematical Society, Providence,
  RI, 1999.

\bibitem[Har77]{Hartshorne_AG}
R.~Hartshorne, \emph{Algebraic geometry}, Springer-Verlag, New York, 1977,
  Graduate Texts in Mathematics, 52.

\bibitem[Hat07]{Hatcher_3M}
A.~Hatcher, \emph{Notes on basic 3-manifold topology},
  \href{http://pi.math.cornell.edu/~hatcher/3M/3Mfds.pdf}{pi.math.cornell.edu/$\sim$hatcher/3M/3Mfds.pdf},
  2007.

\bibitem[HT85]{HT_inc}
A.~Hatcher and W.~Thurston, \emph{Incompressible surfaces in {$2$}-bridge knot
  complements}, Invent. Math. \textbf{79} (1985), no.~2, 225--246.

\bibitem[IF77]{Iitaka_Fujita-cancellation}
S.~Iitaka and T.~Fujita, \emph{Cancellation theorem for algebraic varieties},
  J. Fac. Sci. Univ. Tokyo Sect. IA Math. \textbf{24} (1977), no.~1, 123--127.

\bibitem[Iit82]{Iitaka_AG}
S.~Iitaka, \emph{Algebraic geometry}, Graduate Texts in Mathematics, vol.~76,
  Springer-Verlag, New York, 1982, An introduction to birational geometry of
  algebraic varieties, North-Holland Mathematical Library, 24.

\bibitem[Kal94]{Kal_exotic-measures}
S.~Kaliman, \emph{Exotic analytic structures and {E}isenman intrinsic
  measures}, Israel J. Math. \textbf{88} (1994), no.~1-3, 411--423.

\bibitem[Kaw79]{Kawamata-classification_surfaces}
Y.~Kawamata, \emph{On the classification of noncomplete algebraic surfaces},
  Algebraic geometry ({P}roc. {S}ummer {M}eeting, {U}niv. {C}openhagen,
  {C}openhagen, 1978), Lecture Notes in Math., vol. 732, Springer, Berlin,
  1979, pp.~215--232.

\bibitem[KK11]{Kal-Ku1_AL-theory}
S.~Kaliman and F.~Kutzschebauch, \emph{On the present state of the
  {A}nders\'en-{L}empert theory}, Affine algebraic geometry, CRM Proc. Lecture
  Notes, vol.~54, Amer. Math. Soc., Providence, RI, 2011, pp.~85--122.

\bibitem[KK16]{KK_avdp}
\bysame, \emph{On algebraic volume density property}, Transform. Groups
  \textbf{21} (2016), no.~2, 451--478.

\bibitem[KKL20]{KKL_avf}
S.~Kaliman, F.~Kutzschebauch, and M.~Leuenberger, \emph{Complete algebraic
  vector fields on affine surfaces}, Internat. J. Math. \textbf{31} (2020),
  no.~3, 2050018, 50.

\bibitem[KLL15]{KLM_relative-density}
F.~Kutzschebauch, M.~Leuenberger, and A.~Liendo, \emph{The algebraic density
  property for affine toric varieties}, J. Pure Appl. Algebra \textbf{219}
  (2015), no.~8, 3685--3700.

\bibitem[Koj99]{Kojima_k0}
H.~Kojima, \emph{Open rational surfaces with logarithmic {K}odaira dimension
  zero}, Internat. J. Math. \textbf{10} (1999), no.~5, 619--642.

\bibitem[Miy01]{Miyan-OpenSurf}
M.~Miyanishi, \emph{Open algebraic surfaces}, CRM Monograph Series, vol.~12,
  American Mathematical Society, Providence, RI, 2001.

\bibitem[Mos71]{Moser_knots}
L.~Moser, \emph{Elementary surgery along a torus knot}, Pacific J. Math.
  \textbf{38} (1971), 737--745.

\bibitem[Mum61]{Mumford-surface_singularities}
D.~Mumford, \emph{The topology of normal singularities of an algebraic surface
  and a criterion for simplicity}, Inst. Hautes \'Etudes Sci. Publ. Math.
  (1961), no.~9, 5--22.

\bibitem[Neu81]{Neumann-plumbing_graphs}
W.~Neumann, \emph{A calculus for plumbing applied to the topology of complex
  surface singularities and degenerating complex curves}, Trans. Amer. Math.
  Soc. \textbf{268} (1981), no.~2, 299--344.

\bibitem[Pal19]{Palka-minimal_models}
K.~Palka, \emph{Cuspidal curves, minimal models and {Z}aidenberg's finiteness
  conjecture}, J. Reine Angew. Math. \textbf{747} (2019), 147--174.

\bibitem[Prz83]{Przytycki_inc}
J.~Przytycki, \emph{Incompressibility of surfaces after {D}ehn surgery},
  Michigan Math. J. \textbf{30} (1983), no.~3, 289--308.

\end{thebibliography}

\end{document}